\documentclass[11pt]{amsart}

%%余白調整
%\setlength{\textwidth}{14cm}
%\setlength{\textheight}{22cm}
%\setlength{\topmargin}{-0.5cm}
%\setlength{\oddsidemargin}{1cm}
%\setlength{\evensidemargin}{-0.5cm}
%章番号にピリオド(articleのみ)
%\usepackage{secdot}
%\sectiondot{subsection}

\setlength{\textwidth}{14cm}
\setlength{\textheight}{22cm}
\setlength{\topmargin}{0cm}
\setlength{\oddsidemargin}{1cm}
\setlength{\evensidemargin}{1cm}

\usepackage{amsmath, amssymb}
\usepackage{amsthm}

\usepackage[all]{xy}
\usepackage{color}
%%定義環境

%%式番号にsection番号を載せる
 \makeatletter
    
    \@addtoreset{equation}{section}
  \makeatother

\def\@begintheorem#1#2{%
   \trivlist
   \item[\hskip \labelsep{\bfseries #1\ #2.}]%
   \itshape}
\def\@opargbegintheorem#1#2#3{%
   \trivlist
   \item[\hskip \labelsep{\bfseries #1\ #2.\ (#3)}]%
   \itshape}
\makeatother

 \makeatletter
    
    \@addtoreset{equation}{section}
  \makeatother

\theoremstyle{plain}
\newtheorem{thm}{Theorem}[subsection]

\newtheorem{lem}[thm]{Lemma}
\newtheorem{prop}[thm]{Proposition}
\newtheorem{cor}[thm]{Corollary}

\theoremstyle{remark}
\newtheorem{rem}[thm]{Remark}

\theoremstyle{definition}
\newtheorem{dfn}[thm]{Definition}

\theoremstyle{plain}
\newtheorem{theor}{Theorem}%[subsection]

%%番号付き, 番号なしのClaim
\newtheorem*{claim}{Claim}
\newtheorem{nclaim}{Claim}

\newcommand{\QLS}{{\rm QLS}}
\newcommand{\B}{{\mathcal{B}}}
\newcommand{\QB}{{\mathcal{QB}}}

\newcommand{\QBG}{{\rm QBG}}
\newcommand{\qwt}{{\rm qwt}}
\newcommand{\ed}{{\rm end}}
\newcommand{\id}{{e}}

\newcommand{\gch}{{\rm gch}}
\newcommand{\wt}{{\rm wt}}
\newcommand{\dg}{{\rm deg}}
\newcommand{\Dg}{{\rm Deg}}
\newcommand{\dr}{{\rm dir}}

\newcommand{\OS}{{\rm OS}}

\newcommand{\aff}{{\rm aff}}
\newcommand{\cl}{{\rm cl}}

\newcommand{\half}{{\frac{1}{2}}}
\newcommand{\eqdef}{\overset{\rm def}{=}}
\newcommand{\ardef}{\overset{\rm def}{\Leftrightarrow}}
\newcommand{\si}{\frac{\infty}{2}}
\newcommand{\adj}{{\rm -adj}}
\newcommand{\SB}{{\rm SiB}}
\newcommand{\SILS}{\rm SiLS}
\newcommand{\bqed}{\quad \hbox{\rule[-0.5pt]{6pt}{6pt}}  \vspace{3mm}}
\newcommand{\lon}{w_\circ}
\newcommand{\lons}{w_\circ (S)}
\newcommand{\setzero}{ \{ {\bf 0 } \} }
\newcommand{\zero}{ {\bf 0 } }
\newcommand{\C}{{A_{2n}^{(2)}}}
\newcommand{\D}{{D_{n+1}^{(2)}}}
\newcommand{\tra}{t (\lambda_- ) }
\newcommand{\inv}{^{-1}}
\newcommand{\inte}{{\rm int}}
\newcommand{\appr}{\Delta^{+}_{S, \aff}}
\newcommand{\apr}{\Delta_{S, \aff}}
\newcommand{\pcr}{W^{S}_{\aff}}
\newcommand{\apsg}{W_{S, \aff}}
\newcommand{\pdag}{^{\dag}}

\newcommand{\bp}{\mathbf{p}}
\newcommand{\bq}{\mathbf{q}}

\newcommand{\pair}[2]{\langle #1,\,#2 \rangle}

\newenvironment{enu}{%
 \begin{enumerate}%
}{\end{enumerate}}

\title[The specialization of Macdonald polynomials at $t=0$
in type $\C$]{Quantum Lakshmibai-Seshadri paths and the specialization of Macdonald polynomials at $t=0$
in type $\C$.}

\author[F.~Nomoto]{Fumihiko Nomoto}
\address[Fumihiko Nomoto]
 {Department of Mathematics, Tokyo Institute of Technology,
  2-12-1 Oh-Okayama, Meguro-ku, Tokyo 152-8551, Japan}
\email{nomoto.f.aa@m.titech.ac.jp}

\begin{document}

%\title{An explicit formula for the specialization of 
%nonsymmetric Macdonald polynomials at $t=\infty$ }
%\date{Algebraic Lie Theory and Representation Theory 2015, 2015年6月6日}
%\author{野本 文彦\footnote{東京工業大学理工学研究科数学専攻博士課程1年}}

\maketitle
\begin{abstract}
In this paper, we give 
a combinatorial realization of the crystal basis of a quantum Weyl module over a quantum affine algebra of type $\C$,
and
a representation-theoretic interpretation
of the specialization $P_{\lambda}^\C (q,0)$ of the symmetric Macdonald polynomial $P_{\lambda}^\C(q,t)$ at $t=0$, 
where $\lambda$ is a dominant weight
and $P_{\lambda}^\C(q,t)$ denotes the specific specialization of the symmetric Macdonald-Koornwinder polynomial
$P_{\lambda}(q,t_1, t_2, t_3, t_4, t_5)$.
More precisely, 
as some results for untwisted affine types,
the set of all $\C$-type quantum Lakshmibai-Seshadri paths of shape $\lambda$, which is described in terms of 
the finite Weyl group $W$, 
realizes the crystal basis of a quantum Weyl module over a quantum affine algebra of type $\C$
and its graded character is equal to the specialization $P_{\lambda}^\C (q,0)$ of the symmetric Macdonald-Koornwinder polynomial.
\end{abstract}

\section{Introduction}
Symmetric Macdonald polynomials %(in affine type A) 
with two parameters $q$ and $t$ were introduced 
by Macdonald
 \cite{M1} as a family of orthogonal symmetric polynomials,
which include as special or limiting cases almost all the classical families of orthogonal symmetric polynomials.
This family of polynomials are characterized in terms of the double affine Hecke algebra (DAHA) introduced by Cherednik (\cite{C1}, \cite{C3}).
In fact, there exists another family of orthogonal polynomials, called nonsymmetric Macdonald polynomials, which are simultaneous eigenfunctions of $Y$-operators 
acting on the polynomial representation of the DAHA; 
by 
``symmetrizing''
nonsymmetric Macdonald polynomials, we obtain symmetric Macdonald polynomials (see \cite{M}).

%%%%%%%%%%%%1
Based on the characterization above of nonsymmetric Macdonald polynomials,
Ram-Yip \cite{RY} obtained a combinatorial formula expressing symmetric or nonsymmetric Macdonald polynomials associated to an arbitrary untwisted affine
root system; this formula is described in terms of alcove walks, which are certain strictly combinatorial objects.
%%%%%%%%%%%%1
In addition, Orr-Shimozono \cite{OS} refined the Ram-Yip formula above,
and generalized it to an arbitrary affine root system
(including the twisted case);
also, they specialized their formula at $t=0$, $t=\infty$, $q=0$, and $q=\infty$.

As for representation-theoretic interpretations of the specialization of symmetric or nonsymmetric Macdonald polynomials at $t=0$,
%
%
%
%
%
%
%
%Concerning the second question, 
we know the following.
Ion \cite{I} proved that for a dominant integral weight $\lambda$ and an element $x$ of the finite Weyl group $W$, 
%%%%%%%%%%%%2
the specialization $E_{x \lambda} (q, 0)$ of the nonsymmetric Macdonald polynomial $E_{x\lambda} (q,t)$ at $t=0$
is equal to
%%%%%%%%%%%%2
the graded character of a certain Demazure submodule of an irreducible highest weight module over
an affine Lie algebra of a dual untwisted type. %equals the specialization $E_{x\lambda}(q, 0)$ of the nonsymmetric Macdonald polynomial $E_{x\lambda}(q, t)$ at $t=0$.
Afterward,
Lenart-Naito-Sagaki-Schilling-Shimozono \cite{LNSSS2} proved that for a dominant integral weight $\lambda$, the set 
$\QLS(\lambda)$
of all quantum Lakshmibai-Seshadri (QLS) paths of shape $\lambda$ provides a realization of the crystal basis of a special quantum Weyl module %$W_q (\lambda)$ 
over a quantum affine algebra $U_{q}'(\mathfrak{g}_\aff)$ (without degree operator) of an arbitrary untwisted type, 
and that its graded character equals the specialization $E_{\lon \lambda}(q,0)$ at $t=0$, 
which equals the specialization $P_\lambda (q,0)$ of the symmetric Macdonald polynomial $P_\lambda (q, t)$ at $t=0$,
where $\lon$ denotes the longest element of $W$.
Here a QLS path is described in terms of (the parabolic version of) the quantum Bruhat graph, introduced by Brenti-Fomin-Postnikov \cite{BFP};
the set of QLS paths 
is endowed with an affine crystal structure in a way
similar to
the one for the set of ordinary LS paths introduced by Littelmann \cite{L1}.
Moreover,
Lenart-Naito-Sagaki-Schilling-Shimozono \cite{LNSSS3}
 obtained a formula for the specialization $E_{x\lambda}(q, 0)$,
$x \in W$, at $t=0$ in an arbitrary untwisted affine type, which is described in terms of $\QLS$ paths of shape $\lambda$, and proved that the specialization $E_{x\lambda}(q, 0)$ is just the 
graded character of a certain Demazure-type submodule of the special quantum Weyl module.

Also,
Ishii-Naito-Sagaki proved that
for a level-zero dominant weight $\Lambda$,
the set of semi-infinite Lakshmibai-Seshadri (SiLS) paths of shape $\Lambda$ provides a realization of the crystal basis of the 
 level-zero extremal weight module of extremal weight $\Lambda$ over a quantum affine algebra $U_{q} (\mathfrak{g}_\aff)$ of a untwisted affine type,
and by factoring the null root $\delta$ of the affine Lie algebra $\mathfrak{g}_\aff$,
we obtain a surjective strict morphism of crystals from the crystal of all SiLS paths of shape $\Lambda$ onto the set $\QLS(\lambda)$;
here $\lambda$ denotes the image of $\Lambda$ via the projection $ P_\aff \rightarrow  P$
 (see \S 2).

This paper is mainly a $\C$-type-analog of the Lenart-Naito-Sagaki-Schilling-Shimozono's work \cite{LNSSS2}.
%%%%%%%%%%%%4
More precisely, the purpose of this paper is %$\C$-analog of 
%the first purpose is
to define a QLS path of type $\C$,
and to give a representation-theoretic interpretation of the specialization $P^{\C}_\lambda (q, 0)= E^\C_{\lon \lambda} (q, 0)$ at $t=0$
of the symmetric Macdonald polynomial $P^{\C}_\lambda (q, t)$ of type $\C$
in terms of the set $\QLS^\C(\lambda)$ of all $\C$-type QLS paths of shape $\lambda$; 
here $P^{\C}_\lambda (q, t)$ (resp., $E^\C_{\mu} (q, t)$) is a specific
specialization of the symmetric Macdonald-Koornwinder polynomial $P_\lambda (q, t_1, t_2, t_3, t_4, t_5)$
 (resp., nonsymmetric Macdonald-Koornwinder polynomial $E_\mu (q, t_1, t_2, t_3, t_4, t_5)$) (see \S 5.1 and \cite{OS}).
We prove the following 2 theorems:
\begin{theor}[{$=$ Theorem~\ref{realization_theorem}}]\label{thm:A}
Let $\lambda$ be a dominant weight.
Then, the set $\QLS^\C(\lambda)$ of all QLS path of shape $\lambda$ provides a realization of the crystal basis of a quantum Weyl module 
$W_q (\overline{\xi}(\lambda))$ over a quantum affine algebra $U_{q}'(\mathfrak{g} (\C) )$ of type $\C$,
where $\overline{\xi}$ denotes the bijection $\overline{\xi}:P \rightarrow P^0_\aff / (P^0_\aff \cap \mathbb{Q}\delta)$,
and $P^0_\aff$ the set of all level-zero weight {\rm(}see \S $2${\rm)}.
\end{theor}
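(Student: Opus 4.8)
The plan is to identify $\QLS^\C(\lambda)$, as a $U_q'(\mathfrak{g}(\C))$-crystal, with the crystal basis of the quantum Weyl module $W_q(\overline{\xi}(\lambda))$, following the strategy used in the untwisted case in \cite{LNSSS2} but with the combinatorics adapted to the twisted root system of type $\C$. Recall from the preceding sections that $\QLS^\C(\lambda)$ carries root operators $e_i,f_i$ $(i\in I_\aff)$ defined in the manner of Littelmann's path model \cite{L1}, which make it a normal $U_q'(\mathfrak{g}(\C))$-crystal, and that the distinguished straight-line path $\eta_\lambda$ of shape $\lambda$ is an extremal element of weight $\overline{\xi}(\lambda)$. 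The first thing to pin down is that these root operators genuinely preserve $\QLS^\C(\lambda)$: for $i\neq 0$ this rests on stability properties of the parabolic quantum Bruhat graph of the finite Weyl group $W$ of type $C_n$, while for $i=0$ one must use the explicit twisted description of $\alpha_0$ in $\C$ (with its rescaling and the doubled affine root), which is where the argument first departs from the untwisted template.

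The technical heart is a concatenation isomorphism. Writing $\overline{\xi}(\lambda)=\sum_i m_i\varpi_i$ and listing the fundamental weights with multiplicities as $\varpi_{i_1},\dots,\varpi_{i_N}$, I would prove that $\QLS^\C(\lambda)$ is isomorphic, as a $U_q'(\mathfrak{g}(\C))$-crystal, to the connected component of $\eta_{\varpi_{i_1}}\otimes\cdots\otimes\eta_{\varpi_{i_N}}$ inside $\QLS^\C(\varpi_{i_1})\otimes\cdots\otimes\QLS^\C(\varpi_{i_N})$, by encoding a QLS path of shape $\lambda$ as a tuple of QLS paths of fundamental shapes whose consecutive directions are joined along edges of the parabolic quantum Bruhat graph (a ``lift'' condition), and checking that this encoding intertwines the root operators. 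The supporting lemmas --- a diamond lemma and a ``shape''/``lift'' characterization for the quantum Bruhat graph, and the compatibility of $e_i,f_i$ with tensor products --- are the analogs, for the quantum Bruhat graph of type $C_n$ under the $\C$-conventions, of those in \cite{LNSSS2, LNSSS3}; re-establishing them in the twisted setting is the main obstacle I anticipate.

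It then remains to treat fundamental weights and to assemble. On the module side, $W_q(\overline{\xi}(\lambda))$ possesses a crystal basis $\B(\overline{\xi}(\lambda))$ --- being a level-zero Weyl module it is a quotient of an extremal weight module, to which Kashiwara's theory of global and crystal bases applies --- and by the tensor-product theorem for level-zero Weyl modules this crystal basis is the connected component of $u_{\varpi_{i_1}}\otimes\cdots\otimes u_{\varpi_{i_N}}$ in $\B(W_q(\varpi_{i_1}))\otimes\cdots\otimes\B(W_q(\varpi_{i_N}))$. Hence it suffices to establish the fundamental case $\QLS^\C(\varpi_i)\cong\B(W_q(\varpi_i))$, i.e., that $\QLS^\C(\varpi_i)$ is the level-one Kirillov-Reshetikhin crystal of type $\C$; I would obtain this either directly, by matching the crystal graph of $\QLS^\C(\varpi_i)$ with the known explicit model of the twisted level-one Kirillov-Reshetikhin crystal, or by virtualizing $\C$-crystals into a suitable untwisted affine type and invoking \cite{LNSSS2}. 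Combining the concatenation isomorphism, the fundamental case, and the tensor-product theorem then yields $\QLS^\C(\lambda)\cong\B(W_q(\overline{\xi}(\lambda)))$, which is the assertion.

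A parallel route, closer to the semi-infinite picture recalled in the introduction, would construct a $\C$-type semi-infinite LS path crystal of shape $\Lambda$ (with $\overline{\xi}(\lambda)$ its image) realizing the crystal basis of the level-zero extremal weight module over $U_q(\mathfrak{g}(\C))$, factor out the null root $\delta$ to obtain a strict surjection onto $\QLS^\C(\lambda)$, and identify the image of the relevant Demazure-type subcrystal with $\B(W_q(\overline{\xi}(\lambda)))$. In either approach, the twisted combinatorics --- stability under $e_0,f_0$ and the concatenation theorem --- is the step I expect to be hardest, the representation-theoretic input being essentially the same as in the untwisted case.
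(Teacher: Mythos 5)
Your plan is a legitimate alternative, but it is not the route the paper takes, and it is considerably heavier. The paper never decomposes $\lambda$ into fundamental weights, never touches Kirillov--Reshetikhin crystals, and proves no concatenation/tensor-product isomorphism. Instead it exploits the containment $\QLS^\C(\lambda)\subset\QLS(\lambda)$, where $\QLS(\lambda)$ is the already-established dual-untwisted ($\D$, i.e.\ $B_n$-combinatorics) path crystal: the type-$\C$ root operators are realized on $\mathbb{B}_\inte$ as powers $\widetilde{e}\pdag_i=e_i^{\chi_i}$, $\widetilde{f}\pdag_i=f_i^{\chi_i}$ of the $\D$-operators (with $\chi_n=2$, $\chi_i=1$ otherwise), and the representation-theoretic input is a single citation to \cite[Theorem~2.4.1]{LNSSS4}, which characterizes $\mathcal{B}(\overline{\xi}\pdag(\lambda\pdag))$ abstractly as the $(\widetilde{f}\pdag_i)^{\max}$-connected component $\widetilde{\mathbb{B}}(\lambda)_\cl$ of the straight-line path. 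The proof then reduces to two combinatorial facts: Theorem~\ref{stable} (stability of $\QLS^\C(\lambda)$ under $\widetilde{e}\pdag_i,\widetilde{f}\pdag_i$, proved via the diamond and path-deformation lemmas for $(\QBG^\C_{b\lambda})^S$), and the identity $(\widetilde{f}\pdag_i)^{\max}\pi=f_i^{\max}\pi$ on $\QLS^\C(\lambda)$, which holds because the integrality statement (Proposition~\ref{4.1.12}) forces $\varphi_i(\pi)\in\chi_i\mathbb{Z}$; connectedness of $\QLS^\C(\lambda)$ is thereby inherited for free from connectedness of the $\D$-crystal $\QLS(\lambda)$. By contrast, your route requires three substantial inputs you leave unestablished in the twisted setting: the identification $\QLS^\C(\varpi_i)\cong\mathcal{B}(W_q(\overline{\xi}(\varpi_i)))$ with a level-one KR crystal of type $\C$, the tensor-product decomposition of the crystal basis of $W_q(\overline{\xi}(\lambda))$ for $U_q'(\mathfrak{g}(\C))$, and the concatenation isomorphism for the $\C$-decorated parabolic quantum Bruhat graph; none of these is needed in the paper. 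One further small correction: in the paper's normalization the delicate node is $i=n$ (where the operator is the square $f_n^2$ and local minima of $H_n^\eta$ lie in $2\mathbb{Z}$), not $i=0$; node $0$ behaves exactly as the $\theta$-node does in the other dual untwisted types. Your ``parallel route'' via semi-infinite LS paths is developed in the paper's Appendix, but only as far as the surjection $\cl:\SILS(\Lambda)\rightarrow\QLS(\lambda)$; it is not used to prove Theorem~\ref{thm:A}.
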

\vspace{4mm}
\begin{theor}[{$=$ Theorem \ref{theorem_graded_character}}]\label{thm:B}
Let $\lambda$ be a dominant weight.
Then,
%%%%%%%%%%%%
\begin{equation*}
P^\C_\lambda (q,0) = \sum_{\eta \in \QLS^\C(\lambda)} q^{\Dg (\eta)} e^{\wt (\eta)},
\end{equation*}
where
for $\eta \in \QLS(\lambda)$, $\Dg(\eta)$ is a certain nonnegative integer,
which is explicitly described in terms of the quantum Bruhat graph;
see \S $5.1$ for details.
\end{theor}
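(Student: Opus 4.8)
The strategy is to reduce the statement to Theorem A together with the Orr–Shimozono specialization of the Ram–Yip–type formula for Macdonald–Koornwinder polynomials at $t=0$. First I would recall from \cite{OS} the explicit combinatorial expression for $P^{\C}_\lambda(q,t)$ as a sum over alcove walks (in the relevant twisted affine root system), and then apply their $t=0$ specialization. This yields $P^{\C}_\lambda(q,0)$ as a sum of monomials $q^{\bullet}e^{\bullet}$ indexed by a certain set of ``surviving'' alcove walks — those whose foldings are all of the appropriate (quantum/Bruhat) type. The first main task is to set up a weight- and degree-preserving bijection between this surviving set and $\QLS^{\C}(\lambda)$, mirroring the untwisted argument of \cite{LNSSS2} and the QLS-path formula of \cite{LNSSS3}. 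This is where the combinatorics of the quantum Bruhat graph enters: each surviving alcove walk should be encoded by its sequence of directions, which after collapsing repetitions gives a chain in the (parabolic) quantum Bruhat graph of the finite Weyl group $W$, i.e.\ precisely the data of a $\C$-type QLS path of shape $\lambda$; the statistic accumulated by the quantum (down) steps matches $\Dg(\eta)$ by construction of $\Dg$ in \S5.1.

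Second, I would match the $q$-powers and $e$-weights term by term. The weight $\wt(\eta)$ of a QLS path is the endpoint of the associated piecewise-linear path, and under the bijection this equals the weight read off from the Orr–Shimozono monomial; the degree $\Dg(\eta)$, defined in \S5.1 via the quantum Bruhat graph, is designed to equal the exponent of $q$ coming from the ``quantum'' (wrapping) folds. The bookkeeping here is essentially the same as in the untwisted case, but one must be careful with the twisted root system $\C = A_{2n}^{(2)}$: the normalization of the bilinear form, the distinction between short and long roots, and the precise shape of the affine simple roots all feed into the alcove-walk combinatorics and into the definition of $\Dg$. I expect this to be routine but technically delicate.

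Alternatively — and this is probably the cleaner route given Theorem A is already available — I would prove the formula representation-theoretically: by Theorem A, $\QLS^{\C}(\lambda)$ realizes the crystal basis of the quantum Weyl module $W_q(\overline{\xi}(\lambda))$, so its graded character (with grading recorded by $\Dg$) equals the graded character $\gch W_q(\overline{\xi}(\lambda))$ provided $\Dg$ corresponds to the degree grading on the module. Then it suffices to show $\gch W_q(\overline{\xi}(\lambda)) = P^{\C}_\lambda(q,0)$. For this I would invoke the $\C$-type analogue of the chain of identifications used in the untwisted theory: the graded character of the level-zero quantum Weyl module is computed by the Orr–Shimozono formula at $t=0$ (this is where \cite{OS} does the heavy lifting, since their formula covers twisted types), and its symmetrized version is exactly $P^{\C}_\lambda(q,0) = E^{\C}_{\lon\lambda}(q,0)$. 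The main obstacle, in either route, is verifying that the combinatorially defined statistic $\Dg$ on $\QLS^{\C}(\lambda)$ — built from quantum Bruhat graph edge labels — coincides with the degree grading coming from the $t=0$ Orr–Shimozono specialization; establishing this identification of statistics, and checking it survives the passage to the twisted root system, is the crux, and I would handle it by carefully tracking how the null-root coefficient accumulates along a lift to semi-infinite LS paths (using the factor-$\delta$ surjection mentioned in the introduction) and comparing with the $q$-degree in \cite{OS}.
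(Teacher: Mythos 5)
Your first route is essentially the paper's proof: in \S4.2--4.4 the author invokes the Orr--Shimozono specialization $P^\C_\lambda(q,0)=\sum_{p^{\OS}_{J}\in\QB^\C(\id;t(\lon\lambda))}q^{\dg(\qwt(p^{\OS}_{J}))}e^{\wt(\ed(p^{\OS}_{J}))}$ and then constructs a weight- and degree-preserving bijection $\Xi:\QB^\C(\id;t(\lon\lambda))\to\QLS^\C(\lambda)$ from a reduced expression of $t(\lon\lambda)$ adapted to a weak reflection order, exactly the collapsing-of-directions argument you describe. Your alternative representation-theoretic route via Theorem~A is not the one taken (the paper has no independent computation of the graded character of $W_q(\overline{\xi}(\lambda))$ to compare against), but since you present it only as an option, your primary plan matches the paper's.
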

Theorem~\ref{thm:A} implies that the definition of a $\QLS$ path of type $\C$ is reasonable.
By Theorem~\ref{thm:B},
$P^\C_\lambda (q,0)$ is equal to the graded character of the set $\QLS^\C (\lambda)$.
Moreover, by Theorem~\ref{thm:A},
$P^\C_\lambda (q,0)$ is equal to the graded character of the crystal basis of the quantum Weyl module $W_q (\xi(\lambda))$ over a quantum affine algebra $U_{q}'(\mathfrak{g} (\C) )$.
Also, we state an explicit formula for the specialization $E^\C_\mu (q,0)$ of nonsymmetric Macdonald polynomial $E^\C_\mu (q,t)$ at $t=0$
in terms of the specific subset of $\QLS^\C(\lambda)$.
Furthermore, we establish the relation between QLS paths and SiLS paths of type $\C$, which is 
$\C$-type analog of the Ishii-Naito-Sagaki's work \cite{INS}.

This paper is organized as follows.
In Section 2, we fix our notation.
In Section 3, we define a QLS path
of dual untwisted types and type $\C$.
Also, we prove Theorem~\ref{thm:A} by using root operators acting on the set $\QLS^\C(\lambda)$.
In Section 4, 
we prove Theorem \ref{thm:B};
this theorem gives the description of the specialization $P^\C_\lambda (q, 0)$ at $t=0$ in terms of QLS paths of shape $\lambda$.
In Appendix, we define a SiLS path of type $\C$.
Also, we prove that 
the set of QLS paths of shape $\lambda$
is obtained from  the set of SiLS paths of shape $\Lambda$
through the projection $\mathbb{R}\otimes_{\mathbb{Z}} P_\aff \rightarrow \mathbb{R}\otimes_{\mathbb{Z}} P$.

\section{Notation}

Let $A = (a_{ij})_{i,j \in I}$ be a Cartan matrix, and $A_\aff = (a_{ij})_{i,j \in I_\aff}$ be a generalized Cartan matrix of (twisted) affine type;
in this paper, we consider the following pairs  $(A, A_\aff)$.
\begin{center}
\begin{tabular}{ccc} \hline
$A$ & $A_\aff$ & \\ \hline 
$C_n$ & $A_{2n-1}^{(2)}$ & $(n \geq 3)$ \\ \hline 
$B_n$ & $\D$ & $(n \geq 2)$ \\ \hline 
$F_4$ & $E_6^{(2)}$ &  \\ \hline 
$G_2$ & $D_4^{(3)}$ &  \\ \hline 
$C_n$ & $\C$ & $(n \geq 2)$ \\ \hline 
\end{tabular}
\end{center}
Let
$\mathfrak{g}(A)$ be a finite dimensional simple Lie algebra associated with the Cartan matrix $A $,
and
$\mathfrak{g} (A_\aff)$ be a twisted affine Lie algebra associated with the generalized Cartan matrix $A_\aff$.
We denote by $I$ and $I_\aff = I \sqcup \setzero$ the vertex sets for the Dynkin diagrams of $A$ and $A_\aff$, respectively.

First, we consider the cases when $\mathfrak{g} (A_\aff)$ is  a
dual untwisted affine Lie algebra; that is,
$A_\aff =
A_{2n-1}^{(2)}$, 
 $\D$,
 $E_6^{(2)}$, or
	$D_4^{(3)}$.
Let $\{ \alpha_i \}_{i \in I }$
(resp., $\{ {\alpha}^{\lor}_i \}_{i \in I }$) be
 the set of all simple roots (resp., coroots) of  $\mathfrak{g}(A)$,
$\mathfrak{h} = \bigoplus_{i \in I}\mathbb{C}\alpha^\lor_i$ a Cartan subalgebra of  $\mathfrak{g}(A)$,
$\mathfrak{h}^* = \bigoplus_{i \in I}\mathbb{C}\alpha_i$ the dual space of $\mathfrak{h}$,
and 
$\mathfrak{h}_\mathbb{R} = \bigoplus_{i \in I}\mathbb{R}\alpha_i^\lor$
(resp., $\mathfrak{h}_\mathbb{R}^* = \bigoplus_{i \in I}\mathbb{R}\alpha_i$)
 the real form of $\mathfrak{h}$ (resp., $\mathfrak{h}^*$);
the duality pairing between  $\mathfrak{h}$ and  $\mathfrak{h}^*$ is denoted by
$\langle \cdot, \cdot \rangle : \mathfrak{h}^* \times \mathfrak{h} \rightarrow \mathbb{C}$.
Let $Q = \sum_{i \in I}\mathbb{Z}\alpha_i  
\subset \mathfrak{h}_\mathbb{R}^*$
(resp.,
$Q^\lor = \sum_{i \in I} \mathbb{Z}\alpha^\lor_i  
\subset \mathfrak{h}_\mathbb{R}$
)
 denote 
the root (resp., coroot) lattice of $\mathfrak{g}(A)$,
and 
$P = \sum_{i \in I}\mathbb{Z}\varpi_i \subset \mathfrak{h}_\mathbb{R}^*$  the weight lattice of $\mathfrak{g}(A)$;
here $\varpi_i$, $i \in I$, are the fundamental weights for $\mathfrak{g}(A)$,
that is, 
$\langle \varpi_i ,\alpha_j^\lor \rangle = \delta_{i j}$
for $j \in I$.
Let us denote by $\Delta$ the set of all roots,
and by $\Delta^{+}$ (resp., $\Delta^{-}$) the set of all positive (resp., negative) roots.
For a subset $\Gamma \subset \Delta$, we set $\Gamma^\lor \eqdef \{ \alpha^\lor \ | \ \alpha \in \Gamma \}$.
Also, let $W \eqdef \langle s_i \ | \ i \in I \rangle$
be the Weyl group of $\mathfrak{g}(A)$,
where
$s_i $, $i \in I$, are the simple reflections acting on $\mathfrak{h}^*$ and on $\mathfrak{h}$:
\begin{align*}
s_i x=s_{\alpha_i}x = x - \langle  x , \alpha^\lor_i  \rangle \alpha_i, & \ \ \
x \in \mathfrak{h}^*,\\
s_i y =s_{\alpha^\lor_i} y= y - \langle \alpha_i , y \rangle \alpha^\lor_i,
& \ \ \
y \in \mathfrak{h};
\end{align*}
we denote 
the identity element and
the longest element of $W$ by $e$ and $\lon$, respectively.
If $\alpha \in \Delta$ is written as $\alpha = w \alpha_i$ for 
$w\in W$ and $i \in I$, we define $\alpha^\lor$ to be $w \alpha^\lor_i$;
we often identify $s_\alpha$ with $s_{\alpha^\lor}$.
For  $u \in W$,
the length of $u$ is denoted by $\ell(u)$,
 which equals
$\# (\Delta^+ \cap u^{-1}\Delta^-)$.

For a subset $S \subset I$,
we set
$W_S \eqdef \langle s_i \ | \ i \in S \rangle$.
We denote the longest element of $W_S$ by $\lons$.
Also, we set 
$Q_S \eqdef \sum_{i \in S} \mathbb{Z}\alpha_i$,
$Q_S^\lor \eqdef \sum_{i \in S} \mathbb{Z}\alpha_i^\lor$,
$\Delta_S \eqdef Q_S \cap \Delta^+$,
$\Delta_S^+ \eqdef \Delta_S \cap \Delta^+ $, and 
$\Delta_S^- \eqdef \Delta_S \cap \Delta^- $.
Let $W^S$ denote the set of all minimal-length coset representatives for the cosets in $W / W_S$.
For $w\in W$, we denote  the minimal-length coset representative of the coset $w W_S$ by 
$\lfloor w \rfloor$, and
for a subset $T \subset W$, we set $\lfloor T \rfloor \eqdef \{ \lfloor w \rfloor \ | \ w \in T \} \subset W^S$.
For a dominant weight $\lambda \in P$, i.e., $\pair{\lambda}{\alpha^\lor_i}\geq 0$, $i\in I$,
 we set 
$
S_\lambda \eqdef \{ i \in I \ | \  \langle \lambda , \alpha^{\lor}_i \rangle = 0 \} \subset I.
$

Let $\{ \alpha_i \}_{i \in I_\aff}$ (resp., $\{ \alpha_i^\lor  \}_{\in I_\aff }$) be the set of all simple roots (resp., coroots), and $d$ the degree operator;
here,  
$\alpha_0 = \delta - \theta$ and
$\alpha^\lor_0 = c - \theta^\lor$, 
where 
$\delta$ denotes the null root of $\mathfrak{g}(A_\aff)$,
$c$ denotes the canonical central element of $\mathfrak{g} (A_\aff)$,
and $\theta$ denotes the highest short root of $\mathfrak{g}(A)$.
Then the Cartan subalgebra of $\mathfrak{g}(A_\aff)$
is $\mathfrak{h}_\aff = \left( \bigoplus_{i \in I_\aff}\mathbb{C}\alpha_i^\lor  \right) \oplus \mathbb{C}d$.
We denote by $\Delta_\aff$  the set of all real roots of $\mathfrak{g}(A_\aff)$,
and by $\Delta_\aff^+$ (resp., $\Delta_\aff^-$) the set of all positive (resp., negative) roots.

Also, let 
$\Lambda_i \in \mathfrak{h}_\aff^*$, $i \in I_\aff$,  be the (affine) fundamental weights.
We have
\begin{align*}
\langle \alpha_j , \alpha^\lor_i \rangle_\aff &= a_{i j},  \\
\langle \alpha_j , d \rangle_\aff &= \delta_{j,0},  \\
\langle \Lambda_j , \alpha_i^\lor \rangle_\aff &= \delta_{i,j},  \\
\langle \Lambda_j , d \rangle_\aff &= 0 ,
\end{align*}
for $i, j \in I_\aff$,
where
$\langle \cdot , \cdot \rangle_\aff : \mathfrak{h}_\aff^* \times \mathfrak{h}_\aff \rightarrow \mathbb{C}$ 
denotes the duality pairing.

The weight lattice and the coweight lattice of
$\mathfrak{g} (A_\aff)$ are
$P_\aff \eqdef \left( \oplus_{i \in I_\aff } \mathbb{Z}\Lambda_i  \right)  \oplus \mathbb{Z}\delta  \subset \mathfrak{h}_\aff^*$
and
$P^\lor_\aff \eqdef {\rm Hom}_\mathbb{Z}(P_\aff , \mathbb{Z}) =
\left( \oplus_{i \in I_\aff } \mathbb{Z}\alpha_i^\lor  \right)  \oplus \mathbb{Z}d
\subset \mathfrak{h}_\aff$,
respectively.
We also consider the following lattices:
\begin{align*}
\overline{P_\aff} &= P_\aff / (P_\aff \cap {\mathbb{Q}\delta}) \cong  \bigoplus_{i \in I_\aff } \mathbb{Z}\Lambda_i, \\
\overline{P_\aff}^{\lor} &= \bigoplus_{i \in I_\aff } \mathbb{Z}\alpha_i^\lor 
\subset P^\lor_\aff.
\end{align*}
We denote the canonical projection $P_\aff \rightarrow \overline{P_\aff}$ by $\cl$, and
take an injective $\mathbb{Z}$-linear map $\xi : P \rightarrow  P_\aff$%\footnote{$\overline{P_\aff}$ から変更?} 
given by
$\xi (\varpi_i ) = \Lambda_i - \langle \Lambda_i , c \rangle_\aff \Lambda_0 $,
$i \in I$.
%Here $c$ is the canonical central element of $\mathfrak{g}_\aff$.
%Note that 
%$\langle \xi (\mu) , \alpha_i^\lor \rangle_\aff = \langle \mu , \alpha^\lor_i \rangle $ for $\mu \in P$, $i \in I$.

\begin{rem}
For $x \in P_\aff$, $x$ is a level-zero weight if $\langle x, c \rangle_\aff =0$.
We denote the set of all level-zero weights by
\begin{equation*}
P^0_\aff \eqdef \{ x \in P_\aff \ | \ \langle x, c \rangle_\aff =0 \},
\end{equation*}
and set
\begin{align*}
\overline{P_\aff^0} \eqdef \{ \cl (x) \in \overline{P_\aff} \ | \ x \in P_\aff^0 \} = \bigoplus_{i \in I} \mathbb{Z} (\Lambda_i - \langle \Lambda_i , c \rangle_\aff \Lambda_0 )
\left(  = \bigoplus_{i \in I} \mathbb{Z} \xi (\varpi_i)
\right)
 .
\end{align*}
Then $\xi (P) \subset P_\aff^0$, and $\overline{\xi}\eqdef \cl \circ \xi : P \xrightarrow{\xi}P_\aff^0 \xrightarrow{\cl} \overline{P_\aff^0} $ is a linear isomorphism of $\mathbb{Z}$-modules.
%We can naturally extend the isomorphism $\overline{\xi}$ to $\overline{\xi}: \mathfrak{h}^* = \mathbb{R} \otimes_{\mathbb{Z}} P \rightarrow \mathbb{R} \otimes_{\mathbb{Z}} \overline{P_\aff^0} = \bigoplus_{i \in I} \mathbb{R} \xi(\varpi_i) $.
Also, $x \in P^0_\aff$ is a level-zero dominant weight if $\overline{\xi}\inv \circ \cl (x) \in P$ is a dominant weight.
\end{rem}

We consider the case $(A, A_\aff) = (C_n , \C)$.
In this case, we use all the notations which we set above,
putting the dagger for $\mathfrak{g}(\C)$ and $\mathfrak{g}(C_n)$
except $\theta\pdag$, $\alpha\pdag_0$ and $P\pdag_\aff$;
here 
we denote by $\theta\pdag$ the highest root,
and set
$\alpha\pdag_0 \eqdef \half( -\theta\pdag + \delta\pdag )$,
and 
$P\pdag_\aff \eqdef \left( \oplus_{i \in I\pdag_\aff } \mathbb{Z}\Lambda\pdag_i  \right)  \oplus \half\mathbb{Z}\delta\pdag  \subset (\mathfrak{h}\pdag_\aff)^*$.

\begin{rem}\label{BC}
%Let $(A, A_\aff) = (C_n , \C)$.
Let $I_\aff = \{ 0, \ldots , n\}$
and $I = \{ 1, \ldots , n\}$ denote
  the vertex sets for the Dynkin diagrams of $\mathfrak{g}(\D)$ and $\mathfrak{g} (B_n)$, respectively, indexed as follows
\begin{equation*}
\begin{xy}
\ar@{<=}
(10,0) *++!D{1} *\cir<4pt>{}="B";
 (0,0)  *++!D{0} *\cir<4pt>{};
\ar@{-} "B";(20,0) *++!D{2} *\cir<4pt>{}="C"
\ar@{-} "C";(25,0) \ar@{.} (25,0);(30,0)^*!U{\cdots}
\ar@{-} (30,0);(35,0) *++!D{n-1} *\cir<4pt>{}= "D"
\ar@{=>}  "D"; (45,0) *++!D{n} *\cir<4pt>{}
\end{xy}.
\end{equation*}
Also, let $I\pdag_\aff = \{ 0, \ldots , n\}=I_\aff$
and $I\pdag = \{ 1, \ldots , n\}=I$ denote
  the vertex sets for the Dynkin diagrams of $\mathfrak{g}(\C)$ and $\mathfrak{g} (C_n)$, respectively, indexed as follows
\begin{equation*}
\begin{xy}
\ar@{<=}
(10,0) *++!D{1} *\cir<4pt>{}="B";
 (0,0)  *++!D{0} *\cir<4pt>{};
\ar@{-} "B";(20,0) *++!D{2} *\cir<4pt>{}="C"
\ar@{-} "C";(25,0) \ar@{.} (25,0);(30,0)^*!U{\cdots}
\ar@{-} (30,0);(35,0) *++!D{n-1} *\cir<4pt>{}= "D"
\ar@{<=}  "D"; (45,0) *++!D{n} *\cir<4pt>{}
\end{xy}.
\end{equation*}
Let $\mathfrak{h}_\mathbb{R}$ and $\mathfrak{h}_\mathbb{R}^\dag$ be a Cartan subalgebra of $\mathfrak{g}(B_n)$ and $\mathfrak{g}(C_n)$, 
respectively.
We define linear isomorphisms 
$\iota: \mathfrak{h}\pdag_\mathbb{R} \rightarrow \mathfrak{h}_\mathbb{R}$ and $\iota^* : (\mathfrak{h}_\mathbb{R}\pdag)^* \rightarrow \mathfrak{h}^*_\mathbb{R}$ by
\begin{align*}
\iota^*(\alpha\pdag_i) \eqdef
\left\{
\begin{array}{ll}
\alpha_i & \mbox{ if }i\neq n,\\
2\alpha_i & \mbox{ if }i = n,
\end{array}\right.
\mbox{ and }
\iota((\alpha\pdag_i)^\lor) =
\left\{
\begin{array}{ll}
\alpha_i^\lor & \mbox{ if }i\neq n,\\
\half\alpha_i^\lor & \mbox{ if }i = n;
\end{array}\right.
\end{align*}
for $i \in I$;
here we notice that 
\begin{equation*}
\pair{x}{y}^\dag
=
\pair{\iota^*(x)}{\iota(y)},
\mbox{ for }
x\in (\mathfrak{h}_\mathbb{R}^\dag)^*
\mbox{ and }
y\in \mathfrak{h}_\mathbb{R}^\dag,
\end{equation*}
where 
the pairing $\pair{\cdot}{\cdot}^\dag$ in the left-hand side is the duality pairing $(\mathfrak{h}^\dag_\mathbb{R})^* \times \mathfrak{h}^\dag_\mathbb{R} \rightarrow \mathbb{R}$
and the pairing $\pair{\cdot}{\cdot}$ in the right-hand side is the duality pairing $\mathfrak{h}^*_\mathbb{R} \times \mathfrak{h} _\mathbb{R} \rightarrow \mathbb{R}$.
Then, for $\alpha^\dag \in \Delta\pdag$,
$\iota^* (\alpha^\dag)$ is a long root in $\Delta$ if $\alpha^\dag$ is a short root in $\Delta^\dag$,
or 
twice as a short root in $\Delta$ if $\alpha^\dag$ is a long root in $\Delta^\dag$.
Also,
for $\alpha^\dag \in \Delta\pdag$,
$\iota ((\alpha^\dag)^\lor)$ is a short coroot in $\Delta^\lor$ if $\alpha^\dag$ is a short root in $\Delta^\dag$,
or 
half as a long coroot in $\Delta$ if $\alpha^\dag$ is a long root in $\Delta^\dag$.
%
%
%here the vertex sets $I_\aff = \{ 0, \ldots , n\} = I\pdag_\aff$ and $I = \{ 1, \ldots , n\} = I\pdag$ for the Dynkin diagrams of $\mathfrak{g}(\D)$ and $\mathfrak{g}(B_n)$,
%respectively, is indexed as follows:
%\begin{equation*}
%\begin{xy}
%\ar@{<=}
%(10,0) *++!D{1} *\cir<4pt>{}="B";
% (0,0)  *++!D{0} *\cir<4pt>{};
%\ar@{-} "B";(20,0) *++!D{2} *\cir<4pt>{}="C"
%\ar@{-} "C";(25,0) \ar@{.} (25,0);(30,0)^*!U{\cdots}
%\ar@{-} (30,0);(35,0) *++!D{n-1} *\cir<4pt>{}= "D"
%\ar@{=>}  "D"; (45,0) *++!D{n} *\cir<4pt>{}
%\end{xy}.
%\end{equation*}
If we identify $\mathfrak{h}\pdag_\mathbb{R}$ and ${\mathfrak{h}\pdag_\mathbb{R}}^*$
with
$\mathfrak{h}_\mathbb{R}$ and $\mathfrak{h}_\mathbb{R}^*$, respectively,
then, 
the set of all roots $\Delta\pdag$,
the set of all coroots $(\Delta\pdag)^\lor$,
the highest root $\theta\pdag$,
the weight lattice $P\pdag$, and 
the Weyl group $W\pdag$
of $\mathfrak{g}(C_n)$
can be described 
in terms of 
those of $\mathfrak{g}(B_n)$.
More precisely,
$\iota^* (\Delta\pdag) = \{ \alpha \ | \ \alpha \in \Delta \mbox{ is a long root}\}
\sqcup
\{ 2\alpha \ | \ \alpha \in \Delta \mbox{ is a short root}\}$
and
$\iota^* ( (\Delta\pdag)^\lor ) = \{ \alpha^\lor \ | \ \alpha \in \Delta \mbox{ is a long root}\}
\sqcup
\{ \half \alpha^\lor \ | \ \alpha \in \Delta \mbox{ is a short root}\}$;
in particular,
the highest root $\theta\pdag$ of $\mathfrak{g}(C_n)$ is identified with $2 \theta$,
where $\theta$ is the highest root of $\mathfrak{g}(B_n)$.
The weight lattice $P\pdag$ of $\mathfrak{g}(C_n)$ is identified with the root lattice $Q$ of $\mathfrak{g}(B_n)$;
indeed,
\begin{align*}
\iota^*(P\pdag) &= \left\{ x \in \mathfrak{h}^*_\mathbb{R} \ \left| \ \langle x, \alpha^\lor_i  \rangle \in \mathbb{Z},  1 \leq i \leq n-1 ,\mbox{and }\langle x, \half\alpha^\lor_n \rangle \in \mathbb{Z}\right. \right\} \\
&=\left( \bigoplus_{i \neq n} \mathbb{Z} \varpi_i \right)
\oplus 2 \mathbb{Z} \varpi_i = Q;
\end{align*}
notice that $\iota^*(P\pdag) =Q \subset P$.
There exists a group isomorphism $\iota^* :W\pdag\rightarrow W$ which satisfies $\iota^*(s_i)=s_i$, $i \in I$.
Furthermore, if we identify the null root $\delta\pdag$ of $\mathfrak{g}(\C)$ with $2 \delta$,
where $\delta$ is the null root of $\mathfrak{g}(\D)$, 
then $\alpha\pdag_0$ is identified with $\alpha_0$.
\end{rem}

We fix a pair $(A, A_\aff)$ with $\mathfrak{g}(A_\aff)$ a dual untwisted affine Lie algebra.
For $\alpha \in \Delta$ of $\mathfrak{g}(A)$,
we set
\begin{equation*}
c_\alpha \eqdef \left\{
\begin{array}{ll}
1 & \mbox{if }\alpha \mbox{ is a short root,}\\
2 & \mbox{if }\alpha \mbox{ is a long root and }A \neq G_2,\\
3 & \mbox{if }\alpha \mbox{ is a long root and }A = G_2.
\end{array}\right.
\end{equation*}
Then $\Delta_\aff = \{ \alpha + c_\alpha a \delta \ | \ \alpha \in \Delta , a \in \mathbb{Z} \}$.

\begin{rem}\label{rem:root_coroot}
Keep the notation and setting in Remark~\ref{BC}.
Then, for every $\alpha^\dag \in \Delta^\dag$, there exists
$\alpha \in \Delta$ such that $\iota^* (\alpha^\dag) = \frac{2}{c_\alpha}\alpha$ and 
$\iota ((\alpha^\dag)^\lor) = \frac{c_\alpha}{2}\alpha^\lor$.
\end{rem}

\begin{rem}\label{BC'}
For $A=B_n$, 
we define the standard bilinear form $(\cdot, \cdot) : \mathfrak{h}_\mathbb{R}^* \times \mathfrak{h}_\mathbb{R}^* \rightarrow \mathbb{R}$
by
\begin{equation*}
( x , \alpha_i ) \eqdef \frac{c_{\alpha_i}}{2} \langle x, \alpha_i^\lor \rangle,
\ \
x \in \mathfrak{h}_\mathbb{R}^*.
\end{equation*}
We can identify $\mathfrak{h}_\mathbb{R}$ with $\mathfrak{h}^*_\mathbb{R}$ by $(\cdot, \cdot) : \mathfrak{h}_\mathbb{R}^* \times \mathfrak{h}_\mathbb{R}^* \rightarrow \mathbb{R}$;
$\alpha^\lor$ is identified with $\frac{2}{c_\alpha}\alpha$.
Then, by Remark~\ref{rem:root_coroot}, we have $\iota^* (\Delta^\dag) = \Delta^\lor$ and $\iota ((\Delta^\dag)^\lor) = \Delta$.
We claim that 
%%%%%%%%%%
%%%%%%%%%%
%equ:pair
%%%%%%%%%%
%%%%%%%%%%
\begin{equation}\label{equ:pair}
\langle \alpha^\dag, (\beta^\dag)^\lor \rangle^\dag = \langle \iota((\beta^\dag)^\lor), \iota^*(\alpha^\dag) \rangle,
\mbox{ for }\alpha^\dag, \beta^\dag \in \Delta^\dag;
\end{equation}
here, the pairing $\pair{\cdot}{\cdot}^\dag$ in the left-hand side is the duality pairing $(\mathfrak{h}^\dag)^* \times \mathfrak{h}^\dag \rightarrow \mathbb{R}$
and the pairing $\pair{\cdot}{\cdot}$ in the right-hand side is the duality pairing $\mathfrak{h}^* \times \mathfrak{h} \rightarrow \mathbb{R}$.
Indeed, 
Since there exist $\alpha , \beta \in \Delta$ such that
 $\iota^* (\alpha^\dag) = \frac{2}{c_\alpha}\alpha$ and $\iota ((\beta^\dag)^\lor) = \frac{c_\beta}{2}\beta^\lor$,
then 
\begin{equation*}
\langle \alpha^\dag, (\beta^\dag)^\lor \rangle^\dag
=
\langle \iota^*(\alpha^\dag), \iota((\beta^\dag)^\lor) \rangle
=
\langle \frac{2}{c_\alpha}\alpha, \frac{c_\beta}{2}\beta^\lor \rangle
=
\frac{2}{c_\alpha}(\alpha, \beta)
=
\frac{2}{c_\alpha}(\beta, \alpha)
=
\langle \beta, \alpha^\lor \rangle.
\end{equation*}
%
%By Remark \ref{BC}, $\{ \alpha^\lor \ | \ \alpha \in \Delta \}$ is the set of all roots of $\mathfrak{g}(C_n)$.
%We use this identification in \S 2.2.
\end{rem}

\section{The (parabolic) quantum Bruhat graphs of type $\C$}
\subsection{Definition of the (parabolic) quantum Bruhat graphs}
First, we fix $(A, A_\aff)$ with $\mathfrak{g}(A_\aff)$ a dual untwisted affine Lie algebra.
\begin{dfn}[\normalfont{\cite[Definition 6.1]{BFP}}]\label{QBG}
The quantum Bruhat graph, denoted by $\QBG$, is the directed graph with vertex set $W$, and  directed edges 
 labeled by positive coroots;
for $u,v \in W$, and $\beta \in \Delta^+$, 
an arrow $u \xrightarrow{\beta^\lor} v $ is an edge of $\QBG$
if the following conditions  hold:

\begin{enu}
\item
$v=u s_\beta$, and

\item
either
(2a): 
$\ell(v)=\ell(u)+1$, or
(2b):
$\ell(v)=\ell(u) - 2\langle  \beta, \rho^\lor \rangle +1$,
\end{enu}
where $\rho^\lor \eqdef \half \sum_{\alpha \in \Delta^+}\alpha^\lor$.
An edge satisfying condition (2a)
(resp., (2b))
is called a Bruhat (resp., quantum) edge.
\end{dfn}

\begin{dfn}[\normalfont{{see  \cite[\S 4.3]{LNSSS1} for untwisted types}}]\label{QBGS}
The parabolic quantum Bruhat graph, denoted by $\QBG^S$, is the directed graph with vertex set $W^S$, and directed edges
labeled by 
positive coroots in
$( \Delta^+ \setminus \Delta^+_S )^\lor$;
for $u,v \in W^S$, 
and $\beta \in \Delta^+ \setminus \Delta^+_S$,
an arrow $u \xrightarrow{\beta^\lor} v $ is an edge of $\QBG^S$
if the following conditions hold{\rm :}

\begin{enu}
\item
$v=\lfloor u s_\beta \rfloor$, and

\item
 either
{\rm (2a):} 
$\ell(v)=\ell(u)+1$, or
{\rm (2b):} $\ell(v)=\ell(u) - 2\langle   \beta, \rho^\lor - \rho_S^\lor \rangle +1$,
\end{enu}
where $\rho_S^\lor =\half \sum_{\alpha \in \Delta^+_S}\alpha^\lor$.
An edge satisfying condition {\rm (2a)} (resp., {\rm (2b)}) is called a Bruhat (resp., quantum) edge.
\end{dfn}

For a dominant weight 
$\lambda \in P$,
we set $S = S_\lambda$.

\begin{dfn}[{see \cite[\S 3.2]{LNSSS2} for untwisted types}]
Let $\lambda \in P$ be a dominant weight and 
$b \in \mathbb{Q}\cap [0,1]$.
We denote by
$\QBG_{b\lambda}$ (resp., $\QBG^S_{b\lambda}$ ) 
 the subgraph of $\QBG$ (resp., $\QBG^S$)
with the same vertex set but having only the edges:
$u \xrightarrow{\beta^\lor} v$ with $b\langle \lambda, \beta^{\lor}  \rangle \in \mathbb{Z}$.
\end{dfn}

Let us fix  $(A, A_\aff) = (B_n , \D)$.
Let us define $\QBG_{b\lambda}^{A^{(2)}_{2n}}$ and $\left( \QBG_{b\lambda}^{A^{(2)}_{2n}} \right)^S$
in terms of $\QBG_{b\lambda}$ and $\QBG_{b\lambda}^S$.

\begin{dfn}
Let $\lambda \in Q $ be a dominant weight and 
$b \in \mathbb{Q}\cap [0,1]$.
We denote by
$\QBG_{b\lambda}^{A^{(2)}_{2n}}$ (resp., $\left( \QBG_{b\lambda}^{A^{(2)}_{2n}} \right)^S$ )
 the subgraph of $\QBG_{b\lambda}$ (resp., $\QBG_{b \lambda}^S$)
with the same vertex set but having only the edges:
\begin{equation*}
u \xrightarrow{\beta^\lor} v \mbox{ with } 
\left\{
			\begin{array}{ll}
				b\langle \lambda, \beta^\lor \rangle \in \mathbb{Z} & \ \ \mbox{ if }\beta \mbox{ is a long root of }\Delta  ,  \\
				b\langle \lambda, \beta^\lor \rangle \in \mathbb{Z} & \ \ \mbox{ if the edge is a quantum edge with }\beta \mbox{ a short root of }\Delta  ,  \\
				b\langle \lambda, \beta^\lor \rangle \in 2\mathbb{Z} & \ \ \mbox{ if the edge is a Bruhat edge with }\beta \mbox{ a short root of }\Delta.
			\end{array}
		\right.
\end{equation*}
\end{dfn}

\subsection{$\QBG^\C$ in terms of the root system of type $C_n$}
Let $(A, A_\aff) = (C_n , \C)$.
In this subsection, we use the notation in Remark \ref{BC}.
\begin{dfn}[\normalfont{\cite[Definition 6.1]{BFP}}]\label{QBG_A}
The quantum Bruhat graph (of type $C_n$), denoted by $\QBG\pdag$, is the directed graph with vertex set $W\pdag$, and  directed edges 
 labeled by positive roots;
for $u,v \in W\pdag$, and $\beta \in (\Delta\pdag)^+$, 
an arrow $u \xrightarrow{\beta} v $ is an edge of $\QBG\pdag$
if the following conditions hold:

\begin{enu}
\item
$v=u s_\beta$, and

\item
either
(2a): 
$\ell(v)=\ell(u)+1$, or
(2b):
$\ell(v)=\ell(u) - 2\langle \rho\pdag, \beta^\lor \rangle^\dag +1$,
\end{enu}
where $\rho\pdag \eqdef \half \sum_{\alpha \in (\Delta\pdag)^+}\alpha$.
An edge satisfying condition (2a) 
(resp., (2b))
is called a Bruhat (resp., quantum) edge.
\end{dfn}

We remark that $\QBG\pdag$ is identical to the quantum Bruhat graph for the untwisted affine Lie algebra of type $C_n^{(1)}$ 
(see \cite{LNSSS1}).

\begin{dfn}[\normalfont{{\cite[\S 4.3]{LNSSS1}}}]\label{QBGS_A}
The parabolic quantum Bruhat graph, denoted by $(\QBG\pdag)^S$, is the directed graph with vertex set $(W\pdag)^S$, and directed edges
labeled by 
positive coroots in
$ (\Delta\pdag)^+ \setminus  (\Delta\pdag)^+_S$;
for $u,v \in (W\pdag)^S$, 
and $\beta \in (\Delta\pdag)^+ \setminus  (\Delta\pdag)^+_S$,
$u \xrightarrow{\beta} v $ is an edge of $(\QBG\pdag)^S$
if the following conditions hold{\rm:}

\begin{enu}
\item
$v=\lfloor u s_\beta \rfloor$, and

\item 
either
{\rm (2a):} 
$\ell(v)=\ell(u)+1$, or
{\rm (2b):} $\ell(v)=\ell(u) - 2\langle \rho\pdag - \rho_S\pdag , \beta^\lor \rangle^\dag +1$,
\end{enu}
where $\rho_S\pdag =\half \sum_{\alpha \in (\Delta\pdag_S)^+}\alpha$.
An edge satisfying condition {\rm (2a)} (resp., {\rm (2b)}) is called a Bruhat (resp., quantum) edge.
\end{dfn}

We take and fix an arbitrary dominant weight
$\lambda \in P\pdag$, i.e., 
$\langle \lambda , (\alpha^\dag_i)^{\lor} \rangle^\dag \geq 0$
for all $i \in I$.
We set 
\begin{equation*}
S = S_\lambda \eqdef \{ i \in I \ | \  \langle \lambda , (\alpha^\dag_i )^{\lor} \rangle^\dag = 0 \} \subset I.
\end{equation*}

\begin{dfn}\label{QBG_B}
Let $\lambda \in P\pdag $ be a dominant weight and 
$b \in \mathbb{Q}\cap [0,1]$.
We denote by
$(\QBG_{b\lambda}\pdag)^{\C}$ (resp., $\left( (\QBG\pdag_{b\lambda})^{\C} \right)^S$ )
 the subgraph of $\QBG\pdag$ (resp., $(\QBG\pdag)^S$)
with the same vertex set but having only the edges:
\begin{equation*}
u \xrightarrow{\beta} v \mbox{ with } 
\left\{
			\begin{array}{ll}
				b\langle \lambda, \beta^\lor \rangle^\dag \in \mathbb{Z} & \ \ \mbox{ if }\beta \mbox{ is a short root of }\Delta\pdag  ,  \\
				b\langle \lambda, \beta^\lor \rangle^\dag \in \half \mathbb{Z} & \ \ \mbox{ if the edge is a quantum edge with }\beta \mbox{ a long root of }\Delta\pdag  ,  \\
				b\langle \lambda, \beta^\lor \rangle^\dag \in \mathbb{Z} & \ \ \mbox{ if the edge is a Bruhat edge with }\beta \mbox{ a long root of }\Delta\pdag.
			\end{array}
		\right.
\end{equation*}
\end{dfn}

\begin{rem}\label{identification_QBG}
It follows from Lemmas~\ref{BC},~\ref{rem:root_coroot}, and~\ref{BC'} that 

\begin{enu}
\item
$W\pdag \overset{\iota^*}{\cong} W$,

\item
$ \iota^*(\rho\pdag) =\rho^\lor$
and
$\iota^*((\rho\pdag)_S) = \rho^\lor_S $ since $\iota^*(\Delta^\dag)=\Delta^\lor$, and $\iota^*((\Delta^\dag)^\lor)=\Delta$,

\item
$\pair{\alpha}{\beta^\lor}^\dag = \pair{\iota(\beta^\lor)}{\iota^*(\alpha)}$
for $\alpha, \beta \in \Delta^\dag$,

\item
we can rewrite integrality condition by using
$\iota^*(\Delta\pdag) = \{ \alpha \ | \ \alpha \in \Delta \mbox{ is a long root}\}
\sqcup
\{ 2\alpha \ | \ \alpha \in \Delta \mbox{ is a short root}\}$.
\end{enu}
Therefore, by \eqref{equ:pair}, for $\lambda \in P\pdag$ a dominant weight, and $b \in \mathbb{Q}\cap [0,1]$,
there exist graph isomorphisms $ (\QBG\pdag_{b \lambda})^\C \overset{\iota^*}{\cong} \QBG_{b \iota^*(\lambda)}^\C $ and 
$((\QBG\pdag_{b \lambda})^\C)^S \overset{\iota^*}{\cong} (\QBG_{b \iota^*(\lambda)}^\C)^S $
induced by the group isomorphism $\iota^*:W\pdag \rightarrow W$.
\end{rem}

\subsection{Some properties on QBG}
Let $\prec$ be a total order on $(\Delta^+)^\lor$ satisfying the following condition:
if $\alpha$, $\beta$, $\gamma \in \Delta^+$ with $\gamma = \alpha + \beta$, then 
$\alpha^\lor \prec \gamma^\lor \prec \beta^\lor$ or $\beta^\lor \prec \gamma^\lor \prec \alpha^\lor$ (see \S 2.2).

\begin{prop}[{\cite[Theorem 6.4]{BFP}}] \label{shellability}
	Let $u$ and $v$ be elements in $W$.

\begin{enu}
\item
There exists a unique directed path from $u$ to $v$ in $\QBG$ 
for which the edge labels are strictly increasing {\rm(}resp., strictly decreasing{\rm)} in the total order $\prec$ above.

\item
The unique label-increasing {\rm(}resp., label-decreasing{\rm)} path
\begin{equation*}
u = u_0
\xrightarrow{\gamma_1^\lor}
u_1
\xrightarrow{\gamma_2^\lor}
\cdots
\xrightarrow{\gamma_r^\lor}
u_r=v
\end{equation*}
 from $u$ to $v$ in $\QBG$
is a shortest directed path from $u$ to $v$.
Moreover, it is lexicographically minimal {\rm(}resp., lexicographically maximal{\rm)} among all shortest directed paths from $u$ to $v${\rm;}
that is, for an arbitrary shortest directed path
\begin{equation*}
u = u'_0
\xrightarrow{{\gamma'_1}^\lor}
u'_1
\xrightarrow{{\gamma'_2}^\lor}
\cdots
\xrightarrow{{\gamma'_r}^\lor}
u'_r=v
\end{equation*}
from $u$ to $v$ in $\QBG$, there exists $1 \leq j \leq r$
such that $\gamma_j^\lor \prec {\gamma'_j}^\lor$ {\rm(}resp., $\gamma_j^\lor \succ {\gamma'_j}^\lor${\rm)},
and $\gamma_k^\lor = {\gamma'_k}^\lor$ for $1 \leq k \leq j-1$.
\end{enu}
	\end{prop}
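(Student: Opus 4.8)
This statement is exactly \cite[Theorem 6.4]{BFP}, so in the present paper it will simply be quoted; were one to reprove it, here is the route I would take. The plan is to reduce everything to one local ``straightening'' (diamond) lemma for pairs of consecutive edges, together with a length/weight bookkeeping. For a directed path $p\colon u=u_0\xrightarrow{\gamma_1^\lor}u_1\xrightarrow{\gamma_2^\lor}\cdots\xrightarrow{\gamma_r^\lor}u_r=v$ in $\QBG$, write $\gamma_j\in\Delta^+$ for the positive root with coroot $\gamma_j^\lor$, and set $\wt(p)\eqdef\sum\gamma_j$, the sum being over those $j$ for which the $j$-th edge is a quantum edge; since $\langle\beta,\rho^\lor\rangle>0$ for every $\beta\in\Delta^+$, this gives $\langle\wt(p),\rho^\lor\rangle\ge 0$, with equality iff $p$ has no quantum edge. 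A Bruhat edge raises $\ell$ by $1$ and a quantum edge labelled $\gamma^\lor$ changes $\ell$ by $1-2\langle\gamma,\rho^\lor\rangle$, so summing along $p$ gives $r=\ell(v)-\ell(u)+2\langle\wt(p),\rho^\lor\rangle$; hence $p$ is a shortest directed path from $u$ to $v$ iff $\langle\wt(p),\rho^\lor\rangle$ is minimal over all such paths, and in particular all shortest paths share a common length.

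First I would prove the key local step: if a directed path in $\QBG$ contains two consecutive edges $x\xrightarrow{\alpha^\lor}y\xrightarrow{\beta^\lor}z$ with $\alpha^\lor\succ\beta^\lor$ (a ``descent''), then there is a directed path from $x$ to $z$ of length at most $2$ whose first label is strictly $\prec\alpha^\lor$ (and which, when it has length $2$, is lexicographically smaller than $(\alpha^\lor,\beta^\lor)$). This is where the defining property of $\prec$ on sums $\gamma=\alpha+\beta$ of positive roots enters, and it requires a case analysis according to whether each of the two edges is Bruhat or quantum and how the reflections $s_\alpha,s_\beta$ and the coroots interact; this case analysis is the real content and, I expect, the main obstacle. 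Closely related, and equally deep, is the auxiliary fact that all shortest paths between two fixed vertices carry the same $\wt$ --- this is the structural input one ultimately needs and is essentially equivalent to the theorem itself.

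Granting the local step, existence of a label-increasing path from $u$ to $v$ follows by a terminating rewriting argument: starting from any directed path, repeatedly apply the straightening move at a descent $\gamma_j^\lor\succ\gamma_{j+1}^\lor$; each application strictly decreases the label sequence in the lexicographic order on sequences of coroots while never increasing the length, so after finitely many steps one reaches a label-increasing path, which is therefore of minimal length. This yields the existence part of (1) together with the ``shortest path'' part of (2). The same mechanism gives lexicographic minimality: any shortest path that is not label-increasing has a descent and can be replaced by a strictly lexicographically smaller shortest path, so the lexicographically minimal shortest path has no descent, i.e.\ is label-increasing, and every shortest path rewrites to it. Uniqueness of the label-increasing path is then a confluence statement: if $p\ne p'$ were two label-increasing paths from $u$ to $v$, take the first index $j$ at which their labels differ and apply the local lemma to the tail of the one with the larger $j$-th label to produce a shortest path from $u_{j-1}$ to $v$ with strictly smaller $j$-th label, contradicting lexicographic minimality; this is formally an induction on $\ell(v)-\ell(u)+2\langle\wt,\rho^\lor\rangle$.

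Finally, the label-decreasing statements (the ``resp.'' parenthetical claims) would not be proved separately but deduced by symmetry: the opposite total order still satisfies the convexity condition imposed on $\prec$, and conjugation by $\lon$ (which reverses $\ell$) identifies $\QBG$ with itself in a way that interchanges ``label-increasing'' and ``label-decreasing'' and ``lexicographically minimal'' and ``lexicographically maximal'', so applying parts (1) and (2) already proved in this mirrored setting gives the decreasing case.
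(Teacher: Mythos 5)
You are right that the paper itself offers no proof of this proposition: it is imported verbatim from \cite[Theorem~6.4]{BFP} (applied here to the coroot-labelled quantum Bruhat graph of a dual untwisted type, i.e.\ to the dual root system), so there is nothing in the paper to compare your argument against. Your sketch is a reasonable reconstruction of how this result is proved in the literature, though it is closer in spirit to the diamond-lemma/rewriting treatment of Postnikov and of Lenart--Naito--Sagaki--Schilling--Shimozono than to the original argument in \cite{BFP}, which derives the statement from Yang--Baxter relations for the mixed Bruhat operators. The length bookkeeping $r=\ell(v)-\ell(u)+2\langle \wt(p),\rho^\lor\rangle$ and the reduction of ``shortest'' to ``minimal $\langle\wt(p),\rho^\lor\rangle$'' are correct, and the deduction of the label-decreasing case by transporting the order through $x\mapsto x\lon$ (Lemma~\ref{involution}) works.

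Two caveats. First, as you acknowledge, the entire content of the theorem is concentrated in the local two-edge straightening lemma and in the invariance of $\wt$ over shortest paths (the latter is \cite[Lemma~1]{Po} and is quoted, not reproved, elsewhere in this paper); your proposal defers both, so it is an outline rather than a proof. Second, the uniqueness step as you state it does not quite close: if $p\neq p'$ are both label-increasing, neither has a descent, so the straightening move cannot be applied to either of them directly, and ``apply the local lemma to the tail of the one with the larger $j$-th label'' needs to be replaced by the finer inductive argument (comparing the first edges of two increasing paths out of $u_{j-1}$ and using the local lemma on a concatenation, with induction on the common length). This is exactly the point where a reflection-order-style EL-labelling argument, or the operator identities of \cite{BFP}, is genuinely needed.
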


\begin{lem}\label{involution}
If $x \xrightarrow{\beta^\lor} y$ is a Bruhat $($resp., quantum$)$ edge of $\QBG$, then $y\lon \xrightarrow{- \lon \beta^\lor} x \lon$ is also a Bruhat $($resp., quantum$)$ edge of $\QBG$.
\end{lem}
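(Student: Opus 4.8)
The plan is to reduce everything to two standard facts about the longest element $\lon$: conjugation by $\lon$ preserves length differences via $\ell(\lon w)=\ell(w\lon)=\ell(\lon)-\ell(w)$, and the map $\alpha\mapsto-\lon\alpha$ is a bijection of $\Delta^+$ onto itself that intertwines the action of simple reflections appropriately. First I would verify condition (1) of Definition~\ref{QBG}: if $y=xs_\beta$, then multiplying on the right by $\lon$ and inserting $\lon\inv\lon$ gives $y\lon=xs_\beta\lon=(x\lon)(\lon\inv s_\beta\lon)=(x\lon)s_{\lon\inv\beta}$. Since $\lon\inv=\lon$, this is $(x\lon)s_{\lon\beta}=(x\lon)s_{-\lon\beta}$, and because $-\lon\beta\in\Delta^+$ when $\beta\in\Delta^+$, the coroot label $-\lon\beta^\lor=(-\lon\beta)^\lor$ is a legitimate positive coroot; so $x\lon=(y\lon)s_{-\lon\beta}$, which is exactly condition (1) for the putative edge $y\lon\xrightarrow{-\lon\beta^\lor}x\lon$.

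Next I would handle the length/degree condition (2), distinguishing the Bruhat and quantum cases. Write $d=\ell(y)-\ell(x)$. Using $\ell(w\lon)=\ell(\lon)-\ell(w)$ for all $w\in W$, we get $\ell(x\lon)-\ell(y\lon)=(\ell(\lon)-\ell(x))-(\ell(\lon)-\ell(y))=\ell(y)-\ell(x)=d$. So the length change along the reversed-and-twisted edge equals the length change along the original edge. For a Bruhat edge, $d=1$, hence $\ell(x\lon)=\ell(y\lon)+1$, which is condition (2a) for $y\lon\xrightarrow{-\lon\beta^\lor}x\lon$. For a quantum edge, $d=-2\pair{\beta}{\rho^\lor}+1$; since $-\lon$ permutes $\Delta^+$, we have $\pair{-\lon\beta}{\rho^\lor}=\pair{\beta}{-\lon\inv\rho^\lor}=\pair{\beta}{-\lon\rho^\lor}=\pair{\beta}{\rho^\lor}$, using $\rho^\lor=\half\sum_{\alpha\in\Delta^+}\alpha^\lor$ and that $-\lon$ permutes positive coroots. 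Hence $\ell(x\lon)-\ell(y\lon)=d=-2\pair{-\lon\beta}{\rho^\lor}+1$, which is precisely condition (2b) for the new edge. This proves both implications, and since all steps are equivalences (the map $x\mapsto x\lon$, $\beta\mapsto-\lon\beta$ is an involution), the converse direction is automatic.

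The routine identities I am leaning on — $\ell(w\lon)=\ell(\lon)-\ell(w)$, the fact that $-\lon$ stabilizes $\Delta^+$, and $\lon\inv s_\beta\lon=s_{\lon\inv\beta}$ — are all classical; the only place demanding a moment's care is the bookkeeping that $-\lon\beta^\lor$ means $(-\lon\beta)^\lor$ and that this matches the labelling convention of Definition~\ref{QBG} (labels are positive coroots, and the paper's identification $s_\alpha=s_{\alpha^\lor}$ makes the reflection in condition (1) unambiguous). I do not anticipate a genuine obstacle here; the slight subtlety is purely notational, namely checking that the operation $\beta^\lor\mapsto-\lon\beta^\lor$ is compatible with the coroot-valued edge labels and that $-\lon$ fixes $\rho^\lor$, which I would state as a one-line remark rather than expand.
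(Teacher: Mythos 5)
Your proposal is correct and follows essentially the same route as the paper, which proves the lemma in one line from exactly the two identities you isolate: $\ell(y)-\ell(x)=\ell(x\lon)-\ell(y\lon)$ and $\langle-\lon\beta,\rho^\lor\rangle=\langle\beta,\rho^\lor\rangle$. Your write-up simply makes explicit the verification of condition (1) and the case split that the paper leaves to the reader.
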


\begin{proof}
This follows easily from equalities
$\ell(y)-\ell(x)=\ell(x \lon )-\ell(y \lon )$ and $\langle  - \lon \beta,\rho^\lor \rangle =\langle  \beta, \rho^\lor\rangle $.
\end{proof}

\begin{lem}[{see \cite[Proposition 5.10 and Proposition 5.11]{LNSSS1} for untwisted types}]\label{leftaction}
Let $\lambda \in P$ be a dominant weight. Let $w \in W^S$, $i \in I$, and $\beta \in \Delta^+$.

\begin{enu}
\item
If $\langle w \lambda , \alpha_i^\lor \rangle > 0$ {\rm(}resp., $<0${\rm)}, then
$w \xrightarrow{w\inv \alpha_i^\lor} \lfloor s_i w \rfloor$
{\rm(}resp., $w \xleftarrow{-w\inv \alpha_i^\lor} \lfloor s_i w \rfloor${\rm)}
is a Bruhat edge.

\item
If $\langle w \lambda , -\theta^\lor \rangle < 0$ {\rm(}resp., $>0${\rm)}, then
$w \xleftarrow{z w\inv \theta^\lor} \lfloor s_\theta w \rfloor$
{\rm(}resp., $w \xrightarrow{-w\inv \theta^\lor} \lfloor s_\theta w \rfloor${\rm)}
is a quantum edge where $z \in W_S$ is defined by $ s_\theta w = \lfloor s_\theta w \rfloor z$.
\end{enu}
\end{lem}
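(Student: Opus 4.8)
The plan is to reduce everything to statements about the (ordinary, non-parabolic) quantum Bruhat graph $\QBG$, where the needed facts are already available in the literature cited in the excerpt, and then to descend to $\QBG^S$ by the standard "lift" machinery for minimal-length coset representatives. For part (1), suppose $\langle w\lambda,\alpha_i^\lor\rangle>0$. Since $w\in W^S$ and $\lambda$ is dominant with $S=S_\lambda$, one has $\langle w\lambda,\alpha_i^\lor\rangle=\langle\lambda,w^{-1}\alpha_i^\lor\rangle$, and the sign of this pairing forces $w^{-1}\alpha_i\in\Delta^+$ (indeed $w^{-1}\alpha_i\in\Delta^+\setminus\Delta^+_S$, because if $w^{-1}\alpha_i$ lay in $\Delta_S$ the pairing with the dominant $\lambda$ would vanish). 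Set $\beta\eqdef w^{-1}\alpha_i\in\Delta^+\setminus\Delta^+_S$. Then $s_i w = w s_\beta$, and since $w^{-1}\alpha_i>0$ we get $\ell(s_iw)=\ell(w)+1$; moreover $w$ being a minimal coset representative and the edge label $\beta$ lying outside $\Delta_S$, one checks $s_iw$ is again a minimal representative, so $\lfloor s_iw\rfloor=s_iw=ws_\beta$. Hence condition (2a) of Definition~\ref{QBGS_A} holds and $w\xrightarrow{\beta}\lfloor s_iw\rfloor$ is a Bruhat edge. The case $\langle w\lambda,\alpha_i^\lor\rangle<0$ is the mirror image: now $w^{-1}\alpha_i<0$, so $-w^{-1}\alpha_i\in\Delta^+$, and $\ell(s_iw)=\ell(w)-1$, giving the reversed Bruhat edge $\lfloor s_iw\rfloor\xrightarrow{-w^{-1}\alpha_i}w$.

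For part (2), the relevant root is $\theta$ (the highest short root of $\mathfrak g(A)$, equivalently the highest root $\theta\pdag$ in the type $C_n$ picture of \S3.2), and a quantum edge is governed by condition (2b), i.e. a length drop of $2\langle\theta,\rho^\lor-\rho_S^\lor\rangle-1$. Assume $\langle w\lambda,-\theta^\lor\rangle<0$, i.e. $\langle\lambda,-w^{-1}\theta^\lor\rangle<0$, i.e. $\langle\lambda,w^{-1}\theta^\lor\rangle>0$; this forces $w^{-1}\theta\in\Delta^+\setminus\Delta^+_S$ (it cannot lie in $\Delta_S$ for the same dominance reason as above). Write $s_\theta w=\lfloor s_\theta w\rfloor z$ with $z\in W_S$. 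The key computation is that the length identity for the quantum-edge condition in $\QBG^S$ is exactly the image, under $w\mapsto\lfloor w\rfloor$, of the corresponding quantum-edge condition in $\QBG$: one uses $\ell(s_\theta w)=\ell(\lfloor s_\theta w\rfloor)+\ell(z)$ together with the standard fact that for any reflection $s_\gamma$ with $\gamma\in\Delta^+\setminus\Delta^+_S$ one has $\ell(ws_\gamma)-\ell(\lfloor ws_\gamma\rfloor)$ controlled by $\langle\gamma,\rho_S^\lor\rangle$-type corrections; comparing with the non-parabolic quantum-edge condition (where the drop is $2\langle\theta,\rho^\lor\rangle-1$, i.e. $2\mathrm{ht}(\theta^\lor)-1$) produces precisely the parabolic drop $2\langle\theta,\rho^\lor-\rho_S^\lor\rangle-1$. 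Thus $\lfloor s_\theta w\rfloor\xrightarrow{zw^{-1}\theta^\lor}w$ is a quantum edge (the label $zw^{-1}\theta^\lor$ is the positive coroot one reads off from $w=\lfloor s_\theta w\rfloor z s_{w^{-1}\theta}$). The opposite sign case $\langle w\lambda,-\theta^\lor\rangle>0$ gives the reversed quantum edge $w\xrightarrow{-w^{-1}\theta^\lor}\lfloor s_\theta w\rfloor$ by the symmetric argument.

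I would organize the write-up so that the non-parabolic versions of (1) and (2) are disposed of first — these are essentially Lemma~\ref{involution}-style bookkeeping with lengths, plus the identity $2\langle\theta,\rho^\lor\rangle=2\mathrm{ht}(\theta^\lor)$ and the fact that $-\theta^\lor$ labels the unique "affine-type" quantum edge out of $e$ — and then invoke the coset-descent lemma. In the type $\C$ setting one should additionally note, via Remark~\ref{identification_QBG} and \eqref{equ:pair}, that the pairings $\langle\cdot,\cdot\rangle^\dag$ appearing in Definition~\ref{QBGS_A} transport correctly under $\iota^*$, so the statement for $\QBG\pdag$ follows from the statement for $\QBG$ (type $B_n$ coroots) with no extra content.

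\textbf{Main obstacle.} The delicate point is the bookkeeping in part (2): showing that the parabolic length drop is exactly $2\langle\theta,\rho^\lor-\rho_S^\lor\rangle-1$ rather than something off by the length of the correcting element $z$. This requires the precise relation between $\ell(s_\theta w)$, $\ell(\lfloor s_\theta w\rfloor)$, $\ell(z)$, and $\langle w^{-1}\theta,\rho_S^\lor\rangle$ — the same identity that underlies the well-definedness of $\QBG^S$ itself — and one must also verify that the coroot serving as the edge label is genuinely $zw^{-1}\theta^\lor\in(\Delta^+\setminus\Delta_S^+)^\lor$. For untwisted types this is \cite[Propositions 5.10, 5.11]{LNSSS1}; the only real work here is checking that nothing in those arguments used untwistedness in an essential way, i.e. that they go through verbatim once the root/coroot dictionary of Remarks~\ref{BC}, \ref{rem:root_coroot}, \ref{BC'} is in place.
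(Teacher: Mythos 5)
Your proposal is correct and follows essentially the same route as the paper, which disposes of (1) as well known and reduces (2) to the argument of \cite[Proposition 5.11]{LNSSS1} transported through the root/coroot dictionary of Remarks~\ref{BC}--\ref{BC'}; your part (1) argument (positivity of $w^{-1}\alpha_i$ forcing it outside $\Delta_S$, hence $s_iw\in W^S$ and a length increase of $1$) is exactly the standard one. The only blemish is notational: in the quantum-edge condition the pairing must be taken with the actual label $zw^{-1}\theta^\lor$ (resp.\ $-w^{-1}\theta^\lor$), not with $\theta^\lor$ itself, but since you correctly identify the label and defer the length bookkeeping to \cite{LNSSS1} this does not affect the argument.
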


\begin{proof}
(1) is well-known.
The proof of (2) is similar to that of \cite[Proposition 5.11]{LNSSS1}.
\end{proof}

The proof of the following lemma is similar to those of \cite[Proposition 4.1.4 (3), (4) and Proposition 4.1.5 (3), (4)]{LNSSS1}.
\begin{lem}[{see \cite[Proposition 4.1.4 (3), (4) and Proposition 4.1.5 (3), (4)]{LNSSS1} for untwisted types}]\label{2.3.3.5}
Let $\lambda \in P$ be a dominant weight. Let $w \in W^S$, and $i \in I$.

\begin{enu}
\item
If $\langle w \lambda, \alpha_i^\lor \rangle \geq 0$ and $\langle w s_\beta \lambda, \alpha_i^\lor \rangle < 0$,
then $w \beta = \pm \alpha_i$.

\item
If $\langle w \lambda, \alpha_i^\lor \rangle > 0$ and $\langle w s_\beta \lambda, \alpha_i^\lor \rangle \leq 0$,
then $w \beta = \pm \alpha_i$.

\item
If $\langle w \lambda, -\theta^\lor \rangle \geq 0$ and $\langle w s_\beta \lambda, -\theta^\lor \rangle < 0$,
then $w \beta = \pm \theta$.

\item
If $\langle w \lambda, -\theta^\lor \rangle > 0$ and $\langle w s_\beta \lambda, -\theta^\lor \rangle \leq 0$,
then $w \beta = \pm \theta$.
\end{enu}
\end{lem}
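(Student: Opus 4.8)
The plan is to reduce all four statements to the simple-root case for the finite Weyl group, exploiting that $\beta \in \Delta^+$ and that $w s_\beta \lambda = w\lambda - \pair{\lambda}{\beta^\lor} w\beta$. For part (1), I would argue as follows. Write $c = \pair{\lambda}{\beta^\lor} \geq 0$ (since $\lambda$ is dominant and $\beta \in \Delta^+$, so $\beta^\lor \in (\Delta^+)^\lor$, hence $c \geq 0$). Then
\[
\pair{w s_\beta \lambda}{\alpha_i^\lor} = \pair{w\lambda}{\alpha_i^\lor} - c\,\pair{w\beta}{\alpha_i^\lor}.
\]
By hypothesis the left side is $<0$ and $\pair{w\lambda}{\alpha_i^\lor} \geq 0$, so $c\,\pair{w\beta}{\alpha_i^\lor} > 0$; in particular $c > 0$ and $\pair{w\beta}{\alpha_i^\lor} > 0$, so $w\beta$ is a positive coroot-pairing root against $\alpha_i$. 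The key point is then: for $\gamma \in \Delta$ with $\pair{\gamma}{\alpha_i^\lor} > 0$, if $\gamma \neq \alpha_i$ then $s_i\gamma = \gamma - \pair{\gamma}{\alpha_i^\lor}\alpha_i \in \Delta$ still has the same sign as $\gamma$ componentwise except in the $\alpha_i$-coordinate, and one shows $\pair{w\lambda}{\alpha_i^\lor} \geq c\,\pair{w\beta}{\alpha_i^\lor}$ must fail unless $w\beta = \pm\alpha_i$ by using $\pair{\lambda}{(w^{-1}\alpha_i)^\lor}$ sign analysis. More cleanly: apply $s_i$ to the inequality. Since $\pair{w\lambda}{\alpha_i^\lor}\geq 0$ means $s_i w\lambda \preceq w\lambda$ is "closer to dominant," and $\pair{ws_\beta\lambda}{\alpha_i^\lor}<0$ means $s_i$ increases the $\alpha_i$-pairing there, I would compare $w^{-1}s_i w$ acting on $\lambda$. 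The cleanest route is the one used in \cite[Prop.~4.1.4]{LNSSS1}: consider $\mu \coloneqq w\lambda$, $\nu \coloneqq ws_\beta\lambda = \mu - c\, w\beta$; then $\pair{\mu}{\alpha_i^\lor}\geq 0 > \pair{\nu}{\alpha_i^\lor}$ forces the segment from $\mu$ to $\nu$ (in direction $w\beta$) to cross the wall $\pair{\cdot}{\alpha_i^\lor}=0$, and since both endpoints lie in the $W$-orbit of the dominant $\lambda$, a standard argument on orbit points and walls forces $w\beta \parallel \alpha_i$, hence $w\beta = \pm\alpha_i$ because $w\beta$ is a root and so is $\alpha_i$, and in a reduced root system a root proportional to another root is $\pm$ it (in type $C$, $2\alpha_i$ is not a root and $\alpha_i/2$ is not a root, so proportionality already gives equality up to sign).

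For part (2), the argument is identical with the weak/strict inequalities interchanged: now $\pair{w\lambda}{\alpha_i^\lor} > 0$ and $\pair{ws_\beta\lambda}{\alpha_i^\lor}\leq 0$, and the same wall-crossing/orbit argument applies; the only care needed is that when $\pair{ws_\beta\lambda}{\alpha_i^\lor}=0$ the point $ws_\beta\lambda$ lies on the wall, but it is still in the $W$-orbit of $\lambda$, so the proportionality conclusion $w\beta=\pm\alpha_i$ still holds. For parts (3) and (4), I would replace $\alpha_i$ by $\theta$, the highest short root, and $\alpha_i^\lor$ by $\theta^\lor$, and $-\theta$ plays the role of the affine simple root $\alpha_0$ (recall $\alpha_0 = \delta - \theta$, so the $0$-pairing of a level-zero weight is $-\pair{\cdot}{\theta^\lor}$). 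The same orbit-and-wall argument goes through verbatim with $\theta$ in place of $\alpha_i$: from $\pair{w\lambda}{-\theta^\lor}\geq 0 > \pair{ws_\beta\lambda}{-\theta^\lor}$ (resp.\ the strict/weak variant for (4)) we get $w\beta \parallel \theta$, hence $w\beta = \pm\theta$ since $\theta$ is a short root and its only proportional roots are $\pm\theta$. Here one uses that $s_\theta$ reflects in the wall $\pair{\cdot}{\theta^\lor}=0$ exactly as $s_i$ reflects in $\pair{\cdot}{\alpha_i^\lor}=0$.

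The main obstacle is making the "wall-crossing on an orbit forces proportionality" step precise: the naive statement "two points of a Weyl orbit on opposite (closed/open) sides of a reflecting hyperplane, connected by a segment in the direction of a root, must have that root perpendicular to... no, proportional to the wall normal" needs the correct formulation. The right formulation is: if $\mu, \nu \in W\lambda$ with $\nu = \mu - c\gamma$ for $c>0$ and a root $\gamma$, and $\pair{\mu}{\alpha^\lor}\geq 0$, $\pair{\nu}{\alpha^\lor} < 0$ (or the $\leq$/$>$ variant), then in fact $\gamma = \pm\alpha$; the proof is that $s_\alpha$ maps $W\lambda$ to itself, $s_\alpha\mu$ and $s_\alpha\nu$ are again orbit points, and one computes $\pair{s_\alpha\nu - \nu}{\alpha^\lor} = -2\pair{\nu}{\alpha^\lor} > 0$ while comparing lengths/heights to squeeze out that $c\gamma$ is a multiple of $\alpha$. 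Since \cite[Propositions~4.1.4 and 4.1.5]{LNSSS1} carry out exactly this reduction for untwisted types and nothing in the argument uses more than: (i) $\lambda$ dominant, (ii) $\beta \in \Delta^+$ so $\pair{\lambda}{\beta^\lor}\geq 0$, (iii) the root system is reduced so proportional roots are equal up to sign, and (iv) $\theta$ behaves like a simple root for the reflection $s_\theta$ — all of which hold in our type-$C_n$ (and dual untwisted) setting — I would simply transport that proof, noting at each of (1)--(4) which inequality is strict and which is weak. I do not expect any genuinely new difficulty beyond bookkeeping.
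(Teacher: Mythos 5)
There is a genuine gap: the pivotal step of your argument --- the claimed ``right formulation'' that if $\mu,\nu\in W\lambda$ with $\nu=\mu-c\gamma$ for $c>0$ and a root $\gamma$, and $\pair{\mu}{\alpha^\lor}\ge 0>\pair{\nu}{\alpha^\lor}$, then $\gamma=\pm\alpha$ --- is false, and the whole wall-crossing reduction collapses with it. Take $w=e$, $\lambda=\rho$, and $\beta$ any non-simple positive root with $\pair{\beta}{\alpha_i^\lor}>0$ (such $\beta$ exists in every rank $\ge 2$ system, e.g.\ $\beta=\theta$ for a suitable $i$). Then $\pair{w\lambda}{\alpha_i^\lor}=1>0$, while
$\pair{ws_\beta\lambda}{\alpha_i^\lor}=1-\pair{\rho}{\beta^\lor}\pair{\beta}{\alpha_i^\lor}\le 1-2<0$
because $\pair{\rho}{\beta^\lor}\ge 2$ for non-simple $\beta$; yet $w\beta=\beta\ne\pm\alpha_i$. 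So two points of a Weyl orbit joined by a segment in a root direction can perfectly well straddle the wall $\pair{\cdot}{\alpha_i^\lor}=0$ without that root being proportional to $\alpha_i$, and parts (1) and (2) are simply false for arbitrary $\beta\in\Delta^+$.

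What saves the lemma is a hypothesis that is suppressed in the statement (note that $\beta$ is never even quantified there) but is present both in the cited propositions of LNSSS1 and in every application in this paper (e.g.\ the proof of Lemma \ref{4.1.8}, where $\beta=\gamma_p$ labels an edge of $\QBG^S$): namely that $w\xrightarrow{\beta^\lor}\lfloor ws_\beta\rfloor$ is an edge of the parabolic quantum Bruhat graph, so that $\beta\in\Delta^+\setminus\Delta_S^+$ and one of the length conditions (2a)/(2b) of Definition \ref{QBGS} holds. In the example above, $e\to s_\beta$ is not an edge of $\QBG$ for non-simple $\beta$, which is exactly why it does not contradict the intended statement. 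A correct proof therefore cannot avoid the edge condition: one argues, as in the diamond and left-multiplication lemmas (Lemmas \ref{diamond} and \ref{leftaction}), via the lifting property of the (parabolic quantum) Bruhat graph, showing that if $w\beta\ne\pm\alpha_i$ then $r_i$ can be moved past $s_\beta$ compatibly with the length conditions, which is incompatible with the assumed signs of $\pair{w\lambda}{\alpha_i^\lor}$ and $\pair{ws_\beta\lambda}{\alpha_i^\lor}$; in the degenerate case one obtains $\pair{\lambda}{\beta^\lor}\,w\beta=\pair{w\lambda}{\alpha_i^\lor}\,\alpha_i$ and concludes proportionality from there. Your checklist (i)--(iv) of what the LNSSS1 argument needs omits precisely this input, so the proof cannot be ``transported with bookkeeping'' in the form you describe.
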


In what follows,
we fix a dominant weight $\lambda \in P$ for a dual untwisted type, or a dominant weight $\lambda \in Q$ for type $\C$;
that is, 
if we consider $\QBG_{b \lambda}$ and $\QBG_{b \lambda}^S$, then $\lambda \in P$,
and if we consider $\QBG_{b \lambda}^\C$ and $(\QBG_{b \lambda}^\C)^S$, then $\lambda \in Q$.

In diagrams of the following lemma, a plain (resp., dotted) edge represents a Bruhat (resp., quantum) edge.
\begin{lem}[{see \cite[Lemma 5.14]{LNSSS2} for untwisted types}]\label{diamond}
Let $G$ be $\QBG_{b \lambda}^S$ or $(\QBG_{b \lambda}^\C)^S$.
Let $i \in I$, $\gamma \in \Delta^+ \setminus \Delta^+_S$, and $w \in W^S$.
Then we have following cases 
in each of which 
the bottom two edges in $G$ imply the top two edges in $G$ in the left diagram,
and
the top two edges in $G$ imply the bottom two edges in $G$ in the right diagram.

\begin{enu}
\item
Here we assume that $\gamma \neq w^{-1}\alpha_i$ and have $s_i \lfloor w s_\gamma \rfloor = s_i w s_\gamma = \lfloor s_i w s_\gamma \rfloor$  in both cases.
\begin{align}\label{eq_5.3}
\xymatrix{
  & s_i \lfloor w s_\gamma \rfloor  & \\
s_i w\ar[ur]^{\gamma^\lor}
& & \lfloor w s_\gamma \rfloor\ar[ul]_{\lfloor w s_\gamma \rfloor^{-1}\alpha_i^\lor} \\
& w  \ar[ul]^{w^{-1}\alpha_i^\lor}\ar[ur]_{\gamma^\lor}&
}
\ \ \ \ \ \ \ \
\xymatrix{
  &  \lfloor w s_\gamma \rfloor  & \\
\; \;w \; \; \ar[ur]^{\gamma^\lor}
& &s_i \lfloor w s_\gamma \rfloor\ar[ul]_{-\lfloor w s_\gamma \rfloor^{-1}\alpha_i^\lor} \\
& s_i w  \ar[ul]^{-w^{-1}\alpha_i^\lor}\ar[ur]_{\gamma^\lor}&
}
\end{align}

\item
Here we have $s_i \lfloor w s_\gamma \rfloor = \lfloor s_i w s_\gamma \rfloor$ in both cases.
\begin{align}\label{eq_5.4}
\xymatrix{
  & s_i \lfloor w s_\gamma \rfloor  & \\
s_i w\ar@{.>}[ur]^{\gamma^\lor}
& & \lfloor w s_\gamma \rfloor\ar[ul]_{\lfloor w s_\gamma \rfloor^{-1}\alpha_i^\lor} \\
& w  \ar[ul]^{w^{-1}\alpha_i^\lor}\ar@{.>}[ur]_{\gamma^\lor}&
}
\ \ \ \ \ \ \ \
\xymatrix{
  &  \lfloor w s_\gamma \rfloor  & \\
\; \;w \; \; \ar@{.>}[ur]^{\gamma^\lor}
& &s_i \lfloor w s_\gamma \rfloor\ar[ul]_{-\lfloor w s_\gamma \rfloor^{-1}\alpha_i^\lor} \\
& s_i w  \ar[ul]^{-w^{-1}\alpha_i^\lor}\ar@{.>}[ur]_{\gamma^\lor}&
}
\end{align}

\item
Here $z$, $z' \in W_S$ are defined by 
$s_\theta w = \lfloor s_\theta w \rfloor z$, 
$s_\theta \lfloor  w s_\gamma \rfloor = \lfloor s_\theta \lfloor  w s_\gamma \rfloor \rfloor z' = \lfloor s_\theta  w s_\gamma \rfloor z' $.
In subcase \eqref{eq_5.5} {\rm(}resp., \eqref{eq_5.6}{\rm)}
we assume that $\langle w^{-1}\theta , \gamma^\lor \rangle$ is nonzero {\rm(}resp., zero{\rm)}.
In both cases, we have $w s_\gamma = \lfloor  w s_\gamma \rfloor$.
\begin{align}\label{eq_5.5}
\xymatrix{
  & \lfloor s_\theta  w s_\gamma \rfloor  & \\
\lfloor s_\theta w \rfloor \ar@{.>}[ur]^{z\gamma^\lor}
& & \lfloor w s_\gamma \rfloor\ar@{.>}[ul]_{-\lfloor w s_\gamma \rfloor^{-1}\theta^\lor} \\
& w  \ar@{.>}[ul]^{-w^{-1}\theta^\lor}\ar[ur]_{\gamma^\lor}&
}
\ \ \ \ \ \ \ \
\xymatrix{
  &  \lfloor w s_\gamma \rfloor  & \\
\; \;w \; \; \ar[ur]^{\gamma^\lor}
& &\lfloor s_\theta  w s_\gamma \rfloor \ar@{.>}[ul]_{z'\lfloor w s_\gamma \rfloor^{-1}\theta^\lor} \\
&\lfloor s_\theta w\rfloor \ar@{.>}[ul]^{zw^{-1}\theta^\lor}\ar@{.>}[ur]_{z\gamma^\lor}&
}
\end{align}
\begin{align}\label{eq_5.6}
\xymatrix{
  & \lfloor s_\theta  w s_\gamma \rfloor  & \\
\lfloor s_\theta w \rfloor \ar[ur]^{z\gamma^\lor}
& & \lfloor w s_\gamma \rfloor\ar@{.>}[ul]_{-\lfloor w s_\gamma \rfloor^{-1}\theta^\lor} \\
& w  \ar@{.>}[ul]^{-w^{-1}\theta^\lor}\ar[ur]_{\gamma^\lor}&
}
\ \ \ \ \ \ \ \
\xymatrix{
  &  \lfloor w s_\gamma \rfloor  & \\
\; \;w \; \; \ar[ur]^{\gamma^\lor}
& &\lfloor s_\theta  w s_\gamma \rfloor \ar@{.>}[ul]_{z'\lfloor w s_\gamma \rfloor^{-1}\theta^\lor} \\
&\lfloor s_\theta w\rfloor \ar@{.>}[ul]^{zw^{-1}\theta^\lor}\ar[ur]_{z\gamma^\lor}&
}
\end{align}

\item
Here we assume $\gamma \neq - w^{-1}\theta$ in all cases,
and $z$, $z' \in W_S$ are defined as in (3).
In subcase \eqref{eq_5.7} {\rm(}resp., \eqref{eq_5.8}{\rm)} we assume that $\langle w^{-1}\theta, \gamma^\lor \rangle$ is nonzero {\rm(}resp., zero{\rm)}.
\begin{align}\label{eq_5.7}
\xymatrix{
  & \lfloor s_\theta  w s_\gamma \rfloor  & \\
\lfloor s_\theta w \rfloor \ar[ur]^{z\gamma^\lor}
& & \lfloor w s_\gamma \rfloor\ar@{.>}[ul]_{-\lfloor w s_\gamma \rfloor^{-1}\theta^\lor} \\
& w  \ar@{.>}[ul]^{-w^{-1}\theta^\lor}\ar@{.>}[ur]_{\gamma^\lor}&
}
\ \ \ \ \ \ \ \
\xymatrix{
  &  \lfloor w s_\gamma \rfloor  & \\
\; \;w \; \; \ar@{.>}[ur]^{\gamma^\lor}
& &\lfloor s_\theta  w s_\gamma \rfloor \ar@{.>}[ul]_{z'\lfloor w s_\gamma \rfloor^{-1}\theta^\lor} \\
&\lfloor s_\theta w\rfloor \ar@{.>}[ul]^{zw^{-1}\theta^\lor}\ar[ur]_{z\gamma^\lor}&
}
\end{align}

\begin{align}\label{eq_5.8}
\xymatrix{
  & \lfloor s_\theta  w s_\gamma \rfloor  & \\
\lfloor s_\theta w \rfloor \ar@{.>}[ur]^{z\gamma^\lor}
& & \lfloor w s_\gamma \rfloor\ar@{.>}[ul]_{-\lfloor w s_\gamma \rfloor^{-1}\theta^\lor} \\
& w  \ar@{.>}[ul]^{-w^{-1}\theta^\lor}\ar@{.>}[ur]_{\gamma^\lor}&
}
\ \ \ \ \ \ \ \
\xymatrix{
  &  \lfloor w s_\gamma \rfloor  & \\
\; \;w \; \; \ar@{.>}[ur]^{\gamma^\lor}
& &\lfloor s_\theta  w s_\gamma \rfloor \ar@{.>}[ul]_{z'\lfloor w s_\gamma \rfloor^{-1}\theta^\lor} \\
&\lfloor s_\theta w\rfloor \ar@{.>}[ul]^{zw^{-1}\theta^\lor}\ar@{.>}[ur]_{z\gamma^\lor}&
}
\end{align}
\end{enu}
\end{lem}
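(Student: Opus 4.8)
The plan is to reduce everything to the untwisted case, which is already available in \cite[Lemma 5.14]{LNSSS1}, \cite[Lemma 5.14]{LNSSS2}. First, when $G = (\QBG_{b\lambda}^\C)^S$ with $\lambda \in Q$, Remark~\ref{identification_QBG} gives a graph isomorphism $((\QBG^\dag_{b\lambda'})^\C)^S \overset{\iota^*}{\cong} (\QBG_{b\lambda}^\C)^S$, where $\lambda' = (\iota^*)^{-1}(\lambda) \in P^\dag$, and $(\QBG^\dag)^S$ is literally the parabolic quantum Bruhat graph attached to the untwisted affine algebra of type $C_n^{(1)}$; so in that case the statement follows verbatim from the untwisted result, once one checks that the integrality constraints in Definition~\ref{QBG_B} are preserved by $\iota^*$ — this is immediate from \eqref{equ:pair} and item (4) of Remark~\ref{identification_QBG}. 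Thus it remains to treat $G = \QBG_{b\lambda}^S$ for the dual untwisted types, which is what the bulk of the argument addresses.

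For the dual untwisted case, the combinatorial skeleton — the four assertions about how two given edges of $\QBG^S$ force the opposite two edges, and the accompanying coset identities $s_i\lfloor w s_\gamma\rfloor = \lfloor s_i w s_\gamma\rfloor$, $s_\theta w = \lfloor s_\theta w\rfloor z$, etc. — depends only on the underlying (parabolic) quantum Bruhat graph $\QBG^S$, not on the choice of $b\lambda$. Here one can invoke the untwisted proofs of \cite[Prop.~4.1.4(3),(4), Prop.~4.1.5(3),(4)]{LNSSS1} and \cite[Lemma 5.14]{LNSSS2} essentially word for word: they rest on Lemma~\ref{leftaction}, Lemma~\ref{2.3.3.5}, and the shellability Proposition~\ref{shellability}, all of which we have established in the present (dual untwisted) generality. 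The one genuinely new point is the \emph{integrality bookkeeping}: passing from $\QBG^S$ to $\QBG_{b\lambda}^S$ one must verify, in each of the diagrams \eqref{eq_5.3}--\eqref{eq_5.8}, that if the two hypothesis edges satisfy $b\langle\lambda,\cdot^\lor\rangle\in\mathbb{Z}$ then so do the two conclusion edges. I would handle this by tracking the edge labels around each diamond: in a diamond $w \to \lfloor w s_\gamma\rfloor$, $w \to s_i w$ (or $w \to \lfloor s_\theta w\rfloor$) with top vertex $v$, the four labels are $\gamma^\lor$, $w^{-1}\alpha_i^\lor$ (or $-w^{-1}\theta^\lor$), and their images under the relevant Weyl group elements; since $\langle\lambda,\beta^\lor\rangle$ is constant along $W_S$-translates and along the reflections appearing here (because $\langle w\lambda,\alpha_i^\lor\rangle$-type quantities control exactly which labels recur), the membership $b\langle\lambda,\beta^\lor\rangle\in\mathbb{Z}$ propagates. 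Concretely: $\langle\lambda,(w^{-1}\alpha_i^\lor)\rangle=\langle w\lambda,\alpha_i^\lor\rangle$ and the opposite edge of the same diamond carries $\langle\lambda,(\lfloor w s_\gamma\rfloor^{-1}\alpha_i^\lor)\rangle = \langle \lfloor w s_\gamma\rfloor\lambda,\alpha_i^\lor\rangle$, and one shows these differ by a multiple of $\langle\lambda,\gamma^\lor\rangle$ (an integer by hypothesis), hence lie in the same coset of $\frac1b\mathbb{Z}$.

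The main obstacle, as just indicated, is the integrality step, and within it the subtlety is that in the dual untwisted setting edges come in two flavours (long vs.\ short $\beta$, and Bruhat vs.\ quantum), and — crucially in $\QBG_{b\lambda}^{A_{2n}^{(2)}}$ — the integrality threshold depends on that flavour ($\mathbb{Z}$ vs.\ $2\mathbb{Z}$ for short Bruhat edges, say). So around a diamond one must check not just that $b\langle\lambda,\beta^\lor\rangle$ lands in $\mathbb{Z}$ but, when $\beta$ is short, whether a quantum hypothesis edge can force a Bruhat conclusion edge (or vice versa) with a stricter constraint; this forces a careful case analysis of which of \eqref{eq_5.3}--\eqref{eq_5.8} can mix root-lengths and edge-types. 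I would organize this by first noting that $\iota^*$ sends short roots of $\Delta^\dag$ to long roots of $\Delta$ and long roots to twice-short roots (Remark~\ref{BC}, Remark~\ref{rem:root_coroot}), so that in the type $\C$ picture the ``bad'' case is a quantum edge on a long root $\beta$ with threshold $\frac12\mathbb{Z}$; then I would check directly that in each diamond a long root stays long around the diamond precisely when the relevant pairing $\langle w^{-1}\theta,\gamma^\lor\rangle$ vanishes or not — which is exactly the dichotomy already built into the split between \eqref{eq_5.5}/\eqref{eq_5.6} and \eqref{eq_5.7}/\eqref{eq_5.8} — so the threshold is consistent on both the top and bottom edges. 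With that observation the remaining verifications are routine pairings-of-coroots computations of the kind done in \cite{LNSSS1}, and I would relegate them to a short case-by-case check rather than spell out all of \eqref{eq_5.3}--\eqref{eq_5.8}.
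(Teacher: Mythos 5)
Your overall architecture diverges from the paper's in a way that leaves the essential difficulty unresolved. The paper does not reduce the case $G=(\QBG_{b\lambda}^\C)^S$ to the untwisted $C_n^{(1)}$ result via $\iota^*$; it first establishes the case $G=\QBG_{b\lambda}^S$ (following \cite{LNSSS1}) and then deduces the $\C$ case from \emph{that}. Your $\iota^*$-reduction cannot work as stated: Remark~\ref{identification_QBG} only identifies the two descriptions of the $\C$-graph (in terms of the $C_n$ and $B_n$ root systems) with each other, whereas the untwisted Lemma~5.14 of \cite{LNSSS2} concerns the subgraph $\QBG_{b\lambda}$ cut out by the \emph{uniform} condition $b\langle\lambda,\beta^\lor\rangle\in\mathbb{Z}$, not by the flavour-dependent thresholds of Definition~\ref{QBG_B}. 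So ``follows verbatim from the untwisted result'' is precisely the step that is false, and you are left facing the same threshold problem you describe afterwards.

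On that problem --- the genuine content of the lemma --- your proposal has a gap. In cases \eqref{eq_5.5} and \eqref{eq_5.7} the edge labelled $\gamma^\lor$ changes type around the diamond (quantum on one side, Bruhat on the other); in the dangerous direction a hypothesis edge satisfying only $b\langle\lambda,\gamma^\lor\rangle\in\mathbb{Z}$ must yield a Bruhat conclusion edge which, if $\gamma$ were a short root of $\Delta$, would require $b\langle\lambda,z\gamma^\lor\rangle\in 2\mathbb{Z}$. The paper closes this by observing that in exactly these two cases $\langle w^{-1}\theta,\gamma^\lor\rangle\neq 0$, and since $\theta$ is the highest \emph{short} root and two non-proportional short roots of $B_n$ are orthogonal, $\gamma$ is forced to be a \emph{long} root; hence the threshold is $\mathbb{Z}$ on both sides and the edge transfers. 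Your substitute --- ``a long root stays long around the diamond precisely when the relevant pairing vanishes or not'' --- is not the right statement (root lengths are always preserved by $W$; what changes around the diamond is the Bruhat/quantum type of the edge), and it does not exclude the bad scenario of a short $\gamma$ on a type-changing edge. Without the observation that a nonzero pairing against the short root $w^{-1}\theta$ forces $\gamma$ to be long, the integrality step in \eqref{eq_5.5} and \eqref{eq_5.7} is unjustified.
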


\begin{proof}
If $G= \QBG_{b \lambda}^S$, then the proof is similar to the proof of \cite[Lemma 5.14]{LNSSS1}.
Hence we assume that $G= (\QBG_{b \lambda}^\C)^S$.
Then in all the cases except (\ref{eq_5.5}) and (\ref{eq_5.7}) the assertion is obvious by Lemma \ref{diamond} for $G= (\QBG_{b \lambda})^S$ of
type $\D$.

In cases (\ref{eq_5.5}) and (\ref{eq_5.7}), the assertion for $b=1$ is obvious by Lemma \ref{diamond} for $G= (\QBG_{b \lambda})^S$ of type $\D$.
Suppose that $w \xrightarrow{\gamma^\lor} \lfloor w s_\gamma \rfloor$ is a quantum edge of $ (\QBG_{b \lambda}^\C)^S$.
It suffices to show that $\lfloor s_\theta w \rfloor \xrightarrow{z \gamma^\lor} \lfloor  s_\theta w s_\gamma \rfloor$ is a Bruhat edge of $ (\QBG_{b \lambda}^\C)^S$.
By Lemma \ref{diamond} for $G= (\QBG_{b \lambda})^S$ of type $\D$,
we know that $\lfloor s_\theta w \rfloor \xrightarrow{z \gamma^\lor} \lfloor  s_\theta w s_\gamma \rfloor$ is a Bruhat edge of $\QBG_{b\lambda}^S$.
Recall that $\theta$ is the highest short root of $\mathfrak{g} (\D)$.
Hence if $\langle w^{-1}\theta, \gamma^\lor \rangle \neq 0$, then $\gamma$ must be a long root.
Therefore, $b \langle \lambda , z\gamma^\lor \rangle =b \langle \lambda , \gamma^\lor \rangle  \in \mathbb{Z}$, and hence
$\lfloor s_\theta w \rfloor \xrightarrow{z \gamma^\lor} \lfloor  s_\theta w s_\gamma \rfloor$ is a Bruhat edge of $ (\QBG_{b \lambda}^\C)^S$.
\end{proof}

%%%%%%%%%%
For an edge $u \xrightarrow{\beta} v $ of $\QBG$,
we set
\begin{equation*}
\wt (u \rightarrow v)
\eqdef
\left\{
\begin{array}{ll}
      0 & \mbox{if} \ u \xrightarrow{\beta} v \mbox{ is a Bruhat edge}, \\
      \beta^{\lor} &  \mbox{if} \ u \xrightarrow{\beta} v \mbox{ is a quantum edge}.
\end{array}
\right.
\end{equation*}
Also, for $u,v \in W$, we take a shortest directed path
$u=x_0 \xrightarrow{\gamma_1} x_1 \xrightarrow{\gamma_2}\cdots \xrightarrow{\gamma_r} x_r=v$ in $\QBG$,
and set
\begin{equation*}
\wt (u\Rightarrow v) \eqdef \wt (x_0 \rightarrow x_1)+\cdots +\wt (x_{r-1} \rightarrow x_r) \in Q^\lor;
\end{equation*}
%\begin{equation*}
%\wt_\lambda (u\Rightarrow v) \eqdef \wt_\lambda (x_0 \rightarrow x_1)+\cdots +\wt_\lambda (x_{r-1} \rightarrow x_r);
%\end{equation*}
we know from \cite[Lemma 1 (2), (3)]{Po} that
this definition does not depend on the choice of a shortest directed path from $u$ to $v$ in $\QBG$.
For a dominant weight $\lambda \in P$, we set $\wt_\lambda (u \Rightarrow v ) \eqdef  \pair{\lambda}{\wt (u\Rightarrow v)}$,
and call it the $\lambda$-weight of a directed path from $u$ to $v$ in $\QBG$.

%%%%%%%%%%%%%%
%%%%%%%%%%%%%%
%begin 1
%%%%%%%%%%%%%%
%%%%%%%%%%%%%%
For an edge $u \xrightarrow{\beta} v $ in $\QBG^S$, we set
\begin{equation*}
\wt^S (u \rightarrow v)
\eqdef
\left\{
\begin{array}{ll}
      0 & \mbox{if} \ u \xrightarrow{\beta} v \mbox{ is a Bruhat edge}, \\
      \beta^{\lor} &  \mbox{if} \ u \xrightarrow{\beta} v \mbox{ is a quantum edge}.
\end{array}
\right.
\end{equation*}
Also, for $u, v \in W^S$, we take a shortest directed path
$\bp:u=x_0 \xrightarrow{\gamma_1} x_1 \xrightarrow{\gamma_2}\cdots \xrightarrow{\gamma_r} x_r=v$ in $\QBG^S$
(such a path always exists by \cite[Lemma 6.12]{LNSSS1}),
and set
\begin{equation*}
\wt^S (\bp) \eqdef \wt^S (x_0 \rightarrow x_1)+\cdots +\wt^S (x_{r-1} \rightarrow x_r) \in Q^\lor;
\end{equation*}
we know from \cite[Proposition 8.1]{LNSSS1} that if $\bq$ is another shortest directed path from $u$ to $v$ in $\QBG^S$,
then $\wt^S(\bp) - \wt^S(\bq) \in Q_S^\lor \eqdef \sum_{i \in S} \mathbb{Z}_{\geq 0} \alpha^\lor_i$. 
%%%%%%%%%%%%%%
%%%%%%%%%%%%%%
%end 1
%%%%%%%%%%%%%%
%%%%%%%%%%%%%%

%\subsection{Relation between $\QBG$ and  $\QBG^S$}\footnote{タイトル不適切?}
Now, we take and fix an arbitrary dominant weight
$\lambda \in P^+$,
%$\lambda \in P$, i.e., 
%$\langle \lambda , \alpha^{\lor}_i \rangle \geq 0$
%for all $i \in I$.
and set 
\begin{equation*}
S = S_\lambda \eqdef \{ i \in I \ | \  \langle \lambda , \alpha^{\lor}_i \rangle =0 \}
% \subset I
.
\end{equation*}
%%%%%%%%%%%%%%
%%%%%%%%%%%%%%
%begin 2
%%%%%%%%%%%%%%
%%%%%%%%%%%%%%
By the remark just above,
for $u,v \in W^S$,
the value $\pair{\lambda}{\wt^S(\bp)}$ does not depend on the choice of a shortest directed path $\bp$
from $u$ to $v$ in $\QBG^S$;
this value is called the $\lambda$-weight of a directed path from $u$ to $v$ in $\QBG^S$.
Moreover,
we know from \cite[Lemma 7.2]{LNSSS2}
that the value $\pair{\lambda}{\wt^S(\bp)}$ is equal to the value 
$\wt_\lambda (x \Rightarrow y) =  \pair{\lambda}{\wt (x \Rightarrow y) }$ for all $x \in uW_S$ and $y \in vW_S$.
In view of this, for $u, v \in W^S$, we write $\wt_\lambda (u \Rightarrow v)$ also for the value $\pair{\lambda}{\wt^S(\bp)}$
by abuse of notation.

The proof of the following lemma is similar to that of \cite[Lemma 6.1]{LNSSS2}.
\begin{lem}[see {\cite[Lemma 6.1]{LNSSS2}} for untwisted types]\label{lem:6.1}
Let $b \in \mathbb{Q}\cap [0,1]${\rm;} notice that $b$ may be 1.
If $u \rightarrow v$ is an edge of $\QBG^\C_{b\lambda}$,
then there exists a directed path from $\lfloor u \rfloor$ to $\lfloor v \rfloor$ in $(\QBG^\C_{b\lambda})^S$.
\end{lem}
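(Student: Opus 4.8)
The plan is to adapt the proof of \cite[Lemma~6.1]{LNSSS2} to type $\C$, the only genuinely new ingredient being that the refined integrality defining $\QBG^\C_{b\lambda}$ is compatible with all the elementary moves that occur there. First I would use the graph isomorphism $\iota^*$ of Remark~\ref{identification_QBG} to regard $\QBG^\C_{b\lambda}$ as a subgraph of the quantum Bruhat graph $\QBG$ of type $B_n$: an edge $x\xrightarrow{\gamma^\lor}y$ of $\QBG$ survives in $\QBG^\C_{b\lambda}$ precisely when the appropriate condition among $b\pair{\lambda}{\gamma^\lor}\in\mathbb{Z}$ and $b\pair{\lambda}{\gamma^\lor}\in2\mathbb{Z}$, determined by whether $\gamma$ is short or long and by whether the edge is Bruhat or quantum, is met. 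Throughout, $S=S_\lambda$, so $W_S$ fixes $\lambda$; hence $\pair{\lambda}{z\gamma^\lor}=\pair{\lambda}{\gamma^\lor}$ for every $z\in W_S$, and $z\gamma$ is short (resp. long) if and only if $\gamma$ is — the two facts that keep the integrality conditions stable under the moves below.

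Write the given edge as $u\xrightarrow{\beta^\lor}v$, so $v=us_\beta$. If $\beta\in\Delta^+_S$ then $s_\beta\in W_S$, hence $uW_S=vW_S$, so $\lfloor u\rfloor=\lfloor v\rfloor$ and the empty path suffices; so assume $\beta\in\Delta^+\setminus\Delta^+_S$. I would then run the induction of \cite[Lemma~6.1]{LNSSS2} on $\ell(u)-\ell(\lfloor u\rfloor)$, the length of the $W_S$-component of $u$. In the base case $u\in W^S$, one reads off from Definitions~\ref{QBG} and~\ref{QBGS} that the given edge either descends to a single edge $\lfloor u\rfloor\to\lfloor v\rfloor$ of $\QBG^S$ of the same Bruhat/quantum type, or else can be completed to a short directed path $\lfloor u\rfloor\to\lfloor v\rfloor$ in $\QBG^S$ using the auxiliary Bruhat and quantum edges supplied by Lemma~\ref{leftaction}; since every label occurring is $\beta^\lor$ or a $W_S$-translate of a label of the given edge, the integrality conditions are inherited and the path lies in $(\QBG^\C_{b\lambda})^S$. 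For the inductive step, pick $i\in S$ with $\ell(us_i)=\ell(u)-1$; then $\lfloor us_i\rfloor=\lfloor u\rfloor$, and $u\xrightarrow{\alpha_i^\lor}us_i$ is a quantum edge of $\QBG$ (as $\pair{\alpha_i}{\rho^\lor}=1$) which lies in $\QBG^\C_{b\lambda}$ since $\pair{\lambda}{\alpha_i^\lor}=0$. Applying Lemma~\ref{2.3.3.5} and the diamond moves of Lemma~\ref{diamond} to this edge together with the given one — exactly as in \cite[Lemma~6.1]{LNSSS2} — produces an edge of $\QBG^\C_{b\lambda}$ out of $us_i$ whose target has the same image in $W^S$ as $v$; as $\ell(us_i)-\ell(\lfloor us_i\rfloor)<\ell(u)-\ell(\lfloor u\rfloor)$, the inductive hypothesis then yields the desired path from $\lfloor u\rfloor=\lfloor us_i\rfloor$ to $\lfloor v\rfloor$ in $(\QBG^\C_{b\lambda})^S$.

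The hard part will not be this combinatorial skeleton — which is literally that of \cite[Lemma~6.1]{LNSSS2} — but checking that the $\C$-type integrality defining $\QBG^\C_{b\lambda}$ survives each replacement of an edge by another edge or by a diamond detour. The only genuinely delicate case is the one involving the highest short root $\theta$, i.e. the $\theta$-diamonds \eqref{eq_5.5}--\eqref{eq_5.8}: there a label $\gamma^\lor$ is replaced by $z\gamma^\lor$ with $z\in W_S$, and one must invoke, in the notation of Lemma~\ref{diamond}, the fact that $\pair{w^{-1}\theta}{\gamma^\lor}\neq0$ forces $\gamma$ to be a long root, so that the integrality constraint on $\gamma^\lor$ is merely $b\pair{\lambda}{\gamma^\lor}\in\mathbb{Z}$ irrespective of edge type, whence $b\pair{\lambda}{z\gamma^\lor}=b\pair{\lambda}{\gamma^\lor}\in\mathbb{Z}$. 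This is precisely the computation already carried out in the proof of Lemma~\ref{diamond}; once it is isolated, the remaining bookkeeping is routine and parallels \cite{LNSSS2} verbatim.
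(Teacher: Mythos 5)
Your proposal takes essentially the same route as the paper, whose entire ``proof'' of Lemma~\ref{lem:6.1} is the remark that it is similar to \cite[Lemma 6.1]{LNSSS2}: you defer the combinatorial skeleton to that reference and isolate the one genuinely new verification, namely that the $\C$-type integrality survives the diamond moves because in the $\theta$-diamonds \eqref{eq_5.5} and \eqref{eq_5.7} the condition $\pair{w^{-1}\theta}{\gamma^\lor}\neq 0$ forces $\gamma$ to be long (as $\theta$ is the highest \emph{short} root), so the constraint is $b\pair{\lambda}{\gamma^\lor}\in\mathbb{Z}$ irrespective of the Bruhat/quantum type change. This is exactly the computation the paper carries out inside its proof of Lemma~\ref{diamond}, so the argument is correct and matches the paper's intent.
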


For
$u, v \in W$ (resp., $\in W^S$),
let $\ell (u ,v)$ denote the length of a shortest path in 
$\QBG$ (resp., $\QBG^S$) from $u$ to $v$.
For $w\in W$,
we define the $w$-tilted (dual) Bruhat order $<_{w}$ on $W$ as follows:
for $u,v \in W$,
	\begin{equation*}
		u<_{w} v
		\ardef
		\ell(v , w) =\ell(v , u)+\ell(u, w).
	\end{equation*}

The proof of the following lemma is similar to those of {\cite[Theorem 7.1]{LNSSS1}} and  {\cite[Lemma 6.5]{LNSSS2}}.
\begin{lem}[see {\cite[Theorem 7.1]{LNSSS1}}, {\cite[Lemma 6.5]{LNSSS2}} for untwisted types]\label{8.5}
Let $u, v\in W^S$, and $w \in W_S$.

\begin{enu}

\item
There exists a unique minimal element in the coset $v W_S$
in the $u w$-tilted Bruhat order $<_{u w}$.
We denote it by $\min(v W_S, <_{u w})$.

\item
There exists a unique  directed path from some $x \in v W_S$ to some $uw$ 
in $\QBG$ whose edge labels are increasing 
in the 
total order $\prec$ on $(\Delta^+)^\lor$,
defined at the beginning of \S {\rm2.3},
and lie in $(\Delta^+ \setminus \Delta^+_S)^\lor$.
This path begins with $\min(v W_S, <_{u w})$.

\item
Let
$b\in\mathbb{Q}\cap [0,1]$.
If there exists a directed path from $v$ to $u$ in $\QBG_{b \lambda}^S$ {\rm(}resp., $(\QBG_{b \lambda}^\C)^S${\rm)},
then the directed path in {\rm(2)} is in  $\QBG_{b \lambda}$ {\rm(}resp., $\QBG_{b \lambda}^\C${\rm)}.
\end{enu}
\end{lem}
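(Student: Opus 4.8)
The plan is to follow the proofs of \cite[Theorem 7.1]{LNSSS1} and \cite[Lemma 6.5]{LNSSS2}, isolating the places at which the type $\C$ integrality conditions of Definition~\ref{QBG_B} have to be tracked. Parts (1) and (2) refer neither to $\lambda$, nor to $b$, nor to the refined subgraphs; they are purely structural facts about the quantum Bruhat graph $\QBG$ and the parabolic quantum Bruhat graph $\QBG^{S}$ of the underlying finite root system $\Delta$ (of type $B_n$, $C_n$, $F_4$, or $G_2$; recall that for the pair $(C_n,\C)$ the graph $\QBG\pdag$ is just the ordinary quantum Bruhat graph of type $C_n$). I would obtain them by repeating the proof of \cite[Theorem 7.1]{LNSSS1} verbatim: (1) is the existence and uniqueness of $\min(vW_S,<_{uw})$, and (2) produces the unique directed path $\bp'$ in $\QBG$ from $x=\min(vW_S,<_{uw})$ to $uw$ whose consecutive coroot labels $\gamma_1^\lor\prec\cdots\prec\gamma_r^\lor$ are strictly increasing and lie in $(\Delta^{+}\setminus\Delta_S^{+})^\lor$ (the underlying uniqueness of label-increasing paths in $\QBG$ being Proposition~\ref{shellability}). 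That construction also exhibits $\bp'$ as the lift of the unique label-increasing directed path $\overline{\bp}\colon v\to\cdots\to u$ in $\QBG^{S}$, with the property that for each $j$ the $j$-th edge of $\overline{\bp}$ carries the coroot label $\gamma_j^\lor$ and has the same type (Bruhat or quantum) as the $j$-th edge of $\bp'$; hence, comparing with Definition~\ref{QBG_B}, the $j$-th edge of $\bp'$ satisfies the (type $\C$) integrality condition if and only if the $j$-th edge of $\overline{\bp}$ does. Consequently part (3) is equivalent to the statement that, if there is a directed path from $v$ to $u$ in $\QBG_{b\lambda}^{S}$ (resp., $(\QBG_{b\lambda}^\C)^{S}$), then $\overline{\bp}$ itself lies in $\QBG_{b\lambda}^{S}$ (resp., $(\QBG_{b\lambda}^\C)^{S}$).

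For $\QBG_{b\lambda}^{S}$ this last statement is exactly \cite[Lemma 6.5]{LNSSS2}, whose proof carries over unchanged, the definitions of $\QBG_{b\lambda}$ and $\QBG_{b\lambda}^{S}$ being the same in every dual untwisted case. For $(\QBG_{b\lambda}^\C)^{S}$ I would proceed in two steps. First, since $(\QBG_{b\lambda}^\C)^{S}$ is a subgraph of $(\QBG_{b\lambda})^{S}$ of type $\D$ (note that $\lambda\in Q\subset P$), the hypothesis yields a directed path from $v$ to $u$ in $(\QBG_{b\lambda})^{S}$, so the case just settled gives $\overline{\bp}\subseteq(\QBG_{b\lambda})^{S}$; it then only remains to check, on each edge $y\xrightarrow{\gamma^\lor}y'$ of $\overline{\bp}$, the sharper clause of Definition~\ref{QBG_B}, namely $b\pair{\lambda}{\gamma^\lor}\in 2\mathbb{Z}$ when the edge is a Bruhat edge with $\gamma$ short (the long-root clauses, one of which asks only $b\pair{\lambda}{\gamma^\lor}\in\half\mathbb{Z}$, already follow from $b\pair{\lambda}{\gamma^\lor}\in\mathbb{Z}$). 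Second, I would connect the directed path in $(\QBG_{b\lambda}^\C)^{S}$ supplied by the hypothesis to $\overline{\bp}$ by a sequence of elementary moves, each of which replaces a length-two subpath through a vertex by the complementary length-two subpath of one of the diamonds of Lemma~\ref{diamond}; since Lemma~\ref{diamond}, applied with $G=(\QBG_{b\lambda}^\C)^{S}$, asserts precisely that the two bottom edges of such a diamond in $G$ force the two top edges into $G$ and conversely, every move preserves membership in $(\QBG_{b\lambda}^\C)^{S}$, and $\overline{\bp}$ inherits it.

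The main obstacle is to carry out this straightening entirely within $(\QBG_{b\lambda}^\C)^{S}$, and two points need care. One is that the path given by the hypothesis need not be a shortest path; it must first be replaced by a shortest directed path from $v$ to $u$ that still lies in $(\QBG_{b\lambda}^\C)^{S}$, which I would do as in \cite[\S 7]{LNSSS1}, using Lemma~\ref{leftaction} and Lemma~\ref{2.3.3.5} (and the fact that $\wt^{S}$ is well defined modulo $Q_S^\lor$) to excise non-minimal detours without leaving the subgraph. The other — and here the type $\C$ refinement is genuinely used — is that in a diamond move built from the reflection $s_\theta$, i.e., one of \eqref{eq_5.5}--\eqref{eq_5.8}, the extra constraint $b\pair{\lambda}{\gamma^\lor}\in 2\mathbb{Z}$ on short-root Bruhat edges must not be broken; here one invokes, exactly as in the proof of Lemma~\ref{diamond}, that $\theta$ is the highest \emph{short} root of $\mathfrak{g}(\D)$, so that whenever $\pair{w^{-1}\theta}{\gamma^\lor}\neq 0$ the coroot $\gamma^\lor$ labelling the edge concerned is forced to be long, on which the coarser requirement $b\pair{\lambda}{\gamma^\lor}\in\mathbb{Z}$ already suffices. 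Once these two points are settled, Lemma~\ref{diamond} supplies the remaining bookkeeping and the argument concludes as in \cite[Lemma 6.5]{LNSSS2}.
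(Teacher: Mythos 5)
Your overall plan coincides with the paper's: the paper's ``proof'' of Lemma~\ref{8.5} consists of the single sentence that the arguments of [LNSSS1, Theorem~7.1] and [LNSSS2, Lemma~6.5] carry over, and you correctly isolate the only genuinely new point, namely that in the $s_\theta$-diamonds the edges whose type (Bruhat vs.\ quantum) can change carry long labels because $\theta$ is the highest \emph{short} root of $\mathfrak{g}(\D)$, so the extra $2\mathbb{Z}$-clause of the type-$\C$ integrality condition is never at risk. Note, however, that this observation is already packaged into the paper's Lemma~\ref{diamond}, which is stated for $G=(\QBG^\C_{b\lambda})^S$; your two-step argument (first the coarse $\mathbb{Z}$-integrality via the inclusion $(\QBG^\C_{b\lambda})^S\subset\QBG^S_{b\lambda}$, then the $2\mathbb{Z}$-clause separately) re-proves part of that lemma rather than simply invoking it. Parts (1), (2) and the $\QBG^S_{b\lambda}$ case of (3) are fine as you present them.

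The one step I would not accept as written is the ``straightening.'' You propose to connect the hypothesized directed path from $v$ to $u$ to the label-increasing path by a sequence of local moves, each replacing a length-two subpath by the complementary side of a diamond of Lemma~\ref{diamond}. But those diamonds are not generic quadrilaterals: in each of them one pair of opposite sides consists of left-multiplication edges of the form $w\to\lfloor r_i w\rfloor$ with label $\pm w^{-1}\alpha_i^\lor$ or $\pm w^{-1}\theta^\lor$, so such a move only applies to a length-two subpath one of whose edges already has a label of this special shape; two directed paths with the same endpoints are in general \emph{not} connected by moves of this kind (the moves underlying the shellability statement of Proposition~\ref{shellability} are rank-two polygon moves, which Lemma~\ref{diamond} does not supply, and for which no type-$\C$ integrality statement is proved in the paper). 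What the cited proofs actually do --- and what Lemmas~\ref{leftaction}, \ref{2.3.3.5} and \ref{diamond} are designed for --- is an induction that transports the \emph{entire} path by left multiplication by $r_i$, exactly as in Lemmas~\ref{4.1.8}--\ref{4.1.13} later in the paper; running that induction with $G=(\QBG^\C_{b\lambda})^S$ yields (3) directly, with no separate verification of the $2\mathbb{Z}$-clause and no appeal to a fixed-endpoint homotopy of paths. You should replace the ``elementary moves'' paragraph by this left-multiplication induction.
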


\section{Quantum Lakshmibai-Seshadri paths and root operators}
\subsection{Quantum Lakshmibai-Seshadri paths}
First of all, 
we define quantum Lakshmibai-Seshadri paths for dual untwisted type.
\begin{dfn}[{see \cite[Definition 3.1]{LNSSS2} for untwisted types}]\label{def_qls}
We fix $(A, A_\aff)$ with $\mathfrak{g} (A_\aff)$ a dual untwisted affine Lie algebra.
Let $\lambda \in P$ be a dominant  weight
and set $S = S_\lambda = \{ i \in I \ | \ \langle \lambda , \alpha^\lor_i \rangle =0 \}$.
A pair $\eta = (w_1, w_2 ,\ldots ,w_s ; \tau_0, \tau_1 , \ldots , \tau_s ) $
of a sequence
$w_1,\ldots , w_s$ of  elements in $W^S$ such that $w_k \neq w_{k+1}$ for $1 \leq k \leq s-1$
 and a increasing sequence  
$0=\tau_0, \ldots , \tau_s=1$ of rational numbers
is called a quantum Lakshmibai-Seshadri ($\QLS$) path of shape $\lambda$
if 

(C)
for every $1\leq i \leq s-1$, there exists a directed  path from $w_{i+1}$ to $w_{i}$ in $\QBG^S_{\tau_i \lambda}$.

\noindent
Let $\QLS(\lambda)$ denote the set of all $\QLS$ paths of shape $\lambda$.
\end{dfn}

\begin{rem}\label{def_qls_rem}
As in \cite[Definition 3.2.2 and Theorem 4.1.1]{LNSSS4},
condition (C) can be replaced by

(C')
for every $1\leq i \leq s-1$, there exists a shortest directed  path in $\QBG^S$ from $w_{i+1}$ to $w_{i}$
which is also a directed path in $\QBG^S_{\tau_i \lambda}$.
\end{rem}

Let $(A, A_\aff) = (B_n, \D)$.
We define a $\C$-type quantum Lakshmibai-Seshadri paths
as a quantum Lakshmibai-Seshadri paths for type $\D$ with a specific condition.
\begin{dfn}\label{def_qls_c}
Let $\lambda \in Q$ be a dominant  weight
and set $S = S_\lambda = \{ i \in I \ | \ \langle \lambda , \alpha^\lor_i \rangle =0 \}$.
A pair $\eta = (w_1, w_2 ,\ldots ,w_s ; \tau_0, \tau_1 , \ldots , \tau_s ) $
of a sequence
$w_1,\ldots , w_s$ of  elements in $W^S$ such that $w_k \neq w_{k+1}$ for $1 \leq k \leq s-1$  and a increasing sequence  
$0=\tau_0, \ldots , \tau_s=1$ of rational numbers
is called a $\C$-type quantum Lakshmibai-Seshadri ($\QLS$) path (or a $\QLS$ path of type $\C$) of shape $\lambda$
if 

(C)
for every $1\leq i \leq s-1$, there exists a directed  path from $w_{i+1}$ to $w_{i}$ in $(\QBG^\C_{\tau_i \lambda})^S$.

\noindent
Let $\QLS^\C(\lambda) \subset \QLS(\lambda)$ denote the set of all $\C$-type $\QLS$ paths of shape $\lambda$.
\end{dfn}

\begin{dfn}
We define a subset $\widetilde{\QLS}^\C(\lambda) \subset \QLS^\C(\lambda)$
by
\begin{align*}
\widetilde{\QLS}^\C(\lambda) \eqdef \{ \eta=(w_1, w_2 ,\ldots ,w_s ; \tau_0, \tau_1 , \ldots , \tau_s )\in \QLS^\C(\lambda) \ | \ \eta \mbox{ satisfies condition (C')}  \},
\end{align*}
where

(C')
for every $1\leq i \leq s-1$, there exists a shortest directed  path in $\QBG^S$ from $w_{i+1}$ to $w_{i}$
which is also a directed path in $(\QBG^\C_{\tau_i \lambda})^S$.
\end{dfn}
Indeed,
the set $\widetilde{\QLS}^\C(\lambda)$ is identical to $\QLS^\C(\lambda)$ (see \S 3.4).

Let $(A, A_\aff) = (C_n, \C)$.
\begin{dfn}\label{QLS_C}
Let $\lambda \in P\pdag$ be a dominant  weight
and set $S = S_\lambda = \{ i \in I\pdag \ | \ \langle \lambda , \alpha^\lor_i \rangle =0 \}$.
A pair $\eta = (w_1, w_2 ,\ldots ,w_s ; \tau_0, \tau_1 , \ldots , \tau_s ) $
of a sequence
$w_1,\ldots , w_s$ of  elements in $(W\pdag)^S$  and a increasing sequence  
$0=\tau_0, \ldots , \tau_s=1$ of rational numbers
is called a $\C$-type quantum Lakshmibai-Seshadri ($\QLS\pdag$) path of shape $\lambda$
if for every $1\leq i \leq s-1$, there exists a directed  path from $w_{i+1}$ to $w_{i}$ in $(\QBG\pdag_{\tau_i \lambda})^S$.

\noindent
Let $\QLS\pdag(\lambda)$ denote the set of all $\C$-type $\QLS\pdag$ paths of shape $\lambda$.
\end{dfn}

\begin{rem}\label{remark_bijection_qls}
It follows from Remark \ref{identification_QBG}
that 
for $\lambda \in P\pdag$,
there exists a bijection 
$\iota^* : \QLS\pdag(\lambda) \rightarrow \QLS^\C(\iota^* (\lambda) )$
which satisfies
$\iota^* ((w_1, \ldots, w_s; \tau_0, \ldots , \tau_s))
=
(\iota^*(w_1), \ldots, \iota^*(w_s); \tau_0, \ldots , \tau_s)$,
where
$\iota^*: W\pdag \rightarrow W$ denotes the group isomorphism defined in Remark \ref{BC}.
Since
in dual untwisted types, a QLS path of type $A_\aff$ is defined in terms of the root system of type $A$,
and in untwisted types, a QLS path of type $A_\aff^{(1)}$ is defined in terms of the root system of type $A$ (see \cite{LNSSS2}),
QLS path of type $\C$ should be defined in terms of the root system of type $C_n$.
However, in this paper, we mainly study $\QLS^\C(\lambda)$, because 
the proofs of all properties on QLS paths of dual untwisted types are similar to that of untwisted types,
and
Orr-Shimozono formula (see \S 4.2) of type $\C$ is described in terms of the root system of types $\D$ and $C_n$.
\end{rem}

\subsection{Root operators}

We fix a pair $(A, A_\aff)$ with $\mathfrak{g} (A_\aff)$ a dual untwisted affine Lie algebra.
We mainly consider the case $(A, A_\aff)=(B_n , \D)$.
In what follows,
we use the following notation:
for $i \in I_\aff$, 
\begin{equation*}
\overline{\alpha}_i \eqdef
\left\{
			\begin{array}{ll}
				-\theta  & \ \ \mbox{for}\ i=0  ,  \\
				\alpha_i & \ \ \mbox{for}\ i \in I  ,
			\end{array}
		\right.
\mbox{ and }
r_i \eqdef
\left\{
			\begin{array}{ll}
				s_\theta  & \ \ \mbox{for}\ i=0  ,  \\
				s_i & \ \ \mbox{for}\ i \in I .
			\end{array}
		\right. 
\end{equation*}
%Here, $\theta$ is the highest root of $\Delta$.
%We remark that $\xi (\overline {\alpha}_i) =\overline{ \alpha }_i $ for
%$i \in I_\aff$.
%note that 
%$\xi (\overline {\alpha}_i) =\overline{ \alpha }_i $ for $i \in I_\aff$.

For a piecewise-linear, continuous (PLC for short) map $\pi:[0,1]\rightarrow \mathfrak{h}^*_\mathbb{R} $,
we define a function $H(t)= H_i^\pi (t)$ on [0,1] by $H(t)=H^\eta_i (t) \eqdef \langle \eta(t) , \left( \overline{\alpha}_i \right)^\lor \rangle $,
$t \in [0,1]$, and set $m = m^\eta_i \eqdef \min \{ H^\eta_i (t) \ | \ t \in [0,1]  \}$.
Let $\mathbb{B}_\inte$ be the set of all PLC maps $[0,1]\rightarrow \mathfrak{h}^*_\mathbb{R}$ whose all local minima of the function $H^\pi_i(t)$,
$t\in [0,1]$ are integers.

We fix a dominant weight $\lambda \in P$.
\begin{rem}\label{piecewiseliner}
We identify an element $\eta = (v_1 , \ldots , v_s ; \sigma_0 , \sigma_1 , \ldots ,\sigma_s ) \in \QLS(\lambda)$
with the following PLC map $\eta : [0,1]\rightarrow \mathfrak{h}^*_\mathbb{R} = \bigoplus_{i \in I}\mathbb{R}\alpha_i$:
\begin{align*}
\eta(t) \eqdef \sum_{k=1}^{p-1} (\sigma_k -\sigma_{k-1})v_k \lambda +(t - \sigma_{p-1})v_p \lambda
\ \ {\rm for} \ \sigma_{p-1} \leq t \leq \sigma_p , 1\leq p\leq s.
\end{align*}
Then, $\eta \in \mathbb{B}_\inte$ (see Lemma \ref{4.1.12}).
We set $\wt (\eta) \eqdef \eta(1)$.
In fact, $\wt (\eta) \in P$ (see Lemma \ref{4.1.7}).
\end{rem}

Let $i\in I_\aff$.
We define the root operators
$e_i ,f_i : \mathbb{B}_\inte \rightarrow \mathbb{B}_\inte \sqcup \setzero$ as follows.

If $m=m^\eta_i=0$,  then $e_i  \eta \eqdef {\bf 0} $.
If $m\leq -1$, then we set
\begin{align*}
t_1 &\eqdef \min \{ t \in [0,1] \ | \ H(t) = m \}  , \\
t_0 &\eqdef \max \{ t \in [0,t_1] \ | \ H(t) = m+1 \} ,
\end{align*}
and define $e_i \eta $ by
\begin{equation*}
e_i \eta (t) \eqdef
\left\{
			\begin{array}{ll}
				\eta(t)  & \ \ \mbox{for}\ t\in [0, t_0]  ,  \\
				\eta(t_0)+r_i (\eta(t)-\eta(t_0) ) & \ \ \mbox{for}\  t\in [t_0, t_1] , \\
				\eta(t) + \overline{\alpha}_i & \ \ \mbox{for}\  t \in [t_1 ,1].
			\end{array}
		\right. 
\end{equation*}

Similarly, we define $f_i$ as follows.
If $m-H(1)  =0$, 
then $f_i  \eta \eqdef \zero $. 
Otherwise, we set
\begin{align*}
t_2 & \eqdef \max \{ t \in [0,1] \ | \ H(t) = m \}, \\
t_3 & \eqdef \min \{ t \in [t_2,1] \ | \ H(t) = m+1 \} ,
\end{align*}
and define $f_i \eta $ by 
\begin{equation*}
f_i \eta (t) \eqdef
\left\{
			\begin{array}{ll}
				\eta(t)  & \ \ \mbox{for}\ t\in [0, t_2]  ,  \\
				\eta(t_2)+r_i (\eta(t)-\eta(t_2) ) & \ \ \mbox{for}\ t\in [t_2, t_3] , \\
				\eta(t) - \overline{\alpha}_i & \ \ \mbox{for}\ t \in [t_3 ,1].
			\end{array}
		\right. 
\end{equation*}
Then, it is a well-known fact that
$e_i \eta, f_i \eta \in \mathbb{B}_\inte \sqcup \setzero$ for $\eta \in \QLS(\lambda)$.
Moreover,
\begin{prop}[{see \cite[Theorem 4.1.12]{LNSSS4} for untwisted types}]
The set $\QLS(\lambda) \sqcup \setzero$ is stable under the action of the root operators $e_i$ and $f_i$ for all $i \in I_\aff$.
\end{prop}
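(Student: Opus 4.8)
The plan is to adapt the argument of Lenart--Naito--Sagaki--Schilling--Shimozono \cite[Theorem 4.1.12]{LNSSS4} for untwisted types to the dual untwisted setting (primarily $(A,A_\aff)=(B_n,\D)$); the combinatorial inputs are Lemmas~\ref{leftaction}, \ref{2.3.3.5}, \ref{diamond}, and~\ref{8.5}, all already available above, together with the equivalence of conditions (C) and (C') recorded in Remark~\ref{def_qls_rem}. Fix $\eta=(w_1,\dots,w_s;\tau_0,\dots,\tau_s)\in\QLS(\lambda)$ and $i\in I_\aff$; I will treat $f_i$, the case of $e_i$ being mirror-symmetric (read the path from the right-hand end). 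Assume $f_i\eta\neq\zero$, i.e.\ $m^\eta_i<H^\eta_i(1)$. Viewing $\eta$ as the PLC path of Remark~\ref{piecewiseliner}, the function $H=H^\eta_i$ is piecewise linear with slope $\langle w_p\lambda,\overline{\alpha}_i^\lor\rangle$ on $(\tau_{p-1},\tau_p)$. I first refine the partition by inserting the points $t_2,t_3$ from the definition of $f_i$ (this preserves condition (C), since the length-$0$ path lies in every $\QBG^S_{b\lambda}$), obtaining directions $u_1,\dots,u_{s'}\in W^S$ and indices $p\le q$ with $\tau'_p=t_2$, $\tau'_q=t_3$. Reading off the defining formula of $f_i$, the path $f_i\eta$ has direction sequence $u_1,\dots,u_p,\lfloor r_iu_{p+1}\rfloor,\dots,\lfloor r_iu_q\rfloor,u_{q+1},\dots,u_{s'}$ (the translation by $-\overline{\alpha}_i$ on $[t_3,1]$ not affecting directions), with equal consecutive terms to be merged; every term lies in $W^S$ automatically, and one checks $\wt(f_i\eta)\in P$ exactly as in Remark~\ref{piecewiseliner}.

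It remains to verify condition (C) for $f_i\eta$ at each breakpoint. Breakpoints $\tau'_j$ with $j<p$ or $j>q$ inherit condition (C) unchanged from $\eta$, so there are three cases to treat. \textbf{(a) Interior breakpoints $\tau'_j$ with $p<j<q$.} Here both neighbours are reflected, and one must upgrade a shortest directed path $u_{j+1}\to u_j$ lying in $\QBG^S_{\tau'_j\lambda}$ (which exists by condition (C')) to one $\lfloor r_iu_{j+1}\rfloor\to\lfloor r_iu_j\rfloor$ lying in $\QBG^S_{\tau'_j\lambda}$. The choices of $t_2$ (last global minimum of $H$) and $t_3$ (first return of $H$ to $m+1$) force $m<H<m+1$ on $(t_2,t_3)$, hence a definite sign pattern on the values $\langle u_k\lambda,\overline{\alpha}_i^\lor\rangle$ along the path; this lets me propagate the diamond relations of Lemma~\ref{diamond} edge by edge along the shortest path (cases (1)--(2) when $i\in I$, cases (3)--(4) when $i=0$), with Lemma~\ref{2.3.3.5} used to recognise those edges whose label is $\pm\overline{\alpha}_i$. \textbf{(b) The breakpoint $\tau'_p=t_2$.} Since $t_2$ is the largest point with $H(t)=m$, the slope of $H$ just right of $t_2$ is positive, i.e.\ $\langle u_{p+1}\lambda,\overline{\alpha}_i^\lor\rangle>0$; so Lemma~\ref{leftaction} produces an edge $u_{p+1}\to\lfloor r_iu_{p+1}\rfloor$ of the correct (Bruhat or quantum) type, and combining it with the given path $u_{p+1}\to u_p$ in $\QBG^S_{t_2\lambda}$ and re-routing via the $uw$-tilted Bruhat order together with the lifting statement of Lemma~\ref{8.5} yields a path $\lfloor r_iu_{p+1}\rfloor\to u_p$ in $\QBG^S_{t_2\lambda}$, where one checks that the integrality condition at $b=t_2$ holds for the newly used edge. \textbf{(c) The breakpoint $\tau'_q=t_3$.} Symmetrically, since $H>m$ on $(t_2,t_3)$ and $H(t_3)=m+1$ we get $\langle u_q\lambda,\overline{\alpha}_i^\lor\rangle>0$, so Lemma~\ref{leftaction} gives an edge $u_q\to\lfloor r_iu_q\rfloor$, which one appends to the given path $u_{q+1}\to u_q$ (again tracking integrality at $b=t_3$) to obtain the required path $u_{q+1}\to\lfloor r_iu_q\rfloor$ in $\QBG^S_{t_3\lambda}$.

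The genuinely delicate point is case (b): at $t_2$ a segment in direction $u_{p+1}\lambda$ is split and its later part reflected, so one must \emph{redirect the head} of an existing path rather than merely append an edge at the tail. This is where the interplay of Lemma~\ref{leftaction} (existence and type of the edge $u\to\lfloor r_iu\rfloor$, governed by the sign of $\langle u\lambda,\overline{\alpha}_i^\lor\rangle$) with Lemma~\ref{8.5} (canonical label-increasing representative of a coset, and lifting a path in $\QBG^S_{b\lambda}$ to $\QBG_{b\lambda}$ and projecting back) is essential, and where the integrality conditions defining $\QBG^S_{b\lambda}$ must be tracked with care; cases (a) and (c) are then comparatively routine propagations of this mechanism, and the later passage to the $\C$-type analogue $\QLS^\C(\lambda)$ requires only replacing $\QBG^S_{b\lambda}$ by its finer variant and re-running the same arguments (Lemma~\ref{diamond} has already been proved in that generality). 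Merging equal consecutive directions gives $f_i\eta\in\QLS(\lambda)$, and the stability of $\QLS(\lambda)\sqcup\setzero$ under $e_i$ follows identically.
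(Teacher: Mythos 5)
Your overall decomposition (insert $t_2,t_3$ as new breakpoints, verify condition (C) at each affected breakpoint, merge equal consecutive directions at the end) is the same as the paper's, and your cases (a) and (c) use exactly the mechanisms the paper relies on: for interior breakpoints $\sigma_j\in(t_2,t_3)$ one has $m<H(\sigma_j)<m+1$, so $H(\sigma_j)\notin\mathbb{Z}$, no label along the path is conjugate to $\pm\overline{\alpha}_i$ (Lemma~\ref{2.3.3.5}), and the diamond relations of Lemma~\ref{diamond} reflect the whole path (this is the content of Lemma~\ref{4.1.13}); at $t_3$ the edge $w_{m+1}\to\lfloor r_iw_{m+1}\rfloor$ supplied by Lemma~\ref{leftaction} is appended at the \emph{end} of the existing path into $w_{m+1}$, after an integrality check at $b=t_3$.

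Your case (b), however, does not work as described. At $t_2$ you need a directed path that \emph{starts} at $\lfloor r_iw_{u+1}\rfloor$ and ends at $w_u$. The edge from Lemma~\ref{leftaction} is oriented $w_{u+1}\to\lfloor r_iw_{u+1}\rfloor$, i.e.\ it points \emph{into} $\lfloor r_iw_{u+1}\rfloor$; since both this edge and the given path from $w_{u+1}$ to $w_u$ start at $w_{u+1}$, they cannot be concatenated into the required path (QBG edges are not reversible except for simple labels). Lemma~\ref{8.5}(3) cannot repair this: its hypothesis is the existence of a directed path from $v$ to $u$ in $\QBG^S_{b\lambda}$, so applying it with $v=\lfloor r_iw_{u+1}\rfloor$, $u=w_u$ is circular, while applying it with $v=w_{u+1}$ only yields a path out of the coset $w_{u+1}W_S$, not out of $\lfloor r_iw_{u+1}\rfloor$, which lies in a different coset. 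The mechanism the paper uses (Lemma~\ref{4.1.9}(1),(4), invoked in step (i) of the proof of Theorem~\ref{stable}) is path surgery rather than edge concatenation: along $w_u=x_0\leftarrow\cdots\leftarrow x_r=w_{u+1}$ the signs of $\langle x_k\lambda,\overline{\alpha}_i^\lor\rangle$ must change, since $\langle w_u\lambda,\overline{\alpha}_i^\lor\rangle\le 0<\langle w_{u+1}\lambda,\overline{\alpha}_i^\lor\rangle$; at the sign change Lemma~\ref{2.3.3.5} forces the label to be conjugate to $\pm\overline{\alpha}_i$, so that edge collapses to an identity ($x_{p-1}=\lfloor r_ix_p\rfloor$), and Lemma~\ref{diamond} reflects the remaining portion, producing a path from $\lfloor r_iw_{u+1}\rfloor$ to $w_u$ of length $r-1$. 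This is the same diamond propagation you already carry out in case (a), with one collapsing edge; your appeal to Lemma~\ref{8.5} should be replaced by it.
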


Also, for  $i \in I_\aff$,
we define $\varepsilon_i , \varphi_i : \QLS(\lambda) \rightarrow \mathbb{Z}$ by
$\varepsilon_i (\eta) \eqdef \max\{ k \in \mathbb{Z}_{\geq 0} \ | \ e^k_i \eta \neq {\bf 0} \}$, 
$\varphi_i (\eta) \eqdef \max\{ k \in \mathbb{Z}_{\geq 0} \ | \ f^k_i \eta \neq {\bf 0} \}$.
Then
we see that 
\begin{equation}\label{epsilon_phi}
\varepsilon_i (\eta) = m^\eta_i \ \mbox{ and } \ \varphi_i (\eta)=H_i^\eta (1) - m^\eta_i.
\end{equation}

The proof of the following proposition is similar to that of \cite[Theorem 4.1.1]{LNSSS4}.
%\begin{prop}[{see \cite[Theorem 4.1.1]{LNSSS4}, \cite[Theorem 3.2]{NS}, \cite[Remark 2.15]{Nakajima}}]\label{crystal}
\begin{prop}[{see \cite[Theorem 4.1.1]{LNSSS4} for untwisted types}]\label{crystal}
The tuple $( \QLS(\lambda), \overline{\xi} \circ \wt , e_i ,f_i , \varepsilon_i , \varphi_i \ (i\in I_\aff ) )$
is a $U'_q(\mathfrak{g}(\D))$-crystal.
Moreover,
it is a realization of the crystal basis of a particular quantum Weyl module $W_q (\overline{\xi}(\lambda))$
over a quantum affine algebra $U'_q(\mathfrak{g}(\D))$.
\end{prop}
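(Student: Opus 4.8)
The plan is to establish Proposition~\ref{crystal} by transporting the known untwisted result to the twisted setting of type $\D$, rather than by redeveloping crystal theory from scratch. First I would verify that $(\QLS(\lambda), \overline{\xi}\circ\wt, e_i, f_i, \varepsilon_i, \varphi_i\ (i\in I_\aff))$ is indeed an abstract $U'_q(\mathfrak{g}(\D))$-crystal: the weight axiom $\wt(e_i\eta) = \wt(\eta) + \overline{\alpha}_i$ (after applying $\overline{\xi}$, so that $\cl(\alpha_i)$ appears) follows directly from the definition of the root operator on $[t_1,1]$; the axioms relating $\varepsilon_i, \varphi_i$ to $\langle\wt(\eta),\alpha_i^\vee\rangle$ reduce to the identity $\varphi_i(\eta) - \varepsilon_i(\eta) = H_i^\eta(1) = \langle\wt(\eta),(\overline{\alpha}_i)^\vee\rangle$ recorded in \eqref{epsilon_phi}; and the compatibility $e_i(f_i\eta) = \eta$ when $f_i\eta\neq\zero$ (and its mirror) is the standard verification for Littelmann-path-type operators, using that the cut points $t_0,t_1$ for $e_i$ applied to $f_i\eta$ recover $t_2,t_3$. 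The proof is explicitly said to be similar to \cite[Theorem 4.1.1]{LNSSS4}, so here I would follow that argument step by step, merely checking that every ingredient it uses about $\QLS(\lambda)$ — stability under $e_i,f_i$, the piecewise-linear identification, integrality of local minima — has already been provided in the excerpt (Remark~\ref{piecewiseliner}, the displayed stability proposition, and the forthcoming Lemmas~\ref{4.1.7}, \ref{4.1.12}).

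Next I would identify the resulting crystal with the crystal basis of the quantum Weyl module $W_q(\overline{\xi}(\lambda))$. The strategy is a connectedness-plus-highest-weight argument: exhibit the straight-line path $\eta_\lambda = (e;0,1)$ as a distinguished element with $e_i\eta_\lambda = \zero$ for all $i\in I$ (the finite directions), compute its weight as $\overline{\xi}(\lambda)$, and show that $\QLS(\lambda)$ is generated from the $W$-orbit of $\eta_\lambda$ under the affine operators $e_0,f_0$. One then invokes the characterization of $W_q(\overline{\xi}(\lambda))$ and its crystal basis — in the twisted case this is governed by the same extremal-weight/Demazure package as in the untwisted case, via the level-zero fundamental representations — together with a dimension/character comparison or a uniqueness theorem for crystals with the appropriate extremal structure. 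Concretely I expect to match $\QLS(\lambda)$ with the tensor product of the one-column crystals $\QLS(\varpi_i)$ (or $\QLS(\varpi_i^\dagger)$ under the identification of Remark~\ref{remark_bijection_qls}), each of which realizes the crystal basis of a level-zero fundamental module, and then use that the quantum Weyl module $W_q(\overline{\xi}(\lambda))$ is the corresponding tensor product module; this is the content that makes the "particular" in the statement precise.

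The main obstacle will be the identification step, not the axiom-checking: one must know that in twisted type $\D$ the quantum Weyl module $W_q(\overline{\xi}(\lambda))$ has a global/crystal basis whose combinatorics is captured by the parabolic quantum Bruhat graph $\QBG^S$ of the underlying finite type $B_n$ — equivalently, that the semi-infinite Bruhat order picture underlying \cite{LNSSS2}, \cite{LNSSS4} goes through verbatim for $\mathfrak{g}(\D)$. I would handle this by appealing to the general theory of level-zero extremal weight modules over arbitrary affine quantum groups (Kashiwara), which is type-uniform, combined with the $\C$/$\D$-specific fact — available from the structural remarks in \S 2 and the graph isomorphisms of Remark~\ref{identification_QBG} — that all the relevant finite-type data (roots, coroots, the element $\rho^\vee$, lengths) is exactly that of $B_n$. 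A secondary technical point is keeping careful track of the degree-operator normalization implicit in $\overline{\xi}$ versus $\xi$, so that the weight of $\eta_\lambda$ lands in $\overline{P_\aff^0}$ correctly; this is bookkeeping rather than a genuine difficulty. Once these are in place, the proof closes exactly as in the untwisted reference.
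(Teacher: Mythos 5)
Your proposal is correct and follows essentially the same route as the paper, which itself gives no argument beyond the remark that the proof is similar to that of \cite[Theorem 4.1.1]{LNSSS4}: one checks the abstract crystal axioms via the identities \eqref{epsilon_phi} and the stability of $\QLS(\lambda)$ under the root operators, and then identifies the connected crystal generated by $\eta_\lambda=(e;0,1)$ with the crystal basis of $W_q(\overline{\xi}(\lambda))$ by transporting the untwisted level-zero extremal-weight machinery to type $\D$. Your tensor-product-of-one-column-crystals formulation of the identification step is the \cite{LNSSS2} variant of the same argument, and the obstacle you flag (that the semi-infinite/quantum-Bruhat-graph package carries over to the dual untwisted case) is exactly what the paper's surrounding lemmas in \S 2--3 and the appendix are supplying.
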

\begin{rem}
Proposition~\ref{crystal} also holds for every dual untwisted types.
\end{rem}

\begin{rem}\label{simple_crystal_dual_untwisted}
We set $\eta_\lambda \eqdef (e; 0,1) \in \QLS(\lambda)$.
By \cite[Remark 2.3.6 (2)]{LNSSS4},
for $\eta \in \QLS(\lambda)$,
there exist $i_1 , \ldots , i_k \in I_\aff$ such that
$e_{i_1}^{\max} \cdots e_{i_k}^{\max} \eta = \eta_\lambda$.
Also, 
there exist $i_1 , \ldots , i_k \in I_\aff$ such that
$f_{i_1}^{\max} \cdots f_{i_k}^{\max} \eta = \eta_\lambda$.
\end{rem}

We consider the case $(A, A_\aff) = (C_n , \C)$.
In this case, we use all the notation which we set above,
putting the dagger for $(C_n , \C)$
except $\overline{\alpha}_0\pdag$;
here we set $\overline{\alpha}_0\pdag \eqdef - \half \theta\pdag (= - \theta)$.
Thus we obtain the root operators $e\pdag_i$ and $f\pdag_i$, $i \in I_\aff$, acting on the set $\mathbb{B}\pdag_\inte$.

We fix a dominant weight $\lambda^\dag \in P\pdag$.
We set
\begin{equation}\label{chi}
\chi_i \eqdef \left\{
\begin{array}{ll}
1 &\mbox{ if }i \in I_\aff \setminus \{n\},\\
2 &\mbox{ if }i= n.
\end{array}\right.
\end{equation}
We remark that %$\chi_i = c_{\alpha_i}$ for $i = 1, \ldots , n$, where $c_{\alpha_i}$ is defined in \S 2.
$\iota^*(\overline{\alpha}^\dag_i) = \chi_i \overline{\alpha}_i$
and
$\iota ((\overline{\alpha}^\dag_i)^\lor) = \frac{1}{\chi_i} \overline{\alpha}_i^\lor$ for $i \in I_\aff$
by Remark \ref{BC}.

\begin{lem}
When we see an element $\eta \in \QLS\pdag(\lambda^\dag)$ as a PLC map $\eta(t): [0,1] \rightarrow (\mathfrak{h}\pdag_\mathbb{R})^*$,
all local minima of $H_i^\eta (t) = \langle \eta(t),  (\overline{\alpha}\pdag_i)^\lor \rangle$, $t \in [0,1]$, are integers{\rm;}
that is, $\QLS\pdag(\lambda^\dag) \in \mathbb{B}^\dag_\inte$.
\end{lem}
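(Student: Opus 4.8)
The plan is to transport everything to type $\D$ using the bijection $\iota^{*}$ of Remark~\ref{remark_bijection_qls} and to reduce to the already available fact (Lemma~\ref{4.1.12}) that ordinary type-$\D$ $\QLS$ paths lie in $\mathbb{B}_{\inte}$. Put $\bar{\lambda}\eqdef\iota^{*}(\lambda^{\dag})$; by Remark~\ref{BC} this is a dominant weight of $\mathfrak{g}(B_{n})$ lying in the root lattice $Q$ (dominance because $\pair{\bar{\lambda}}{\alpha_{i}^{\lor}}=\chi_{i}\,\pair{\lambda^{\dag}}{(\alpha_{i}^{\dag})^{\lor}}^{\dag}\ge 0$), and by Remark~\ref{remark_bijection_qls} the element $\eta\in\QLS\pdag(\lambda^{\dag})$ corresponds to $\bar{\eta}\eqdef\iota^{*}\circ\eta\in\QLS^{\C}(\bar{\lambda})\subseteq\QLS(\bar{\lambda})$, the last inclusion by Definition~\ref{def_qls_c}. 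Using $\pair{x}{y}^{\dag}=\pair{\iota^{*}(x)}{\iota(y)}$ together with $\iota\bigl((\overline{\alpha}_{i}^{\dag})^{\lor}\bigr)=\tfrac{1}{\chi_{i}}\overline{\alpha}_{i}^{\lor}$ (recorded just before the lemma), one gets, for all $i\in I_{\aff}$ and $t\in[0,1]$,
\begin{equation*}
H_{i}^{\eta}(t)=\pair{\eta(t)}{(\overline{\alpha}_{i}^{\dag})^{\lor}}^{\dag}=\tfrac{1}{\chi_{i}}\pair{\bar{\eta}(t)}{\overline{\alpha}_{i}^{\lor}}=\tfrac{1}{\chi_{i}}\,H_{i}^{\bar{\eta}}(t),
\end{equation*}
where $H_{i}^{\bar{\eta}}$ is the height function of the type-$\D$ path $\bar{\eta}$. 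As $H_{i}^{\eta}$ and $H_{i}^{\bar{\eta}}$ have the same breakpoints, it suffices to show that every local minimum of $H_{i}^{\bar{\eta}}$ lies in $\chi_{i}\mathbb{Z}$.

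For $i\neq n$ we have $\chi_{i}=1$, and there is nothing to prove: $\bar{\eta}\in\QLS(\bar{\lambda})\subseteq\mathbb{B}_{\inte}$ by Lemma~\ref{4.1.12}. The only substantial case is $i=n$, where $\chi_{n}=2$; Lemma~\ref{4.1.12} already gives integrality of the local minima of $H_{n}^{\bar{\eta}}$, and the task is to upgrade ``integer'' to ``even''. I would record two preliminary facts. First, each slope of $H_{n}^{\bar{\eta}}$ equals $\pair{w_{k}\bar{\lambda}}{\alpha_{n}^{\lor}}$ with $w_{k}\bar{\lambda}\in Q$, and $\pair{Q}{\alpha_{n}^{\lor}}\subseteq 2\mathbb{Z}$ because in $B_{n}$ the only nonzero $\pair{\alpha_{j}}{\alpha_{n}^{\lor}}=a_{nj}$ are $a_{nn}=2$ and $a_{n,n-1}=-2$; so all slopes of $H_{n}^{\bar{\eta}}$ are even. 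Second, the endpoint values are even: $H_{n}^{\bar{\eta}}(0)=0$, and $H_{n}^{\bar{\eta}}(1)=\pair{\wt(\bar{\eta})}{\alpha_{n}^{\lor}}\in 2\mathbb{Z}$ since $\wt(\bar{\eta})=\iota^{*}(\wt(\eta))\in\iota^{*}(P^{\dag})=Q$ (the weight of a $\QLS\pdag$-path lying in $P^{\dag}$ by the type-$\C$ analogue of Lemma~\ref{4.1.7}).

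The heart of the proof is the case of a local minimum of $H_{n}^{\bar{\eta}}$ attained at an interior breakpoint $\sigma_{j}$, and here I would reproduce, step by step and tracking parities, the proof of $\QLS(\bar{\lambda})\subseteq\mathbb{B}_{\inte}$ in \cite{LNSSS4}. Along the directed path from $w_{j+1}$ to $w_{j}$ in $(\QBG^{\C}_{\sigma_{j}\bar{\lambda}})^{S}$ supplied by Definition~\ref{def_qls_c}, one locates an edge $x\xrightarrow{\gamma^{\lor}}\lfloor xs_{\gamma}\rfloor$ of the path with $\pair{x\bar{\lambda}}{\alpha_{n}^{\lor}}>0$ and $\pair{xs_{\gamma}\bar{\lambda}}{\alpha_{n}^{\lor}}\le 0$; by Lemma~\ref{2.3.3.5}(2) this forces $x\gamma=\pm\alpha_{n}$, so $\gamma$ is a \emph{short} root of $\Delta$, and since $\gamma$ is positive and $\bar{\lambda}$ is dominant in fact $\gamma=x^{-1}\alpha_{n}$ and $\pair{\bar{\lambda}}{\gamma^{\lor}}=\pair{x\bar{\lambda}}{\alpha_{n}^{\lor}}$; Lemma~\ref{leftaction}(1) then shows that this edge is a \emph{Bruhat} edge. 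Being an edge of $(\QBG^{\C}_{\sigma_{j}\bar{\lambda}})^{S}$, it satisfies the defining condition of $\QBG^{\C}$ for a Bruhat edge labelled by a short root, namely $\sigma_{j}\pair{\bar{\lambda}}{\gamma^{\lor}}\in 2\mathbb{Z}$. The analysis of \cite{LNSSS4} identifies $H_{n}^{\bar{\eta}}(\sigma_{j})$ with an integral combination of the slopes of $H_{n}^{\bar{\eta}}$ and of the quantities $\sigma_{j}\pair{\bar{\lambda}}{\gamma^{\lor}}$ attached to such sign-change edges; by the preliminaries and the above, every contribution is even, whence $H_{n}^{\bar{\eta}}(\sigma_{j})\in 2\mathbb{Z}$, which finishes the proof.

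The main obstacle is this last paragraph. One must carry out faithfully the \cite{LNSSS4} height-function analysis and verify that, at an \emph{interior local minimum} of $H_{n}^{\bar{\eta}}$, the edges whose labels feed into the value $H_{n}^{\bar{\eta}}(\sigma_{j})$ really are Bruhat edges labelled by short roots --- a \emph{quantum} short-root edge would yield only $\sigma_{j}\pair{\bar{\lambda}}{\gamma^{\lor}}\in\mathbb{Z}$ and would break the parity. What makes the argument go through at a minimum (rather than a maximum) is that the orientation of the path, combined with Lemma~\ref{leftaction}(1), forces the relevant sign-change edges to be Bruhat; pushing this bookkeeping through all the cases of \cite{LNSSS4} is where the real work lies. (Equivalently, one may run the whole argument directly on the $C_{n}$-side of Remark~\ref{BC}, where the only rescaled affine node is $0$, with $\overline{\alpha}_{0}^{\dag}=-\tfrac12\theta^{\dag}$; the factor $2$ this produces in $H_{0}^{\eta}$ exactly compensates the weakening --- to $\tfrac12\mathbb{Z}$ --- of the $\QBG^{\C}$-integrality condition for quantum long edges, which are the sign-change edges relevant to node $0$, while at the nodes $i\in I\pdag$ the sign-change edges at a local minimum are Bruhat and thus obey the unweakened condition.)
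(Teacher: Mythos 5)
Your proof is correct and follows essentially the same route as the paper: transport $\eta$ to $\bar{\eta}=\iota^{*}(\eta)\in\QLS^{\C}(\iota^{*}(\lambda^{\dag}))$ and use $H_i^{\eta}=\tfrac{1}{\chi_i}H_i^{\bar{\eta}}$. The only difference is that the parity bookkeeping you flag as ``the real work'' for $i=n$ is precisely the ``resp.\ $\chi_i\mathbb{Z}$'' clause of Proposition~\ref{4.1.12} (proved via Lemmas~\ref{4.1.10} and~\ref{4.1.11}), which the paper simply cites, so your last two paragraphs re-derive an already-available statement rather than supplying a missing step.
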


\begin{proof}
In the proof, we assume that Lemma \ref{4.1.12} holds; we will prove the Lemma in \S 3.3.
Let $\eta = (w_1, \ldots, w_s; \sigma_0, \ldots, \sigma_s) \in \QLS\pdag(\lambda^\dag)$.
Then we have
\begin{equation*}
\iota^* (\eta) = (\iota^*(w_1), \ldots, \iota^*(w_s);\sigma_0, \ldots, \sigma_s) \in \QLS^\C (\iota^*(\lambda)),
\end{equation*}
where  $\iota^* (\eta)$ is defined in Remark~\ref{remark_bijection_qls}.
By Lemma \ref{4.1.12}, all local minima of $H_i^{\iota^*(\eta)} (t) = \langle \iota^*( \eta) (t), \overline{\alpha}_i^\lor \rangle = \langle \iota^*( \eta (t) ), \overline{\alpha}_i^\lor \rangle $
lie in $\chi_i \mathbb{Z}$.
Also, it follows from Remark \ref{BC} that
$\iota ((\overline{\alpha}_i^\dag)^\lor) = \frac{1}{\chi_i}\overline{\alpha}^\lor_i$ for $i \in I_\aff$.
Therefore, all local minima of  $H_i^\eta (t) = \langle \eta(t), (\overline{\alpha}\pdag_i)^\lor \rangle =\frac{1}{\chi_i} \pair{\iota^*(\eta)}{\overline{\alpha}^\lor_i}$, $t \in [0,1]$, are integers.
\end{proof}

We define an injective map $\iota^*: \mathbb{B}\pdag_\inte \rightarrow \mathbb{B}_\inte$
by $\eta(t) \mapsto \iota^*(\eta(t))$, $t \in [0,1]$.
The bijection $\iota^* : \QLS\pdag(\lambda^\dag) \rightarrow \QLS^\C (\iota^*(\lambda^\dag))$
defined in Remark \ref{remark_bijection_qls}
is identical to the restriction of the injective map $\iota^*: \mathbb{B}\pdag_\inte \rightarrow \mathbb{B}_\inte$ to $\QLS\pdag(\lambda)$.

Since the root operators $e\pdag_i$ and $f\pdag_i$, $i \in I_\aff$, act on $\mathbb{B}\pdag_\inte$,
let us define $\widetilde{e}\pdag_i$ and $\widetilde{f}\pdag_i$, $i \in I_\aff$, acting on $\mathbb{B}_\inte$
which are compatible with $\iota^* : \mathbb{B}\pdag_\inte \rightarrow \mathbb{B}_\inte$.

If $m=m^\eta_i > -\chi_i $,  then $\widetilde{e}\pdag_i  \eta \eqdef {\bf 0} $.
If $m\leq -\chi_i$, then we set
\begin{align*}
t_1 				&\eqdef \min \{ t \in [0,1] \ | \ H(t) = m \}  , \\
t_0^\chi&\eqdef \max \{ t \in [0,t_1] \ | \ H(t) = m + \chi_i \} ,
\end{align*}
and define $\widetilde{e}\pdag_i \eta $ by
\begin{equation*}
\widetilde{e}\pdag_i \eta (t) \eqdef
\left\{
			\begin{array}{ll}
				\eta(t)  & \ \ \mbox{for}\ t\in [0, t_0]  ,  \\
				\eta(t_0)+r_i (\eta(t)-\eta(t_0) ) & \ \ \mbox{for}\  t\in [t_0, t_1^\chi] , \\
				\eta(t) + \chi_i \overline{\alpha}_i & \ \ \mbox{for}\  t \in [t_1^\chi ,1].
			\end{array}
		\right. 
\end{equation*}

Similarly, we define $\widetilde{f}\pdag_i$ as follows.
If $m - H(1) > - \chi_i $, 
then $\widetilde{f}\pdag_i  \eta \eqdef {\bf 0} $. 
Otherwise, we set
\begin{align*}
t_2 & \eqdef \max \{ t \in [0,1] \ | \ H(t) = m \}, \\
t_3^\chi & \eqdef \min \{ t \in [t_2,1] \ | \ H(t) = m+\chi_i \} ,
\end{align*}
and define $\widetilde{f}\pdag_i \eta $ by 
\begin{equation*}
\widetilde{f}\pdag_i \eta (t) \eqdef
\left\{
			\begin{array}{ll}
				\eta(t)  & \ \ \mbox{for}\ t\in [0, t_2]  ,  \\
				\eta(t_2)+r_i (\eta(t)-\eta(t_2) ) & \ \ \mbox{for}\ t\in [t_2, t_3^\chi] , \\
				\eta(t) - \chi_i \overline{\alpha}_i & \ \ \mbox{for}\ t \in [t_3^\chi  ,1].
			\end{array}
		\right. 
\end{equation*}
Then we see that 
\begin{equation}\label{compatible}
\iota^* \circ e\pdag_i \eta =\widetilde{e}\pdag_i \circ \iota^* \eta, \mbox{ and } \iota^* \circ f\pdag_i \eta =\widetilde{f}\pdag_i \circ \iota^*\eta
\end{equation}
for $i \in I_\aff$, $\eta \in \mathbb{B}\pdag_\inte$.

\begin{rem}\label{root_operator_identification}
Let $\eta \in \mathbb{B}_\inte$ 
and $i \in I_\aff$ be such that $\widetilde{e}\pdag_i \eta \neq \zero$ (resp., $\widetilde{f}\pdag_i \eta \neq \zero$).
By the definition of $\widetilde{e}\pdag_i$ (resp., $\widetilde{f}\pdag_i$), we have
$\widetilde{e}\pdag_i \eta = e_i^{\chi_i} \eta$ (resp., $\widetilde{f}\pdag_i \eta = f_i^{\chi_i} \eta$);
here $e_i$ and $f_i$ are root operators on $\mathbb{B}_\inte$.
\end{rem}

For  $i \in I_\aff$,
we define $\widetilde{\varepsilon}\pdag_i , \widetilde{\varphi}\pdag_i : \QLS(\iota^*(\lambda^\dag)) \rightarrow \mathbb{Z}$ by
$\widetilde{\varepsilon}\pdag_i (\eta) \eqdef \max\{ k \in \mathbb{Z}_{\geq 0} \ | \ (\widetilde{e}\pdag_i)^k \eta \neq {\bf 0} \}$, 
$\widetilde{\varphi}\pdag_i (\eta) \eqdef \max\{ k \in \mathbb{Z}_{\geq 0} \ | \ (\widetilde{f}\pdag_i)^k \eta \neq {\bf 0} \}$,
and set $(\widetilde{e}\pdag_i)^{\max}\eta \eqdef (\widetilde{e}\pdag_i)^{\widetilde{\varepsilon}\pdag_i (\eta)}\eta$ and 
$(\widetilde{f}\pdag_i)^{\max}\eta \eqdef (\widetilde{f}\pdag_i)^{\widetilde{\varphi}\pdag_i (\eta)}\eta$.
Then
we see that 
\begin{align}\label{epsilon_phi_C}
\widetilde{\varepsilon}\pdag_i (\eta) = \left\{
\begin{array}{ll}
m^\eta_i & \mbox{ if }i\neq n ,\\
\lfloor \frac{m^\eta_i}{2}\rfloor & \mbox{ if }i = n,
\end{array} \right.
\widetilde{\varphi}\pdag_i (\eta) = \left\{
\begin{array}{ll}
H_i^\eta (1) - m^\eta_i & \mbox{ if }i\neq n ,\\
\lfloor \frac{H_i^\eta (1) - m^\eta_i}{2}\rfloor & \mbox{ if }i = n.
\end{array} \right.
\end{align}

The following Theorem will be proved in \S 3.4.
\begin{thm}\label{stable}
For a dominant weight $\lambda^\dag \in P^\dag$,
the sets $\QLS^\C(\iota^*(\lambda^\dag)) \sqcup \setzero$ and $\widetilde{\QLS}^\C(\iota^*(\lambda^\dag)) \sqcup \setzero$  
are stable under the action of the root operators $\widetilde{e}\pdag_i$ and $\widetilde{f}\pdag_i$ for all $i \in I_\aff$.
\end{thm}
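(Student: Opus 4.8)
The plan is to reduce everything to the stability of $\QLS(\iota^*(\lambda^\dag))\sqcup\setzero$ under the type-$\D$ root operators $e_i,f_i$ (which is part of the crystal structure in Proposition~\ref{crystal}), and then to upgrade ``membership in $\QLS$'' to ``membership in $\QLS^\C$'' using the refined diamond lemmas of \S3. First, recall from Remark~\ref{root_operator_identification} that, whenever they are nonzero, $\widetilde e\pdag_i\eta=e_i^{\chi_i}\eta$ and $\widetilde f\pdag_i\eta=f_i^{\chi_i}\eta$, where $\chi_i=1$ for $i\in I_\aff\setminus\{n\}$ and $\chi_n=2$; equivalently, by \eqref{compatible}, the statement is the same as the stability of $\QLS\pdag(\lambda^\dag)\sqcup\setzero$ under $e\pdag_i,f\pdag_i$. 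Since $\QLS^\C(\iota^*(\lambda^\dag))\subset\QLS(\iota^*(\lambda^\dag))$ and the latter together with $\setzero$ is stable under every $e_i$ and $f_i$, we get for free that $\widetilde e\pdag_i\eta,\widetilde f\pdag_i\eta\in\QLS(\iota^*(\lambda^\dag))\sqcup\setzero$. Thus the entire content is: if $\eta\in\QLS^\C(\iota^*(\lambda^\dag))$ and $e_i^{\chi_i}\eta\neq\zero$ (resp.\ $f_i^{\chi_i}\eta\neq\zero$), then this element again satisfies condition (C) of Definition~\ref{def_qls_c}, i.e.\ lies in $\QLS^\C(\iota^*(\lambda^\dag))$; and, for the $\widetilde\QLS^\C$ assertion, that condition (C') is preserved too.

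Next I would make explicit the effect of a single application of $e_i$ (or $f_i$), $i\in I_\aff$, on a datum $\eta=(w_1,\dots,w_s;\tau_0,\dots,\tau_s)$, exactly as in the proof of \cite[Theorem 4.1.1]{LNSSS4}: it replaces the $w_k$ whose supporting interval meets $[t_0,t_1]$ (resp.\ $[t_2,t_3]$) by $\lfloor r_i w_k\rfloor$, possibly inserts one new division point, and is governed by where the piecewise-linear function $H^\eta_i$ attains its minimum $m^\eta_i$ and the value $m^\eta_i+1$. For $i\neq n$ one has $\chi_i=1$, so $\widetilde e\pdag_i=e_i$ is a single such modification; for $i=n$ one has $\chi_n=2$, so $\widetilde e\pdag_n=e_n^2$ is two successive modifications (and, by the definition of $\widetilde e\pdag_n$, $\widetilde e\pdag_n\eta\neq\zero$ precisely when $m^\eta_n\le-2$; compare \eqref{epsilon_phi} and \eqref{epsilon_phi_C}). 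In each step one must check two things inside $(\QBG^\C_{\tau\lambda})^S$, rather than inside the type-$\D$ parabolic graph: that along the affected range the left action of $r_i$ carries the directed paths witnessing (C) to directed paths again witnessing (C), and that at each newly created break point $\tau$ there is an edge of $(\QBG^\C_{\tau\lambda})^S$ joining the new element to its neighbour. The first is precisely Lemma~\ref{diamond} applied to $(\QBG^\C_{b\lambda})^S$ and concatenated edge by edge along the path; the second is Lemma~\ref{leftaction} together with Lemma~\ref{2.3.3.5} (which pins down when the relevant pairing changes sign), while Lemma~\ref{lem:6.1} and Lemma~\ref{8.5} are used to pass between $\QBG^\C_{b\lambda}$ and $(\QBG^\C_{b\lambda})^S$ and to produce shortest paths when one wants (C') (cf.\ Remark~\ref{def_qls_rem}). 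Structurally this is the untwisted argument of \cite{LNSSS4}, with each appeal to an untwisted diamond/chain lemma replaced by its $\C$-refined counterpart from \S3.

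The one genuinely new point, and the step I expect to be the main obstacle, is $i=n$. Here $\overline\alpha_n=\alpha_n$ is the short simple root of $\D$-type, while $\chi_n\overline\alpha_n=2\alpha_n$ corresponds under $\iota^*$ to the long root $\alpha_n\pdag$ of type $C_n$; and the refined integrality defining $(\QBG^\C_{b\lambda})^S$ treats Bruhat and quantum edges through short roots of $\D$ asymmetrically (Bruhat edges need $b\langle\lambda,\beta^\lor\rangle\in2\mathbb Z$, quantum edges only $\in\mathbb Z$). One must verify that using $e_n^2$ instead of a single $e_n$ is exactly what compensates this factor: the value of $H^\eta_n$ at the relevant break points, being an integer, guarantees only an edge of the full type-$\D$ graph, but after the second application the accumulated shift is even, which is what places the pairing $\tau\langle\lambda,\gamma^\lor\rangle$ in $2\mathbb Z$ on the Bruhat edges and keeps the $\tfrac12\mathbb Z$/$\mathbb Z$ conditions of Definition~\ref{QBG_B} intact. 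Carrying out this bookkeeping — in particular, showing that the possibly ``illegal'' Bruhat edges produced after the first $e_n$ are corrected by the second, and that $e_n^2\eta\neq\zero$ exactly under the hypothesis $m^\eta_n\le-2$ — is the technical heart of the proof.

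Finally, the $f$-operators are handled in the same way, or more economically by the standard involution argument relating $f_i$ on $\eta$ to $e_i$ on a $\lon$-twist of $\eta$, using (a parabolic, $\C$-refined form of) Lemma~\ref{involution}. Given the stability of $\QLS^\C(\iota^*(\lambda^\dag))\sqcup\setzero$, the assertion for $\widetilde\QLS^\C(\iota^*(\lambda^\dag))\sqcup\setzero$ is immediate once the identity $\widetilde\QLS^\C=\QLS^\C$ is established in \S3.4; alternatively, since Lemma~\ref{diamond} and Lemma~\ref{8.5}(2)--(3) manifestly yield shortest directed paths, the whole argument above goes through verbatim with condition (C') in place of (C).
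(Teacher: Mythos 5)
Your reduction is sound as far as it goes: since $\QLS^\C(\lambda)\subset\QLS(\lambda)$ and $\QLS(\lambda)\sqcup\setzero$ is stable under the type-$\D$ operators, membership of $\widetilde f\pdag_i\eta$ in $\QLS(\lambda)\sqcup\setzero$ is indeed free, and the entire content is that condition (C) holds with respect to $(\QBG^\C_{\tau\lambda})^S$ rather than $\QBG^S_{\tau\lambda}$. For $i\neq n$ your description (untwisted argument with each diamond/chain lemma replaced by its $\C$-refined counterpart) matches the paper. The $e_i$/$f_i$ symmetry is also unproblematic.

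The gap is exactly where you locate it, at $i=n$, and your proposed mechanism for closing it is not the one that works. You plan to decompose $\widetilde f\pdag_n=f_n^2$, track the intermediate object $f_n\eta$, and argue that the ``illegal'' Bruhat edges it produces are ``corrected by the second'' application; this bookkeeping is never carried out, and it is the only genuinely new content of the theorem relative to the dual untwisted case. The paper never considers the intermediate object at all. It applies $\widetilde f\pdag_n$ as a single reflection on the interval $[t_2,t_3^\chi]$ on which $H^\eta_n$ rises from $m$ to $m+2$, and the two integrality issues are resolved as follows: (a) at every \emph{intermediate} break point $\sigma_p$ of that interval one has $H^\eta_n(\sigma_p)\notin 2\mathbb{Z}$ (because all local minima lie in $\chi_n\mathbb{Z}=2\mathbb{Z}$ by Proposition~\ref{4.1.12} and $H$ is strictly increasing there), so the $\chi_i$-refined Lemma~\ref{4.1.13} — already proved in \S3.3 with the $2\mathbb{Z}$-integrality built in — carries the witnessing paths into $(\QBG^\C_{\sigma_p\lambda})^S$ with no new edge needed at those points; and (b) the single new edge, at $t_3^\chi$ where $H=m+2$, is legal because the Claim in the paper shows $t_3^\chi\langle w_{m+1}\lambda,\overline\alpha_n^\lor\rangle\in 2\mathbb{Z}$ directly from $\eta(1)\in Q$, Lemma~\ref{4.1.6}, and Proposition~\ref{4.1.12}; the analogous input at $t_2=\sigma_u$ is Lemma~\ref{4.1.9}\,(4). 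So the theorem does not rest on any cancellation between two successive $f_n$'s, but on the fact that the refined lemmas were formulated from the start with $\chi_i\mathbb{Z}$ in place of $\mathbb{Z}$. As written, your proposal identifies the obstacle correctly but does not overcome it, and the route you sketch for overcoming it would require a separate (and unprovided) analysis of an intermediate object that need not lie in $\QLS^\C(\lambda)$.
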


%Let $\eta_\lambda \eqdef (e; 0,1)$.
We define a subset $\mathbb{B}\pdag(\lambda^\dag)_\cl$ of $\mathbb{B}\pdag_\inte$ by
\begin{equation*}
\eta \in \mathbb{B}\pdag(\lambda^\dag)_\cl \ardef \mbox{there exists a sequence }(i_1, \ldots, i_k)
\mbox{ such that }(f_{i_1}\pdag)^{\max} \dots (f_{i_k}\pdag)^{\max}\eta =  \eta_{\lambda^\dag} .
\end{equation*}
By \cite[Theorem 2.4.1]{LNSSS4},
the set $\mathbb{B}\pdag( \lambda^\dag )_\cl$
is a realization of the crystal basis $\mathcal{B}(\overline{\xi}^\dag(\lambda^\dag))$ of a particular quantum Weyl module
$W_q (\overline{\xi}^\dag(\lambda^\dag))$
over a quantum affine algebra $U_q'(\mathfrak{g}(\C))$.
It follows from \eqref{compatible} that
$\iota^*(\mathbb{B}\pdag( \lambda^\dag )_\cl)=\widetilde{\mathbb{B}}( \iota^*(\lambda^\dag) )_\cl$ is also a realization of the $U_q'(\mathfrak{g}(\C))$-crystal $\mathcal{B}(\overline{\xi}^\dag (\lambda^\dag))$,
where 
for a dominant weight $\lambda \in Q$,
$\widetilde{\mathbb{B}}(\lambda)_\cl \subset \mathbb{B}_\inte$ is defined by
\begin{equation*}
\eta \in \widetilde{\mathbb{B}}( \lambda )_\cl \ardef \mbox{there exists a sequence }(i_1, \ldots, i_k)
\mbox{ such that } (\widetilde{f}\pdag_{i_1})^{\max} \dots (\widetilde{f}\pdag_{i_k})^{\max} \eta =\eta_{\lambda } .
\end{equation*}

We will prove the following theorem in \S 3.4.
\begin{thm}\label{realization_theorem}
Let $\lambda^\dag \in P^\dag$ be a dominant weight.
$\QLS^\C( \iota^*(\lambda^\dag) ) = \widetilde{\QLS}^\C( \iota^*(\lambda^\dag) )= \widetilde{\mathbb{B}}( \iota^*(\lambda^\dag) )_\cl${\rm;}
that is, the set $\QLS^\C ( \iota^*(\lambda^\dag) )= \widetilde{\QLS}^\C( \iota^*(\lambda^\dag) )$ is a realization of the $U_q'(\mathfrak{g}(\C))$-crystal $\mathcal{B}(\overline{\xi}^\dag(\lambda^\dag))$.
\end{thm}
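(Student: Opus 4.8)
The plan is to deduce Theorem~\ref{realization_theorem} from the cyclic chain of inclusions
\[
\widetilde{\mathbb{B}}(\iota^*(\lambda^\dag))_\cl
\ \subseteq\ \widetilde{\QLS}^\C(\iota^*(\lambda^\dag))
\ \subseteq\ \QLS^\C(\iota^*(\lambda^\dag))
\ \subseteq\ \widetilde{\mathbb{B}}(\iota^*(\lambda^\dag))_\cl ,
\]
which forces the three sets to coincide; write $\lambda \eqdef \iota^*(\lambda^\dag) \in Q$. The middle inclusion holds by the very definition of $\widetilde{\QLS}^\C(\lambda)$, so only the two outer inclusions need work, and for both the essential input is Theorem~\ref{stable} (stability of $\QLS^\C(\lambda)\sqcup\setzero$ and $\widetilde{\QLS}^\C(\lambda)\sqcup\setzero$ under $\widetilde{e}\pdag_i,\widetilde{f}\pdag_i$, $i\in I_\aff$).

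The technical heart is a \emph{matching of root operators}: for every $\eta\in\QLS^\C(\lambda)$ and every $i\in I_\aff$ one has $(\widetilde{f}\pdag_i)^{\max}\eta = f_i^{\max}\eta$ and $(\widetilde{e}\pdag_i)^{\max}\eta = e_i^{\max}\eta$, with $e_i,f_i$ the type $\D$ root operators on $\mathbb{B}_\inte$. For $i\neq n$ this is immediate because $\chi_i=1$, $\widetilde{e}\pdag_i=e_i$, $\widetilde{f}\pdag_i=f_i$, and $\widetilde{\varepsilon}\pdag_i(\eta)=\varepsilon_i(\eta)$, $\widetilde{\varphi}\pdag_i(\eta)=\varphi_i(\eta)$ by \eqref{epsilon_phi} and \eqref{epsilon_phi_C}. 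For $i=n$ one has $\widetilde{f}\pdag_n=f_n^2$, $\widetilde{e}\pdag_n=e_n^2$ by Remark~\ref{root_operator_identification}, so it suffices to show $m^\eta_n\in 2\mathbb{Z}$ and $H^\eta_n(1)\in 2\mathbb{Z}$, hence $\varepsilon_n(\eta)=m^\eta_n$ and $\varphi_n(\eta)=H^\eta_n(1)-m^\eta_n$ lie in $2\mathbb{Z}$. By Lemma~\ref{4.1.12} every local minimum of $H^\eta_n(t)$ lies in $\chi_n\mathbb{Z}=2\mathbb{Z}$, and $H^\eta_n(0)=0$; the minimum $m^\eta_n$ of the piecewise-linear function $H^\eta_n$ is attained at $t=0$, at an interior local minimum, or at $t=1$, and in the last case $m^\eta_n=H^\eta_n(1)=\pair{\wt(\eta)}{\alpha_n^\lor}$. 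So it remains to note $\wt(\eta)\in Q$ and $\pair{Q}{\alpha_n^\lor}\subseteq 2\mathbb{Z}$ in type $B_n$; the former holds since, by Remark~\ref{simple_crystal_dual_untwisted}, $\eta$ is joined to $\eta_\lambda$ by type $\D$ root operators, each shifting the weight by an element of $Q$, while $\wt(\eta_\lambda)=\lambda\in Q$. This is the step I expect to be the main obstacle: Theorem~\ref{stable} controls only $\widetilde{f}\pdag_n=f_n^2$ and $\widetilde{e}\pdag_n=e_n^2$, not the single operators $f_n,e_n$, so everything hinges on this parity statement, which is precisely where the hypothesis $\lambda\in Q$ and the integrality built into $(\QBG\pdag_{\tau\lambda})^\C$ are used.

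For $\QLS^\C(\lambda)\subseteq\widetilde{\mathbb{B}}(\lambda)_\cl$: given $\eta\in\QLS^\C(\lambda)\subseteq\QLS(\lambda)$, Remark~\ref{simple_crystal_dual_untwisted} yields $i_1,\dots,i_k\in I_\aff$ with $f_{i_1}^{\max}\cdots f_{i_k}^{\max}\eta=\eta_\lambda$. Running this reduction from the right and using downward induction, one checks that every partial reduct $f_{i_j}^{\max}\cdots f_{i_k}^{\max}\eta$ again lies in $\QLS^\C(\lambda)$: if the $(j+1)$-st reduct $\eta'$ does, then by the matching of root operators $f_{i_j}^{\max}\eta'=(\widetilde{f}\pdag_{i_j})^{\max}\eta'$, which belongs to $\QLS^\C(\lambda)\sqcup\setzero$ by Theorem~\ref{stable} and is nonzero. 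The same computation then gives $(\widetilde{f}\pdag_{i_1})^{\max}\cdots(\widetilde{f}\pdag_{i_k})^{\max}\eta=\eta_\lambda$, i.e.\ $\eta\in\widetilde{\mathbb{B}}(\lambda)_\cl$.

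For $\widetilde{\mathbb{B}}(\lambda)_\cl\subseteq\widetilde{\QLS}^\C(\lambda)$: the straight-line path $\eta_\lambda=(e;0,1)$ lies in $\widetilde{\QLS}^\C(\lambda)$ (condition (C$'$) is vacuous for $s=1$) and $\widetilde{\QLS}^\C(\lambda)\sqcup\setzero$ is stable under the $\widetilde{e}\pdag_i$ by Theorem~\ref{stable}. Given $\eta\in\widetilde{\mathbb{B}}(\lambda)_\cl$, choose $(i_1,\dots,i_k)$ with $(\widetilde{f}\pdag_{i_1})^{\max}\cdots(\widetilde{f}\pdag_{i_k})^{\max}\eta=\eta_\lambda$. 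Since $\widetilde{\mathbb{B}}(\lambda)_\cl=\iota^*(\mathbb{B}\pdag(\lambda^\dag)_\cl)$ is a realization of a $U_q'(\mathfrak{g}(\C))$-crystal by \eqref{compatible} and \cite[Theorem 2.4.1]{LNSSS4}, the operators $\widetilde{e}\pdag_i,\widetilde{f}\pdag_i$ satisfy the crystal relations there; reversing the reduction then expresses $\eta$ as obtained from $\eta_\lambda$ by successively applying operators $\widetilde{e}\pdag_i$, so $\eta\in\widetilde{\QLS}^\C(\lambda)$ by the stability just recalled and $\eta\neq\setzero$. Combining the three inclusions gives $\QLS^\C(\iota^*(\lambda^\dag))=\widetilde{\QLS}^\C(\iota^*(\lambda^\dag))=\widetilde{\mathbb{B}}(\iota^*(\lambda^\dag))_\cl$, which is a realization of $\mathcal{B}(\overline{\xi}^\dag(\lambda^\dag))$.
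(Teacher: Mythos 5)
Your proposal is correct and follows essentially the same route as the paper's proof: the same chain of inclusions $\widetilde{\mathbb{B}}(\iota^*(\lambda^\dag))_\cl \subseteq \widetilde{\QLS}^\C \subseteq \QLS^\C \subseteq \widetilde{\mathbb{B}}(\iota^*(\lambda^\dag))_\cl$, the same key claim that $(\widetilde{f}\pdag_i)^{\max}$ and $f_i^{\max}$ agree on $\QLS^\C(\iota^*(\lambda^\dag))$ via the parity $\varphi_i(\eta)\in\chi_i\mathbb{Z}$ deduced from Proposition~\ref{4.1.12} and $\wt(\eta)\in Q$, and the same use of Theorem~\ref{stable} and Remark~\ref{simple_crystal_dual_untwisted}. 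You merely make explicit some steps the paper leaves implicit (the induction showing each partial reduct stays in $\QLS^\C$, and the reversal of the $(\widetilde{f}\pdag_i)^{\max}$ string by the $\widetilde{e}\pdag_i$).
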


\subsection{Some technical lemmas}

In this subsection, we state some lemmas for dual untwisted types and type $\C$, but prove them only for $\C$;
the proofs of them for dual untwisted types are similar to those of lemmas in \cite[\S 4.1]{LNSSS4}.
We fix a dominant weight $\lambda \in P$ for a dual untwisted type
(especially for type $\D$), or a dominant weight $\lambda \in Q$ for type $\C$;
that is, 
if we consider $\QBG_{b \lambda}$, $\QBG_{b \lambda}^S$, and $\QLS(\lambda)$, then $\lambda \in P$,
and if we consider $\QBG_{b \lambda}^\C$, $(\QBG_{b \lambda}^\C)^S$, and $\QLS^\C(\lambda)$, then $\lambda \in Q$,
where $P$ denote the weight lattice of $\mathfrak{g}(B_n)$, and $Q$ the root lattice of  $\mathfrak{g}(B_n)$.
Also, we set $S=S_\lambda = \{ i \in I \ | \ \pair{\lambda}{\alpha_i^\lor}=0 \}$.

\begin{lem}[\normalfont{see \cite[Lemma 4.1.6]{LNSSS4}} for untwisted types]\label{4.1.6}
Let $u, v \in W^S$, $b \in \mathbb{Q}\cap [0,1]$, 
and $G = \QBG_{b\lambda}^S$ or $(\QBG_{b\lambda}^\C)^S$. 
Let
\begin{equation*}
u = x_0 \xleftarrow{\gamma_1^\lor} x_1 \xleftarrow{\gamma_2^\lor} \cdots \xleftarrow{\gamma_r^\lor} x_r =v
\end{equation*}
be a directed path from $v$ to $u$ in $G$.
Then $b (x \lambda -y \lambda) \in Q$.
\end{lem}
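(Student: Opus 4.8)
The plan is to reduce everything to the weight-counting identity for the parabolic quantum Bruhat graph. Recall that along a directed path in $\QBG^S$, each edge $x_{k}\xleftarrow{\gamma_k^\lor}x_{k-1}$ contributes $x_{k-1}\lambda - x_k\lambda = s_{\gamma_k}(\cdot)$-type differences, and the telescoping sum gives $v\lambda - u\lambda = \sum_{k=1}^r (x_k\lambda - x_{k-1}\lambda)$. So the real content is to show that for a single edge $x\xleftarrow{\gamma^\lor} y$ of $G$ (with $G=\QBG_{b\lambda}^S$ or $(\QBG_{b\lambda}^\C)^S$), the difference $b(y\lambda - x\lambda)$ lies in the root lattice $Q$; summing over the path then finishes the argument. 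Here I am using that $Q$ is a lattice, hence closed under addition.

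First I would reduce to the non-parabolic picture. If $y = \lfloor x s_\gamma\rfloor$, write $x s_\gamma = y z$ with $z\in W_S$; since $\lambda$ is $W_S$-fixed (as $S=S_\lambda$), we get $y\lambda = x s_\gamma \lambda = x\lambda - \pair{\lambda}{\gamma^\lor}\,x\gamma$. Hence $y\lambda - x\lambda = -\pair{\lambda}{\gamma^\lor}\, x\gamma$, and it suffices to show $b\,\pair{\lambda}{\gamma^\lor}\, x\gamma \in Q$. Since $x\gamma\in\Delta\subset Q$, it is enough to show $b\,\pair{\lambda}{\gamma^\lor}\in\mathbb{Z}$. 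Now I invoke the defining integrality conditions of the respective subgraph. For $G=\QBG_{b\lambda}^S$ this is built into the definition: the edge is present only if $b\pair{\lambda}{\beta^\lor}\in\mathbb{Z}$. For $G=(\QBG_{b\lambda}^\C)^S$ the situation is the delicate one: by Definition~\ref{QBG_B} (via Remark~\ref{identification_QBG}, working with the $\D$-model), the condition $b\pair{\lambda}{\gamma^\lor}\in\mathbb{Z}$ holds outright when $\gamma$ is a long root of $\Delta$ (in the $\D$-labelling, the role of long/short is as specified there) and when the edge is a Bruhat edge; only for a \emph{quantum} edge with $\gamma$ short is the weaker condition $b\pair{\lambda}{\gamma^\lor}\in\tfrac12\mathbb{Z}$ guaranteed.

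So the main obstacle is precisely the quantum-edge-with-short-root case in type $\C$, where a priori $b\pair{\lambda}{\gamma^\lor}$ is only a half-integer and $b(y\lambda-x\lambda)$ could fail to be in $Q$ by a factor of two. The resolution I would pursue: one shows that along a \emph{directed path} such half-integral contributions must occur in pairs, or more precisely that the total weight $\wt^S(\bp)=\sum_k \wt^S(x_k\to x_{k-1})$ contributed by quantum edges, when paired against $\lambda\in Q$ (not merely $P^\dag$), produces an integer. The clean way is to use that $\lambda$ lies in the root lattice $Q$ of $\mathfrak{g}(B_n)$: for a short root $\gamma$ of $\Delta$ one has $\pair{\lambda}{\gamma^\lor}\in 2\mathbb{Z}$ when $\lambda\in Q$ is written in the appropriate basis — no, more carefully, one uses $\wt^S(\bp)\in Q^\lor$ together with the fact that $\pair{Q}{Q^\lor}$ and the mismatch is controlled. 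I would argue: $b(y\lambda - x\lambda) = -b\pair{\lambda}{\gamma^\lor}x\gamma$; if this is a Bruhat edge or $\gamma$ long, done as above; if it is a quantum short edge, then $b\pair{\lambda}{\gamma^\lor}\in\tfrac12\mathbb{Z}$ and $x\gamma$ is a short root of $\Delta$, and since $\lambda\in Q$ one checks directly (in coordinates, $\lambda\in\bigoplus_{i\ne n}\mathbb{Z}\varpi_i\oplus 2\mathbb{Z}\varpi_n$, cf. the identification $\iota^*(P^\dag)=Q$ in Remark~\ref{BC}) that $2b\pair{\lambda}{\gamma^\lor}x\gamma$ already lies in $2Q$, hence $b\pair{\lambda}{\gamma^\lor}x\gamma\in Q$. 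Telescoping over the path and using that $Q$ is closed under sums then yields $b(v\lambda - u\lambda)\in Q$, which is the assertion. I would model the bookkeeping of the quantum-edge contributions on the proof of \cite[Lemma 4.1.6]{LNSSS4}, adapting the integrality constants to the $\C$-conventions fixed in Definition~\ref{QBG_B} and Remark~\ref{BC}.
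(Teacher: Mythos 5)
Your telescoping reduction to a single edge is the right skeleton, and the first two of your three cases are handled correctly. But the ``main obstacle'' you then spend most of the proof on does not exist, and your resolution of it is circular. The graph $(\QBG^{\C}_{b\lambda})^{S}$ appearing in the lemma is, by its very definition, a subgraph of $\QBG^{S}_{b\lambda}$ (it is introduced as the subgraph of $\QBG^{S}_{b\lambda}$ having only the listed edges), so \emph{every} edge $x\xleftarrow{\gamma^{\lor}}y$ of either graph in the statement already satisfies $b\pair{\lambda}{\gamma^{\lor}}\in\mathbb{Z}$; the extra conditions in the type-$\C$ definition (such as $2\mathbb{Z}$ for a Bruhat edge with short label) only cut the graph down further. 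This is exactly the paper's one-line proof: the $\C$-case reduces to the dual untwisted case via $(\QBG^{\C}_{b\lambda})^{S}\subset\QBG^{S}_{b\lambda}$. The half-integrality condition $b\pair{\lambda}{\beta^{\lor}}^{\dag}\in\tfrac12\mathbb{Z}$ that worries you belongs to Definition~\ref{QBG_B}, which is phrased in the $C_{n}$ root system with the pairing $\pair{\cdot}{\cdot}^{\dag}$; by Remark~\ref{BC} a long root $\beta^{\dag}$ of $\Delta^{\dag}$ has $\iota((\beta^{\dag})^{\lor})=\tfrac12\alpha^{\lor}$ for a short $\alpha\in\Delta$, so $b\pair{\lambda^{\dag}}{(\beta^{\dag})^{\lor}}^{\dag}\in\tfrac12\mathbb{Z}$ is literally equivalent to $b\pair{\iota^{*}(\lambda^{\dag})}{\alpha^{\lor}}\in\mathbb{Z}$, i.e.\ to full integrality in the $B_{n}$ pairing used in the lemma. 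There is no mixed case to control.

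Moreover, the patch you propose for this phantom case would not stand on its own: ``one checks that $2b\pair{\lambda}{\gamma^{\lor}}x\gamma$ already lies in $2Q$, hence $b\pair{\lambda}{\gamma^{\lor}}x\gamma\in Q$'' is a restatement of the desired conclusion, not an argument; and the true fact that $\pair{\lambda}{\gamma^{\lor}}\in 2\mathbb{Z}$ for $\lambda\in Q$ and $\gamma$ short does not combine with $b\pair{\lambda}{\gamma^{\lor}}\in\tfrac12\mathbb{Z}$ to give $b\pair{\lambda}{\gamma^{\lor}}\in\mathbb{Z}$ (take $b=\tfrac14$ and $\pair{\lambda}{\gamma^{\lor}}=2$). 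Once you replace that paragraph by the observation that the relevant integrality is built into the definition of both graphs, your proof collapses to the paper's and is complete.
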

\begin{proof}
This is obvious even for $G = (\QBG_{b\lambda}^\C)^S$ since $(\QBG_{b\lambda}^\C)^S \subset \QBG_{b\lambda}^S$.
\end{proof}

\begin{lem}[\normalfont{see \cite[Lemma 4.1.7]{LNSSS4}} for untwisted types]\label{4.1.7}
If $\eta \in \QLS(\lambda)$ {\rm(}resp., $\in \QLS^\C(\lambda)${\rm)}, then $\eta(1)$ is contained in $\lambda + Q$.
\end{lem}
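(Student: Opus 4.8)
The plan is to reduce the claim to an application of Lemma~\ref{4.1.6}. Write $\eta = (w_1, \ldots, w_s; \sigma_0, \ldots, \sigma_s) \in \QLS(\lambda)$ (resp. $\in \QLS^\C(\lambda)$), realized as the PLC map of Remark~\ref{piecewiseliner}, so that
\begin{equation*}
\eta(1) = \sum_{k=1}^{s} (\sigma_k - \sigma_{k-1}) w_k \lambda.
\end{equation*}
The idea is to rewrite this as $\lambda$ plus a $\mathbb{Z}$-linear combination of differences $w_{k} \lambda - w_{k+1} \lambda$ (together with one term involving $w_s \lambda - \lambda$, which will vanish or be handled separately since $w_s \in W^S$ so $w_s\lambda - \lambda \in Q$ automatically). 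Explicitly, Abel summation gives $\eta(1) = \sum_{k=1}^{s-1} \sigma_k (w_k\lambda - w_{k+1}\lambda) + w_s\lambda$, and since $w_s\lambda \in \lambda + Q$ it suffices to show each $\sigma_k(w_k\lambda - w_{k+1}\lambda) \in Q$.

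The key input is condition (C) in Definition~\ref{def_qls} (resp. Definition~\ref{def_qls_c}): for each $1 \le i \le s-1$ there is a directed path from $w_{i+1}$ to $w_i$ in $\QBG^S_{\tau_i \lambda}$ (resp. $(\QBG^\C_{\tau_i \lambda})^S$), where $\tau_i = \sigma_i$. Applying Lemma~\ref{4.1.6} with $b = \sigma_i$, $u = w_i$, $v = w_{i+1}$ yields $\sigma_i(w_i\lambda - w_{i+1}\lambda) \in Q$ for every $i$. Summing over $i$ and adding $w_s\lambda \in \lambda + Q$ gives $\eta(1) \in \lambda + Q$, as desired. (For the type $\C$ case one uses that $(\QBG^\C_{\sigma_i\lambda})^S \subset \QBG^S_{\sigma_i\lambda}$, exactly as in the proof of Lemma~\ref{4.1.6}.)

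I do not expect any genuine obstacle here; the only point requiring a little care is the bookkeeping in the Abel-summation rearrangement and the observation that the "boundary" term $w_s\lambda$ lies in $\lambda + Q$ because $w_s \in W \subset W$ acts on the weight lattice and every Weyl group element fixes a weight modulo the root lattice. Everything else is an immediate consequence of Lemma~\ref{4.1.6} applied term by term.
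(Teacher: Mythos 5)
Your proof is correct. The paper itself gives essentially no argument here: it simply invokes the untwisted/dual-untwisted reference (\cite[Lemma 4.1.7]{LNSSS4}) and observes that the type-$\C$ case follows from the inclusion $\QLS^\C(\lambda)\subset\QLS(\lambda)$. What you have written is precisely the argument being delegated to that reference: the Abel-summation rearrangement $\eta(1)=w_s\lambda+\sum_{k=1}^{s-1}\sigma_k(w_k\lambda-w_{k+1}\lambda)$, the term-by-term application of Lemma~\ref{4.1.6} via condition (C), and the observation that $w_s\lambda\in\lambda+Q$. So the route is the standard one, just made explicit where the paper is content to cite. Two cosmetic points: the fact $w_s\lambda-\lambda\in Q$ has nothing to do with $w_s$ lying in $W^S$ --- it holds for every $w\in W$ by induction on length, as your closing remark correctly indicates (modulo the typo ``$W\subset W$''); and your phrase ``which will vanish or be handled separately'' in the first paragraph is superfluous once the Abel summation is written out, since the boundary term is exactly $w_s\lambda$ and nothing vanishes.
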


\begin{proof}
Even for $\eta \in \QLS^\C(\lambda)$, this is obvious since $\QLS^\C(\lambda) \subset \QLS(\lambda)$.
\end{proof}

\begin{lem}[\normalfont{see \cite[Lemma 4.1.8]{LNSSS4}} for untwisted types]\label{4.1.8}
Let $u, v \in W^S$, %$b \in \mathbb{Q}\cap [0,1]$, 
and 
\begin{equation}\label{eq_4.1.1}
u = x_0 \xleftarrow{\gamma_1^\lor} x_1 \xleftarrow{\gamma_2^\lor} \cdots \xleftarrow{\gamma_r^\lor} x_r =v
\end{equation}
be a directed path from $v$ to $u$ in $\QBG^S$.
Let $i \in I_\aff$.
\begin{enu}
\item
If there exists $1 \leq p \leq r$ such that $\langle x_k\lambda, \overline{\alpha}_i^\lor \rangle < 0$ for all $0 \leq k \leq p-1$
and $\langle x_p\lambda,  \overline{\alpha}_i^\lor\rangle \geq 0$,
then
$\lfloor r_i x_{p-1} \rfloor = x_p$,
and there exists a directed path from $v$ to $r_i u$ of the form{\rm:}
\begin{equation}\label{eq_4.1.2}
\lfloor r_i u \rfloor = \lfloor r_i x_0 \rfloor
\xleftarrow{z_1 \gamma_1^\lor} \cdots \xleftarrow{z_{p-1} \gamma_{p-1}^\lor}
\lfloor r_i x_{p-1} \rfloor = x_p
\xleftarrow{\gamma_{p+1}^\lor} \cdots \xleftarrow{\gamma_r^\lor}
x_r =v.
\end{equation}
Here,
if $ i \in I$,
then we define $z_k = e$ for all $1 \leq k \leq p-1${\rm;}
if $ i = 0$,
then we define $z_k \in W_S$ by 
$s_\theta x_k = \lfloor s_\theta x_k \rfloor z_k$
 for all $1 \leq k \leq p-1$.

\item
If the directed path \eqref{eq_4.1.1} from $v$ to $u$ is shortest,
i.e., $\ell(v, u)= r$,
then the directed path \eqref{eq_4.1.2} from $v$ to $\lfloor r_i u \rfloor$ is also shortest,
i.e., $\ell(v, \lfloor r_i u \rfloor) = r-1$.

\item
Let
$b \in \mathbb{Q}\cap [0,1]$,
and suppose that the directed path \eqref{eq_4.1.1} is a path in $\QBG_{b \lambda}^S$.
Then the directed path \eqref{eq_4.1.2} is also a path in $\QBG_{b \lambda}^S$.

\item
Let
$b \in \mathbb{Q}\cap [0,1]$, and suppose that
 the directed path \eqref{eq_4.1.1} is a path in $(\QBG_{b \lambda}^\C)^S$.
Then the directed path \eqref{eq_4.1.2} is also a path in $(\QBG_{b \lambda}^\C)^S$,
and $\lfloor r_i u \rfloor \rightarrow u$ for $i \in I$ {\rm(}resp., $i=0${\rm)}
is a Bruhat {\rm(}resp., quantum{\rm)} edge of $(\QBG_{b \lambda}^\C)^S$.
\end{enu}
\end{lem}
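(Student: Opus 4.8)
The plan is to follow the untwisted skeleton of \cite[Lemma~4.1.8]{LNSSS4}: part~(1) comes from a turning--point analysis combined with an iterated use of the diamond Lemma~\ref{diamond}, part~(2) is a length count, and parts~(3)--(4) record that the edges produced in~(1) stay inside the prescribed subgraph; for type~$\C$ the skeleton runs verbatim, so the only new work is the integrality bookkeeping in~(4). I would begin with the turning point $k=p$. Since $\langle x_p\lambda,\overline{\alpha}_i^\lor\rangle\ge 0$ and $\langle x_{p-1}\lambda,\overline{\alpha}_i^\lor\rangle<0$, and since $\langle x_{p-1}\lambda,\,\cdot\,\rangle=\langle x_ps_{\gamma_p}\lambda,\,\cdot\,\rangle$ because $\lambda$ is $W_S$--invariant, Lemma~\ref{2.3.3.5}(1) (if $i\in I$) or~(3) (if $i=0$), applied to the edge $x_p\xrightarrow{\gamma_p^\lor}x_{p-1}$, gives $x_p\gamma_p=\pm\overline{\alpha}_i$; hence $s_{\gamma_p}=x_p^{-1}r_ix_p$, so $r_ix_ps_{\gamma_p}=x_p$ and $\lfloor r_ix_{p-1}\rfloor=\lfloor r_ix_ps_{\gamma_p}\rfloor=x_p$, which is the first assertion of~(1).

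Next I would propagate $r_i$ down the path for $k=p-1,\dots,1$. For each such $k$ both $\langle x_k\lambda,\overline{\alpha}_i^\lor\rangle$ and $\langle x_{k-1}\lambda,\overline{\alpha}_i^\lor\rangle$ are negative, so Lemma~\ref{leftaction}(1) (for $i\in I$) or~(2) (for $i=0$) supplies a vertical edge $\lfloor r_ix_{k-1}\rfloor\to x_{k-1}$, Bruhat if $i\in I$ and quantum if $i=0$; moreover $\gamma_k\ne\pm x_k^{-1}\overline{\alpha}_i$, for otherwise $x_{k-1}=\lfloor r_ix_k\rfloor$ and $\langle x_{k-1}\lambda,\overline{\alpha}_i^\lor\rangle=-\langle x_k\lambda,\overline{\alpha}_i^\lor\rangle>0$. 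Feeding the original edge $x_k\xrightarrow{\gamma_k^\lor}x_{k-1}$ together with this vertical edge into the appropriate case of Lemma~\ref{diamond} (its right--hand diagram, with $w=x_k$) returns the edge $\lfloor r_ix_k\rfloor\xrightarrow{z_k\gamma_k^\lor}\lfloor r_ix_{k-1}\rfloor$, with $z_k=e$ if $i\in I$ and $z_k\in W_S$ defined by $s_\theta x_k=\lfloor s_\theta x_k\rfloor z_k$ if $i=0$; concatenating these with the unchanged tail $x_p\xleftarrow{\gamma_{p+1}^\lor}\cdots\xleftarrow{\gamma_r^\lor}v$ yields \eqref{eq_4.1.2} and proves~(1), and since Lemma~\ref{diamond} is stated for both $\QBG^S_{b\lambda}$ and $(\QBG^\C_{b\lambda})^S$, whenever the input path lies in one of these subgraphs so does \eqref{eq_4.1.2}, which is the path part of~(3) and of~(4). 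For~(2): the path \eqref{eq_4.1.2} has $(p-1)+(r-p)=r-1$ edges, and if it were not shortest, extending it by the edge $\lfloor r_iu\rfloor\to u$ (present by Lemma~\ref{leftaction} since $\langle x_0\lambda,\overline{\alpha}_i^\lor\rangle<0$) would give a path from $v$ to $u$ of length $<r=\ell(v,u)$, which is impossible.

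It remains to prove the last clause of~(4) and to supply the type--$\C$ integrality checks, which is where I expect the real work to lie. That $\lfloor r_iu\rfloor\to u$ is Bruhat for $i\in I$ and quantum for $i=0$ is again Lemma~\ref{leftaction}; to see it belongs to $(\QBG^\C_{b\lambda})^S$, if $p=1$ it is the first edge $x_1\to x_0$ of the given path (here $\lfloor r_iu\rfloor=x_1$ by the turning--point step), while if $p\ge 2$ the three edges $\lfloor r_ix_1\rfloor\to x_1$, $\lfloor r_ix_1\rfloor\xrightarrow{z_1\gamma_1^\lor}\lfloor r_ix_0\rfloor$ and $x_1\xrightarrow{\gamma_1^\lor}x_0$, all already known to lie in $(\QBG^\C_{b\lambda})^S$, form three sides of a diamond with source $\lfloor r_ix_1\rfloor$ and sink $x_0$, so the left--hand diagram of Lemma~\ref{diamond} with $w=\lfloor r_ix_1\rfloor$ produces the fourth side $\lfloor r_iu\rfloor\to u$ inside $(\QBG^\C_{b\lambda})^S$. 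The remaining obstacle is that the integrality condition cutting out $(\QBG^\C_{b\lambda})^S$ is strictly stronger on Bruhat edges carrying short roots than the one cutting out $\QBG^S_{b\lambda}$, so I must ensure that none of the diamond applications above turns a cheaper edge into such an expensive one. This should follow from two facts: first, $z_k\in W_S=W_{S_\lambda}$ fixes $\lambda$ and preserves root lengths, so $\langle\lambda,z_k\gamma_k^\lor\rangle=\langle\lambda,\gamma_k^\lor\rangle$ and $z_k\gamma_k$ is as long as $\gamma_k$; second, in the $i=0$ cases of Lemma~\ref{diamond} the Bruhat/quantum type of an edge can change only in subcases \eqref{eq_5.5} and \eqref{eq_5.7}, which require $\langle x_k^{-1}\theta,\gamma_k^\lor\rangle\ne 0$, and since $\theta$ is the highest \emph{short} root of $\mathfrak{g}(\D)$ this forces $\gamma_k$ to be long, so no short root is involved precisely when the type flips. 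Combining these with the hypothesis that the given path already satisfies the type--$\C$ condition shows every output edge does too, and I expect this case analysis, rather than anything structural, to be the bulk of the write--up.
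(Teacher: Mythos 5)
Your treatment of parts (1)--(3) follows the paper's route: the turning point is handled exactly as in the paper (Lemma~\ref{2.3.3.5} forces $x_p\gamma_p=\pm\overline{\alpha}_i$, whence $\lfloor r_ix_{p-1}\rfloor=x_p$), the non-degeneracy $x_k\gamma_k\neq\pm\overline{\alpha}_i$ for $k\le p-1$ is proved by the same sign-flip contradiction, and the new edges come from Lemma~\ref{diamond}; your length count for (2) and the observation $\langle\lambda,z_k\gamma_k^\lor\rangle=\langle\lambda,\gamma_k^\lor\rangle$ for (3) are fine. Your closing discussion of why the type-$\C$ integrality survives the diamond (length preserved by $z_k\in W_S$; the Bruhat/quantum type can flip only in subcases \eqref{eq_5.5}, \eqref{eq_5.7}, which force $\gamma_k$ long since $\theta$ is the highest \emph{short} root) is exactly the content already built into the paper's proof of Lemma~\ref{diamond} for $(\QBG^\C_{b\lambda})^S$, so the new horizontal edges are indeed in the subgraph.

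The gap is in the last clause of (4), for $p\ge 2$. You take as "already known" that the vertical edge $\lfloor r_ix_1\rfloor\to x_1$ lies in $(\QBG^\C_{b\lambda})^S$, but nothing you have proved gives this: Lemma~\ref{leftaction} only produces the vertical edges $\lfloor r_ix_k\rfloor\to x_k$ as edges of $\QBG^S$, and their integrality condition, $-b\langle x_k\lambda,\overline{\alpha}_i^\lor\rangle$ lying in the required lattice, is not automatic and is not the label of any single original edge. Worse, your diamond step is oriented so that the vertical at $k-1$ is an \emph{input} and the vertical at $k$ an \emph{output}; run over $k=1,\dots,p-1$ this induction would need the vertical at $0$ --- precisely the edge $\lfloor r_iu\rfloor\to u$ you are trying to produce --- as its base case, so it is circular for subgraph membership. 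The paper's proof avoids this by starting at the turning point, where the vertical edge at $p-1$ \emph{is} the original edge $x_{p-1}\xleftarrow{\gamma_p^\lor}x_p$ (hence in $(\QBG^\C_{b\lambda})^S$ by hypothesis), and then propagating \emph{downward}: at each step the known vertical at $k$ together with the original edge $x_k\to x_{k-1}$ yields, via Lemma~\ref{diamond}, both the new horizontal edge and the vertical at $k-1$ inside $(\QBG^\C_{b\lambda})^S$; after $p-1$ steps this delivers the vertical at $0$, i.e.\ the edge $\lfloor r_iu\rfloor\to u$. You need to reverse the orientation of your induction (and seed it at $k=p-1$) to close this.
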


\begin{proof}
(1)
The proof of (1), (2), and (3) is similar to that of \cite[Lemma 4.1.8]{LNSSS4};
here we introduce their proof of (1)
in order to prove (4).
Assume that $i \in I$.
The proof for the case $i =0$ is similar;
replace $\alpha_i$ and $\alpha_i^\lor$ by $- \theta$ and $- \theta^\lor$, respectively,
and use Lemmas \ref{diamond} (3), (4) and \ref{2.3.3.5} (3) instead of Lemmas \ref{diamond} (1), (2) and \ref{2.3.3.5} (1).
First, we show that $x_k \gamma_k \neq \pm \alpha_i$ for all $1 \leq k \leq p-1$.
Suppose that $x_k \gamma_k = \pm \alpha_i$ for some $1 \leq k \leq p-1$.
Then, $x_{k-1}\lambda = x_k s_{\gamma_k}\lambda = s_{x_k \gamma_k}x_k \lambda = r_i x_k \lambda$,
and hence $\langle x_{k-1}\lambda, \alpha_i^\lor \rangle = \langle r_i x_k \lambda, \alpha_i^\lor \rangle = -\langle x_k \lambda, \alpha_i^\lor \rangle >0$, which contradicts our assumption.
Thus $x_k \gamma_k \neq \pm \alpha_i$ for $1 \leq k \leq p-1$.
By Lemma \ref{diamond} (1), (2) for $b=1$, we see that $\lfloor r_i x_{k-1} \rfloor \xrightarrow{\gamma_k^\lor} \lfloor r_i x_k \rfloor$ 
is an edge of $\QBG^S$ for all $1 \leq k \leq p-1$.
Also, since $\langle x_{p-1}\lambda, \alpha_i^\lor \rangle <0$, and  $\langle x_p \lambda, \alpha_i^\lor \rangle \geq 0$, 
it follows from Lemma \ref{2.3.3.5} (1) that $x_p \gamma_p = \pm \alpha_i$,
and hence 
$r_i x_p \lambda = r_i x_p r_{\gamma_p} \lambda = r_i s_{x_p \gamma_p}x_p \lambda = r_i^2 x_p \lambda = x_p \lambda$,
which implies $\lfloor r_i x_{p-1} \rfloor = x_p$.
Therefore, we obtain a directed path of the form (\ref{eq_4.1.2}) from $v$ to $\lfloor r_i u \rfloor $.

Let us show (4).
Since $\lfloor r_i x_{p-1} \rfloor = x_p$ and $x_{p-1}\xleftarrow{\gamma_p^\lor} x_p$ is an edge of $(\QBG_{b \lambda}^\C)^S$,
we see that $x_{p-1} \xleftarrow{\gamma_p^\lor} \lfloor s_i x_{p-1} \rfloor$ is an edge of $(\QBG_{b \lambda}^\C)^S$;
note that $x_{p-1} \xleftarrow{\gamma_p^\lor} \lfloor r_i x_{p-1} \rfloor$ is a Bruhat (resp., quantum) edge for $i \in I$ (resp., $i=0$)
by Lemma \ref{leftaction} (1) (resp., (2)).
Hence by Lemma \ref{diamond}, 
for $1 \leq k \leq p-1$,
$\lfloor r_i x_{k-1} \rfloor \xrightarrow{\gamma_k^\lor} \lfloor r_i x_k \rfloor$ is an edge of $(\QBG_{b \lambda}^\C)^S$,
and $x_{k-1} \leftarrow \lfloor r_i x_{k-1} \rfloor$ is a Bruhat  (resp., quantum) edge of $(\QBG_{b \lambda}^\C)^S$ for $i \in I$ (resp., $i=0$).
Thus the directed path (\ref{eq_4.1.2}) is a path in $(\QBG_{b \lambda}^\C)^S$, and 
$\lfloor r_i u \rfloor \rightarrow u$ is a Bruhat  (resp., quantum) edge of $(\QBG_{b \lambda}^\C)^S$ for $i \in I$ (resp., $i=0$).
\end{proof}

The following lemma can be shown in the same way as Lemma \ref{4.1.8}.
\begin{lem}[\normalfont{see \cite[Lemma 4.1.9]{LNSSS4}} for untwisted types]\label{4.1.9}
Keep the notation and setting in Lemma $\ref{4.1.8}$.
\begin{enu}
\item
If there exists $1 \leq p \leq r$ such that $\langle x_k\lambda, \overline{\alpha}_i^\lor \rangle > 0$ for all $p \leq k \leq r$
and $\langle x_{p-1}\lambda,  \overline{\alpha}_i^\lor\rangle \leq 0$,
then
$x_{p-1} = \lfloor r_i x_{p} \rfloor$,
and there exists a directed path from $\lfloor r_i v \rfloor$ to $u$ of the form{\rm:}
\begin{equation}\label{eq_4.1.3}
u =  x_0 
\xleftarrow{ \gamma_1^\lor} \cdots \xleftarrow{\gamma_{p-1}^\lor}
x_{p-1} = \lfloor r_i x_{p} \rfloor
\xleftarrow{z_{p+1} \gamma_{p+1}^\lor} \cdots \xleftarrow{z_r \gamma_r^\lor}
\lfloor r_i x_r \rfloor =\lfloor r_i v \rfloor.
\end{equation}
Here,
if $ i \in I$,
then we define $z_k = e$ for all $p+1 \leq k \leq r${\rm;}
if $ i = 0$,
then we define $z_k \in W_S$ by 
$s_\theta x_k = \lfloor s_\theta x_k \rfloor z_k$
 for all $p+1 \leq k \leq r$.

\item
If the directed path \eqref{eq_4.1.1} from $v$ to $u$ is shortest,
i.e., $\ell(v, u)= r$,
then the directed path \eqref{eq_4.1.3} from $\lfloor r_i v \rfloor$ to $u$ is also shortest,
i.e., $\ell(\lfloor r_i v \rfloor, u) = r -1$.

\item
Let
$b \in \mathbb{Q}\cap [0,1]$,
and suppose that the directed path \eqref{eq_4.1.1} is a path in $\QBG_{b \lambda}^S$.
Then the directed path \eqref{eq_4.1.3} is also a path in $\QBG_{b \lambda}^S$.

\item
Let
$b \in \mathbb{Q}\cap [0,1]$, and suppose that
 the directed path \eqref{eq_4.1.1} is a path in $(\QBG_{b \lambda}^\C)^S$.
Then the directed path \eqref{eq_4.1.3} is also a path in $(\QBG_{b \lambda}^\C)^S$,
and $v \rightarrow \lfloor r_i v \rfloor$ for $i \in I$ {\rm(}resp., $i=0${\rm)}
is a Bruhat {\rm(}resp., quantum{\rm)} edge of $(\QBG_{b \lambda}^\C)^S$.
\end{enu}
\end{lem}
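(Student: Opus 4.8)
Since Lemma~\ref{4.1.9} is, step for step, the mirror image of Lemma~\ref{4.1.8} --- with the initial segment $x_0 \xleftarrow{\gamma_1^\lor}\cdots\xleftarrow{\gamma_{p-1}^\lor} x_{p-1}$ of the given path now kept fixed and the terminal segment $x_{p-1}\xleftarrow{\gamma_p^\lor}\cdots\xleftarrow{\gamma_r^\lor} x_r$ reflected by $r_i$ --- the plan is to transcribe the proof of Lemma~\ref{4.1.8} with the strict and non-strict inequalities interchanged, invoking parts~(2) and~(4) of Lemma~\ref{2.3.3.5} (in place of parts~(1) and~(3)) and the matching cases of Lemma~\ref{diamond}. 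As there, I would carry out the case $i\in I$ explicitly and note that $i=0$ is identical after replacing $\alpha_i$, $\alpha_i^\lor$ by $-\theta$, $-\theta^\lor$ and appealing to Lemmas~\ref{diamond}~(3),~(4) and~\ref{2.3.3.5}~(4) instead of Lemmas~\ref{diamond}~(1),~(2) and~\ref{2.3.3.5}~(2).

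For part~(1) I would first check $x_k\gamma_k\neq\pm\alpha_i$ for $p+1\le k\le r$: otherwise $x_{k-1}\lambda = x_k s_{\gamma_k}\lambda = s_{x_k\gamma_k}x_k\lambda = r_i x_k\lambda$, so $\langle x_{k-1}\lambda,\alpha_i^\lor\rangle = -\langle x_k\lambda,\alpha_i^\lor\rangle<0$, contradicting the hypothesis $\langle x_{k-1}\lambda,\alpha_i^\lor\rangle>0$ (valid because $k-1\ge p$). Then Lemma~\ref{2.3.3.5}~(2) with $w=x_p$, using $\langle x_p\lambda,\alpha_i^\lor\rangle>0$ and $\langle x_p s_{\gamma_p}\lambda,\alpha_i^\lor\rangle=\langle x_{p-1}\lambda,\alpha_i^\lor\rangle\le 0$, gives $x_p\gamma_p=\pm\alpha_i$, hence $x_{p-1}\lambda = r_i x_p\lambda=\lfloor r_i x_p\rfloor\lambda$; since $x_{p-1},\lfloor r_i x_p\rfloor\in W^S$ and $\lambda$ is dominant with $S=S_\lambda$, this forces $x_{p-1}=\lfloor r_i x_p\rfloor$. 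To produce the path~\eqref{eq_4.1.3}, I would lift the terminal segment one edge at a time, descending from $k=r$ to $k=p+1$: for $p+1\le k\le r$ the vertex $x_k$ has the two distinct outgoing edges $x_k\xrightarrow{\gamma_k^\lor} x_{k-1}$ and $x_k\xrightarrow{x_k^{-1}\alpha_i^\lor}\lfloor r_i x_k\rfloor$ --- the latter a Bruhat edge by Lemma~\ref{leftaction}~(1), since $\langle x_k\lambda,\alpha_i^\lor\rangle>0$, and at the top ($k=r$) it is the ``seed'' while for $k<r$ it was produced by the previous step --- and feeding these two edges into Lemma~\ref{diamond}~(1) (when $x_k\to x_{k-1}$ is Bruhat; its hypothesis $\gamma_k\neq x_k^{-1}\alpha_i$ holds by the previous paragraph) or Lemma~\ref{diamond}~(2) (when it is quantum) yields the lifted edge $\lfloor r_i x_k\rfloor\xrightarrow{z_k\gamma_k^\lor}\lfloor r_i x_{k-1}\rfloor$ together with the edge $x_{k-1}\to\lfloor r_i x_{k-1}\rfloor$. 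Because $\lfloor r_i x_p\rfloor=x_{p-1}$, the lifted edges concatenate with the untouched segment $x_0\xleftarrow{\gamma_1^\lor}\cdots\xleftarrow{\gamma_{p-1}^\lor}x_{p-1}$ and give~\eqref{eq_4.1.3}. Parts~(2) and~(3) then follow as in Lemma~\ref{4.1.8}, since the diamond moves carry shortest paths to shortest paths and preserve membership in $\QBG_{b\lambda}^S$.

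For part~(4) I would rerun the lifting with $G=(\QBG_{b\lambda}^\C)^S$ throughout, which is permitted because Lemma~\ref{diamond} is stated for that graph, so that each lifted edge lands in $(\QBG_{b\lambda}^\C)^S$; that $v\to\lfloor r_i v\rfloor$ is a Bruhat edge for $i\in I$ (resp.~quantum for $i=0$) is immediate from Lemma~\ref{leftaction}~(1) (resp.~(2)). This $\C$-refinement is the part I expect to need the most care. When $i=0$ the auxiliary $r_i$-edges are quantum and $\theta$ is the highest \emph{short} root, so one must verify that every diamond move respects the short/long and Bruhat/quantum integrality conditions built into $(\QBG_{b\lambda}^\C)^S$ --- exactly the bookkeeping already done in the proof of Lemma~\ref{diamond} for $(\QBG_{b\lambda}^\C)^S$ and in the proof of Lemma~\ref{4.1.8}~(4). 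A second, less routine point: in Lemma~\ref{4.1.8}~(4) the seed of the diamond propagation is the $p$-th edge of the given path and hence automatically lies in $(\QBG_{b\lambda}^\C)^S$, whereas here the natural seed is $v\to\lfloor r_i v\rfloor$ itself, which sits at the end of the path rather than on it; I would settle this by an additional integrality check for that edge, drawing on Lemma~\ref{4.1.6}, in the same manner as the corresponding points are handled for untwisted types in \cite[\S 4.1]{LNSSS4}.
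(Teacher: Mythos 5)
Your proposal matches the paper, whose entire proof of Lemma~\ref{4.1.9} is the remark that it ``can be shown in the same way as Lemma~\ref{4.1.8}''; you carry out exactly that mirroring, with the correct substitutions (Lemma~\ref{2.3.3.5}~(2),(4) in place of (1),(3), the matching cases of Lemma~\ref{diamond}, and Lemma~\ref{leftaction} for the auxiliary $r_i$-edges). Your observation that in part~(4) the seed $v\to\lfloor r_i v\rfloor$ does not lie on the given path and therefore needs a separate integrality verification (via Lemma~\ref{4.1.6} together with the integrality of the $p$-th edge, whose label lies in $W\overline{\alpha}_i$) is a genuine asymmetry with Lemma~\ref{4.1.8} that the paper glosses over, and your proposed fix is the right one.
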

%\footnote{括弧類の修正はここまで}
\begin{lem}[\normalfont{see \cite[Lemma 4.1.10]{LNSSS4}} for untwisted types]\label{4.1.10}
\ \\
\vspace{0mm}
Let $\eta = (w_1, \ldots , w_s; \sigma_0, \ldots , \sigma_s) \in \QLS(\lambda)$ {\rm(}resp., $\in  \QLS^\C(\lambda)${\rm)}.
Let $i \in I_\aff$ and $1 \leq u \leq s-1$ be such that $\langle w_{u+1} \lambda , \overline{\alpha}_i^\lor \rangle>0$.
Let
\begin{equation*}
w_u = x_0 \xleftarrow{\gamma_1^\lor} x_1 \xleftarrow{\gamma_2^\lor} \cdots \xleftarrow{\gamma_r^\lor} x_r = w_{u+1}
\end{equation*}
be a directed path from $w_{u+1}$ to $w_u$ in $\QBG_{\sigma_u \lambda}^S$ {\rm(}resp., $(\QBG_{\sigma_u \lambda}^\C)^S${\rm)}.
If there exists $0 \leq k < r$ such that $\langle x_k \lambda , \overline{\alpha}_i^\lor \rangle \leq 0$,
then $H_i^\eta (\sigma_u) \in \mathbb{Z}$ {\rm(}resp., $\in \chi_i \mathbb{Z}${\rm)}.
In particular, if $\langle w_u \lambda, \alpha_i^\lor \rangle \leq 0$, then $H_i^\eta (\sigma_u) \in \mathbb{Z}$  {\rm(}resp., $\in \chi_i \mathbb{Z}${\rm)}.
\end{lem}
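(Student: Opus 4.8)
The plan is to write $H^{\eta}_i(\sigma_u)$ out explicitly, rearrange it by summation by parts, and then show that each resulting term lies in $\chi_i\mathbb{Z}$, using the integrality conditions built into $(\QBG^{\C}_{b\lambda})^S$ together with Lemmas~\ref{4.1.6} and~\ref{4.1.9}. First I would recall from Remark~\ref{piecewiseliner} that $\eta(\sigma_u)=\sum_{k=1}^{u}(\sigma_k-\sigma_{k-1})w_k\lambda$, so that, using $\sigma_0=0$,
\begin{align*}
H^{\eta}_i(\sigma_u)
&=\sum_{k=1}^{u}(\sigma_k-\sigma_{k-1})\langle w_k\lambda,\overline{\alpha}_i^\lor\rangle\\
&=\sigma_u\langle w_u\lambda,\overline{\alpha}_i^\lor\rangle+\sum_{k=1}^{u-1}\langle\sigma_k(w_k\lambda-w_{k+1}\lambda),\overline{\alpha}_i^\lor\rangle .
\end{align*}
I would also record the elementary fact that $\langle Q,\overline{\alpha}_i^\lor\rangle\subseteq\chi_i\mathbb{Z}$ for every $i\in I_\aff$: for $i\neq n$ this is just $\langle Q,\overline{\alpha}_i^\lor\rangle\subseteq\mathbb{Z}$ (since $\overline{\alpha}_i^\lor$ is a coroot, resp.\ $-\theta^\lor$), while for $i=n$ it holds because $\alpha_n$ is the short simple root of $B_n$, so $\langle\alpha_{n-1},\alpha_n^\lor\rangle=-2$ and $\langle\alpha_j,\alpha_n^\lor\rangle=0$ for $j\le n-2$, whence $\langle Q,\alpha_n^\lor\rangle=2\mathbb{Z}$. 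Thus it suffices to prove $\sigma_u\langle w_u\lambda,\overline{\alpha}_i^\lor\rangle\in\chi_i\mathbb{Z}$ and $\sigma_k(w_k\lambda-w_{k+1}\lambda)\in Q$ for $1\le k\le u-1$; the latter is immediate from Lemma~\ref{4.1.6} applied to a directed path in $(\QBG^{\C}_{\sigma_k\lambda})^S$ from $w_{k+1}$ to $w_k$, which exists by condition~(C) for $\eta$ at $\tau_k=\sigma_k$.

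The heart of the argument is therefore $\sigma_u\langle w_u\lambda,\overline{\alpha}_i^\lor\rangle\in\chi_i\mathbb{Z}$, and I would prove it by first treating $w_{u+1}$ in place of $w_u$. Let $p-1$ be the largest index $k$ with $0\le k<r$ and $\langle x_k\lambda,\overline{\alpha}_i^\lor\rangle\le 0$ (such a $k$ exists by hypothesis); then $\langle x_k\lambda,\overline{\alpha}_i^\lor\rangle>0$ for all $p\le k\le r$, since $\langle x_r\lambda,\overline{\alpha}_i^\lor\rangle=\langle w_{u+1}\lambda,\overline{\alpha}_i^\lor\rangle>0$. So Lemma~\ref{4.1.9}(1) and (4) apply to the given path and show that $w_{u+1}\to\lfloor r_i w_{u+1}\rfloor$ is a Bruhat edge (if $i\in I$), resp.\ a quantum edge (if $i=0$), of $(\QBG^{\C}_{\sigma_u\lambda})^S$; by Lemma~\ref{leftaction}, since $\langle w_{u+1}\lambda,\overline{\alpha}_i^\lor\rangle>0$, the label of this edge is $w_{u+1}^{-1}\alpha_i^\lor$ for $i\in I$ and $-w_{u+1}^{-1}\theta^\lor$ for $i=0$, i.e.\ $\beta^\lor$ with $\beta$ a long root of $\Delta$ when $i\notin\{0,n\}$, a short root when $i=n$ (as $\alpha_n$ is short), and a short root when $i=0$ (as $\theta$ is the highest short root). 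Reading off the matching clause in the definition of $(\QBG^{\C}_{b\lambda})^S$ — the Bruhat--short clause giving divisibility by $2$ when $i=n$, and the remaining clauses giving divisibility by $1$ when $i\notin\{0,n\}$ (long, Bruhat) or $i=0$ (short, quantum) — yields $\sigma_u\langle w_{u+1}\lambda,\overline{\alpha}_i^\lor\rangle\in\chi_i\mathbb{Z}$. Finally, Lemma~\ref{4.1.6} applied to the given path from $w_{u+1}$ to $w_u$ gives $\sigma_u(w_u\lambda-w_{u+1}\lambda)\in Q$, hence $\sigma_u\langle w_u\lambda,\overline{\alpha}_i^\lor\rangle=\sigma_u\langle w_{u+1}\lambda,\overline{\alpha}_i^\lor\rangle+\langle\sigma_u(w_u\lambda-w_{u+1}\lambda),\overline{\alpha}_i^\lor\rangle\in\chi_i\mathbb{Z}$. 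Combining everything with the displayed identity gives $H^{\eta}_i(\sigma_u)\in\chi_i\mathbb{Z}$; the ``in particular'' assertion is the special case $k=0$ of the hypothesis (note $r\ge1$ since $w_u\neq w_{u+1}$).

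I expect the main obstacle to be the bookkeeping in the middle step: one must correctly decide whether the edge $w_{u+1}\to\lfloor r_i w_{u+1}\rfloor$ is Bruhat or quantum and whether its label is a long or short root of $\mathfrak{g}(B_n)$, and then match this to the right clause of the integrality condition defining $(\QBG^{\C}_{b\lambda})^S$, since only that clause delivers divisibility by $\chi_i$ rather than merely by $1$. Everything else is routine manipulation together with Lemmas~\ref{4.1.6}, \ref{4.1.9} and~\ref{leftaction}. For the dual untwisted cases the same argument works verbatim with $\chi_i$ replaced by $1$ throughout (and $\lambda\in P$ in place of $\lambda\in Q$).
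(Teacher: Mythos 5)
Your proposal is correct and its heart coincides with the paper's proof: both reduce the statement to showing $\sigma_u\langle w_{u+1}\lambda,\overline{\alpha}_i^\lor\rangle\in\chi_i\mathbb{Z}$ and obtain this from Lemma~\ref{4.1.9}\,(4) together with the matching integrality clause in the definition of $(\QBG^\C_{b\lambda})^S$. The only difference is in the routine bookkeeping of the remaining terms (you use summation by parts plus Lemma~\ref{4.1.6} term by term, whereas the paper truncates $\eta$ to $\eta'=(w_1,\ldots,w_{u+1};\sigma_0,\ldots,\sigma_u,1)$ and invokes Lemma~\ref{4.1.7} for $\eta'(1)\in\lambda+Q$), which is immaterial.
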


\begin{proof} 
%The proof for $\QLS(\lambda)$ is similar to that of \cite[Lemma 4.1.10]{LNSSS4}.
Since $\eta = (w_1, \ldots , w_s; \sigma_0, \ldots, \sigma_s) \in \QLS^\C (\lambda)$,
we see from the definition of $\C$-type $\QLS$ path that 
$\eta' = (w_1, \ldots , w_u, w_{u+1}; \sigma_0, \ldots , \sigma_{u}, \sigma_s) \in \QLS(\lambda)^\C$.
Observe that $\eta'(t)=\eta(t)$ for $0 \leq t \leq \sigma_{u+1}$,
and hence $H_i^{\eta'}(t)=H_i^\eta (t)$ for $0 \leq t \leq \sigma_{u+1}$.
Also, it follows that
$H_i^\eta (\sigma_u) = H_i^{\eta'}(\sigma_u)= H_i^{\eta'}(1)-(1 -\sigma_{u}) \langle w_{u+1}\lambda, \overline{\alpha}_i^\lor \rangle$.
Since $\eta'(1) \in Q $ by Lemma \ref{4.1.7}, and since $w_{u+1}\lambda \in Q$,
we have $H_i^{\eta'}(1), \pair{w_{u+1}\lambda}{\overline{\alpha}^\lor_i} \in \chi_i \mathbb{Z}$.
Hence it suffices to show that $\sigma_{u} \langle w_{u+1}\lambda, \overline{\alpha}_i^\lor \rangle \in \chi_i \mathbb{Z}$.

We deduce from Lemma \ref{4.1.9} (4) that 
for $i \in I$ (resp., $i=0$),
$w_{u+1} \rightarrow \lfloor r_i w_{u+1} \rfloor$
is a Bruhat (resp., quantum) edge of $(\QBG_{\sigma_u \lambda}^\C)^S$.
Hence if $i \in I$,
then $\sigma_u \langle \lambda, w_{u+1}^{-1}\alpha_i^\lor \rangle \in \chi_i \mathbb{Z}$;
if $i=0$,
then $\sigma_u \langle \lambda, w_{u+1}^{-1}(-\theta)^\lor \rangle \in \mathbb{Z} = \chi_0 \mathbb{Z}$.
Therefore, in both case, we have $(1 -\sigma_{u}) \langle w_{u+1} \lambda, \overline{\alpha}_i^\lor \rangle \in \chi_i \mathbb{Z}$.
\end{proof}

The following Lemma can be shown in the same way as Lemma \ref{4.1.10}.
\begin{lem}[\normalfont{see \cite[Lemma 4.1.11]{LNSSS4}} for untwisted types]\label{4.1.11}
\ \\
\vspace{0mm}
Let $\eta = (w_1 , \ldots, w_s ; \sigma_0, \ldots, \sigma_s) \in \QLS(\lambda)$ {\rm(}resp., $\in  \QLS(\lambda)^\C${\rm)}.
Let $i \in I_\aff$ and $1 \leq u \leq s-1$ be such that $\langle w_u\lambda, \overline{\alpha}_i^\lor \rangle<0$.
Let
\begin{equation*}
w_u = x_0 \xleftarrow{\gamma_1^\lor} x_1 \xleftarrow{\gamma_2^\lor} \cdots \xleftarrow{\gamma_r^\lor} x_r = w_{u+1}
\end{equation*}
be a directed path from $w_{u+1}$ to $w_{u}$ in $\QBG_{\sigma_u \lambda}^S$ {\rm(}resp., $(\QBG_{\sigma_u \lambda}^\C)^S${\rm)}.
If there exists $0 < k \leq r$ such that $\langle x_k \lambda, \overline{\alpha}_i^\lor \rangle \geq 0$,
then $H_i^\eta (\sigma_u) \in \mathbb{Z}$ {\rm(}resp., $\in \chi_i \mathbb{Z}${\rm)}.
In particular, if $\langle w_{u+1}\lambda, \overline{\alpha}^\lor_i \rangle \geq 0$, then $H_i^\eta (\sigma_u) \in \mathbb{Z}$ {\rm(}resp., $\in \chi_i \mathbb{Z}${\rm)}.
\end{lem}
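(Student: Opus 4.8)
The plan is to mirror the argument used for Lemma~\ref{4.1.10}, only with the roles of $w_u$ and $w_{u+1}$, and of ``initial segment'' and ``final segment'' of the directed path, interchanged; the sign hypothesis $\pair{w_u\lambda}{\overline{\alpha}_i^\lor}<0$ now plays the role that $\pair{w_{u+1}\lambda}{\overline{\alpha}_i^\lor}>0$ played before. First I would truncate $\eta$ from the left: since $\eta=(w_1,\dots,w_s;\sigma_0,\dots,\sigma_s)\in\QLS^\C(\lambda)$, the pair $\eta''=(w_u,w_{u+1},\dots,w_s;\sigma_{u-1},\sigma_u,\dots,\sigma_s)$ is again a $\C$-type $\QLS$ path, and $\eta''(t)=\eta(t)$ holds on $[\sigma_{u-1},1]$, so in particular $H_i^{\eta''}(t)=H_i^\eta(t)$ there; hence $H_i^\eta(\sigma_u)=H_i^{\eta''}(\sigma_u)=H_i^{\eta''}(\sigma_{u-1})+(\sigma_u-\sigma_{u-1})\pair{w_u\lambda}{\overline{\alpha}_i^\lor}$. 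Actually, it is cleaner to run the same computation from the right as in Lemma~\ref{4.1.10}: writing $H_i^\eta(\sigma_u)=\eta(1)\text{-value}$ minus the contributions of the segments after $\sigma_u$, and using Lemma~\ref{4.1.7} (so $\eta(1)\in\lambda+Q$, giving $H_i^\eta(1)\in\chi_i\mathbb{Z}$) together with $w_k\lambda\in Q$ for all $k$ (so $\pair{w_k\lambda}{\overline{\alpha}_i^\lor}\in\chi_i\mathbb{Z}$), the claim reduces to showing that $(1-\sigma_u)$ times some relevant pairing, equivalently $\sigma_u\pair{w_u\lambda}{\overline{\alpha}_i^\lor}$ via the index arithmetic, lies in $\chi_i\mathbb{Z}$.

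The key step is to produce, from the hypothesis, an edge of $(\QBG^\C_{\sigma_u\lambda})^S$ incident to $w_u$ whose integrality datum forces $\sigma_u\pair{w_u\lambda}{\overline{\alpha}_i^\lor}$ (or $\sigma_u\pair{\lambda}{w_u^{-1}\overline{\alpha}_i^\lor}$) into $\chi_i\mathbb{Z}$. Here is where the hypothesis ``there exists $0<k\le r$ with $\pair{x_k\lambda}{\overline{\alpha}_i^\lor}\ge 0$'' is used: together with $\pair{x_0\lambda}{\overline{\alpha}_i^\lor}=\pair{w_u\lambda}{\overline{\alpha}_i^\lor}<0$, it guarantees the existence of an index $1\le p\le r$ at which the sign of $\pair{x_{p-1}\lambda}{\overline{\alpha}_i^\lor}$ is $\le 0$ and $\pair{x_p\lambda}{\overline{\alpha}_i^\lor}$ is $\ge 0$ — precisely the situation of Lemma~\ref{4.1.9}(1),(4) (with $v=w_{u+1}$, $u=w_u$ in the notation there, reading the path in the given direction). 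Lemma~\ref{4.1.9}(4) then tells us that $w_u\to\lfloor r_i w_u\rfloor$ is a Bruhat edge (if $i\in I$) or a quantum edge (if $i=0$) of $(\QBG^\C_{\sigma_u\lambda})^S$. By the integrality conditions defining $(\QBG^\C_{b\lambda})^S$ (Definition~\ref{QBG_B}, transported to type $\D$ via Remark~\ref{identification_QBG}), a Bruhat edge labelled $w_u^{-1}\alpha_i^\lor$ forces $\sigma_u\pair{\lambda}{w_u^{-1}\alpha_i^\lor}\in\chi_i\mathbb{Z}$ for $i\in I$ — the case $i=n$ needs the ``$2\mathbb{Z}$'' clause for Bruhat edges with a short root, which is exactly what gives $\chi_n=2$ — while for $i=0$ the edge is a quantum edge labelled by $-w_u^{-1}\theta^\lor$ and the relevant condition yields $\sigma_u\pair{\lambda}{w_u^{-1}(-\theta)^\lor}\in\mathbb{Z}=\chi_0\mathbb{Z}$.

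Finally I would assemble these pieces: $H_i^\eta(\sigma_u)$ is, up to terms already known to lie in $\chi_i\mathbb{Z}$, a $\mathbb{Z}$-combination of the quantity just controlled, so $H_i^\eta(\sigma_u)\in\chi_i\mathbb{Z}$, proving the main assertion; the ``in particular'' statement follows by taking $k=r$, since $\pair{w_{u+1}\lambda}{\overline{\alpha}_i^\lor}\ge 0$ is the hypothesis there and $r>0$. The step I expect to be the main obstacle is the bookkeeping of the $i=n$ case: one has to be careful that the Bruhat-edge integrality clause in Definition~\ref{QBG_B} (the ``$\in 2\mathbb{Z}$ for Bruhat edges with $\beta$ a short root of $\Delta^\dag$'', equivalently the clause for long roots of $\Delta$ of type $\D$ under $\iota^*$) matches the factor $\chi_n=2$ appearing in the root-operator normalization, and that the direction conventions of Lemma~\ref{4.1.9} are applied with $v$ and $u$ in the correct slots — everything else is a sign-flipped transcription of the proof of Lemma~\ref{4.1.10}.
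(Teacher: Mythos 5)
The paper does not write out a proof of Lemma~\ref{4.1.11}; it only says it ``can be shown in the same way as Lemma~\ref{4.1.10}'', and your plan is exactly that mirror argument: reduce to showing $\sigma_u\pair{w_u\lambda}{\overline{\alpha}_i^\lor}\in\chi_i\mathbb{Z}$ and extract that from the integrality condition on an edge of $(\QBG^\C_{\sigma_u\lambda})^S$ joining $w_u$ and $\lfloor r_iw_u\rfloor$; your bookkeeping of the $\chi_n=2$ case via the $2\mathbb{Z}$ clause for Bruhat edges labelled by short roots of $\Delta$ (type $\D$ side) is also right.

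There is, however, one concrete error: the key step must invoke Lemma~\ref{4.1.8}, not Lemma~\ref{4.1.9}. Lemma~\ref{4.1.9}(1) requires $\pair{x_k\lambda}{\overline{\alpha}_i^\lor}>0$ for \emph{all} $k\geq p$ — which your hypotheses do not guarantee, since the signs along the path may change more than once — and its part (4) produces an edge at $v=x_r=w_{u+1}$, whose integrality condition controls $\sigma_u\pair{w_{u+1}\lambda}{\overline{\alpha}_i^\lor}$ rather than the quantity you need. The correct mirror is Lemma~\ref{4.1.8}: taking $p$ \emph{minimal} with $\pair{x_p\lambda}{\overline{\alpha}_i^\lor}\geq 0$ gives $\pair{x_k\lambda}{\overline{\alpha}_i^\lor}<0$ for all $0\leq k\leq p-1$, which is precisely the hypothesis of Lemma~\ref{4.1.8}(1), and Lemma~\ref{4.1.8}(4) yields the Bruhat (resp.\ quantum) edge $\lfloor r_iw_u\rfloor\to w_u$ of $(\QBG^\C_{\sigma_u\lambda})^S$ at the vertex $u=x_0=w_u$ (note the direction of the arrow), which is the edge forcing $\sigma_u\pair{\lambda}{w_u^{-1}\overline{\alpha}_i^\lor}\in\chi_i\mathbb{Z}$. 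Separately, the truncation you need is the right-truncation $\eta'=(w_1,\ldots,w_u;\sigma_0,\ldots,\sigma_{u-1},1)$, not the left-truncated $\eta''$ you first wrote: with $\eta'$ one has $H_i^{\eta}(\sigma_u)=H_i^{\eta'}(1)-(1-\sigma_u)\pair{w_u\lambda}{\overline{\alpha}_i^\lor}$ with a \emph{single} residual segment, and $H_i^{\eta'}(1)$, $\pair{w_u\lambda}{\overline{\alpha}_i^\lor}\in\chi_i\mathbb{Z}$ by Lemma~\ref{4.1.7}; without the truncation the ``contributions of the segments after $\sigma_u$'' are not individually in $\chi_i\mathbb{Z}$. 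With these two substitutions your argument is the paper's.
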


\begin{prop}[\normalfont{see \cite[Proposition 4.1.12]{LNSSS4}} for untwisted types]\label{4.1.12}
Let $\eta \in \QLS(\lambda)$ {\rm(}resp., $\in \QLS(\lambda)^\C${\rm)}, and $i \in I_\aff$.
Then all local minima of $H_i^\eta (t)$, $t \in [0,1]$, are elements in $\mathbb{Z}$ {\rm(}resp., $\chi_i \mathbb{Z}${\rm)}.
\end{prop}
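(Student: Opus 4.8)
I treat only the type $\C$ case; the dual untwisted case is analogous (and simpler, since there $\chi_i\equiv 1$ and $Q$ is replaced by $P$). The plan is to read the local minima of $H^\eta_i$ off the explicit piecewise-linear description of $\eta$ in Remark~\ref{piecewiseliner} and then to dispatch each of them using Lemmas~\ref{4.1.7},~\ref{4.1.10}, and~\ref{4.1.11}. Writing $\eta=(w_1,\dots,w_s;\sigma_0,\dots,\sigma_s)$, the function $H^\eta_i$ is continuous, is affine of slope $a_p\eqdef\langle w_p\lambda,\overline{\alpha}_i^\lor\rangle$ on each interval $[\sigma_{p-1},\sigma_p]$, and all its breakpoints lie among $0=\sigma_0<\sigma_1<\dots<\sigma_s=1$. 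A local minimum attained in the interior of a segment forces that segment's slope to vanish, so the same value is already attained at the segment's left endpoint $\sigma_{p-1}$; hence it suffices to prove $H^\eta_i(\sigma_u)\in\chi_i\mathbb{Z}$ whenever $\sigma_u$ is a local minimum. For $u=0$ this is clear, as $H^\eta_i(0)=0$. For $u=s$ it follows from Lemma~\ref{4.1.7}: then $\eta(1)\in\lambda+Q\subset Q$ (here $\lambda\in Q$), and a direct check of the Cartan integers of $B_n$ shows $\langle\alpha_j,\overline{\alpha}_i^\lor\rangle\in\chi_i\mathbb{Z}$ for every $j\in I$ -- for $i\neq n$ this is immediate since then $\overline{\alpha}_i^\lor$ is a coroot and $\chi_i=1$, while for $i=n$ one uses $\langle\alpha_{n-1},\alpha_n^\lor\rangle=-2$, $\langle\alpha_n,\alpha_n^\lor\rangle=2$, and $\langle\alpha_j,\alpha_n^\lor\rangle=0$ for $j\le n-2$ -- so that $H^\eta_i(1)=\langle\eta(1),\overline{\alpha}_i^\lor\rangle\in\chi_i\mathbb{Z}$.

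Now fix an interior local minimum $\sigma_u$ with $1\le u\le s-1$, so that $a_u\le 0\le a_{u+1}$, and take a directed path from $w_{u+1}$ to $w_u$ in $(\QBG_{\sigma_u\lambda}^\C)^S$, which exists by the definition of $\QLS^\C(\lambda)$. If $a_{u+1}>0$, I would invoke the final (``in particular'') assertion of Lemma~\ref{4.1.10}, which applies because $a_u=\langle w_u\lambda,\overline{\alpha}_i^\lor\rangle\le 0$, to get $H^\eta_i(\sigma_u)\in\chi_i\mathbb{Z}$. If $a_{u+1}=0$ and $a_u<0$, I would instead invoke the final assertion of Lemma~\ref{4.1.11}, which applies because $a_{u+1}=\langle w_{u+1}\lambda,\overline{\alpha}_i^\lor\rangle\ge 0$. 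The only remaining possibility is $a_u=a_{u+1}=0$, in which case $H^\eta_i$ is constant near $\sigma_u$, so $H^\eta_i(\sigma_u)=H^\eta_i(\sigma_{u-1})$; I would replace $\sigma_u$ by $\sigma_{u-1}$ and repeat. As the index strictly decreases, after finitely many steps one reaches either $\sigma_0=0$ (done) or the first index $p$ whose preceding segment has nonzero slope $a_p$; since we started from a local minimum, $a_p<0$, and $p$ falls under the previous case.

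The step I expect to be the main obstacle is this last, ``flat'', case $a_u=a_{u+1}=0$: one must be careful that the value of a local minimum lying on a maximal flat stretch of $H^\eta_i$ can genuinely be transported -- either to a boundary point of $[0,1]$, or to a breakpoint whose adjacent slope has the correct strict sign -- to a position where Lemma~\ref{4.1.10} or Lemma~\ref{4.1.11} applies, rather than being trapped on a flat ``plateau'' that is simultaneously a local maximum. Organising this transport carefully (and observing that the constraint $w_k\neq w_{k+1}$ plays no role in it) is the one point requiring real care; everything else is a direct consequence of the piecewise-linearity of $\eta$ and the three cited lemmas.
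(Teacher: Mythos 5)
Your proof is correct and follows essentially the same route as the paper's: reduce to the breakpoints $\sigma_u$, dispose of $u=0$ and $u=s$ via $H_i^\eta(0)=0$ and $\eta(1)\in Q$ (Lemma~\ref{4.1.7}), and apply the ``in particular'' clauses of Lemmas~\ref{4.1.10} and~\ref{4.1.11} at an interior local minimum. The one point where you go beyond the paper is the plateau case $\langle w_u\lambda,\overline{\alpha}_i^\lor\rangle=\langle w_{u+1}\lambda,\overline{\alpha}_i^\lor\rangle=0$: the paper's proof silently asserts the slope dichotomy (and thus implicitly takes $\sigma_u$ to be an endpoint of the flat stretch), whereas your transport argument makes this explicit, and it is valid provided ``local minimum'' is read as a genuine valley --- so that the slope entering the plateau from the left is strictly negative --- which is the reading the proposition requires in any case.
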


\begin{proof}
Assume that the function $H_i^\eta (t)$ attains a local minimum at $t' \in [0,1]$;
we may assume $t' = \sigma_u$ for some $0 \leq u \leq s$.
If $u =0$, then $H_i^\eta (0) = 0 \in 2\mathbb{Z}\subset \mathbb{Z}$;
if $u =s$, then $H_i^\eta (1) \in \chi_i \mathbb{Z}$
since $\eta(1) \in Q $.
Otherwise, we have either $\langle w_u \lambda , \overline{\alpha}_i^\lor \rangle \leq 0$ and $\langle w_{u+1} \lambda , \overline{\alpha}_i^\lor \rangle > 0$,
or $\langle w_u \lambda , \overline{\alpha}_i^\lor \rangle < 0$ and $\langle w_{u+1} \lambda , \overline{\alpha}_i^\lor \rangle \geq 0$.
Therefore, it follows from Lemma \ref{4.1.10} or \ref{4.1.11} that $H_i^\eta (\sigma_u) \in \chi_i \mathbb{Z}$.
\end{proof}

\begin{lem}[\normalfont{see \cite[Lemma 4.1.13]{LNSSS4}} for untwisted types]\label{4.1.13}
\ \\
\vspace{0mm}
Let
$\eta = (w_1 , \ldots , w_s ; \sigma_0, \ldots, \sigma_s) \in \QLS(\lambda)$ {\rm(}resp., $\in \QLS(\lambda)^\C${\rm)}.
Let
$i \in I_\aff$ and $1 \leq u \leq s-1$ be such that $\langle w_{u+1}\lambda, \overline{\alpha}_i^\lor \rangle >0$ and $H_i^\eta (\sigma_u) \notin \mathbb{Z}$
{\rm(}resp., $\notin \chi_i\mathbb{Z}${\rm)}.
Let
\begin{equation}\label{eq_4.1.4}
w_u = x_0 \xleftarrow{\gamma_1^\lor}  \cdots \xleftarrow{\gamma_r^\lor} x_r = w_{u+1}
\end{equation}
be a directed path from $w_{u+1}$ to $w_u$ in $\QBG_{\sigma_u \lambda}^S$ {\rm(}resp., $(\QBG_{\sigma_u \lambda}^\C)^S${\rm)}.
Then, 
$\langle x_k \lambda , \overline{\alpha}_i^\lor \rangle >0$ for all $0 \leq k \leq r$,
and there exists a directed path from $\lfloor r_i w_{u+1} \rfloor$ to  $\lfloor r_i w_u \rfloor$ in $\QBG_{\sigma_u \lambda}^S$ {\rm(}resp., $(\QBG_{\sigma_u \lambda}^\C)^S${\rm)} of the form:
\begin{equation}\label{eq_4.1.5}
\lfloor w_u \rfloor= \lfloor x_0 \rfloor \xleftarrow{z_1 \gamma_1^\lor}  \cdots \xleftarrow{z_r \gamma_r^\lor}\lfloor x_r \rfloor =\lfloor w_{u+1} \rfloor.
\end{equation}
Here,
if $ i \in I$,
then we define $z_k = e$ for all $1 \leq k \leq r$;
if $ i = 0$,
then we define $z_k \in W_S$ by 
$s_\theta x_k = \lfloor s_\theta x_k \rfloor z_k$
 for all $1 \leq k \leq r$.
Moreover, if the directed path \eqref{eq_4.1.4} is a shortest one from $w_{u+1}$ to $w_u$, i.e., $\ell (w_{u+1}, w_u) =r$,
then the directed path \eqref{eq_4.1.5} is a shortest one from $\lfloor r_i w_{u+1} \rfloor$ to  $\lfloor r_i w_u \rfloor$,
i.e., $\ell(\lfloor r_i w_{u+1} \rfloor, \lfloor r_i w_u \rfloor) = r$.
\end{lem}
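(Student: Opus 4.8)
The plan is to follow the template of the proof of Lemma~\ref{4.1.8}, transporting the given path \eqref{eq_4.1.4} across the diamonds of Lemma~\ref{diamond}; the one genuinely new ingredient is the first assertion, which I would extract from Lemma~\ref{4.1.10}. First I would prove $\langle x_k\lambda,\overline{\alpha}_i^\lor\rangle>0$ for all $0\le k\le r$. Since $\langle w_{u+1}\lambda,\overline{\alpha}_i^\lor\rangle>0$ and \eqref{eq_4.1.4} is a directed path in $\QBG_{\sigma_u\lambda}^S$ (resp.\ in $(\QBG_{\sigma_u\lambda}^\C)^S$), applying the contrapositive of Lemma~\ref{4.1.10} to \eqref{eq_4.1.4} shows that there is no index $0\le k<r$ with $\langle x_k\lambda,\overline{\alpha}_i^\lor\rangle\le0$; here it is crucial that the hypothesis reads $H_i^\eta(\sigma_u)\notin\mathbb{Z}$, resp.\ $\notin\chi_i\mathbb{Z}$, which is exactly the conclusion excluded by Lemma~\ref{4.1.10}. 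Together with $\langle x_r\lambda,\overline{\alpha}_i^\lor\rangle=\langle w_{u+1}\lambda,\overline{\alpha}_i^\lor\rangle>0$ this gives the first assertion.

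Next I would build the path \eqref{eq_4.1.5} one edge at a time, producing from the $k$-th edge $x_{k-1}\xleftarrow{\gamma_k^\lor}x_k$ of \eqref{eq_4.1.4} the edge $\lfloor r_i x_{k-1}\rfloor\xleftarrow{z_k\gamma_k^\lor}\lfloor r_i x_k\rfloor$. Consider first $i\in I$, so $\overline{\alpha}_i=\alpha_i$, $r_i=s_i$ and $z_k=e$. For each $k$ I would first observe $x_k\gamma_k\neq\pm\alpha_i$: otherwise $x_{k-1}\lambda=x_k s_{\gamma_k}\lambda=s_{x_k\gamma_k}x_k\lambda=s_i x_k\lambda$, so $\langle x_{k-1}\lambda,\alpha_i^\lor\rangle=-\langle x_k\lambda,\alpha_i^\lor\rangle<0$, contradicting the first assertion. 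Hence $\gamma_k\neq\pm x_k^{-1}\alpha_i$, and since $x_k\xrightarrow{x_k^{-1}\alpha_i^\lor}\lfloor s_i x_k\rfloor$ is an edge of $\QBG^S$ by Lemma~\ref{leftaction}(1), the left diagram (bottom two edges $\Rightarrow$ top two edges) of Lemma~\ref{diamond}(1) or (2) — the case $\gamma\neq w^{-1}\alpha_i$, with $w=x_k$ — produces, from the bottom edges $x_k\to\lfloor s_i x_k\rfloor$ and $x_k\xrightarrow{\gamma_k^\lor}x_{k-1}$, the edge $\lfloor s_i x_k\rfloor\xrightarrow{\gamma_k^\lor}\lfloor s_i x_{k-1}\rfloor$ of $\QBG^S$ of the \emph{same} Bruhat/quantum type. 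Concatenating over $k=1,\dots,r$ gives \eqref{eq_4.1.5}. For $i=0$ I would run the same argument with $\alpha_i,\alpha_i^\lor$ replaced by $-\theta,-\theta^\lor$: the quantum edge $x_k\xrightarrow{-x_k^{-1}\theta^\lor}\lfloor s_\theta x_k\rfloor$ (Lemma~\ref{leftaction}(2)) replaces the Bruhat one, Lemma~\ref{2.3.3.5}(3),(4) replaces Lemma~\ref{2.3.3.5}(1) to rule out $x_k\gamma_k=\pm\theta$, and Lemma~\ref{diamond}(3),(4) replaces Lemma~\ref{diamond}(1),(2); the coset corrections $z_k\in W_S$ defined by $s_\theta x_k=\lfloor s_\theta x_k\rfloor z_k$ then enter, and the transported labels become $z_k\gamma_k^\lor$.

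It remains to verify that \eqref{eq_4.1.5} lies in the decorated graph and, when \eqref{eq_4.1.4} is shortest, so is \eqref{eq_4.1.5}. For $i\in I$ the labels are unchanged ($z_k=e$) and the Bruhat/quantum type is preserved by Lemma~\ref{diamond}, so every transported edge satisfies the same integrality condition as the corresponding edge of \eqref{eq_4.1.4}, which is in the decorated graph by hypothesis. For $i=0$ a $\theta$-diamond may turn a Bruhat edge into a quantum edge, but — as in the proof of Lemma~\ref{diamond} for type $\C$ — this happens only when $\langle x_k^{-1}\theta,\gamma_k^\lor\rangle\neq0$, which forces $\gamma_k$ to be a long root because $\theta$ is the highest short root of $\mathfrak{g}(\D)$; for a long root the integrality condition is $\sigma_u\langle\lambda,\gamma_k^\lor\rangle\in\mathbb{Z}$ irrespective of the type, and $\langle\lambda,z_k\gamma_k^\lor\rangle=\langle\lambda,\gamma_k^\lor\rangle$ since $z_k\in W_S=W_{S_\lambda}$, so the condition is inherited (and when the type is unchanged this is immediate). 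Finally, \eqref{eq_4.1.5} has length $r$, so $\ell(\lfloor r_i w_{u+1}\rfloor,\lfloor r_i w_u\rfloor)\le r$, and the reverse inequality follows by the same length count as in Lemma~\ref{4.1.8}(2), using that $w_{u+1}\to\lfloor r_i w_{u+1}\rfloor$ and $\lfloor r_i w_u\rfloor\to w_u$ are single edges. I expect the main obstacle to be precisely the $i=0$ bookkeeping for type $\C$: carrying the elements $z_k\in W_S$ through the chain of $\theta$-diamonds, checking that the transported labels are exactly $z_k\gamma_k^\lor$, and confirming $(\QBG_{\sigma_u\lambda}^\C)^S$-integrality when a short-labelled Bruhat edge is converted into a quantum edge; the rest is a routine transcription of the proof of Lemma~\ref{4.1.8}.
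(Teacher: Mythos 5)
Your argument coincides with the paper's proof in all essentials: the first assertion via Lemma~\ref{4.1.10}, the exclusion $x_k\gamma_k\neq\pm\overline{\alpha}_i$ by the same sign contradiction, the transport through the left diagrams of Lemma~\ref{diamond} (1),(2) for $i\in I$ and (3),(4) for $i=0$, and the integrality check via $\langle\lambda, z_k\gamma_k^\lor\rangle=\langle\lambda,\gamma_k^\lor\rangle$; your observation that a change of edge type across a $\theta$-diamond forces the label to be a long root is exactly the point the paper's one-line justification leaves implicit, so that part is if anything more careful than the original. The one incorrect step is in your sketch of the shortestness claim: $\lfloor r_i w_u\rfloor\to w_u$ is \emph{not} in general an edge of $\QBG^S$ --- since $\langle w_u\lambda,\overline{\alpha}_i^\lor\rangle>0$, Lemma~\ref{leftaction} produces the edge $w_u\to\lfloor r_i w_u\rfloor$ in the opposite direction --- and even if both extra edges existed, the triangle inequality would only give $\ell(\lfloor r_i w_{u+1}\rfloor,\lfloor r_i w_u\rfloor)\geq r-2$. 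The correct argument (which the paper simply defers to \cite{LNSSS4}) uses only the edge $w_{u+1}\to\lfloor r_i w_{u+1}\rfloor$: if there were a directed path of length $l<r$ from $\lfloor r_i w_{u+1}\rfloor$ to $\lfloor r_i w_u\rfloor$, prepending that edge gives a path of length $l+1$ from $w_{u+1}$ to $\lfloor r_i w_u\rfloor$ whose source has positive and whose target has negative pairing with $\overline{\alpha}_i^\lor$, so Lemma~\ref{4.1.8}~(1) yields a path of length $l<r$ from $w_{u+1}$ to $\lfloor r_i\lfloor r_i w_u\rfloor\rfloor=w_u$, contradicting $\ell(w_{u+1},w_u)=r$.
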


\begin{proof}
It follows from Lemma \ref{4.1.10} that if $H_i^\eta (\sigma_u) \notin \chi_i \mathbb{Z}$,
then $\langle x_k \lambda, \overline{\alpha}^\lor_i \rangle >0$ for all $0 \leq k \leq r$
(in particular, $\langle w_u \lambda, \overline{\alpha}^\lor_i \rangle >0$).
Assume that $i \in I$ (resp., $i=0$), and suppose, for a contradiction,
that $x_k \gamma_k = \alpha_i$ (resp., $= \pm \theta$) for some $1 \leq k \leq r$.
Then, $x_{k-1}\lambda = x_k s_{\gamma_k}\lambda = s_{x_k \gamma_k} x_k \lambda = r_i x_k \lambda$,
and hence $\langle x_{k-1}\lambda, \overline{\alpha}_i^\lor \rangle = \langle r_i x_k \lambda, \overline{\alpha}_i^\lor \rangle = - \langle x_k \lambda, \overline{\alpha}_i^\lor \rangle$,
which contradicts the fact that $\langle x_{k-1}\lambda, \overline{\alpha}_i^\lor \rangle >0 $ and $\langle x_k \lambda, \overline{\alpha}_i^\lor \rangle >0$.
Thus, we conclude that $x_k \gamma_k \neq \pm \alpha_i$ (resp., $\neq \pm \theta$)
for all $1 \leq k \leq r$.
Therefore, we deduce from Lemma \ref{diamond} (1), (2) (resp., (3), (4))
that there exists a directed path of the form (\ref{eq_4.1.5}) from $\lfloor r_i w_{u+1}\rfloor$ to $\lfloor r_i w_u\rfloor$.
Because the directed path (\ref{eq_4.1.4}) lies in $\QBG^\C_{\sigma_u \lambda}$,
and $\sigma_u \langle \lambda, z\gamma_k^\lor \rangle=\sigma_u \langle \lambda, \gamma_k^\lor \rangle$,
the directed path (\ref{eq_4.1.5}) is a path from $\lfloor r_i w_{u+1} \rfloor$ to $\lfloor r_i w_u \rfloor$ in  $(\QBG_{\sigma_u \lambda}^C)^S$.
The proof of being the shortest is similar to that of \cite[Lemma 4.1.13]{LNSSS4}.
\end{proof}

\subsection{Proof of Theorems \ref{stable} and \ref{realization_theorem}}
We set $\lambda = \iota^*(\lambda^\dag)$. We recall that $\lambda$ is a dominant weight in $Q$.
\begin{proof}[Proof of Theorem $\ref{stable}$]
The outline of the proof is similar to that of \cite[Proposition 4.2.1]{LNSSS4}.
It suffices to show that the set $\QLS^\C(\lambda)$ is stable under the action of the root operators $\widetilde{f}\pdag_i$, $i \in I_\aff$.
Let $\eta = (w_1, \ldots , w_s; \sigma_0,\ldots ,  \sigma_s)\in \QLS^\C(\lambda)$,
and assume that $\widetilde{f}\pdag_i \eta \neq \zero$.
Then there exists $0 \leq u < s$ such that $\sigma_u = t_2$.
Let $u \leq m < s$ be such that $\sigma_m < t_3^\chi \leq \sigma_{m+1}$;
notice that $H_i^\eta(t)$ is strictly increasing on $[t_2, t_3^\chi]$,
which implies that $\langle w_p \lambda, \alpha_i^\lor \rangle>0$ for all $u+1 \leq p \leq m+1$.
We need to consider the following four cases:

Case 1: $w_u \neq \lfloor s_j w_{u+1}\rfloor$ or $u = 0$, and $\sigma_m < t_3^\chi < \sigma_{m+1}$,

Case 2: $w_u \neq \lfloor s_j w_{u+1}\rfloor$ or $u = 0$, and $ t_3^\chi = \sigma_{m+1}$,

Case 3: $w_u = \lfloor s_j w_{u+1}\rfloor$, and $\sigma_m < t_3^\chi < \sigma_{m+1}$,

Case 4: $w_u = \lfloor s_j w_{u+1}\rfloor$, and $ t_3^\chi = \sigma_{m+1}$.

\noindent
In what follows, we assume Case 2; the proofs of other cases are easier.
Then it follows from the definition of the root operator $\widetilde{f}\pdag_i$ that
\begin{align*}
&\widetilde{f}\pdag_i \eta = (w_1 , \ldots , w_u , \lfloor s_i w_{u+1}\rfloor,\ldots , \lfloor s_i w_{m+1}\rfloor, w_{m+2}, \ldots, w_s; \\
& \hspace{40mm}\sigma_0, \ldots , \sigma_u, \ldots , \sigma_m, t_3^\chi = \sigma_{m+1}, \sigma_{m+2}, \ldots, \sigma_s )
\end{align*}

It suffices to show that

(i)
there exists a directed path from $\lfloor s_i w_{u+1}\rfloor$ to $w_u$ in $(\QBG_{\sigma_u \lambda}^\C)^S$ (when $u>0$);

(ii)
there exists a directed path from $\lfloor s_i w_{p+1}\rfloor$ to $\lfloor s_i w_{p}\rfloor$ in $(\QBG_{\sigma_{p} \lambda}^\C)^S$ for each $u+1 \leq p \leq m$;

(iii)
there exists a directed path from $ w_{m+2}$ to $\lfloor s_i w_{m+1}\rfloor$ in $(\QBG_{\sigma_{m+1} \lambda}^\C)^S$.

\noindent
Also,
we will show that if $\eta \in \widetilde{\QLS}^\C(\lambda)$,
then the directed paths in (i)-(iii) above
can be chosen from the shortest ones, which implies that $f_i \eta \in \widetilde{\QLS}^\C(\lambda)$.

(i)
We deduce that from the definition of 
$t_2 = \sigma_u$ that
$\langle w_u \lambda , \overline{\alpha}_i^\lor \rangle \leq 0$ and $\langle w_{u+1} \lambda , \overline{\alpha}_i^\lor \rangle > 0$.
Since $\eta \in \QLS^\C(\lambda)$,
there exists a directed path from $w_{u+1}$ to $w_u$ in
$(\QBG^\C_{\sigma_u \lambda})^S$.
Hence it follows from Lemma \ref{4.1.9} (1), (4)
that there exists a directed path from $\lfloor r_i w_{u+1} \rfloor$ to $w_u$ in $(\QBG^\C_{\sigma_u \lambda})^S$.
Moreover, we see from the definition of $\widetilde{\QLS}^\C (\lambda)$ and
Lemma \ref{4.1.9} (2) that
if $\eta \in \widetilde{\QLS}^\C (\lambda)$,
then there exists a directed path from $\lfloor r_i w_{u+1} \rfloor$ to $w_u$ in $(\QBG^\C_{\sigma_u \lambda})^S$ whose length is equal to $\ell(\lfloor r_i w_{u+1} \rfloor, w_u)$.

(ii)
Recall that $H_i^\eta (t)$ is strictly increasing on $[t_2, t_3^\chi]$,
and that $H_i^\eta (t_0) = m_i^\eta \in \chi_i \mathbb{Z}$ and $H_i^\eta (t_1) = m_i^\eta +\chi_i$.
Hence it follows that $H^\eta_i (\sigma_p)  \notin
\chi_i \mathbb{Z}$
for any $u+1 \leq p \leq m$.
Therefore, we deduce from Lemma \ref{4.1.13} that
there exists a directed path from $\lfloor r_i w_{p+1}\rfloor$ to $\lfloor r_i w_{p} \rfloor$ in $(\QBG_{\sigma_p}^\C)^S$ for each $u+1 \leq p \leq m$.
Moreover, we see from the definition of $\widetilde{\QLS}^\C (\lambda)$ and Lemma \ref{4.1.13} that 
if $\eta \in \widetilde{\QLS}^\C(\lambda)$,
then for each $u+1 \leq p \leq m$, there exists a 
directed path $\lfloor r_i w_{p+1}\rfloor$ to $\lfloor r_i w_{p} \rfloor$ in $(\QBG_{\sigma_p}^\C)^S$ for each $u+1 \leq p \leq m$ whose length is equal to $\ell(\lfloor r_i w_{p+1} \rfloor , \lfloor r_i w_{p} \rfloor) $.

(iii)
Since $\langle w_{m+1} \lambda , \overline{\alpha}_i^\lor \rangle >0$,
it follows from  Lemma \ref{leftaction} (1), (2) that
if $i \in I$ (resp., $i=0$), then
$\lfloor r_i w_{m+1} \rfloor \xleftarrow{\gamma^\lor}
w_{m+1}$ is a Bruhat (resp., quantum) edge;
here
if $i \in I$ (resp., $i =0$), then $\gamma = w_{m+1}\inv \alpha_i$ (resp., $\gamma= - w_{m+1}\inv \theta$).
Note that $\langle \lambda, \gamma^\lor \rangle =
\langle x_{m+1}\lambda, \overline{\alpha}_i^\lor \rangle$.

\begin{claim}
$\lfloor r_i w_{m+1} \rfloor \xleftarrow{\gamma^\lor}
w_{m+1}$ is an edge of $(\QBG_{t_3^\chi \lambda }^\C)^S$;
that is,
$t_3^\chi \langle w_{m+1}\lambda , \overline{\alpha}_i^\lor \rangle \in \chi_i \mathbb{Z}$.
\end{claim}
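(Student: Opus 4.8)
The plan is to reduce the Claim to the single integrality statement $\sigma_{m+1}\langle w_{m+1}\lambda, \overline{\alpha}_i^\lor\rangle \in \chi_i\mathbb{Z}$ and to prove the latter by comparing two integral values of $H_i$: the value $H_i^\eta(t_3^\chi)$ at the time $t_3^\chi=\sigma_{m+1}$ along $\eta$, and the value at the endpoint $1$ of a truncation of $\eta$ whose last segment has slope precisely $\langle w_{m+1}\lambda,\overline{\alpha}_i^\lor\rangle$. The reduction is routine: since $\langle\lambda,\gamma^\lor\rangle=\langle w_{m+1}\lambda,\overline{\alpha}_i^\lor\rangle$ and $\lfloor r_i w_{m+1}\rfloor\xleftarrow{\gamma^\lor}w_{m+1}$ is already known to be a Bruhat edge for $i\in I$ and a quantum edge for $i=0$, one only has to verify the integrality clause defining $(\QBG^\C_{t_3^\chi\lambda})^S$, which I would check in the three cases $i\in I\setminus\{n\}$ ($\gamma$ long, Bruhat edge, requirement $\in\mathbb{Z}=\chi_i\mathbb{Z}$), $i=n$ ($\gamma$ short, Bruhat edge, requirement $\in 2\mathbb{Z}=\chi_i\mathbb{Z}$), and $i=0$ ($\gamma$ short, quantum edge, requirement $\in\mathbb{Z}=\chi_i\mathbb{Z}$).

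For the main step I would set $\eta''\eqdef(w_1,\ldots,w_{m+1};\,\sigma_0,\ldots,\sigma_m,1)$. Since $m<s$, condition (C) for $\eta$ provides, for each $1\leq k\leq m$, a directed path from $w_{k+1}$ to $w_k$ in $(\QBG^\C_{\sigma_k\lambda})^S$, and these same paths witness condition (C) for $\eta''$, whose first $m+1$ times coincide with those of $\eta$; hence $\eta''\in\QLS^\C(\lambda)$. Reading $\eta''$ as a PLC map as in Remark~\ref{piecewiseliner} gives $\eta''(1)=\eta(\sigma_m)+(1-\sigma_m)w_{m+1}\lambda$, so that $H_i^{\eta''}(1)=H_i^\eta(\sigma_m)+(1-\sigma_m)\langle w_{m+1}\lambda,\overline{\alpha}_i^\lor\rangle$. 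Now Lemma~\ref{4.1.7} gives $\eta''(1)\in\lambda+Q=Q$, and since $\langle q,\overline{\alpha}_i^\lor\rangle\in\chi_i\mathbb{Z}$ for every $q\in Q$ (as in the proof of Lemma~\ref{4.1.10}) we obtain $H_i^{\eta''}(1)\in\chi_i\mathbb{Z}$. On the other hand $H_i^\eta(\sigma_{m+1})=H_i^\eta(t_3^\chi)=m_i^\eta+\chi_i$ by the choice of $t_3^\chi$, and $m_i^\eta\in\chi_i\mathbb{Z}$ by Proposition~\ref{4.1.12} (the global minimum being attained at a breakpoint or an endpoint of $[0,1]$), so $H_i^\eta(\sigma_{m+1})\in\chi_i\mathbb{Z}$ as well.

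Finally, $H_i^\eta$ is affine on $[\sigma_m,\sigma_{m+1}]$ with slope $\langle w_{m+1}\lambda,\overline{\alpha}_i^\lor\rangle$, so
\[
H_i^{\eta''}(1)-H_i^\eta(\sigma_{m+1})=(1-\sigma_{m+1})\langle w_{m+1}\lambda,\overline{\alpha}_i^\lor\rangle ,
\]
and the left-hand side lies in $\chi_i\mathbb{Z}$ by the previous paragraph; combining this with $\langle w_{m+1}\lambda,\overline{\alpha}_i^\lor\rangle\in\chi_i\mathbb{Z}$ (again $w_{m+1}\lambda\in Q$) yields $\sigma_{m+1}\langle w_{m+1}\lambda,\overline{\alpha}_i^\lor\rangle\in\chi_i\mathbb{Z}$, i.e.\ $t_3^\chi\langle w_{m+1}\lambda,\overline{\alpha}_i^\lor\rangle\in\chi_i\mathbb{Z}$, as needed. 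I expect the only genuinely delicate point to be the construction of $\eta''$: one must resist the temptation to rescale the times of $\eta$ (that would break the integrality conditions defining the graphs $(\QBG^\C_{b\lambda})^S$), so the truncation has to keep $\sigma_0,\ldots,\sigma_m$ unchanged and merely reset the final time to $1$; once this is done, the argument reduces to the bookkeeping above together with the already-available facts $m_i^\eta\in\chi_i\mathbb{Z}$ and $\langle Q,\overline{\alpha}_i^\lor\rangle\subseteq\chi_i\mathbb{Z}$.
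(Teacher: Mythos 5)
Your proof is correct and rests on the same two pillars as the paper's own argument: the identity $H_i^\eta(t_3^\chi)=m_i^\eta+\chi_i\in\chi_i\mathbb{Z}$ (via Proposition \ref{4.1.12}) and the fact that the accumulated offset $\sum_{k=1}^{m}\sigma_k(w_k\lambda-w_{k+1}\lambda)$ pairs with $\overline{\alpha}_i^\lor$ into $\chi_i\mathbb{Z}$. The paper gets the latter by expanding $\eta(t_3^\chi)=t_3^\chi w_{m+1}\lambda+\sum_{k=1}^{m}\sigma_k(w_k\lambda-w_{k+1}\lambda)$ and applying Lemma \ref{4.1.6} term by term, whereas you package it through the truncated path $\eta''$ and Lemma \ref{4.1.7}; since $\eta''(1)-w_{m+1}\lambda$ is exactly that sum, the two arguments are the same computation in slightly different clothing.
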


$Proof \ of \ Claim.$
Since
\begin{equation*}
\eta (t_3^\chi)
=
\sum_{k=1}^{m+1} (\sigma_k-\sigma_{k-1})w_k \lambda
=
t_3^\chi w_{m+1}\lambda +
 \sum_{k=1}^{m} \sigma_k(w_k \lambda - w_{k+1} \lambda) ,
\end{equation*}
we have
\begin{equation*}
H_i^\eta (t_3^\chi)
=
t_3^\chi \langle w_{m+1}\lambda, \overline{\alpha}_i^\lor
\rangle 
+
 \sum_{k=1}^{m} \langle \sigma_k(w_k \lambda - w_{k+1} \lambda) , \overline{\alpha}_i^\lor
\rangle.
\end{equation*}
Here, $\sigma_k(w_k \lambda - w_{k+1} \lambda) \in Q$ by Lemma \ref{4.1.6},
and hence $\langle \sigma_k(w_k \lambda - w_{k+1} \lambda) , \overline{\alpha}_i^\lor
\rangle \in \chi_i \mathbb{Z}$.
Also, it follows from Lemma \ref{4.1.12} that $H^\eta_i (t_3^\chi) = m_i^\eta + \chi_i \in \chi_i \mathbb{Z}$.
Therefore, we deduce that 
$t_3^\chi \langle w_{m+1}\lambda, \overline{\alpha}_i^\lor
\rangle \in \chi_i \mathbb{Z}$.
\bqed

By the definition of $\QLS^\C (\lambda)$,
there exists  a directed path from $w_{m+2}$ to $w_{m+1}$ in $(\QBG^\C_{\sigma_{m+1}\lambda})^S$.
Concatenating this directed path and the edge $\lfloor r_i w_{m+1} \rfloor \xleftarrow{\gamma^\lor}
w_{m+1}$,
we obtain a directed path from $w_{m+2}$ to $\lfloor r_i w_{m+1} \rfloor$ in $(\QBG^\C_{\sigma_{m+1}\lambda})^S$.
Thus, we have proved that $f_i \eta \in \QLS^\C(\lambda)$.

Assume that $\eta \in \widetilde{\QLS}^\C (\lambda)$,
and set $r \eqdef \ell(w_{m+2}, w_{m+1})$.
We see from the argument above that there exists a directed path from $w_{m+2}$
to $\lfloor r_i w_{m+1} \rfloor$
in $(\QBG_{\sigma_{u+1}\lambda}^\C)^S$
whose length is equal to $r+1$.
Suppose, for a contradiction, that there exists a directed path to $\lfloor r_i w_{m+1} \rfloor$
in $\QBG^S$
whose length $\l$ is less than $r+1$.
Since
$\langle r_i w_{m+1} \lambda ,  \overline{\alpha}_i^\lor \rangle< 0$
and
$\langle w_{m+2} \lambda, \overline{\alpha}_i^\lor \rangle \geq 0$,
we deduce from Lemma \ref{4.1.8} that
there exists a directed path from $w_{m+2}$
to
$\lfloor r_i \lfloor r_i w_{m+1}\rfloor \rfloor =\lfloor w_{m+1} \rfloor =w_{m+1}$
in $\QBG^S$
whose length is equal to $l-1<r= \ell(w_{m+2}, w_{m+1})$, a contradiction.
Thus, we have proved that if $\eta \in \widetilde{\QLS}^\C (\lambda)$,
then $f_i \eta  \in \widetilde{\QLS}^\C (\lambda)$.
\end{proof}

\begin{proof}[Proof of Theorem $\ref{realization_theorem}$]
Since $\eta_\lambda \in \widetilde{\QLS}^\C(\lambda) \subset \QLS^\C(\lambda)$,
it follows from the definition of $\widetilde{\mathbb{B}}(\lambda)_\cl$ and Theorem \ref{stable} that $\widetilde{\mathbb{B}}(\lambda)_\cl  \subset \widetilde{\QLS}^\C(\lambda) \subset \QLS^\C(\lambda)$.
%Recall that the set $\widetilde{\mathbb{B}}(\lambda)_\cl$ is a simple crystal generated by $\eta_\lambda$.
Hence it suffices to show that $\QLS^\C(\lambda) \subset \widetilde{\mathbb{B}}(\lambda)_\cl$.

\begin{claim}
$(\widetilde{f}\pdag_i)^{\max} \pi = f_i^{\max} \pi$ for each $\pi \in  \QLS^\C(\lambda)$ and $i \in I_\aff$.
\end{claim}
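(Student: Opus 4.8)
The plan is to reduce the claim to a divisibility statement about $\varphi_i$. By Remark~\ref{root_operator_identification}, whenever $\widetilde{f}\pdag_i\eta\neq\zero$ one has $\widetilde{f}\pdag_i\eta = f_i^{\chi_i}\eta$, where $f_i$ is the root operator on $\mathbb{B}_\inte$. Starting from $\pi\in\QLS^\C(\lambda)$ and using that $\QLS(\lambda)\sqcup\setzero$ is stable under $f_i$, an easy induction on $k$ (via Remark~\ref{root_operator_identification}, the standard identity $\varphi_i(f_i\eta)=\varphi_i(\eta)-1$, and \eqref{epsilon_phi}, \eqref{epsilon_phi_C}) shows
\begin{equation*}
(\widetilde{f}\pdag_i)^k\pi = f_i^{\chi_i k}\pi \qquad \bigl(0\le k\le \widetilde{\varphi}\pdag_i(\pi)\bigr),
\end{equation*}
the inductive step being legitimate because $f_i^{\chi_i k}\pi\in\QLS(\lambda)$ is nonzero and $\widetilde{f}\pdag_i\bigl(f_i^{\chi_i k}\pi\bigr)\neq\zero$ as long as $\chi_i(k+1)\le\varphi_i(\pi)$, which holds for $k+1\le\widetilde{\varphi}\pdag_i(\pi)=\lfloor\varphi_i(\pi)/\chi_i\rfloor$. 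Consequently $(\widetilde{f}\pdag_i)^{\max}\pi = f_i^{\chi_i\lfloor\varphi_i(\pi)/\chi_i\rfloor}\pi$, and since $f_i^{\max}\pi = f_i^{\varphi_i(\pi)}\pi$, the claim becomes equivalent to the divisibility $\chi_i\mid\varphi_i(\pi)$, where $\varphi_i(\pi)=H_i^\pi(1)-m_i^\pi$ by \eqref{epsilon_phi}.

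For $i\in I_\aff\setminus\{n\}$ we have $\chi_i=1$, so there is nothing more to prove. The case $i=n$, with $\chi_n=2$, is the substantive one: I must show $\varphi_n(\pi)=H_n^\pi(1)-m_n^\pi$ is even. For the endpoint value, Lemma~\ref{4.1.7} gives $\pi(1)\in\lambda+Q=Q$ (recall $\lambda=\iota^*(\lambda^\dag)\in Q$), and since $Q=\bigl(\bigoplus_{i\neq n}\mathbb{Z}\varpi_i\bigr)\oplus 2\mathbb{Z}\varpi_n$ in type $B_n$, it follows that $H_n^\pi(1)=\pair{\pi(1)}{\alpha_n^\lor}\in 2\mathbb{Z}$. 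For the minimum, $m_n^\pi$ is attained at a breakpoint of the piecewise-linear function $H_n^\pi(t)$, hence either at $t=0$ (where $H_n^\pi(0)=0$), or at $t=1$ (already handled), or at an interior local minimum; by Proposition~\ref{4.1.12} applied to $\pi\in\QLS^\C(\lambda)$ and $i=n$, such a local minimum lies in $\chi_n\mathbb{Z}=2\mathbb{Z}$. Therefore $\varphi_n(\pi)\in 2\mathbb{Z}$, so $(\widetilde{f}\pdag_n)^{\max}\pi = f_n^{2\lfloor\varphi_n(\pi)/2\rfloor}\pi = f_n^{\varphi_n(\pi)}\pi = f_n^{\max}\pi$, which proves the claim.

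I expect the substantive point to be exactly this $i=n$ parity: it depends crucially on the hypothesis $\lambda\in Q$ (the root lattice of $\mathfrak{g}(B_n)$), not merely $\lambda\in P$, and on the $\C$-type strengthening of Proposition~\ref{4.1.12} asserting that local minima of $H_n^\pi$ are even for $\pi\in\QLS^\C(\lambda)$; without either input $\varphi_n(\pi)$ could be odd and the two maximal operators would disagree. The only routine care needed is in the induction of the first paragraph, namely checking that iterating $f_i$ never leaves $\QLS(\lambda)$ and that the non-vanishing threshold of $\widetilde{f}\pdag_i$ matches $\chi_i$ consecutive non-vanishings of $f_i$, which is immediate from \eqref{epsilon_phi}, \eqref{epsilon_phi_C} and Remark~\ref{root_operator_identification}.
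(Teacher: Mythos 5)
Your argument is correct and follows essentially the same route as the paper's own proof: reduce via Remark~\ref{root_operator_identification} to the divisibility $\chi_i\mid\varphi_i(\pi)$, and obtain that from Proposition~\ref{4.1.12} together with \eqref{epsilon_phi} and \eqref{epsilon_phi_C}. You merely spell out the induction on $k$ and the $i=n$ parity computation ($H_n^\pi(1)\in 2\mathbb{Z}$ from $\pi(1)\in Q$, and $m_n^\pi\in 2\mathbb{Z}$ from the $\C$-type version of Proposition~\ref{4.1.12}) that the paper leaves implicit.
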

$Proof \ of \ Claim.$
It follows from Proposition \ref{4.1.12} and (\ref{epsilon_phi}) that 
$\varphi_i (\pi) \in \chi_i \mathbb{Z}$.
Therefore, by (\ref{epsilon_phi_C}),  $\widetilde{\varphi}\pdag_i (\pi) = \frac{\varphi_i(\pi)}{\chi_i} \in \mathbb{Z}$.
Hence by Remark \ref{root_operator_identification},
$(\widetilde{f}\pdag_i)^{\widetilde{\varphi}\pdag_i (\pi)} \pi = f_i^{\varphi_i (\pi)} \pi$, which implies
$(\widetilde{f}\pdag_i)^{\max} \pi = f_i^{\max} \pi$.
\bqed

Since $\QLS^\C(\lambda) \subset \QLS(\lambda)$, 
for each element $\eta \in \QLS^\C(\lambda)$, 
there exist  $i_1, \ldots, i_k\in I_\aff$ such that 
\begin{equation*}
f_{i_1}^{\max} \dots f_{i_k}^{\max}\eta =  \eta_\lambda.
\end{equation*}
by Remark \ref{simple_crystal_dual_untwisted}.
It follows from the claim above that
\begin{equation*}
(\widetilde{f}\pdag_{i_1})^{\max} \dots (\widetilde{f}\pdag_{i_k})^{\max}\eta =  \eta_\lambda,
\end{equation*}
which implies that $\eta \in \widetilde{\mathbb{B}}(\lambda)_\cl$.

\end{proof}

\section{The graded character of $\QLS(\lambda)$}
In this section, we fix $(A, A_\aff)= (B_n, \D)$.
For $x \in \mathfrak{h}^*_\mathbb{R} \oplus \mathbb{R}\delta$, 
we define $\overline{x} \in \mathfrak{h}^*_\mathbb{R}$ and $\dg(x) \in \mathbb{R}$ by
%%%%%%%%%%%%
%%%%%%%%%%%%
%equ:4-1
%%%%%%%%%%%%
%%%%%%%%%%%%
\begin{equation}\label{eq:4-1}
x= \overline{x} + \dg(x)\delta.
\end{equation}

For $x \in Q$,
let $t(x)$ denote the linear transformation on $\mathfrak{h}^*_\mathbb{R} \oplus \mathbb{R}\delta$: 
$ t(x) (y+ r \delta ) \eqdef y +(r- 2(x,y))\delta$ for $y \in \mathfrak{h}^*_\mathbb{R}$, $r \in \mathbb{R}$,
where 
$\delta$ denotes the null root of $\mathfrak{g}(\D)$ and
$(\cdot , \cdot) : \mathfrak{h}_\mathbb{R}^*   \times \mathfrak{h}_\mathbb{R}^* \rightarrow \mathbb{R}$ the bilinear form defined in Remark \ref{BC'}.
The affine Weyl group of $\mathfrak{g}(\D)$
are defined by
$W_\aff \eqdef t(Q) \rtimes W $.
Also, we define $s_0 : \mathfrak{h}^* \rightarrow \mathfrak{h}^*$ by $y + r \delta \mapsto s_\theta x - (r - 2(x, \theta))\delta
=
s_\theta x - (r- \pair{x}{\theta^\lor})\delta
$.
Then $W_\aff = \langle s_i \ | \ i \in I_\aff \rangle$;
note that $s_0 = t(\theta)s_\theta$.

%In this section, we fix $(A, A_\aff)= (B_n, \D)$.
%For $x \in Q$,
%let $t(x)$ denote the linear transformation on $P_\aff^0$: 
%$ t(x) (y+ z \delta ) = y +(z- 2(x,y))\delta$ for $y \in \oplus_{i \in I}\mathbb{Z}(\Lambda_i - \langle \Lambda_i,c \rangle_\aff \Lambda_0) $, $z\in \mathbb{Z}$,
%where $(\cdot , \cdot) : \mathfrak{h}_\mathbb{R}^* \times \mathfrak{h}_\mathbb{R}^* \rightarrow \mathbb{R}$ denotes the bilinear form defined in Remark \ref{BC'}.
%The affine Weyl group of $\mathfrak{g}(\D)$
%are defined by
%$W_\aff \eqdef t(Q) \rtimes W $.
%Also, we define \textcolor{red}{$s_0 : \mathfrak{h}^* \rightarrow \mathfrak{h}^*$ by $x \mapsto s_\theta x - 2(x, \theta)\delta
%=
%s_\theta x - \pair{x}{\theta}\delta
%$}.
%Then $W_\aff = \langle s_i \ | \ i \in I_\aff \rangle$;
%note that $s_0 = t(\theta)s_\theta$.
%The affine Weyl group $W_\aff$ acts on
%$P \oplus \mathbb{Z}\delta$ induced by $\xi : P \rightarrow P_\aff^0$\footnote{$\overline{P_\aff^0}$ が良いかもしれない (extremal weight module について書くかどうかによる)}
%(as affine transformations):
%for $v\in W$, $t(x) \in t(Q)$,
%\begin{equation}
%v t(x)( \overline{\beta}+r\delta )=v\overline{\beta}+(r- 2 ( x, \overline{\beta} ) ) \delta,
%\ \ 
%\overline{\beta} \in  Q , r \in \mathbb{Z}.
%\end{equation}

An element $u \in W_\aff $ can be written as 
%%%%%%%%%%%%
%%%%%%%%%%%%
%equ:decomposition
%%%%%%%%%%%%
%%%%%%%%%%%%
\begin{equation}\label{equ:decomposition}
u=t( \wt (u) ) \dr (u),
\end{equation}
where $\wt (u) \in Q$ and $ \dr(u) \in W$,
according to the decomposition
$W_\aff = t(Q) \rtimes W $.
For $w \in W_\aff$,
we denote the length of $w$ by 
$\ell (w) $,
which equals
$\# \left( \Delta^+_\aff
\cap w^{-1}\Delta^-_\aff \right)$.

\subsection{Nonsymmetric and symmetric Macdonald polynomials of type $\C$}
In this subsection, we introduce the notion of nonsymmetric and symmetric Macdonald polynomials of type $\C$;
see \cite[\S 3.6]{OS} for more detail.
We fix $(A, A_\aff) = (B_n , \D)$.
Set $\widetilde{\Delta}_\aff \eqdef W_\aff (\{ \alpha_i \ | \ i=0,\ldots, n \} \cup \{ 2\alpha_0, 2\alpha_n \})\subset \mathfrak{h}^*_\mathbb{R} \oplus \mathbb{R}\delta$.
Then, $\widetilde{\Delta}_\aff$ is decomposed into five $W_\aff$-orbits: 
\begin{equation*}
\widetilde{\Delta}_\aff = 
\underbrace{W_\aff \{ \alpha_0 \}}_{\eqdef O_1} \sqcup  \underbrace{W_\aff \{ 2\alpha_0 \}}_{\eqdef O_2=2O_1} \sqcup 
\underbrace{W_\aff \{ \alpha_n \}}_{\eqdef O_3} \underbrace{\sqcup W_\aff \{ 2\alpha_n \}}_{\eqdef O_4=2O_2} \sqcup 
\underbrace{W_\aff \{ \alpha_i \ | \ i = 1, \ldots , n-1 \}}_{\eqdef O_5}.
\end{equation*}

Let $t_i$ be a indeterminate with respect to $O_i$, $i=1,\ldots , 5$.
For $\mu \in Q$,
let $E_\mu (q, t_1 , t_2, t_3, t_4, t_5)$ denote the nonsymmetric Macdonald-Koornwinder polynomial.
% which is of the from
%$E_\mu (q, t_1 , t_2, t_3, t_4, t_5) = e^\mu + \sum_{\nu < \mu} f_\nu e^\nu$, $f_\nu \in \mathbb{Q}(q, t_1 , t_2, t_3, t_4, t_5)$;
%here 
%the partial order on $Q$ is the one in \cite[(2.7.5)]{M}.
Also, for a dominant weight $\lambda \in Q$,
let $P_\lambda (q, t_1 , t_2, t_3, t_4, t_5)$ denote the symmetric Macdonald-Koornwinder polynomial.
% which is of the from
%$P_\lambda (q, t_1 , t_2, t_3, t_4, t_5) = m^\lambda + \sum_{\nu < \lambda} f_\nu m^\nu$, $f_\nu \in \mathbb{Q}(q, t_1 , t_2, t_3, t_4, t_5)$;
%here we set
%$m^\nu \eqdef \sum_{\mu \in W\nu} e^\mu$ for a dominant weight $\nu \in Q$, and  
%the partial order on the set of all dominant weights is defined by 
%\begin{align*}
%\nu < \lambda \ \ardef  \ \lambda -\nu \in \sum_{i \in I} \mathbb{Z}_{\geq 0}\alpha_i.
%\end{align*}
%
Then we define the nonsymmetric and symmetric Macdonald polynomials of type $\C$ by
$E_\mu^\C (q, t) \eqdef E_\mu (q, t, t, t, 1, t) $ and
$P_\lambda^\C (q, t) \eqdef P_\lambda (q, t, t, t, 1, t) $, respectively.
We set  
$E_\mu^\C (q, 0) \eqdef \lim_{t \rightarrow 0} E_\mu^\C (q, t)$ and 
$P_\lambda^\C (q, 0) \eqdef \lim_{t \rightarrow 0} P_\lambda^\C (q, t)$, which are well-defined.

\begin{rem}\label{Lemma7.7}
As in \cite[Lemma 7.7]{LNSSS2},
for a dominant weight $\lambda \in Q$, we have
\begin{equation*}
P_\lambda^\C (q, 0) =
E_{\lon \lambda}^\C (q, 0),
\end{equation*}
where $\lon$ denotes the longest element of $W$.
\end{rem}

Let $\lambda \in Q$ be a dominant weight.
For $\eta = (w_1, \ldots, w_s; \sigma_0, \ldots, \sigma_s) \in \QLS^\C (\lambda)$, we set
\begin{align*}
\Dg (\eta) = \sum_{i=0}^{s-1} (1 - \sigma_i) \wt_\lambda (w_i \Rightarrow w_{i+1}),
\end{align*}
where $\wt_\lambda (w_i \Rightarrow w_{i+1})$ is the $\lambda$-weight of the path from $w_{i+1}$ to $w_i$ in $\QBG$ (or $\QBG^S$) as defined in \S 2.3.
We define
\begin{align*}
\gch (\QLS^\C(\lambda)) \eqdef \sum_{\eta \in \QLS^\C (\lambda)} q^\Dg (\eta) e^{\wt(\eta)} .
\end{align*}
We will show the following theorem in \S 4.4.
\begin{thm}\label{theorem_graded_character}
Let $\lambda \in Q$ be a dominant weight.
Then,
$P^\C_\lambda (q,0) = \gch  (\QLS^\C(\lambda)) $.
\end{thm}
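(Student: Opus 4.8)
The plan is to deduce the identity from the Orr--Shimozono formula of type $\C$ established in \S 4.2, by constructing a bijection onto $\QLS^\C(\lambda)$ that preserves both the weight and the degree statistic. First I would apply Remark~\ref{Lemma7.7} to replace $P^\C_\lambda(q,0)$ by $E^\C_{\lon\lambda}(q,0)$, so that it suffices to match $\gch(\QLS^\C(\lambda))$ with the $t\to 0$ limit of $E_{\lon\lambda}(q,t,t,t,1,t)$. The Orr--Shimozono formula writes $E_{\lon\lambda}(q,\mathbf{t})$ as a sum of monomials indexed by alcove walks (equivalently, by a quantum alcove model attached to a suitable reduced word), each monomial being an explicit product of powers of $q$, of the five Koornwinder parameters, and of a formal exponential; because the relevant affine root system $\widetilde{\Delta}_\aff$ mixes the root data of $\D$ and of $C_n$, the formula is naturally phrased in terms of both.

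The key steps, in order, would be the following. (i) Specialize at $t_1=t_2=t_3=t_5=t$, $t_4=1$, and let $t\to 0$; following \cite{OS} and \cite[\S 7]{LNSSS2}, all contributions vanish except those of a distinguished family of foldings, leaving a sum over a combinatorial set $\mathcal{A}(\lambda)$ of walks, each carrying an explicit $q$-exponent and weight. (ii) Contract each walk in $\mathcal{A}(\lambda)$ to the sequence of Weyl-group elements at which it changes direction, and read off from it a candidate pair $\eta=(w_1,\dots,w_s;\sigma_0,\dots,\sigma_s)$; using Lemmas~\ref{diamond}, \ref{4.1.8}, \ref{4.1.9}, \ref{lem:6.1} and~\ref{8.5}, together with the integrality conditions of Definitions~\ref{QBG_A} and~\ref{QBG_B} and the graph isomorphism of Remark~\ref{identification_QBG}, check that admissibility of the folded walk at $t=0$ is equivalent to condition (C) of Definition~\ref{def_qls_c}, so that $\eta\in\QLS^\C(\lambda)$. (iii) Verify that the weight attached to the walk equals $\wt(\eta)=\eta(1)$, and that its $q$-exponent equals $\Dg(\eta)=\sum_{i=0}^{s-1}(1-\sigma_i)\wt_\lambda(w_i\Rightarrow w_{i+1})$. (iv) Show that the contraction map is a bijection $\mathcal{A}(\lambda)\xrightarrow{\sim}\QLS^\C(\lambda)$ preserving both statistics; summing over $\mathcal{A}(\lambda)$ then yields $E^\C_{\lon\lambda}(q,0)=\gch(\QLS^\C(\lambda))$.

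The hard part will be step (iii), the identification of the $q$-statistic. In the Orr--Shimozono formula the exponent of $q$ is extracted from the translation parts of the affine Weyl group elements labelling the walk, via the decomposition $u=t(\wt(u))\dr(u)$ of \eqref{equ:decomposition}, whereas $\Dg(\eta)$ is an intrinsic statistic defined through the $\lambda$-weights $\wt_\lambda(w_i\Rightarrow w_{i+1})$ of shortest paths in $\QBG$ (equivalently in $\QBG^S$). Matching the two relies on the path-independence results of \S 2.3 --- that $\pair{\lambda}{\wt^S(\bp)}$ does not depend on the shortest path $\bp$ and equals $\wt_\lambda(x\Rightarrow y)$ for coset representatives --- and on careful bookkeeping of the type-$\C$ normalizations: the coefficients $c_\alpha$ in $\Delta_\aff=\{\alpha+c_\alpha a\delta\}$, the factors $\chi_i$, and the identifications $\delta^\dag=2\delta$, $\theta^\dag=2\theta$, $\alpha^\dag_0=\alpha_0$ from Remarks~\ref{BC} and~\ref{rem:root_coroot}. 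Two further points requiring care are that the short/long dichotomy producing the $\mathbb{Z}$ versus $\tfrac12\mathbb{Z}$ integrality conditions in Definition~\ref{QBG_B} matches exactly the $t\to 0$ vanishing pattern of the Koornwinder parameters, and that one should work throughout with the type-$\D$ object $\QBG^\C_{b\lambda}$ --- in which the Orr--Shimozono formula is phrased --- and only transport the final statement to the type-$C_n$ side through Remark~\ref{identification_QBG}. Once these normalizations are settled, the remainder of the argument follows the template of \cite[\S 7]{LNSSS2} with only notational changes.
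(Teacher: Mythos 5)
Your proposal is correct and follows essentially the same route as the paper: reduce to $E^\C_{\lon\lambda}(q,0)$ via Remark~\ref{Lemma7.7}, invoke the Orr--Shimozono formula (which the paper cites in its already-specialized form, Proposition~\ref{os}/Corollary~\ref{oss}, rather than re-deriving the $t\to 0$ limit as in your step (i)), and construct the weight- and degree-preserving bijection $\Xi:\QB^\C(\id;t(\lon\lambda))\to\QLS^\C(\lambda)$ by grouping the steps of each alcove walk according to the rational parameters $d_j$, multiplying by $\lon$, and projecting to $W^S$ --- exactly your steps (ii)--(iv), with the shellability and path-independence lemmas of \S 2.3 playing the roles you assign them.
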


\subsection{Orr-Shimozono formula}

For $\mu \in Q$, 
we denote the shortest element in the coset	$t(\mu)W$ by $m_{\mu} \in W_\aff$.
In the following, we fix $\mu \in Q$,
and take a reduced expression $m_{\mu} =  s_{\ell_{1}}\cdots s_{\ell_{L}}$,
where $ \ell_1 , \ldots , \ell_L \in  I_{\aff}$.

For each $J = \{ j_{1} < j_{2} < j_{3} < \cdots < j_{r} \} \subset \{1,\ldots,L\}$,
we define an alcove path 
\begin{equation*}
p^{\OS}_{J} =
			\left( m_{\mu} = z^{\OS}_0, z^{\OS}_{1} , \ldots , z^{\OS}_{r} ; \beta^{\OS}_{j_1} , \ldots , \beta^{\OS}_{j_r} \right)
\end{equation*}
 as follows: 
we set
$\beta^{\OS}_{k} \eqdef s_{\ell_{L}}\cdots s_{\ell_{k+1}} \alpha_{\ell_{k}} \in \Delta^+_\aff$ 
for $1 \leq k \leq L$, and set $z^{\OS}_{0} \eqdef m_{\mu}$ and $z^\OS_k \eqdef z^\OS_{k-1} s_{\beta^{\OS}_{j_{k}}}$, $1 \leq k \leq s$.
%	\begin{align*}
%		z^{\OS}_{0} &=m_{\mu} ,\\
%		z^{\OS}_{1} &=m_{\mu}s_{\beta^{\OS}_{j_{1}}},\\
%		z^{\OS}_{2} &=m_{\mu}s_{\beta^{\OS}_{j_{1}}}s_{\beta^{\OS}_{j_2}},\\
%				&\vdots& \\
%		z^{\OS}_{r} &=m_{\mu}s_{\beta^{\OS}_{j_{1}}}\cdots s_{\beta^{\OS}_{j_r}}.
%	\end{align*}
Also, following \cite[\S 3.3]{OS}, 
we set
	$\B ({\id};m_{\mu})
	\eqdef
	\left\{ p^{\OS}_{J} \ \left| \ J \subset \{ 1,\ldots ,L \}  \right. \right\}$
and
	$\ed (p^{\OS}_{J}) \eqdef z^{\OS}_{r}\in W_\aff$.
Then we define
	$\QB^\C (\id; m_{\mu})$
to be the following subset of $\B ({\id};m_{\mu})$:
	\begin{equation*}
		\left\{
			 p^{\OS}_{J} \in \B ({\id} ; m_{\mu} ) \ 
			\left|  \ 
			\begin{array}{l}
				\dr(z^{\OS}_{i}) 
				\xrightarrow{-\left(\overline{ {\beta}^{\OS}_{j_{i+1}}  } \right)^{\lor}}
				\dr (z^{\OS}_{i+1}) \ 
				\text{ is an edge of \QBG}, \ 0\leq  i \leq r-1, \\
				\mbox{if this edge is a Bruhat edge, then }\beta_{j_i}^\OS \in \Delta \oplus 2 \mathbb{Z}\delta
			\end{array}	
			\right. 
		\right\}.
	\end{equation*}

	\begin{rem}[\normalfont{\cite[(2.4.7)]{M}}]
		If $j \in J$,
		then $-\overline{ {\beta}^{\OS}_{j}  } \in {\Delta}^{+}$.
	\end{rem}

	For $p^{\OS}_{J} \in \QB^\C({\id} ; m_{\mu})$, we define ${\qwt}(p^{\OS}_{J})$ as follows.
Let 
$J^0$ denote the set of all indices $j_i$ such that $\ell_{j_i}=0$, and
$J^- \subset J \setminus J^0$ the set of all indices $j_i \in J\setminus J^0$  such that 
	$\dr(z_{i-1}^{\OS}) 
	\xrightarrow{-\left(\overline{ {\beta}^{\OS}_{j_i}  } \right)^{\lor}} 
	\dr (z_{i}^{\OS})$ is a quantum edge;
we remark that  for $j_i \in J^0$,  $\dr(z_{i-1}^{\OS}) 
	\xrightarrow{-\left(\overline{ {\beta}^{\OS}_{j_i}  } \right)^{\lor}} 
	\dr (z_{i}^{\OS})$ is a quantum edge.
Then we set
	\begin{equation*}
		\qwt(p^{\OS}_{J}) 
		\eqdef
		\sum_{j \in J^0 \sqcup J^-} \beta^{\OS}_{j}.
	\end{equation*}

We know the following formula for the specialization of
nonsymmetric Macdonald polynomials at $t=0$ of type $\C$.

\begin{prop}[{\cite[Proposition 5.5]{OS}}]\label{os}
Let $\mu \in Q= P\pdag \subset P$. Then,
	\begin{equation*}
		E_{\mu}^\C(q,0)=
		\sum_{p^{\OS}_{J} \in \QB^\C ({\id} ; m_{\mu}) } 
		q^{{\dg}({\qwt}(p^{\OS}_{J}))}e^{\wt(\ed (p^{\OS}_{J}))}.
	\end{equation*}
\end{prop}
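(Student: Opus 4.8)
The statement is the $t\to 0$ limit of the Orr--Shimozono alcove-walk formula for the nonsymmetric Macdonald--Koornwinder polynomial $E_\mu(q,t_1,\dots,t_5)$ of the affine root datum underlying $(B_n,\D)$, specialized at $(t_1,t_2,t_3,t_4,t_5)=(t,t,t,1,t)$. So the plan is to run that specialization and check that the surviving data are exactly those encoded by $\QB^{\C}(\id;m_\mu)$, $\qwt$ and $\ed$. Recall that, once the reduced word $m_\mu=s_{\ell_1}\cdots s_{\ell_L}$ is fixed, the Orr--Shimozono formula writes $E_\mu(q,t_1,\dots,t_5)$ as a sum over all $J\subset\{1,\dots,L\}$ of $e^{\wt(\ed(p^{\OS}_J))}$ times a monomial in $q$ and the $t_i$; this monomial is a product over the $L$ steps of $p^{\OS}_J$ of local factors, each depending only on whether the step is a crossing ($j_k\notin J$) or a fold ($j_k\in J$), on the sign of that crossing or fold, and on which of the five $W_\aff$-orbits $O_1,\dots,O_5$ the affine root $\beta^{\OS}_{j_k}$ belongs to.

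First I would substitute $t_1=t_2=t_3=t_5=t$, $t_4=1$ and inspect each local factor as $t\to 0$. A term survives precisely when none of its factors vanishes in the limit. For the orbits governed by a parameter sent to $0$, the degeneration is the familiar one from the untwisted $t=0$ specialization: a negative crossing annihilates the term, a positive crossing contributes $1$, a fold survives exactly when the associated step is a quantum (``down'') step --- with the proviso that a fold at the affine node $0$ (the index set $J^0$) survives with a genuine power of $q$ and is automatically a quantum edge. The net effect of these vanishing conditions is exactly the requirement that, for every $0\le i\le r-1$, the pair $\dr(z^{\OS}_i),\dr(z^{\OS}_{i+1})$ be an edge of $\QBG$ with label $-(\overline{\beta^{\OS}_{j_{i+1}}})^\lor$. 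The one place where the present specialization departs from the untwisted paradigm is the treatment of the doubled roots $2\alpha_0,2\alpha_n$ and, through them, of the orbit whose parameter is $t_4=1$: there the relevant fold factor does not vanish, so certain Bruhat (``up'') folds are no longer killed --- precisely those whose affine root lies in $\Delta\oplus 2\mathbb{Z}\delta$. This is exactly the side condition ``if this edge is a Bruhat edge then $\beta^{\OS}_{j_i}\in\Delta\oplus 2\mathbb{Z}\delta$'' defining $\QB^{\C}(\id;m_\mu)$ inside $\B(\id;m_\mu)$. Hence the set of subsets $J$ with a nonzero surviving term is exactly $\QB^{\C}(\id;m_\mu)$.

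It then remains to identify the surviving exponents. For $J\in\QB^{\C}(\id;m_\mu)$ the power of $q$ left by the limit is the $\delta$-coefficient --- the map $\dg$ of \eqref{eq:4-1} --- of the sum of the affine roots $\beta^{\OS}_j$ over the folds $j$ that either sit at the affine node $0$ ($j\in J^0$) or are quantum folds ($j\in J^-$); that is, it is $\dg(\qwt(p^{\OS}_J))$ with $\qwt(p^{\OS}_J)=\sum_{j\in J^0\sqcup J^-}\beta^{\OS}_j$. Since $\ed(p^{\OS}_J)=z^{\OS}_r$ is read off $p^{\OS}_J$ with no reference to the parameters, the $e$-exponents agree verbatim; assembling the three observations term by term gives $E^{\C}_\mu(q,0)=\sum_{p^{\OS}_J\in\QB^{\C}(\id;m_\mu)}q^{\dg(\qwt(p^{\OS}_J))}e^{\wt(\ed(p^{\OS}_J))}$.

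\emph{The main obstacle} is the uniform bookkeeping of the $t\to 0$ limit: one must verify, independently of $\mu$ and of the chosen reduced word, that every local factor degenerates to $0$ or $1$ with the stated quantum-Bruhat-graph meaning, that the factor tied to $t_4=1$ is genuinely non-vanishing and so accounts for exactly the relaxation ``$\beta^{\OS}_{j_i}\in\Delta\oplus 2\mathbb{Z}\delta$'' of the Bruhat-edge condition, and that no cancellations occur among surviving terms. Concretely this means matching Orr--Shimozono's normalization of the affine root system of $(B_n,\D)$ and of its orbits $O_1,\dots,O_5$ with the conventions fixed in \S 4 and Remark~\ref{BC'} and carefully handling the doubled roots $2\alpha_0,2\alpha_n$; once that dictionary is set up, the assertion is a direct reading of \cite[\S 5]{OS}.
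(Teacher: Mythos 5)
The paper offers no proof of this proposition: it is imported verbatim as \cite[Proposition 5.5]{OS}, so there is no argument of the paper's own against which to compare yours. Your sketch — specializing the five Macdonald--Koornwinder parameters to $(t,t,t,1,t)$, letting $t\to 0$, and identifying the surviving alcove walks with $\QB^\C(\id;m_\mu)$, with the non-vanishing $t_4=1$ orbit accounting for the extra condition on Bruhat folds and the $\delta$-degree of $\qwt$ giving the power of $q$ — is consistent with how the result is established in the cited reference, though it remains a citation-level outline that defers the decisive orbit-by-orbit limit computations to \cite{OS}, exactly as the paper itself does.
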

By Remark \ref{Lemma7.7}, we obtain the Orr-Shimozono formula for 
the specialization of symmetric Macdonald polynomials at $t=0$ of type $\C$.
\begin{cor}\label{oss}
Let $\lambda \in Q$ be a dominant weight. Then,
	\begin{equation*}
		P_\lambda^\C(q,0)=
		\sum_{p^{\OS}_{J} \in \QB^\C ({\id} ; t(\lon \lambda)) } 
		q^{{\dg}({\qwt}(p^{\OS}_{J}))}e^{\wt(\ed (p^{\OS}_{J}))}.
	\end{equation*}
\end{cor}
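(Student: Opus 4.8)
The plan is to obtain this statement as an immediate consequence of Proposition~\ref{os} together with Remark~\ref{Lemma7.7}. First I would observe that, since $\lambda\in Q$ is dominant and the finite Weyl group $W$ stabilizes the root lattice $Q$, the weight $\lon\lambda$ again lies in $Q=P^\dag\subset P$; hence Proposition~\ref{os} applies with $\mu=\lon\lambda$ and gives
\begin{equation*}
E_{\lon\lambda}^\C(q,0)=\sum_{p^{\OS}_J\in\QB^\C(\id;\,m_{\lon\lambda})}q^{\dg(\qwt(p^{\OS}_J))}\,e^{\wt(\ed(p^{\OS}_J))}.
\end{equation*}
By Remark~\ref{Lemma7.7} the left-hand side equals $P_\lambda^\C(q,0)$, so the corollary follows once we show that $m_{\lon\lambda}=t(\lon\lambda)$ in $W_\aff$, for then $\QB^\C(\id;m_{\lon\lambda})=\QB^\C(\id;t(\lon\lambda))$.

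To verify this identification, recall that, via the decomposition \eqref{equ:decomposition}, the coset $t(\lon\lambda)W$ is precisely $\{u\in W_\aff\mid\wt(u)=\lon\lambda\}$, and $m_{\lon\lambda}$ is by definition its minimal-length element. Since $\lambda$ is dominant, the weight $\nu:=\lon\lambda$ is antidominant, i.e.\ $\pair{\nu}{\alpha^\lor}\le 0$ for every $\alpha\in\Delta^+$. For an antidominant $\nu$ a standard property of affine Weyl groups (cf.\ \cite[\S 2.4]{M}) gives $t(\nu)\overline{\alpha}_i>0$ for all $i\in I$, hence $\ell(t(\nu)w)=\ell(t(\nu))+\ell(w)$ for all $w\in W$ since $W$ is the standard parabolic subgroup of $W_\aff$ omitting $s_0$; thus $t(\nu)$ is the unique minimal-length representative of $t(\nu)W$, i.e.\ $m_{\lon\lambda}=t(\lon\lambda)$. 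Substituting this into the displayed equality yields exactly the asserted formula for $P_\lambda^\C(q,0)$.

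I do not expect a serious obstacle, since all of the substantive content is already in place: Proposition~\ref{os} is the type-$\C$ Orr--Shimozono formula for $E_\mu^\C(q,0)$, and Remark~\ref{Lemma7.7} relates $P_\lambda^\C(q,0)$ to $E_{\lon\lambda}^\C(q,0)$. The only point demanding a little care is the equality $m_{\lon\lambda}=t(\lon\lambda)$, which rests on the antidominance of $\lon\lambda$; here it is worth noting that the doubled affine roots entering the non-reduced system $\widetilde{\Delta}_\aff$ play no role, because $W_\aff$ and its length function are governed by the underlying reduced affine root system of type $\D$.
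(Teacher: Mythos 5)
Your proposal is correct and follows exactly the route the paper takes: apply Proposition~\ref{os} with $\mu=\lon\lambda$, invoke Remark~\ref{Lemma7.7} to replace $E_{\lon\lambda}^\C(q,0)$ by $P_\lambda^\C(q,0)$, and identify $m_{\lon\lambda}=t(\lon\lambda)$ (the paper simply cites \cite[\S 2.4]{M} for this last point, where you spell out the antidominance argument). No issues.
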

Here we remark that 
\begin{equation}
m_{\lon \lambda} = t(\lon \lambda),
\end{equation}
 and 
\begin{equation}\label{equ:direction}
\dr(t(\lon \lambda)) = e
\end{equation}
(see \cite[\S 2.4]{M}).

\subsection{Reduced expression for $t(\lon \lambda)$ and total order on $\Delta^+_{\aff} \cap t(\lon \lambda)\inv \Delta^-_{\aff} $}

Let $\lambda \in Q \subset P $ be a dominant weight,
and set $\lambda_- = \lon \lambda$ and $S =\{ i \in I \ | \ \langle \lambda , \alpha^\lor_i \rangle =0 \}$.
For $\mu \in W\lambda$, we denote by $v(\mu)\in W^S$ the minimal-coset representative for the coset $\{ w \in W  \ | \ w \lambda = \mu \}$.
Since
$\lon$ is the longest element of $W$,
we have
$\lon = v(\lambda_-) \lons$
and
$\ell(\lon) = \ell(v(\lambda_-)) + \ell(\lons)$.

We fix reduced expressions
\begin{align*}
v(\lambda_-)&=  s_{i_1}\cdots s_{i_M} , \\ 
\lons &= s_{i_{M+1}}\cdots s_{i_N}
\end{align*}
for 
$v(\lambda_-)$, and $\lons$, respectively.
Then 
$ \lon =  s_{i_1}\cdots s_{i_N} $
is a reduced expression for $\lon$.
We set $\beta_j \eqdef s_{i_N} \cdots s_{i_{j+1}}\alpha_{i_j}$, $1 \leq j \leq N$.
Then we have
$\Delta^+ \setminus \Delta^+_S = \{ \beta_1, \ldots, \beta_M \}$, and
$\Delta^+_S  = \{ \beta_{M+1}, \ldots, \beta_N \}$.
We fix a total order on $(\Delta^+)^\lor$ by
\begin{align}\label{weak_reflection_order}
\underbrace{ \beta_1^\lor \succ \beta_2^\lor \succ \cdots \succ \beta_M^\lor }_{\in  (\Delta^+ \setminus \Delta_S^+)^\lor} \succ 
\underbrace{ \beta_{M+1}^\lor \succ \beta_{N}^\lor}_{\in (\Delta_S^+)^\lor} .
\end{align}

\begin{rem}
This total order is a (weak) reflection order on $(\Delta^+)^\lor$; that is,
if $\alpha ,\beta ,\gamma \in \Delta^+$ with $\gamma = \alpha + \beta$,
then $\alpha^\lor \prec \gamma^\lor \prec \beta^\lor$ or $\beta^\lor \prec \gamma^\lor \prec \alpha^\lor$.
%(see \cite{papi} for more detail)
\end{rem}

We define an injective map $\Phi$ by:
\begin{align*}
\Phi : \Delta^+_{\aff} \cap \tra\inv \Delta^-_{\aff} 
&\rightarrow
\mathbb{Q}_{\geq 0} \times (\Delta^+\setminus \Delta^+_S )^\lor ,\\
\beta = \overline{\beta} + \dg(\beta) \delta
&\mapsto 
\left(\frac{c_{\overline{\beta}} \langle {\lambda_-} ,  \overline{\beta}^\lor \rangle -  \dg(\beta)}{c_{\overline{\beta}} \langle {\lambda_-} ,  \overline{\beta}^\lor \rangle }  
,  \lon \overline{\beta}^\lor
\right);
\end{align*}
note that $\langle {\lambda_-} ,  \overline{\beta}^\lor \rangle >0$, $c_{\overline{\beta}} \langle {\lambda_-} ,  \overline{\beta}^\lor \rangle -  \dg(\beta) \geq 0$,
and 
$\lon \overline{\beta} \in \Delta^+\setminus \Delta^+_S $
since $\langle {\lambda_-} ,  \overline{\beta}^\lor \rangle = \langle {\lambda} ,  \lon \overline{\beta}^\lor \rangle >0$,
since  we know from \cite[(2.4.7) (i)]{M}
that
\begin{equation}\label{B}
\Delta^+_{\aff} \cap \tra\inv \Delta^-_{\aff} =
\{ \alpha + a c_\alpha \delta 
\ | \
\alpha \in \Delta^-,
a \in \mathbb{Z}, \mbox{ and }
0 < a \leq  \langle \lambda_- , \alpha^{\lor}
\rangle 
\}.
\end{equation}
We now consider the lexicographic order $<$ on $\mathbb{Q}_{\geq 0} \times (\Delta^+\setminus \Delta^+_S)^\lor$
induced by the usual total order on $\mathbb{Q}_{\geq 0}$
and
the inverse of
the restriction to $(\Delta^+\setminus \Delta^+_S)^\lor$
of the total order $\prec$ on $(\Delta^+)^\lor$ defined above;
that is, for $(a, \alpha^\lor), (b, \beta^\lor) \in \mathbb{Q}_{\geq 0} \times (\Delta^+\setminus \Delta^+_S)^\lor$,
\begin{equation*}
(a, \alpha^\lor)<(b, \beta^\lor) \mbox{ if and only if } 
a<b, \mbox{ or }
a=b \mbox{ and } \alpha^\lor \succ \beta^\lor.
\end{equation*}
Then we denote by $\prec'$ 
the total order on $\Delta^+_{\aff} \cap \tra\inv \Delta^-_{\aff}$
induced by  the lexicographic order on $\mathbb{Q}_{\geq 0} \times (\Delta^+\setminus \Delta^+_S)^\lor$
through the map $\Phi$,
and write $\Delta^+_{\aff} \cap \tra\inv \Delta^-_{\aff} $
as 
$\left\{\gamma_1 \prec' \cdots \prec' \gamma_L \right\}$;
we call this total order a weak reflection order on $\Delta^+_{\aff} \cap \tra\inv \Delta^-_{\aff} $.

The proof of the following proposition is similar to that of \cite[Proposition 3.1.8]{NNS}.
\begin{prop}[\normalfont{see \cite[Proposition 3.1.8]{NNS} for untwisted types}]\label{goodreducedexpression}
Keep the notation and setting above.
Then, there exists a unique reduced expression $\tra= s_{\ell_1}\cdots s_{\ell_L}$ for $\tra$, $\{\ell_1, \ldots , \ell_L \} \subset I_{\aff}$,
such that
$\beta^\OS_j 
%\left( =
%s_{\ell_L}\cdots s_{\ell_{j+1}}\alpha_{\ell_j} \right) 
=\gamma_j$
for $1 \leq j \leq L$.
\end{prop}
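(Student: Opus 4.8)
The plan is to deduce the statement from the standard correspondence, valid in any Coxeter group, between the reduced expressions of an element and the total orders on its inversion set satisfying the convexity condition of a reflection order, following the strategy of \cite[Proposition~3.1.8]{NNS}. Recall that for a reduced expression $w=s_{\ell_1}\cdots s_{\ell_L}$ the roots $\beta^\OS_k=s_{\ell_L}\cdots s_{\ell_{k+1}}\alpha_{\ell_k}$, $1\le k\le L$, are pairwise distinct and exhaust $\Delta^+_\aff\cap w\inv\Delta^-_\aff$, and that the resulting assignment sets up a bijection between the reduced expressions of $w$ and those total orders on $\Delta^+_\aff\cap w\inv\Delta^-_\aff$ having the following property: whenever $\alpha,\beta,\gamma$ belong to $\Delta^+_\aff\cap w\inv\Delta^-_\aff$ and $\gamma$ is a positive real linear combination of $\alpha$ and $\beta$, then $\gamma$ lies strictly between $\alpha$ and $\beta$ in the order. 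I would apply this with $w=\tra$; its length is $L$, and its inversion set is exactly the set displayed in \eqref{B} on which $\Phi$ is defined. Thus both the existence and the uniqueness assertions will follow once one proves that the total order $\prec'$ on $\Delta^+_\aff\cap\tra\inv\Delta^-_\aff$ enjoys this convexity property.

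To check the convexity of $\prec'$, take $\alpha,\beta,\gamma\in\Delta^+_\aff\cap\tra\inv\Delta^-_\aff$ with $\gamma=c_1\alpha+c_2\beta$, $c_1,c_2>0$. Writing $\alpha=\overline{\alpha}+\dg(\alpha)\delta$ and similarly for $\beta$ and $\gamma$, we get $\overline{\gamma}=c_1\overline{\alpha}+c_2\overline{\beta}\in\Delta^-$ and $\dg(\gamma)=c_1\dg(\alpha)+c_2\dg(\beta)$. Set $a\eqdef c_{\overline{\alpha}}\langle\lambda_-,\overline{\alpha}^\lor\rangle>0$ and $b\eqdef c_{\overline{\beta}}\langle\lambda_-,\overline{\beta}^\lor\rangle>0$; by the standard bilinear form of Remark~\ref{BC'} we have $c_\mu\langle\lambda_-,\mu^\lor\rangle=2(\lambda_-,\mu)$ for every $\mu\in\Delta$, which is additive in $\mu$, so $c_{\overline{\gamma}}\langle\lambda_-,\overline{\gamma}^\lor\rangle=c_1a+c_2b$. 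Hence the first coordinate of $\Phi(\gamma)$ equals
\[
\frac{(c_1a+c_2b)-(c_1\dg(\alpha)+c_2\dg(\beta))}{c_1a+c_2b}
=\frac{c_1a}{c_1a+c_2b}\cdot\frac{a-\dg(\alpha)}{a}+\frac{c_2b}{c_1a+c_2b}\cdot\frac{b-\dg(\beta)}{b},
\]
a genuine convex combination of the first coordinates of $\Phi(\alpha)$ and $\Phi(\beta)$, so it lies strictly between them unless these coincide. If they differ, $\Phi(\gamma)$ already lies strictly between $\Phi(\alpha)$ and $\Phi(\beta)$ in the lexicographic order, i.e. $\gamma$ is strictly $\prec'$-between $\alpha$ and $\beta$. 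If they coincide, one compares second coordinates: applying the linear map $\lon$ to $\overline{\gamma}=c_1\overline{\alpha}+c_2\overline{\beta}$ gives $\lon\overline{\gamma}=c_1\lon\overline{\alpha}+c_2\lon\overline{\beta}$, and all three roots $\lon\overline{\alpha},\lon\overline{\beta},\lon\overline{\gamma}$ lie in $\Delta^+\setminus\Delta^+_S$ (as recorded in the definition of $\Phi$), so $\lon\overline{\gamma}^\lor$ is a positive real combination of $\lon\overline{\alpha}^\lor$ and $\lon\overline{\beta}^\lor$. Since the restriction of $\prec$ to $(\Delta^+\setminus\Delta^+_S)^\lor$ is a (weak) reflection order by the Remark following \eqref{weak_reflection_order} — and for a finite root system this forces the convexity property for all positive-cone triples — the coroot $\lon\overline{\gamma}^\lor$ lies $\prec$-between $\lon\overline{\alpha}^\lor$ and $\lon\overline{\beta}^\lor$, whence $\Phi(\gamma)$ lies between $\Phi(\alpha)$ and $\Phi(\beta)$. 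The remaining degenerate configurations (e.g. $\overline{\alpha},\overline{\beta},\overline{\gamma}$ not pairwise distinct, or coincidences among their $\lon$-images) are easily handled directly.

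The step I expect to be the main obstacle is precisely this convexity verification in the twisted-affine setting of $\mathfrak{g}(\D)$. The delicate point is the first coordinate of $\Phi$: because the normalization $c_{\overline{\beta}}\langle\lambda_-,\overline{\beta}^\lor\rangle$ occurring in the denominator is itself root-dependent, that coordinate is not a linear function of the affine root, so the argument rests on the additivity $c_{\overline{\gamma}}\langle\lambda_-,\overline{\gamma}^\lor\rangle=c_1c_{\overline{\alpha}}\langle\lambda_-,\overline{\alpha}^\lor\rangle+c_2c_{\overline{\beta}}\langle\lambda_-,\overline{\beta}^\lor\rangle$ — equivalently, on $c_\mu\mu^\lor$ being proportional to $\mu$ under the standard form of Remark~\ref{BC'} — together with careful bookkeeping of the constants $c_\mu$, which differ for short and long roots and also differ between the finite root system of $B_n$ and the affine root system of $\mathfrak{g}(\D)$. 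Once the convexity of $\prec'$ is established, the identity $\beta^\OS_j=\gamma_j$ for $1\le j\le L$ and the uniqueness of the reduced expression for $\tra$ are formal consequences of the Coxeter-group correspondence recalled above, following \cite[Proposition~3.1.8]{NNS}.
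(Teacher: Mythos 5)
Your proof is correct and follows essentially the same route as the paper's, which simply defers to \cite[Proposition 3.1.8]{NNS}: one checks that $\prec'$ is a convex (reflection) order on the inversion set $\Delta^+_{\aff}\cap t(\lambda_-)^{-1}\Delta^-_{\aff}$ and then invokes the standard bijection between such orders and reduced expressions. Your verification of convexity in the first coordinate of $\Phi$, resting on the linearity of $\mu\mapsto c_\mu\langle\lambda_-,\mu^\lor\rangle=2(\lambda_-,\mu)$, is precisely the point that needs adapting to the dual untwisted setting, and it is handled correctly.
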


In what following, we fix the reduced expression $\tra= s_{\ell_1}\cdots s_{\ell_L}$ for $\tra$ as above.
Recall that $\beta^\OS_j =
s_{\ell_L}\cdots s_{\ell_{j+1}}\alpha_{\ell_j}$,
$1 \leq j \leq L$.

The proof of the following lemma is similar to that of \cite[Lemma 3.1.9]{NNS}.
\begin{lem}[\normalfont{\cite[Lemma 3.1.10]{NNS}}]\label{lengthadditive}
Keep the notation and setting above.
%Since $us_{\ell_k} =s_{i'_k}u$ for some $i'_k \in I_{\aff}$,
%$1 \leq k \leq M$,
%we can rewrite the reduced expression  $u s_{\ell_1}\cdots s_{\ell_L}$ for $\tra$
%as $s_{i'_1} \cdots s_{i'_M} u s_{\ell_{M+1}} \cdots s_{\ell_L}$.
Then, $s_{\ell_1} \cdots s_{\ell_M}$ is a reduced expression for $v({\lambda_-})$,
and $s_{\ell_{M+1}} \cdots s_{\ell_L}$ is a reduced expression for $m_\lambda$.
Moreover, $i_k = \ell_k$ for $ 1 \leq k \leq M$.
\end{lem}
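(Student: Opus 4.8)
The plan is to adapt the proof of \cite[Lemma 3.1.10]{NNS} for untwisted types, tracking carefully where the twisted type $\D$ modifies the argument (essentially only through the appearance of the constants $c_\alpha$ in the description \eqref{B} of $\Delta^+_\aff \cap t(\lambda_-)^{-1}\Delta^-_\aff$). The key observation is that Proposition~\ref{goodreducedexpression} already produces a reduced expression $t(\lambda_-) = s_{\ell_1}\cdots s_{\ell_L}$ with $\beta^\OS_j = \gamma_j$, where $\gamma_1 \prec' \cdots \prec' \gamma_L$ is the weak reflection order on $\Delta^+_\aff \cap t(\lambda_-)^{-1}\Delta^-_\aff$ induced via $\Phi$. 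So the task is purely to read off the structure of the first $M$ labels.

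First I would recall the combinatorial meaning of a reduced word: for the reduced expression $t(\lambda_-) = s_{\ell_1}\cdots s_{\ell_L}$, the roots $\beta^\OS_1, \ldots, \beta^\OS_L$ are exactly an enumeration of $\Delta^+_\aff \cap t(\lambda_-)^{-1}\Delta^-_\aff$, and $s_{\ell_1}\cdots s_{\ell_k}$ is reduced for every $k$ with inversion set $\{\beta^\OS_1, \ldots, \beta^\OS_k\}$ (this holds for any Coxeter group). So I need to identify the prefix of the order $\prec'$ consisting of those $\gamma_j$ whose image under $\Phi$ has first coordinate strictly positive but which actually correspond to the ``$v(\lambda_-)$ part.'' Concretely, by the definition of $\Phi$ and the lexicographic order, the smallest elements in $\prec'$ are those $\beta = \overline\beta + \dg(\beta)\delta$ with $\frac{c_{\overline\beta}\langle\lambda_-,\overline\beta^\lor\rangle - \dg(\beta)}{c_{\overline\beta}\langle\lambda_-,\overline\beta^\lor\rangle} = 0$, i.e. $\dg(\beta) = c_{\overline\beta}\langle\lambda_-,\overline\beta^\lor\rangle$; among these the secondary order is by $\prec$ on $\lon\overline\beta^\lor$. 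I would check that these are precisely $M$ in number and that restricting $\Phi$ to them recovers the finite roots $\beta_1, \ldots, \beta_M \in \Delta^+\setminus\Delta^+_S$ in the order $\succ$ fixed in \eqref{weak_reflection_order}; this matches the inversion set of the chosen reduced word $v(\lambda_-) = s_{i_1}\cdots s_{i_M}$, so by uniqueness of reduced words with a prescribed inversion sequence in a Coxeter group we get $\ell_k = i_k$ and $s_{\ell_1}\cdots s_{\ell_M}$ reduced for $v(\lambda_-)$.

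Next I would handle the tail: since $t(\lambda_-) = v(\lambda_-) m_\lambda$ with $\ell(t(\lambda_-)) = \ell(v(\lambda_-)) + \ell(m_\lambda)$ (a standard length-additivity for the decomposition $m_{\lon\lambda} = t(\lon\lambda)$, cf.\ \eqref{equ:direction} and \cite[\S 2.4]{M}; here one uses that $\lambda_- = \lon\lambda$ is antidominant so $t(\lambda_-)$ has direction $e$), multiplying the reduced expression $v(\lambda_-) = s_{\ell_1}\cdots s_{\ell_M}$ on the left into $t(\lambda_-) = s_{\ell_1}\cdots s_{\ell_L}$ forces $s_{\ell_{M+1}}\cdots s_{\ell_L}$ to be a reduced expression for $m_\lambda = v(\lambda_-)^{-1} t(\lambda_-)$, of the correct length $L - M$.

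The main obstacle I anticipate is verifying cleanly that the prefix of $\prec'$ of length $M$ consists exactly of the roots $\beta$ with $\dg(\beta) = c_{\overline\beta}\langle\lambda_-,\overline\beta^\lor\rangle$ and that these biject, order-compatibly, with $\Delta^+\setminus\Delta^+_S$ under $\overline\beta \mapsto \lon\overline\beta$. This requires: (a) confirming via \eqref{B} that for each $\alpha \in \Delta^-$ with $\langle\lambda_-,\alpha^\lor\rangle > 0$ the maximal admissible shift is $a = \langle\lambda_-,\alpha^\lor\rangle$, giving the root $\alpha + c_\alpha\langle\lambda_-,\alpha^\lor\rangle\delta$ whose $\Phi$-image has first coordinate $0$; (b) checking that $\langle\lambda_-,\alpha^\lor\rangle > 0 \iff \alpha \in \lon(\Delta^-\setminus\Delta^-_S) \iff -\lon\alpha \in \Delta^+\setminus\Delta^+_S$, which uses $S = S_\lambda$ and $\lambda_- = \lon\lambda$; and (c) matching the secondary ordering with \eqref{weak_reflection_order}. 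Steps (a)--(c) are where the twisted-type constants $c_\alpha$ could in principle cause trouble, but since they enter both the numerator and denominator of the first coordinate of $\Phi$ symmetrically when $\dg(\beta)$ is maximal, the first coordinate is simply $0$ regardless of $c_\alpha$, and the argument goes through exactly as in \cite{NNS}. I would then cite \cite[Lemma 3.1.9, Lemma 3.1.10]{NNS} for the remaining routine bookkeeping.
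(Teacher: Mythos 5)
Your proposal is correct and takes essentially the same route as the paper, which gives no argument of its own beyond deferring to the untwisted case \cite[Lemma 3.1.9, Lemma 3.1.10]{NNS}: the key points — that the $d=0$ part of the order $\prec'$ consists of exactly $M$ roots in order-preserving bijection with $\beta_1,\ldots,\beta_M$ via $\overline{\beta}\mapsto \lon\overline{\beta}$, that the constants $c_{\overline{\beta}}$ cancel in the first coordinate of $\Phi$, and that the reduced word is then recovered from its ordered inversion sequence — are the same ones the cited proof uses. Only note that the recovery of the letters $\ell_k$ (and the fact that $\ell_k\neq 0$ for $k\leq M$) is cleanest via the twisted inversions $\beta^{\rm L}_k=-t(\lambda_-)\beta^{\OS}_k$ and induction on $k$, since the intrinsic inversion set of the prefix $s_{\ell_1}\cdots s_{\ell_k}$ is a translate of $\{\beta^{\OS}_1,\ldots,\beta^{\OS}_k\}$ by the suffix rather than that set itself; this is exactly the bookkeeping you defer to \cite{NNS}.
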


We set $a_k \eqdef \frac{\dg(\beta^{\OS}_k)}{c_{\overline{\beta^\OS_k}}} \in \mathbb{Z}_{> 0} $;
since 
$\Delta^+_\aff \cap \tra\inv \Delta^-_\aff
= \{ \beta^\OS_1, \ldots , \beta^\OS_L \}$,
we see by (\ref{B}) that
$0<a_k \leq \langle {\lambda_-}, \overline{\beta^\OS_k}^\lor \rangle$.

\begin{cor}\label{akbk}%\footnote{\cite{NNS}では上の Proposition の証明で書いたこと (わざわざ書かなくても良い?)}
For $1 \leq k \leq M$, $\lon \overline{\beta^\OS_k} = \beta_k$,
where $\beta_k \eqdef s_{i_N} \cdots s_{i_{k+1}} \alpha_{i_k}$.
\end{cor}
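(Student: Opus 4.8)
The plan is to compute the finite part $\overline{\beta^{\OS}_k}$ of the affine root $\beta^{\OS}_k = s_{\ell_L}\cdots s_{\ell_{k+1}}\alpha_{\ell_k}$ for $1 \le k \le M$ directly from the reduced expression $\tra = s_{\ell_1}\cdots s_{\ell_L}$ furnished by Proposition~\ref{goodreducedexpression}, and then to apply $\lon$ and simplify using the fixed reduced expression $\lon = s_{i_1}\cdots s_{i_N}$. The whole computation takes place inside the affine Weyl group $W_\aff = t(Q)\rtimes W$, so the twisted nature of the root system plays no role; this is the type-$\D$ analogue of the corresponding statement in \cite{NNS}.

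First I would invoke Lemma~\ref{lengthadditive} to record that $\ell_k = i_k$ for $1 \le k \le M$, and that $s_{\ell_1}\cdots s_{\ell_M} = v(\lambda_-)$ is a reduced word in the finite Weyl group $W$; in particular every prefix $s_{\ell_1}\cdots s_{\ell_k} = s_{i_1}\cdots s_{i_k}$ with $k \le M$ lies in $W$. From $\tra = s_{\ell_1}\cdots s_{\ell_L}$ one then gets
\[
s_{\ell_L}\cdots s_{\ell_{k+1}} = (s_{\ell_{k+1}}\cdots s_{\ell_L})\inv = \tra\inv\,(s_{\ell_1}\cdots s_{\ell_k}) = t(-\lambda_-)\,(s_{i_1}\cdots s_{i_k}),
\]
and since $t(-\lambda_-)\in t(Q)$ and $s_{i_1}\cdots s_{i_k}\in W$, the semidirect-product decomposition $W_\aff = t(Q)\rtimes W$ gives $\dr(s_{\ell_L}\cdots s_{\ell_{k+1}}) = s_{i_1}\cdots s_{i_k}$. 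Because $\alpha_{\ell_k} = \alpha_{i_k}$ is a finite simple root (with vanishing $\delta$-component) and $t(\mu)$ fixes the finite part of every root, it follows that
\[
\overline{\beta^{\OS}_k} = s_{i_1}\cdots s_{i_k}\,\alpha_{i_k} = -\,s_{i_1}\cdots s_{i_{k-1}}\,\alpha_{i_k}.
\]

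Next I would apply $\lon$. Using that $\lon$ is an involution, $\lon = \lon\inv = s_{i_N}\cdots s_{i_1}$, a telescoping cancellation of the common block $s_{i_1}\cdots s_{i_{k-1}}$ yields the identity $\lon\,s_{i_1}\cdots s_{i_{k-1}} = s_{i_N}\cdots s_{i_k}$ in $W$. Hence
\[
\lon\,\overline{\beta^{\OS}_k} = -\,s_{i_N}\cdots s_{i_k}\,\alpha_{i_k} = s_{i_N}\cdots s_{i_{k+1}}\,\alpha_{i_k} = \beta_k,
\]
where in the second equality I used $s_{i_k}\alpha_{i_k} = -\alpha_{i_k}$, and the last equality is the definition of $\beta_k$.

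The only genuinely delicate point is the bookkeeping of the direction part $\dr$ of a prefix of the reduced word for $\tra$: one needs to know that the relevant prefix $s_{\ell_1}\cdots s_{\ell_k}$ (for $k \le M$) lies in the finite Weyl group, which is exactly what the length-additivity statement of Lemma~\ref{lengthadditive} supplies; everything else is the routine telescoping identity for reduced words of the longest element. As a consistency check, $-\overline{\beta^{\OS}_k} = s_{i_1}\cdots s_{i_{k-1}}\alpha_{i_k}\in\Delta^+$ and $\beta_k\in\Delta^+\setminus\Delta^+_S$, in agreement with the remark following the definition of $\B(\id;m_\mu)$ and with $\Delta^+\setminus\Delta^+_S = \{\beta_1,\ldots,\beta_M\}$.
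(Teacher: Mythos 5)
Your proof is correct and follows essentially the same route as the paper: the paper computes $-t(\lambda_-)\beta^{\OS}_k = s_{\ell_1}\cdots s_{\ell_{k-1}}\alpha_{\ell_k}$ and extracts the finite part, whereas you decompose the suffix product $s_{\ell_L}\cdots s_{\ell_{k+1}} = t(-\lambda_-)(s_{i_1}\cdots s_{i_k})$ directly, but both arguments rest on the same use of Lemma~\ref{lengthadditive} (that $\ell_k = i_k$ for $k \leq M$), the fact that translations fix finite parts, and the telescoping of $\lon = s_{i_N}\cdots s_{i_1}$ against the prefix $s_{i_1}\cdots s_{i_{k-1}}$.
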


\begin{proof}
We set 
$\beta^{\rm L}_k \eqdef s_{\ell_1}\cdots s_{\ell_{k-1}}\alpha_{\ell_k}$, $1 \leq k \leq M$.
Then we have
\begin{align*}
-t({\lambda_-})\beta^{\OS}_k
&=
-( s_{\ell_1}\cdots s_{\ell_L}) ( s_{\ell_L} \cdots s_{\ell_{k+1}} \alpha_{\ell_k})
=
- s_{\ell_1}\cdots s_{\ell_{k-1}}s_{\ell_k} \alpha_{\ell_k} \\
&=
- s_{\ell_1}\cdots s_{\ell_{k-1}}( - \alpha_{\ell_k})
=
u s_{\ell_1}\cdots s_{\ell_{k-1}} \alpha_{\ell_k}
=
\beta^{\rm L}_k.
\end{align*}
From this, together with
$
-\tra \beta^{\OS}_k =
 -\overline{ \beta^{\OS}_k} 
 -c_{\overline{ \beta^{\OS}_k }}(a_k - \langle {\lambda_-}, \overline{\beta^{\OS}_k}^\lor \rangle  )\delta
$,
we obtain
$\overline{\beta^{\rm L}_k }=-\overline{\beta^{\OS}_k}$.
%and
%$\langle {\lambda_-} ,\overline{\beta^{\OS}_k} \rangle - a_k = b_k$.
Thus we have
\begin{align*}
\lon \overline{ \beta^\OS_k }=\lon (-\overline{\beta^{\rm L}_k})
&=
\lon(-\overline{ s_{\ell_1}\cdots s_{\ell_{k-1}} \alpha_{\ell_k}})
\overset{{\rm Lemma \ \ref{lengthadditive}}}{=}
\lon(-s_{i_1}\cdots s_{i_{k-1}} \alpha_{i_k})\\
&=
(s_{i_N}\cdots s_{i_1})(-s_{i_1}\cdots s_{i_{k-1}}\alpha_{i_k})=
(s_{i_N}\cdots s_{i_{k+1}} \alpha_{i_k})
=
\beta_k.
\end{align*}
\end{proof}

\begin{rem}[\normalfont{\cite[Theorem 8.3]{LNSSS2}}, \normalfont{\cite[proof of Lemma 3.1.10]{NNS}}]\label{2.15}
For $1 \leq k \leq L$, we set
	\begin{equation*}
		d_k \eqdef
			 \frac{ \langle \lambda_- ,  {\overline{\beta^{\OS}_k }}^\lor \rangle - a_k}{\langle \lambda_- ,  {\overline{\beta^{\OS}_k }}^\lor \rangle} 
			= \frac{c_{ \overline{\beta^{\OS}_k }} \langle \lambda_- ,   {\overline{\beta^{\OS}_k }}^\lor \rangle - \deg( \beta^{\OS}_k) }{c_{ \overline{\beta^{\OS}_k }}\langle \lambda_- ,   {\overline{\beta^{\OS}_k }}^\lor \rangle} 
%			= \frac{b_k}{\langle - \lambda_-  , \overline{ \beta^{\rm L}_k } \rangle};
	\end{equation*}
%the second equality follows from Remark \ref{akbk}.
Here $d_k$ is just the first component of $\Phi(\beta^{\OS}_k) \in \mathbb{Q}_{\geq 0} \times (\Delta^+ \setminus \Delta^+_S)^\lor$.
For  $1 \leq k,j \leq L$,
$\Phi(\beta^\OS_k)<\Phi(\beta^\OS_j)$ if and only if $k < j$,
and hence we have
	\begin{equation}\label{C}
		0\leq d_1 \leq \cdots \leq d_L \lneqq 1.
	\end{equation}
Moreover,
\begin{equation}\label{3.1.9}
d_i = 0  \  \Leftrightarrow \ i \leq M.
\end{equation}
\end{rem}

\begin{lem}[\normalfont{\cite[Lemma 3.1.112]{NNS}}]\label{remark2.11}
If $1\leq k<j \leq L$ and $d_k =d_j$,
then	$\lon  \overline{ \beta^{\OS}_k }^\lor  \succ \lon  \overline{ \beta^{\OS}_j }^\lor $.
\end{lem}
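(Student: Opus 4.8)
The plan is to deduce the claim directly from the description of the total order $\prec'$ on $\Delta^+_{\aff}\cap\tra\inv\Delta^-_{\aff}$ recorded just before the lemma and in Remark~\ref{2.15}. Recall that $\Delta^+_{\aff}\cap\tra\inv\Delta^-_{\aff}=\{\beta^\OS_1,\dots,\beta^\OS_L\}$, that $\prec'$ is by construction the pullback along the injective map $\Phi$ of the lexicographic order $<$ on $\mathbb{Q}_{\geq 0}\times(\Delta^+\setminus\Delta^+_S)^\lor$, and that Remark~\ref{2.15} tells us the indexing is compatible with this order: for $1\leq k,j\leq L$ one has $\Phi(\beta^\OS_k)<\Phi(\beta^\OS_j)$ if and only if $k<j$.

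First I would write $\Phi(\beta^\OS_k)=(d_k,\lon\overline{\beta^{\OS}_k}^\lor)$ and $\Phi(\beta^\OS_j)=(d_j,\lon\overline{\beta^{\OS}_j}^\lor)$, using that $d_k$ (resp.\ $d_j$) is precisely the first component of $\Phi(\beta^\OS_k)$ (resp.\ $\Phi(\beta^\OS_j)$), as noted in Remark~\ref{2.15}. Since $1\leq k<j\leq L$, that same remark gives $\Phi(\beta^\OS_k)<\Phi(\beta^\OS_j)$. I would then unwind the definition of the lexicographic order $<$ on $\mathbb{Q}_{\geq 0}\times(\Delta^+\setminus\Delta^+_S)^\lor$: the inequality $\Phi(\beta^\OS_k)<\Phi(\beta^\OS_j)$ means that either $d_k<d_j$, or else $d_k=d_j$ and $\lon\overline{\beta^{\OS}_k}^\lor\succ\lon\overline{\beta^{\OS}_j}^\lor$. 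Under the hypothesis $d_k=d_j$ the first alternative is excluded, so we are left with $\lon\overline{\beta^{\OS}_k}^\lor\succ\lon\overline{\beta^{\OS}_j}^\lor$, which is exactly the assertion.

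There is essentially no obstacle here; the only points requiring care are the directions of the two orders involved. The order $<$ on the second factor $(\Delta^+\setminus\Delta^+_S)^\lor$ is the \emph{inverse} of the restriction of $\prec$, so "smaller in $<$ with equal first coordinate" translates into "larger in $\prec$"; and the injectivity of $\Phi$ (so that $k\neq j$ forces $\Phi(\beta^\OS_k)\neq\Phi(\beta^\OS_j)$, and hence one gets the strict relation $\succ$ rather than merely $\succeq$) is what makes the conclusion strict. Both of these are built into the definitions set up immediately before the lemma, so the proof is just this short unwinding.
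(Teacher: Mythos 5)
Your proof is correct and is exactly the intended argument: the paper states this lemma as an immediate consequence of the construction of $\prec'$ (citing [NNS]), and the content is precisely the unwinding you give — $k<j$ forces $\Phi(\beta^\OS_k)<\Phi(\beta^\OS_j)$ in the lexicographic order, and since the first components $d_k=d_j$ coincide, the comparison falls to the second components, where the order was defined as the \emph{inverse} of $\prec$, yielding $\lon\overline{\beta^{\OS}_k}^\lor\succ\lon\overline{\beta^{\OS}_j}^\lor$. Your attention to the reversal of the order on the second factor and to the strictness is exactly where the care is needed, and both points are handled correctly.
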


\subsection{Proof of Theorem~\ref{theorem_graded_character}}
We continue to use all notations in \S 4.3. 
%Let $\lambda \in Q$ be a dominant weight, and $\mu \in W\lambda$;
%recall that 
%$S = \{ i \in I \ | \ \langle \lambda , \alpha^\lor_i \rangle =0 \}$, and
%$\lambda_- = \lon \lambda$.
In this subsection, we give a bijection
	\begin{equation*}
		\Xi : \QB^\C({\id}; t({\lambda_-})) \rightarrow \QLS^\C(\lambda)
	\end{equation*}
that preserves  weights and degrees.

\begin{rem}\label{affinereflectionorder}
Let
$\gamma_1, \gamma_2 ,\ldots, \gamma_r \in \Delta^+_{\aff} \cap \tra\inv \Delta^-_{\aff}$,
and 
consider the sequence 
$\left( y_0 , y_1 , \ldots , y_r ; \gamma_1 , \gamma_2, \ldots , \gamma_r \right)$
defined by
$y_0 = \tra $, and $y_i =y_{i-1}s_{\gamma_i}$ for $1 \leq i \leq r$.
Then, the sequence 
$\left( y_0 , y_1 , \ldots , y_r ; \gamma_1 , \gamma_2, \ldots , \gamma_r \right)$
is an element of 
$\QB^\C({\id}; \tra )$
if and only if the following conditions hold{\rm :}

\begin{enu}
\item
$\gamma_1\prec' \gamma_2 \prec' \cdots \prec' \gamma_r$,
where the order $\prec'$ is 
the weak reflection order on $ \Delta^+_{\aff} \cap \tra\inv \Delta^-_{\aff}$
introduced in \S 4.3;

\item
For $1\leq i \leq r$,
$\dr(y_{i-1}) \xleftarrow{-\left( \overline{\gamma_i} \right)^\lor} \dr(y_{i})$
is
an edge of $\QBG$, 
and if this edge is a Bruhat one, then $\dg(\gamma_i) \in 2 \mathbb{Z}$.
\end{enu}
\end{rem}

In the following,
we define a map
$\Xi : \QB^\C (\id; \tra) \rightarrow \QLS^\C (\lambda)$.
Let $p^{\OS}_{J}$ be an arbitrary element of  $\QB^\C (\id; \tra)$ of the form
	\begin{equation*}
		p^{\OS}_{J} = \left( \tra = z^{\OS}_0 ,  z^{\OS}_{1} , \ldots , z^{\OS}_{r}
; \beta^{\OS}_{j_1} , \beta^{\OS}_{j_2}, \ldots , \beta^{\OS}_{j_r} \right) \in \QB^\C (\id; \tra ),
	\end{equation*}
with $J = \{ j_1 < \cdots < j_r\} \subset \{ 1 , \ldots , L \}$.
	We set $x_k \eqdef {\dr}(z^{\OS}_k)$,
	$0 \leq k \leq r$. Then, by the definition of $\QB^\C (\id ; \tra)$, 
	\begin{equation}\label{2.15.5}
		e =  x_0 \xrightarrow{- \left( \overline{ \beta^{\OS}_{j_1} } \right)^{\lor}  }   x_1
		\xrightarrow{- \left( \overline {\beta^{\OS}_{j_2} } \right)^{\lor} } \cdots  \xrightarrow{ - \left( \overline{ \beta^{\OS}_{j_r} } \right)^{\lor} }  x_r
	\end{equation}
	is a directed path in $\QBG$; the first equality follows from \eqref{equ:direction}.
	We take $0 = u_0 \leq u_1 < \cdots < u_{s-1} < u_s=r$ and $0 \leq \sigma_1 < \cdots <\sigma_{s-1} < 1 = \sigma_{s}$ in such a way that
(see (\ref{C}))
\vspace{3mm}
	\begin{equation}\label{2.16}
		 \underbrace{0 = d_{j_1} = \cdots = d_{j_{u_1}} }_{=\sigma_0}
		< \underbrace{d_{j_{u_1 +1}} = \cdots =d_{j_{u_2}}}_{=\sigma_1} < \cdots <
		\underbrace{ d_{j_{u_{s-1}+1}} = \cdots =d_{j_r} }_{=\sigma_{s-1}} <1 = \sigma_{s};
	\end{equation}
	note that 
	$d_{j_1}>0$ if and only if $u_1=0$,
	and $\sigma_0=0$ even if $d_{j_1}=0$.
	We set
	$w'_p \eqdef x_{u_p}$ for $0 \leq p \leq s-1$, and
	$w'_s \eqdef x_r$. 
	Then, by taking a subsequence of (\ref{2.15.5}), we obtain the following  directed path in $\QBG$
	for each $0 \leq p \leq s-1$:
	\begin{equation*}
		w'_p = x_{{u_p}} \xrightarrow{- \left( \overline{ \beta^{\OS}_{j_{u_p +1}} } \right)^{\lor} }   x_{{u_p +1}} 
		\xrightarrow{- \left( \overline {  \beta^{\OS}_{j_{u_p +2}} } \right)^{\lor} } \cdots  
		\xrightarrow{ -\left( \overline{  \beta^{\OS}_{j_{u_{p+1}}} } \right)^{\lor} }  x_{{u_{p+1}}} = w'_{p+1}.
	\end{equation*}
	Multiplying this directed path on the right by $\lon$, 
	we obtain the following directed path in $\QBG$
	for each $0 \leq p \leq s-1$
	(see Lemma \ref{involution}):
	\begin{equation}\label{w'_p}
		w_p \eqdef
		w'_p \lon =
		x_{{u_p}} \lon \xleftarrow{ \lon \left( \overline{ \beta^{\OS}_{j_{u_p +1}} } \right)^{\lor} }  \cdots  
		\xleftarrow{ \lon \left( \overline{ \beta^{\OS}_{j_{u_{p+1}}} } \right)^{\lor} }  x_{{u_{p+1}}} \lon
		= w'_{p+1} \lon \eqdef w_{p+1}.
	\end{equation}
	Note that $w_0 = w'_0 \lon = x_0 \lon = \lon$, and
the edge labels of this directed path are increasing
in the weak reflection order $\prec$ on $(\Delta^+)^\lor$ introduced at the beginning of \S 4.3
(see Lemma \ref{remark2.11})
	  and lie in $(\Delta^+ \setminus \Delta^+_S)^\lor$;
	this property will be used to give the inverse to $\Xi$.
	Because
	\begin{equation}\label{sigma_with_a}
		\sigma_p \langle \lambda , \lon \overline{ {\beta^{\OS}_{j_u}} }^\lor   \rangle
		=
		d_{j_u} \langle \lambda , \lon \overline{ {\beta^{\OS}_{j_u}} }^\lor   \rangle
		=\frac{\langle \lambda_- ,  \overline{ {\beta^{\OS}_{j_u}} }^\lor   \rangle - a_{j_u}}{\langle \lambda_- ,  \overline{ {\beta^{\OS}_{j_u}} }^\lor   \rangle} \langle \lambda , \lon \overline{ {\beta^{\OS}_{j_u}} }^\lor   \rangle
		=\langle \lambda_- ,  \overline{ {\beta^{\OS}_{j_u}} }^\lor   \rangle - a_{j_u} \in \mathbb{Z}
	\end{equation}
	for $u_p +1 \leq u \leq u_{p+1}$, $0 \leq p \leq s-1$,
	we find that (\ref{w'_p}) is a  directed  path in $\QBG_{\sigma_p \lambda}^\C$ for $0 \leq p \leq s-1$;
	indeed, if $ x_{u-1} \lon \xleftarrow{\lon \left( \overline{ \beta^{\OS}_{j_{u} }} \right)^{\lor} }x_{u} \lon$
	is a Bruhat edge with $\lon  \overline{ \beta^{\OS}_{j_{u} }}  $ a short root,
	that is, $\dr (z_{u-1}^\OS)=x_{u-1}  \xrightarrow{- \left( \overline{ \beta^{\OS}_{j_{u} }} \right)^{\lor} }x_{u}=\dr(z_u^\OS)$
	is  a Bruhat edge with $\overline{ \beta^{\OS}_{j_{u} }}  $ a short root, then
	$\dg(\beta^{\OS}_{j_{u}}) = c_{\overline{\beta^{\OS}_{j_{u}}}}a_{j_u}\in 2\mathbb{Z}$ by the definition of $\QB^\C(\id; \tra)$, and hence
	$a_{j_u} \in 2\mathbb{Z}$ since $c_{\overline{ \beta^{\OS}_{j_{u} }} } = 1$.
	Also, $\langle \lambda_- ,  \overline{ {\beta^{\OS}_{j_u}} }^\lor \rangle \in 2\mathbb{Z}$ since $\overline{ \beta^{\OS}_{j_{u} }}  $ is a short root.
	Hence we have $\sigma_p \langle \lambda , \lon \overline{ {\beta^{\OS}_{j_u}} }^\lor   \rangle \in 2\mathbb{Z}$ by (\ref{sigma_with_a}).
	Therefore, by Lemma \ref{lem:6.1}, there exists a  directed path in $(\QBG_{\sigma_p \lambda}^\C)^S$ from $\lfloor w_{p+1} \rfloor$ to $\lfloor w_p \rfloor$,
where
$S = \{ i \in I \ | \ \langle \lambda , \alpha^\lor_i \rangle =0 \}$.
Also, we claim that $\lfloor w_p \rfloor \neq \lfloor w_{p+1} \rfloor$ for $1 \leq p \leq s-1$. Suppose,
for a contradiction, that $\lfloor w_p \rfloor = \lfloor w_{p+1} \rfloor$
for some $p$.
Then, $w_p W_S = w_{p+1} W_S$,
and hence $\min(w_{p}W_S, w_{p+1}) = \min(w_{p+1}W_S, w_{p+1}) =w_{p+1}$.
Recall that 
the directed path (\ref{w'_p}) is a path in QBG from $w_{p+1}$ to $w_{p}$ whose labels are increasing and lie in $\Delta^+ \setminus \Delta^+_S$.
%Therefore, the directed path (\ref{w'_p}) is a shortest path from $w_{p}$ to $w_{p+1}$ by Proposition \ref{shellability} (1).
By Lemma \ref{8.5} (1), (2), the directed path (\ref{w'_p}) is a shortest path in QBG from $\min(w_{p+1}W_S, w_p) = \min(w_{p}W_S, w_p) =w_p$ to $w_p$,
which implies that the length of the directed path (\ref{w'_p}) is equal to $0$.
Therefore, $\{ j_{u_p +1}, \ldots , j_{u_{p+1}} \} = \emptyset$, and hence $u_p = u_{p+1}$,
which contradicts the fact that $u_p < u_{p+1}$.

	Thus we obtain
	\begin{equation}\label{D}
		\eta \eqdef 
		(\lfloor w_{1} \rfloor , \ldots \ , \lfloor w_{s} \rfloor ; \sigma_0, \ldots, \sigma_s)  \in \QLS^\C(\lambda).
	\end{equation}
We now define $\Xi (p^{\OS}_{J}) \eqdef \eta$.

\begin{lem}\label{injective}
		The map $\Xi:\QB^\C(\id; \tra) \rightarrow \QLS^\C(\lambda)$
is injective.
\end{lem}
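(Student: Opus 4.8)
The plan is to construct an explicit left inverse to $\Xi$ on its image: from a $\QLS$ path $\eta = (\lfloor w_1 \rfloor, \ldots, \lfloor w_s \rfloor; \sigma_0, \ldots, \sigma_s) = \Xi(p^\OS_J)$ I would recover the subset $J = \{j_1 < \cdots < j_r\} \subset \{1, \ldots, L\}$, which is enough since $p^\OS_J$ is completely determined by $J$ (via $\tra = z^\OS_0$, $z^\OS_k = z^\OS_{k-1}s_{\beta^\OS_{j_k}}$, and the fixed roots $\beta^\OS_k$). The reconstruction will recover, in this order, the genuine group elements $w_0, w_1, \ldots, w_s \in W$ (not merely their cosets in $W/W_S$), then the directed paths \eqref{w'_p}, then the affine roots $\beta^\OS_{j_u}$, and finally the indices $j_u$.

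First I would set $w_0 \eqdef \lon$; this is forced because $x_0 = \dr(\tra) = e$ by \eqref{equ:direction}, so $w_0 = x_0\lon = \lon$. Then, inductively for $p = 0, 1, \ldots, s-1$, assuming $w_p$ recovered, I would recover $w_{p+1}$ using Lemma~\ref{8.5}. By the construction of $\Xi$, the path \eqref{w'_p} is a directed path in $\QBG$ from $w_{p+1}$ to $w_p$ whose edge labels are strictly increasing in the reflection order $\prec$ (Lemma~\ref{remark2.11}) and lie in $(\Delta^+ \setminus \Delta^+_S)^\lor$, and $w_{p+1}$ lies in $\lfloor w_{p+1}\rfloor W_S$, where $\lfloor w_{p+1}\rfloor$ is the $(p+1)$-st entry of $\eta$. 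Applying Lemma~\ref{8.5}(2) with $v = \lfloor w_{p+1}\rfloor$, $u = \lfloor w_p\rfloor$, and $w \in W_S$ defined by $w_p = \lfloor w_p\rfloor w$, there is a unique directed path in $\QBG$ from an element of $vW_S$ to $uw = w_p$ with strictly increasing labels lying in $(\Delta^+\setminus\Delta^+_S)^\lor$, and it begins at $\min(vW_S, <_{w_p})$. Since \eqref{w'_p} is such a path, uniqueness forces $w_{p+1} = \min(\lfloor w_{p+1}\rfloor W_S, <_{w_p})$ — an element computable from the already-known $w_p$ and from $\eta$ — and also pins down \eqref{w'_p} itself. (When $p = 0$ and $u_1 = 0$ in \eqref{2.16} the first block is empty, so this simply yields $w_1 = w_0 = \lon$ and an empty path; the argument is unaffected.)

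Having recovered all $w_p$ and all paths \eqref{w'_p}, I would read off the edge labels: within the $p$-th block of \eqref{2.16} the edges of \eqref{w'_p} carry labels $\lon\overline{\beta^\OS_{j_{u_p+1}}}^\lor, \ldots, \lon\overline{\beta^\OS_{j_{u_{p+1}}}}^\lor$, so applying $\lon$ recovers $\overline{\beta^\OS_{j_u}}^\lor$, hence $\overline{\beta^\OS_{j_u}}$ and $c_{\overline{\beta^\OS_{j_u}}}$. Since $d_{j_u} = \sigma_p$ for $u$ in the $p$-th block, \eqref{sigma_with_a} gives $\sigma_p\langle\lambda, \lon\overline{\beta^\OS_{j_u}}^\lor\rangle = \langle\lambda_-, \overline{\beta^\OS_{j_u}}^\lor\rangle - a_{j_u}$, which determines $a_{j_u}$, hence $\dg(\beta^\OS_{j_u}) = c_{\overline{\beta^\OS_{j_u}}}a_{j_u}$ and $\beta^\OS_{j_u} = \overline{\beta^\OS_{j_u}} + \dg(\beta^\OS_{j_u})\delta$. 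Because $k \mapsto \beta^\OS_k$ is a bijection from $\{1, \ldots, L\}$ onto $\Delta^+_\aff \cap \tra\inv\Delta^-_\aff$, each $j_u$, and therefore $J$, is recovered from $\eta$, so $\Xi$ is injective.

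The hard part will be the inductive recovery of the genuine elements $w_p \in W$: one must verify that the element assigned by the construction of $\Xi$ at each stage coincides with the tilted-Bruhat-minimal coset representative $\min(\lfloor w_{p+1}\rfloor W_S, <_{w_p})$, and this is precisely where Lemma~\ref{8.5} is indispensable, together with the fact — deliberately built into the definition of $\Xi$ — that the path \eqref{w'_p} has strictly increasing labels lying in $(\Delta^+\setminus\Delta^+_S)^\lor$. Once the $w_p$ are in hand, the remaining steps are routine bookkeeping with \eqref{w'_p} and \eqref{sigma_with_a}.
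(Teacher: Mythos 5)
Your proposal is correct and follows essentially the same route as the paper: the paper also proceeds by induction on $p$, using Lemma~\ref{8.5}(2) to force agreement of the genuine elements $w_{p+1}$ and of the label-increasing paths \eqref{w'_p}, and then recovers the affine roots $\beta^{\OS}_{j_u}$ from the labels $\lon\overline{\beta^{\OS}_{j_u}}^\lor$ together with the values $d_{j_u}=\sigma_p$ (equivalently, via \eqref{sigma_with_a}). The only difference is presentational — you phrase it as constructing a left inverse, while the paper assumes two subsets $J,K$ have the same image and shows they coincide — and your explicit handling of the degenerate first block ($d_{j_1}>0$) is a small bonus.
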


\begin{proof}
We first show that the map $\Xi$ is injective.
Let $J= \{ j_1 , \ldots , j_r \}$ and $K = \{ k_1 , \ldots , k_{r'} \}$ be subsets of $\{ 1, \ldots , L\}$ such that
$\Xi (p^\OS_J)= \Xi (p^\OS_K) = (v_1, \ldots, v_s ; \sigma_0 , \ldots , \sigma_s) \in \QLS^\C(\lambda)$.
As in (\ref{2.16}), we set $0 = u_0 \leq u_1 < \cdots < u_{s-1} = r$ and $0 = u'_0 \leq u'_1 < \cdots < u'_{s-1} = r'$
in such a way that
	\begin{align*}
		 \underbrace{0 = d_{j_1} = \cdots = d_{j_{u_1}} }_{=\sigma_0}
		< \underbrace{d_{j_{u_1 +1}} = \cdots =d_{j_{u_2}}}_{=\sigma_1} < \cdots <
		\underbrace{ d_{j_{u_{s-1}+1}} = \cdots =d_{j_r} }_{=\sigma_{s-1}} <1 = \sigma_{s},\\
		 \underbrace{0 = d_{k_1} = \cdots = d_{k_{u'_1}} }_{=\sigma_0}
		< \underbrace{d_{k_{u'_1 +1}} = \cdots =d_{k_{u'_2}}}_{=\sigma_1} < \cdots <
		\underbrace{ d_{k_{u'_{s-1}+1}} = \cdots =d_{k_{r'}} }_{=\sigma_{s-1}} <1 = \sigma_{s}.
	\end{align*}
As in (\ref{w'_p}), we consider the directed paths in $\QBG$
	\begin{align}\label{surj_induction}
\begin{array}{l}
		w_p \xleftarrow{ \lon \left( \overline{ \beta^{\OS}_{j_{u_p +1}} } \right)^{\lor} }  \cdots  
		\xleftarrow{ \lon \left( \overline{ \beta^{\OS}_{j_{u_{p+1}}} } \right)^{\lor} }  w_{p+1}, \\
  		y_p  \xleftarrow{ \lon \left( \overline{ \beta^{\OS}_{k_{u'_p +1}} } \right)^{\lor} }  \cdots  
		\xleftarrow{ \lon \left( \overline{ \beta^{\OS}_{k_{u'_p}} } \right)^{\lor} } y_{p+1},
\end{array}
\ \
\mbox{for }0\leq p\leq s-1;
	\end{align}
here we note that $w_0 = y_0 =e$, and  $\lfloor w_{p} \rfloor = \lfloor y_{p} \rfloor = v_p$, $0\leq p\leq s-1$.

Suppose that $w_p = y_p$ and $u_p = u'_p$.
Then the paths (\ref{surj_induction}) are both directed paths from some element in $v_{p+1} W_S$ to $w_p$ in $\QBG$ whose labels are increasing and lie in $(\Delta^+ \setminus \Delta_S^+)^\lor$.
Hence $w_{p+1} = y_{p+1} \in v_{p+1} W_S$ and $\lon \overline{\beta_{j_i}^\OS} = \lon \overline{\beta_{k_i}^\OS}$ for $u_p+1 \leq i \leq u_{p+1}$, and $u_{p+1} = u_{p+1}'$
by Lemma \ref{8.5} (2).
Also,
$d_{j_i} = d_{k_i} = \sigma_p$ for $u_p+1 \leq i \leq u_{p+1}$.
It follows from 
$\beta^\OS_{j} = \overline{\beta_j^\OS} +c_{\overline{\beta_j^\OS}} (1 -d_j)\langle \lambda_- , \overline{\beta_j^\OS}^\lor \rangle \delta$, 
$1 \leq j \leq L$,
that
$\beta_{j_i}^\OS = \beta_{k_i}^\OS$, $u_p+1 \leq i \leq u_{p+1}$.
Thus, by induction on $p$, we deduce that $u_p = u_p'$ for $0 \leq p \leq s-1$, and $\beta_{j_i}^\OS = \beta_{k_i}^\OS$, $u_0+1 \leq i \leq u_{s-1}$.
Therefore, $r = u_{s-1}= u_{s-1}' =r'$, and hence 
$J= \{ j_1 , \ldots , j_r \} = \{ k_1 , \ldots , k_{r'} \}=K$. 
\end{proof}

\begin{lem}\label{surjective}
		The map $\Xi:\QB^\C(\id; \tra) \rightarrow \QLS^\C(\lambda)$
is surjective.
\end{lem}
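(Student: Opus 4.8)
The plan is to construct an explicit preimage under $\Xi$ for an arbitrary $\eta \in \QLS^\C(\lambda)$, essentially reversing the construction that defined $\Xi$. So let $\eta = (v_1, \ldots, v_s; \sigma_0, \ldots, \sigma_s) \in \QLS^\C(\lambda)$ be given. First I would pass from each $v_p \in W^S$ to a canonically chosen lift $w_p \in W$: following the recipe used to build $\eta$ from $p^\OS_J$, I would set $w_0 \eqdef \lon$ and then, working upward in $p$, apply Lemma \ref{8.5}(1),(2) to the coset $v_{p+1} W_S$ and the $w_p$-tilted Bruhat order to obtain the unique label-increasing directed path in $\QBG$ from $\min(v_{p+1}W_S, <_{w_p})$ to $w_p$ with labels in $(\Delta^+ \setminus \Delta^+_S)^\lor$; set $w_{p+1} \eqdef \min(v_{p+1}W_S, <_{w_p})$. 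By Lemma \ref{8.5}(3), since $\eta \in \QLS^\C(\lambda)$ gives a directed path from $v_{p+1}$ to $v_p$ in $(\QBG^\C_{\sigma_p \lambda})^S$, this label-increasing path actually lies in $\QBG^\C_{\sigma_p \lambda}$. Concatenating these paths over $p = 0, 1, \ldots, s-1$ yields a single directed path
\begin{equation*}
\lon = w_0 \xleftarrow{\ } w_1 \xleftarrow{\ } \cdots \xleftarrow{\ } w_s
\end{equation*}
in $\QBG$ whose edge labels lie in $(\Delta^+ \setminus \Delta^+_S)^\lor$.

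Next I would translate this path on the right by $\lon$, invoking Lemma \ref{involution}, to obtain a directed path $e = w_0 \lon \to w_1 \lon \to \cdots \to w_s \lon$ in $\QBG$. Reading off the successive labels $-(\overline{\gamma})^\lor$ of this path, and using the map $\Phi$ together with Proposition \ref{goodreducedexpression} and Corollary \ref{akbk}, each label determines a unique affine root $\gamma_{j_i} \in \Delta^+_\aff \cap t(\lambda_-)^{-1}\Delta^-_\aff$: the spatial part $\overline{\gamma_{j_i}}$ is forced by $\lon \overline{\gamma_{j_i}}^\lor$ being the label, and the degree $\dg(\gamma_{j_i})$ is forced by the requirement $d_{j_i} = \sigma_p$ via the formula $\beta^\OS_j = \overline{\beta^\OS_j} + c_{\overline{\beta^\OS_j}}(1 - d_j)\langle \lambda_-, \overline{\beta^\OS_j}^\lor\rangle\delta$. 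This produces a subset $J = \{j_1 < \cdots < j_r\} \subset \{1, \ldots, L\}$; the indices are genuinely increasing by Remark \ref{2.15} and Lemma \ref{remark2.11}, which guarantee that label-increasing within a fixed $d$-value plus increasing $d$-values between blocks is exactly the order $\prec'$ on the affine roots. Then I would set $p^\OS_J \eqdef (t(\lambda_-) = z^\OS_0, z^\OS_1, \ldots, z^\OS_r; \beta^\OS_{j_1}, \ldots, \beta^\OS_{j_r})$.

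It remains to verify that $p^\OS_J \in \QB^\C(\id; t(\lambda_-))$ and that $\Xi(p^\OS_J) = \eta$. For membership, I would use the characterization in Remark \ref{affinereflectionorder}: condition (1) is the $\prec'$-increasing property just established, and condition (2) requires that each $\dr(z^\OS_{i-1}) \to \dr(z^\OS_i)$ be a $\QBG$-edge (true by construction, since $\dr(z^\OS_i) = w_i \lon$ up to the coset bookkeeping) with $\dg(\gamma_i) \in 2\mathbb{Z}$ whenever that edge is Bruhat. This last integrality point is the crux: it is precisely where the $\C$-type condition enters, and I would derive it from the fact that the path from $v_{p+1}$ to $v_p$ lies in $(\QBG^\C_{\sigma_p \lambda})^S$ — a Bruhat edge there with short-root label forces $\sigma_p \langle \lambda_-, \overline{\beta^\OS_{j_i}}^\lor\rangle \in 2\mathbb{Z}$, hence $a_{j_i} \in 2\mathbb{Z}$, hence (since $c_{\overline{\beta^\OS_{j_i}}} = 1$ for short roots) $\dg(\beta^\OS_{j_i}) = c_{\overline{\beta^\OS_{j_i}}} a_{j_i} \in 2\mathbb{Z}$. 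Finally, that $\Xi(p^\OS_J) = \eta$ follows because the construction of $\Xi$ recovers exactly the $w_p$'s, the $\sigma_p$'s, and the cosets $\lfloor w_p \rfloor = v_p$ we started from. I expect the main obstacle to be the bookkeeping in the second paragraph — checking that the reconstructed indices $j_i$ land in the correct positions relative to the fixed reduced word for $t(\lambda_-)$ and that no two distinct labels collide — which requires careful use of Proposition \ref{goodreducedexpression}, Remark \ref{2.15}, and Lemma \ref{remark2.11}; the integrality verification, by contrast, is a short computation once the right path is in hand.
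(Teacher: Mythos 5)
Your proposal is correct and follows essentially the same route as the paper: lift the $v_p$ via $\min(v_{p+1}W_S, <_{w_p})$ using Lemma \ref{8.5}, take the unique label-increasing paths, translate by $\lon$, reconstruct the affine roots $\beta^\OS_{j}$ from the labels together with the forced degrees $a = (1-\sigma_p)\langle\lambda_-,-\gamma^\lor\rangle$, and verify membership in $\QB^\C(\id;\tra)$ via Remark \ref{affinereflectionorder} before checking $\Xi$ returns $\eta$. Your explicit treatment of the $2\mathbb{Z}$-degree condition for short-root Bruhat edges is a point the paper's own proof leaves implicit, and it is handled correctly.
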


\begin{proof}
Take an arbitrary
$
		\eta =(y_{1} , \ldots \ , y_{s} ; {\tau}_{0},\ldots, {\tau}_{s})  \in \QLS^\C(\lambda)
$.
By convention, we set $y_0=v(\lambda_-) \in W^S$.
We define the elements $v_{p}$, $0 \leq p \leq s$, by:
$v_0 =\lon $,
and
$v_{p} = \min(y_{p}W_S , \leq_{v_{p-1}})$  for $1\leq p \leq s$.

Because there exists a  directed path in $(\QBG^\C_{\tau_p \lambda})^S$ from $y_{p+1}$ to $y_{p}$ for $1\leq p \leq s-1$,
we see from Lemma \ref{8.5} (2), (3) that
there exists a unique  directed path 
\begin{equation}\label{path3}
v_p \xleftarrow{-\lon \gamma_{p,1}^\lor} \cdots \xleftarrow{-\lon \gamma_{p,t_{p}}^\lor} v_{p+1}
\end{equation}
in $\QBG_{\tau_p \lambda}^\C$ from $v_{p+1}$ to $v_p$ whose edge labels $-\lon\gamma_{p, t_p}^\lor, \ldots, -\lon \gamma_{p, 1}^\lor$ are increasing 
in the weak reflection order $\prec$
and lie in $(\Delta^+ \setminus \Delta^+_S)^\lor$
for $1\leq p \leq s-1$;
we remark that this is also true for $p=0$, since
$\tau_0=0$.
Multiplying this directed path on the right by $\lon$, 
we obtain by Lemma \ref{involution} the following directed  paths
	\begin{equation*}
		v_{p,  0} \eqdef v_{p}\lon  
		\xrightarrow{ \gamma_{p, 1}^\lor }  v_{p,  1} 
		\xrightarrow{ \gamma_{p, 2}^\lor } 
		\cdots \xrightarrow{ \gamma_{p, t_p}^\lor } v_{p+1}\lon 
		\eqdef v_{p,  t_p},
\ \
0 \leq p \leq s-1.
	\end{equation*}
Concatenating these paths for $0 \leq p\leq s-1$, we obtain the following directed  path
	\begin{align}\label{recover}
		\id &=v_{0,0} \xrightarrow{ \gamma_{0, 1}^\lor } \cdots  
		\xrightarrow{ \gamma_{0, t_0}^\lor }v_{0,  t_0}=v_{1,  0}
		\xrightarrow{ \gamma_{1, 1}^\lor } 
		\cdots 
 \xrightarrow{ \gamma_{1, t_1}^\lor }v_{1, t_1} = v_{2,  0}  \xrightarrow{ \gamma_{2, 1}^\lor }  \cdots
 \nonumber
\\
&\cdots \xrightarrow{ \gamma_{s-1, t_{s-1}}^\lor }  v_{s-1,  t_{s-1}}
	\end{align}
	in $\QBG$. 
	Now, for $0\leq p \leq s-1$ and $1 \leq m \leq t_p$,
	we set $d_{p,m}  \eqdef \tau_{p} \in \mathbb{Q}\cap [0,1)$, 
	$a_{p,m} \eqdef (1-d_{p,m})\langle \lambda_- , -{\gamma}^{\lor}_{p,m}   \rangle$, 
	and 
	$\widetilde{\gamma}_{p,m}  \eqdef c_{\gamma_{p,m}}a_{p,m} \delta - \gamma_{p,m}$.
	It follows from (\ref{B}) that $\widetilde{\gamma}_{p,m}   \in 
\Delta^+_{\aff} \cap \tra\inv \Delta^-_{\aff}$.

%%%%%%%%%%%%%%%%%%%%%%%%%%%%%%%%%%%%%%%%%%%%%%

%By Claim 1, we can choose $J'= \{ j'_1 < \ldots < j'_{r'} \} \subset \{ M'+1 ,\ldots , L \}$ such that
%	\begin{equation*}
%	\left\{ \beta^{\OS}_j \ | \ j \in J'  \right\} 
%= \left\{ \widetilde {\gamma_{p,m} } \ | \ 1\leq p \leq s, 1 \leq m \leq t_p \right\} 
%\subset \Delta^+_{\aff} \cap m^{-1}_\mu \Delta^-_{\aff} = \left\{ \beta^{\OS}_j \ | \ M'+1 \leq j \leq L   \right\} .
%	\end{equation*}
%Hence we can define  
%$
%p^{\OS}_{J'} 
%= \left( m_{\mu} = z^{\OS}_0 ,  z^{\OS}_{1} , \ldots , z^{\OS}_{r'}
%; \beta^{\OS}_{j'_1} , \beta^{\OS}_{j'_2}, \ldots , \beta^{\OS}_{j'_{r'}} \right) 
%$
%by 
%$z^{\OS}_{0}=m_{\mu}$, $z^{\OS}_{k}=z^\OS_{k-1} s_{\beta^{\OS}_{j'_{k}}}$ for $1\leq k \leq r'$.
%
%If Claim 2 below holds,
%then
%we obtain $p^{\OS}_{J'}\in \overleftarrow{\QB}({\id}; m_{\mu})$ 
%by Remark \ref{affinereflectionorder},
%and hence we can define $\Theta :\QLS^{\mu, \infty}(\lambda) \rightarrow \overleftarrow{\QB}({\id}; m_{\mu})$ by
%$\Theta (\eta) \overset{\text{def}}{=} p^{\OS}_{J'}$.

\begin{nclaim}
\begin{enu}
\item
%The set $J'$ above satisfies the following: 
We have
\begin{equation*}
%\left( \beta^{\OS}_{j'_1} \prec' \cdots \prec' \beta^{\OS}_{j'_{r'}} \right)
%=
%\left(  
\widetilde{\gamma}_{0,1} \prec'  \cdots \prec' \widetilde{\gamma}_{0,t_0} 
\prec'  
\widetilde{\gamma}_{1,1} \prec' \cdots \prec' \widetilde{\gamma}_{s-1,t_{s-1}},
%\right) ,
\end{equation*}
where $\prec'$ denotes the weak reflection order on $\Delta^+_{\aff} \cap m^{-1}_{\lambda_-} \Delta^-_{\aff}$
introduced in \S $4.3$;
%(see the proof of Proposition \ref{goodreducedexpression})
we choose 
$J' = \{ j_1 , \ldots , j'_{r'} \} \subset \{ 1 , \ldots , L \}$ in such way that
\begin{equation*}
\left( \beta^{\OS}_{j'_1} , \cdots , \beta^{\OS}_{j'_{r'}} \right)
=
\left(  
\widetilde{\gamma}_{0,1} ,  \cdots , \widetilde{\gamma}_{0, t_0} ,
\widetilde{\gamma}_{1,1} , \cdots , \widetilde{\gamma}_{s-1,t_{s-1}}
\right) .
\end{equation*}

\item
%For $1 \leq k \leq r'$, we set
Let $1 \leq k \leq r'$, and take $1 \leq p \leq s$ such that
\begin{equation*}
\left( \beta^{\OS}_{j'_1} \prec' \cdots \prec' \beta^{\OS}_{j'_{k}} \right)
=
\left(  \widetilde{\gamma}_{0,1} \prec'  \cdots \prec'  \widetilde{\gamma}_{p,m} 
 \right).
\end{equation*}
%$1 \leq p \leq s, 1 \leq m \leq t_p$.
Then, we have
$\dr(z^{\OS}_{k}) = v_{p,m-1}$.
Moreover, 
$\dr(z^{\OS}_{k-1})\xrightarrow{-\left(\overline{ \beta^\OS_{j'_k} } \right)^\lor} \dr(z^{\OS}_k)$ is an edge of $\QBG$.
\end{enu}
\end{nclaim}

%\begin{proof}[Proof of Claim $2$]
\noindent $Proof \ of \ Claim \ 1.$
(1)
It suffices to show the following:

(i)
for $0 \leq p \leq s-1$ and $1\leq m < t_p$,
we have
$  \widetilde{\gamma}_{p,m}\prec' \widetilde{\gamma}_{p,m+1}$;

(ii)
for $1 \leq p \leq s-1$,
we have
$\widetilde{\gamma}_{p,t_p} \prec' \widetilde{\gamma}_{p+1,1}$.

(i)
Because
$\frac{\langle \lambda_- , -\gamma_{p,m}^{\lor}\rangle - a_{p,m}}{\langle \lambda_- , -\gamma_{p,m}^{\lor}\rangle}=d_{p,m}$ and
$\frac{\langle \lambda_- , -\gamma_{p,m+1}^{\lor}\rangle - a_{p,m+1}}{\langle \lambda_- , -\gamma_{p,m+1}^{\lor}\rangle}=d_{p,m+1}$,
we have
\begin{align*}
\Phi (\widetilde{\gamma}_{p,m}) &=
 (d_{p,m}, - \lon \gamma_{p,m} ), \\
\Phi (\widetilde{\gamma}_{p,m+1}) &=
 (d_{p,m+1}, - \lon \gamma_{p,m+1} ).
\end{align*}
Therefore, the first component of $ \Phi (\widetilde{\gamma}_{p,m})$ is equal to that of $\Phi (\widetilde{\gamma}_{p,m+1})  $
since $d_{p,m} =1 - \tau_p =d_{p,m+1}$.
Moreover, since $- \lon \gamma_{p,m} \prec -\lon \gamma_{p,m+1}$,
we have $ \Phi (\widetilde{\gamma}_{p,m}) < \Phi (\widetilde{\gamma}_{p,m+1})  $.
This implies that $  \widetilde{\gamma}_{p,m}\prec' \widetilde{\gamma}_{p,m+1}$.

(ii)
The proof of (ii) is similar to that of (i).
The first components of $\Phi (\widetilde{\gamma}_{p,t_p}) $ and $\Phi (\widetilde{\gamma}_{p+1,1}) $ are $d_{p,t_p}$ and $d_{p+1,1}$, respectively.
Since $d_{p,t_p}= \tau_p < \tau_{p+1}= d_{p+1,1}$,
we have $\Phi (\widetilde{\gamma}_{p,t_p}) < \Phi (\widetilde{\gamma}_{p+1,1}) $.
This implies that
$\widetilde{\gamma}_{p,t_p} \prec' \widetilde{\gamma}_{p+1,1}$.

(2)
We proceed 
by induction on $k$.
If $\beta_{j'_1}^\OS = \gamma_{0,1}$, i.e., $y_1 \neq v(\lambda_-)$,
then, 
we have
$\dr (z^\OS_1) = \dr (z^\OS_0) s_{-\overline{\beta^{\OS}_{j'_1}}} =v_{0,0}  s_{\gamma_{0,1}} = v_{0,1}$
since $\dr (z^\OS_0) = \dr(t(\lambda_-))= \id = v_{0,0}$.
If $\beta_{j'_1}^\OS = \gamma_{1,1}$, i.e., $y_1 = v(\lambda_-)$ and $t_0 =0$,
then, 
we have
$\dr (z^\OS_1) = \dr (z^\OS_0) s_{-\overline{\beta^{\OS}_{j'_1}}} = s_{\gamma_{1,1}} =
v_{1,0} s_{\gamma_{1,1}}= v_{1,1}$
since $\dr (z^\OS_0) = \dr(\tra)=\id=v_{1,0}$.
Hence the assertion holds in the case $k=1$.

Assume that $\dr(z^{\OS}_{k-1}) = v_{p,m-1}$
for $0 \lneqq m \leq t_p$;
here we remark that $v_{p,m}$ is the successor of $v_{p,m-1}$ in the  directed path (\ref{recover}).
Hence we have
$\dr(z^{\OS}_{k}) 
= \dr(z^{\OS}_{k-1}) s_{-\overline{ \beta^\OS_{j'_k} } }
= v_{p,m-1}s_{\gamma_{p,m}}
\overset{(\ref{recover})}=v_{p,m}$.
Also, since (\ref{recover}) is a directed path in $\QBG$, 
$ v_{p,m} = \dr(z^{\OS}_{k-1})\xrightarrow{-\left(\overline{ \beta^\OS_{j'_k} } \right)^\lor} \dr(z^{\OS}_k) = v_{p,m-1} $ is an edge of $\QBG$.
%\end{proof}
\bqed

Since $J' = \{ j_1 , \ldots , j'_{r'} \} \subset \{ K+1 , \ldots , L \}$,
we can define an element
$
p^{\OS}_{J'} $
to be \\
$ \left( m_{\mu} = z^{\OS}_0 ,  z^{\OS}_{1} , \ldots , z^{\OS}_{r'}
; \beta^{\OS}_{j'_1} , \beta^{\OS}_{j'_2}, \ldots , \beta^{\OS}_{j'_{r'}} \right) 
$
where
$z^{\OS}_{0}=m_{\mu}$, $z^{\OS}_{k}=z^\OS_{k-1} s_{\beta^{\OS}_{j'_{k}}}$ for $1\leq k \leq r'$;
it follows from Remark \ref{affinereflectionorder} and Claim 1 that
 $p^{\OS}_{J'} \in \QB^\C(\id; \tra)$.
%Hence we can define a map $\Theta : \QLS^{\mu, \infty}(\lambda) \rightarrow \overleftarrow{\QB}({\id}; m_{\mu})$ by
%$\Theta (\eta) \eqdef p^{\OS}_{J'}$.

%Hence we can define  
%$
%p^{\OS}_{J'} 
%= \left( m_{\mu} = z^{\OS}_0 ,  z^{\OS}_{1} , \ldots , z^{\OS}_{r'}
%; \beta^{\OS}_{j'_1} , \beta^{\OS}_{j'_2}, \ldots , \beta^{\OS}_{j'_{r'}} \right) 
%$
%by 
%$z^{\OS}_{0}=m_{\mu}$, $z^{\OS}_{k}=z^\OS_{k-1} s_{\beta^{\OS}_{j'_{k}}}$ for $1\leq k \leq r'$.
%
%If Claim 2 below holds,
%then
%we obtain $p^{\OS}_{J'}\in \overleftarrow{\QB}({\id}; m_{\mu})$ 
%by Remark \ref{affinereflectionorder},
%and hence we can define $\Theta :\QLS^{\mu, \infty}(\lambda) \rightarrow \overleftarrow{\QB}({\id}; m_{\mu})$ by
%$\Theta (\eta) \overset{\text{def}}{=} p^{\OS}_{J'}$.

\begin{nclaim}
$\Xi (p^\OS_{J'})=\eta$.
\end{nclaim}

%\begin{nclaim}
%For $p^{\OS}_{J} 
%= \left( m_{\mu} = z^{\OS}_0 ,  z^{\OS}_{1} , \ldots , z^{\OS}_{r}
%; \beta^{\OS}_{j_1} , \beta^{\OS}_{j_2}, \ldots , \beta^{\OS}_{j_r} \right) \in \overleftarrow{\QB}(\id; m_{\mu})$,
%we have $\Theta \circ   \Xi (p^{\OS}_{J}) = p^{\OS}_{J}$.
%\end{nclaim}

\noindent $Proof \ of \ Claim \ 2.$
%We set $\Theta(\eta)= p_{J'}^\OS$, with $J'=\{ j'_1 , \ldots , j'_r \}$;
In the following description of $ p^\OS_{J'}$,
we employ the notation  $u_p$, $\sigma_p$, $w'_p$, and $w_p$ 
used  in the definition of $\Xi (p^\OS_{J})$.

For $1 \leq k \leq r'$, if we set $\beta^{\OS}_{j'_k}=\widetilde{\gamma}_{p,m}$,
then we have
\begin{equation*}
d_{j'_k}=
1 + \frac{\dg(\beta^{\OS}_{j'_k})}{c_{\overline{ \beta^{\OS}_{j'_k} }}\langle \lambda_- , -\overline{ \beta^{\OS}_{j'_k} }^\lor \rangle}
=
1 + 
\frac{\dg(\widetilde{\gamma}_{p,m})}{c_{{\gamma}_{p,m}}\langle \lambda_- , -\overline{\widetilde{\gamma}_{p,m}}^\lor \rangle}
=
1 + \frac{c_{{\gamma}_{p,m}}a_{p,m}}{c_{{\gamma}_{p,m}}\langle \lambda_- , \gamma_{p,m}^\lor \rangle}
=
d_{p,m}
.
\end{equation*}
Therefore,
the sequence (\ref{2.16}) determined by $p^{\OS}_{J'}$ is 
\begin{equation}\label{2.17}
		\underbrace{ 0 = d_{0,1} = \cdots d_{0,t_0} }_{=\tau_0}
		< \underbrace{d_{1,1} = \cdots =d_{1, t_1}}_{=\tau_1}
		 < \cdots <
		\underbrace{ d_{s-1,1} = \cdots =d_{s-1,t_{s-1}} }_{=\tau_{s-1}} <1 = \tau_{s} = \sigma_s.
	\end{equation}
Because the sequences (\ref{2.17}) of rational numbers is just the sequence (\ref{2.16}) for 
$\Theta ( \eta ) = p_{J'}^\OS$,
we deduce that 
$u_{p+1} - u_p = t_p$ for $0 \leq p \leq s-1$,
$\beta^\OS_{j'_{u_p + k}} = \widetilde{\gamma}_{p,k}$ for $0 \leq p \leq s-1$, $1 \leq k \leq u_{p+1} - u_p$,
and
$\sigma_p = \tau_{p}$ for $0 \leq p \leq s$.
Therefore, we have $w'_p = \dr(z^\OS_{u_p})=v_{p,0}$ and $w_p = v_{p,0}\lon =v_{p}$.
Since $\lfloor w_p \rfloor = \lfloor v_{p} \rfloor = y_{p}$,
we conclude that $\Xi (p^\OS_{J'}) = (\lfloor w_1 \rfloor ,\ldots , \lfloor w_s \rfloor ; \sigma_0, \ldots , \sigma_s)=(y_1 ,\ldots, y_s ; \tau_0 ,\ldots , \tau_s)= \eta$.
%\end{proof}
\bqed

%%%%%%%%%%%%%%%%%%%%%%%%%%%%%%%%%%%%%%%%
This completes the proof of Lemma \ref{surjective}.
%%%%%%%%%%%%%%%%%%%%%%%%%%%%%%%%%%%%%%%%
\end{proof}

By Lemmas \ref{injective} and \ref{surjective}, we have the following Proposition.
\begin{prop}\label{bijective}
The map $\Xi$ is bijective.
\end{prop}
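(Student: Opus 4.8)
The plan is to deduce the proposition immediately from the two lemmas just established: Lemma~\ref{injective} gives the injectivity of $\Xi$, Lemma~\ref{surjective} gives the surjectivity, and a map between sets that is both injective and surjective is a bijection. Thus there is nothing further to prove; what follows is a summary of how the two halves fit together and where the real content lies.

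For injectivity (Lemma~\ref{injective}) the key point is that from an element $\eta=(v_1,\dots,v_s;\sigma_0,\dots,\sigma_s)\in\QLS^\C(\lambda)$ one recovers the underlying subset $J\subset\{1,\dots,L\}$ uniquely. The block decomposition of $J$ is dictated by the values $\sigma_p=d_{j_i}$, and within each block the labels $\lon\overline{\beta^{\OS}_{j_i}}$ are forced to be the labels of the unique label-increasing shortest path in $\QBG$ from (the minimal coset representative of) $v_{p+1}W_S$ to $w_p$ provided by Lemma~\ref{8.5}(2). Since $\beta^{\OS}_j=\overline{\beta^{\OS}_j}+c_{\overline{\beta^{\OS}_j}}(1-d_j)\langle\lambda_-,\overline{\beta^{\OS}_j}^\lor\rangle\delta$, knowing $d_j$ together with $\lon\overline{\beta^{\OS}_j}$ pins down $\beta^{\OS}_j$, hence the index $j$, so the two data $\Xi(p^{\OS}_J)=\Xi(p^{\OS}_K)$ force $J=K$.

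For surjectivity (Lemma~\ref{surjective}) one builds a preimage $p^{\OS}_{J'}$ of a given $\eta\in\QLS^\C(\lambda)$ segment by segment: on each interval one lifts the directed path from $v_{p+1}$ to $v_p$ in $(\QBG^\C_{\tau_p\lambda})^S$ to the unique label-increasing path in $\QBG^\C_{\tau_p\lambda}$ with labels in $(\Delta^+\setminus\Delta^+_S)^\lor$ (Lemma~\ref{8.5}(2),(3)), multiplies on the right by $\lon$ (Lemma~\ref{involution}), and concatenates. Claim~1 verifies that the affine roots $\widetilde{\gamma}_{p,m}=c_{\gamma_{p,m}}a_{p,m}\delta-\gamma_{p,m}$ so produced are strictly increasing in the weak reflection order on $\Delta^+_\aff\cap\tra\inv\Delta^-_\aff$ and that their direction vectors form a path in $\QBG$, so that $p^{\OS}_{J'}\in\QB^\C(\id;\tra)$ by Remark~\ref{affinereflectionorder}; Claim~2 then checks $\Xi(p^{\OS}_{J'})=\eta$ by matching the data $(u_p,\sigma_p,w_p)$ produced by $\Xi$ with the data used to build $p^{\OS}_{J'}$.

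I expect the only genuinely delicate point in either direction — and the one place the type $\C$ refinement intervenes — to be the membership $p^{\OS}_{J'}\in\QB^\C(\id;\tra)$: one must check that whenever a direction edge is a Bruhat edge the corresponding affine root lies in $\Delta\oplus2\mathbb{Z}\delta$, which in the reconstruction amounts to the parity statement $a_{j_u}\in2\mathbb{Z}$ for short $\overline{\beta^{\OS}_{j_u}}$, precisely the integrality condition built into $(\QBG^\C_{b\lambda})^S$ in Definition~\ref{QBG_B}. Once Proposition~\ref{bijective} is in place, combining it with the fact that $\Xi$ preserves weights and degrees (to be verified separately) will yield Theorem~\ref{theorem_graded_character}.
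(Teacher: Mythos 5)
Your proof is correct and is exactly the paper's argument: Proposition~\ref{bijective} is deduced immediately by combining Lemma~\ref{injective} (injectivity) and Lemma~\ref{surjective} (surjectivity), and your summaries of those two lemmas accurately reflect how the paper proves them.
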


We recall from \eqref{eq:4-1} and \eqref{equ:decomposition} that $\dg(x)$ is defined by: $x= \overline{x} + \dg(x)\delta$ for $x \in \mathfrak{h}_\mathbb{R}^* \oplus \mathbb{R}\delta$, and $\wt(u) \in Q$ and $\dr(u) \in W$ are defined by: $u=t(\wt (u))\dr(u)$ for $u\in W_\aff = t(Q)\rtimes W$.
\begin{prop}\label{qls_qb}
The bijection $\Xi:\QB^\C(\id; \tra) \rightarrow \QLS^\C(\lambda)$ satisfies the following{\rm:}
\begin{enu}
\item
		$\wt (\ed (p^{\OS}_{J})) = \wt(\Xi(p^\OS_{J}) )${\rm;}
\item
		$\dg (\qwt(p^{\OS}_{J})) = \Dg (\Xi(p^{\OS}_{J}))$.
\end{enu}
\end{prop}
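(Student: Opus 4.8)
The plan is to establish both equalities by a direct computation tracking weight and degree through the explicit construction of $\Xi$. Recall the relevant data: for $p^{\OS}_J = (\tra = z^{\OS}_0, z^{\OS}_1, \ldots, z^{\OS}_r; \beta^{\OS}_{j_1}, \ldots, \beta^{\OS}_{j_r})$ one puts $x_k = \dr(z^{\OS}_k)$, so $x_k = x_{k-1}s_{\overline{\beta^{\OS}_{j_k}}}$ and, writing $z^{\OS}_k = t(\wt(z^{\OS}_k))x_k$ in $W_\aff = t(Q)\rtimes W$, the translation part $\wt(z^{\OS}_k)$ is determined inductively; one then chooses the cut points $0 = u_0 \le u_1 < \cdots < u_s = r$ and $0 = \sigma_0 < \cdots < \sigma_s = 1$ grouping indices by the common value of $d_{j_i}$ as in \eqref{2.16}, sets $w_p = x_{u_p}\lon$, and $\eta = \Xi(p^{\OS}_J) = (\lfloor w_1 \rfloor, \ldots, \lfloor w_s \rfloor; \sigma_0, \ldots, \sigma_s)$ with the convention $\lfloor w_0 \rfloor = v(\lambda_-)$.

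For part (1), I would first compute $\wt(\ed(p^{\OS}_J)) = \wt(z^{\OS}_r)$ by telescoping: expressing each $s_{\beta^{\OS}_{j_i}}$ as a product $t(\cdot)\, s_{\overline{\beta^{\OS}_{j_i}}}$ and iterating $z^{\OS}_i = z^{\OS}_{i-1}s_{\beta^{\OS}_{j_i}}$ expresses $\wt(z^{\OS}_r)$ as $\lambda_-$ corrected by a sum over $1 \le i \le r$ of terms built from the $a_{j_i}$ and the $x_{i-1}\overline{\beta^{\OS}_{j_i}}$. On the QLS side I would expand $\wt(\eta) = \eta(1) = \sum_{p=1}^s (\sigma_p - \sigma_{p-1})w_p\lambda$ by Abel summation, substitute $w_p\lambda = x_{u_p}\lambda_-$ and the recursion $x_i\lambda_- = x_{i-1}\lambda_- - \langle \lambda_-, \overline{\beta^{\OS}_{j_i}}^\lor\rangle\, x_{i-1}\overline{\beta^{\OS}_{j_i}}$, and then use the identity $\sigma_p\langle \lambda_-, \overline{\beta^{\OS}_{j_i}}^\lor\rangle = \langle \lambda_-, \overline{\beta^{\OS}_{j_i}}^\lor\rangle - a_{j_i}$ for $i$ in block $p$ (the computation underlying \eqref{sigma_with_a}) together with \eqref{3.1.9} (namely $d_{j_i}=0 \Leftrightarrow j_i \le M$, which collapses the block-$0$ terms). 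Matching the two expressions, while keeping the identification of coroots with elements of $Q$ via the bilinear form of Remark~\ref{BC'} under control, yields $\wt(\ed(p^{\OS}_J)) = \wt(\Xi(p^{\OS}_J))$.

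For part (2), note first that $\ell_{j_i} = 0$ forces the edge $\dr(z^{\OS}_{i-1}) \to \dr(z^{\OS}_i)$ to be quantum, so $J^0 \sqcup J^-$ is exactly the set of $j_i \in J$ for which this edge is a quantum edge, and hence $\dg(\qwt(p^{\OS}_J)) = \sum_{i:\ \text{quantum}} \dg(\beta^{\OS}_{j_i})$. On the QLS side, the directed path \eqref{w'_p} from $w_{p+1}$ to $w_p$ is obtained from the corresponding stretch of \eqref{2.15.5} by right multiplication by $\lon$, so Bruhat (resp.\ quantum) edges go to Bruhat (resp.\ quantum) edges by Lemma~\ref{involution}; it is a shortest path whose image in $\QBG^S$ computes $\wt_\lambda(\lfloor w_p\rfloor \Rightarrow \lfloor w_{p+1}\rfloor)$, and its quantum edges are precisely the images of the quantum edges among $x_{i-1}\to x_i$ with $u_p < i \le u_{p+1}$, carrying coroot label $\lon\overline{\beta^{\OS}_{j_i}}^\lor$. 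Pairing such a label with $\lambda$, multiplying by $1-\sigma_p$, and invoking the formula for $d_{j_i}$ from Remark~\ref{2.15} to identify $(1-\sigma_p)$ times the resulting pairing with $\dg(\beta^{\OS}_{j_i})$, then summing over $0 \le p \le s-1$, gives $\Dg(\eta) = \dg(\qwt(p^{\OS}_J))$.

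The individual computations are elementary, and the real obstacle is the bookkeeping forced by the twisted setting. One must keep the normalization constants $c_{\overline{\beta^{\OS}_j}}$ (equivalently, the long/short distinction for roots of $B_n$, and the attendant root/coroot identifications) and the sign conventions for the $W_\aff$-action and for affine roots consistent throughout; and one must confirm that the Bruhat-versus-quantum restriction defining $\QB^\C(\id;\tra)$ — namely $\beta^{\OS}_{j_i} \in \Delta \oplus 2\mathbb{Z}\delta$ whenever the associated edge is Bruhat — corresponds under $\Xi$ precisely to the integrality conditions defining $\QBG^\C_{\sigma_p\lambda}$ and $(\QBG^\C_{\sigma_p\lambda})^S$. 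This last compatibility is essentially the one already verified while proving that $\Xi$ is well defined and bijective (Lemmas~\ref{injective} and~\ref{surjective}, in particular the argument around \eqref{sigma_with_a} and Corollary~\ref{akbk}), so the present proposition is mostly a matter of assembling those ingredients and carrying the constants through to the end.
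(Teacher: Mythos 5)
Your proposal is correct and is essentially the computation the paper performs: the paper organizes it as an induction on $\#J$, peeling off the largest index $j_r$ and recording exactly the per-step increments you obtain by telescoping (Claim 1: $\wt$ changes by $a_{j_r}w_s\lon(-\overline{\beta^{\OS}_{j_r}})$ and $\Dg$ by $\zeta\dg(\beta^{\OS}_{j_r})$; Claim 2: the same increments for $\wt(\ed(\cdot))$ and $\dg(\qwt(\cdot))$), with the verification of these increments deferred to the analogous claims in \cite[Proposition 3.2.6]{NNS}. The normalization issue you flag — matching $(1-\sigma_p)\pair{\lambda}{\lon\overline{\beta^{\OS}_{j_i}}^\lor} = a_{j_i}$ against $\dg(\beta^{\OS}_{j_i}) = c_{\overline{\beta^{\OS}_{j_i}}}a_{j_i}$ via Remark~\ref{2.15} and the bilinear form of Remark~\ref{BC'} — is indeed the only delicate point, and it is precisely what the deferred claims are responsible for.
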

\setcounter{nclaim}{0} %%Claimの番号をリセット

\begin{proof}
We proceed by induction on $ \#J$.

If $J=\emptyset$, it is obvious that $\deg (\qwt(p^{\OS}_{J})) = \Dg (\Xi (p^{\OS}_{J}))=0$ and 
${\wt}(\ed (p^{\OS}_{J})) = \wt(\Xi(p^{\OS}_{J}) )=\lambda_-$,
since
$\Xi(p^{\OS}_{J})=(v(\lambda_-); 0,1)$.
	
Let	$J=\{ j_1 <j_2 <\cdots <j_r\}$, and set
	$K \eqdef J\setminus \{ j_r \}$;
	assume that $\Xi (p^{\OS}_{K})$ is of the form: 
$\Xi (p^{\OS}_{K}) =
  (\lfloor w_1 \rfloor  ,\ldots , \lfloor w_s \rfloor ; \sigma_0 ,\ldots, \sigma_s)  $.
In the following, we employ the notation $w_p$, $0\leq p \leq s$, used in the definition of the map $\Xi$. 
Note that
$\dr(p^\OS_K)=w_s \lon$
by the definition of $\Xi$.
Also, observe that
if $d_{j_{r}}=d_{j_{r-1}}={\sigma}_{s-1}$, 
then $\{ d_{j_1} \leq \cdots \leq d_{j_{r-1}}\leq d_{j_r} \}
=
\{ d_{j_1} \leq \cdots \leq d_{j_{r-1}} \} $,
and
if $d_{j_{r}}>d_{j_{r-1}}={\sigma}_{s-1}$,
then
$\{ d_{j_1} \leq \cdots \leq d_{j_{r-1}}\leq d_{j_r} \}
= 
\{ d_{j_1} \leq \cdots \leq d_{j_{r-1}}< d_{j_r} \}
$.
From these, we deduce that
	\begin{align*}
	\Xi (p^{\OS}_{J})=
\begin{cases}
(\lfloor w_1 \rfloor ,\ldots,  \lfloor w_{s-1}\rfloor , \lfloor w_{s} s_{\lon \overline {\beta^{\OS}_{j_r} }  }  \rfloor ; \sigma_0, \ldots, \sigma_{s-1}, \sigma_s)
& \mbox{if } d_{j_{r}}=d_{j_{r-1}} ={\sigma}_{s-1}, \\
(\lfloor w_1 \rfloor ,\ldots , \lfloor w_{s-1} \rfloor ,  \lfloor w_{s} \rfloor ,\lfloor  w_{s}s_{\lon \overline {\beta^{\OS}_{j_r }} }  \rfloor ; \sigma_0, \ldots, \sigma_{s-1}, d_{j_r}, \sigma_{s})
& \mbox{if } d_{j_{r}}>d_{j_{r-1}} ={\sigma}_{s-1}.
\end{cases}
	\end{align*}

For the induction step, it suffices to show the following claims.

\begin{nclaim}
\
\begin{enu}
\item
We have
	\begin{equation*}
		\wt ( \Xi (p^{\OS}_{J}) ) 
		= \wt ( \Xi (p^{\OS}_{K}) )+ 
		a_{j_{r}}w_{s}\lon \left( -\overline {\beta^{\OS}_{j_r } } \right).
	\end{equation*}

\item
We have
\begin{equation*}
		\Dg ( \Xi (p^{\OS}_{J}) ) 
		= \Dg ( \Xi (p^{\OS}_{K}) )+ \zeta \dg(\beta^{\OS}_{j_r} ),
	\end{equation*}
where
$\zeta \eqdef 0$ {\rm(}resp., $\zeta \eqdef 1${\rm)} 
if $w_{s}s_{\lon \overline {\beta^{\OS}_{j_r } }  } \leftarrow w_{s}$ is a Bruhat {\rm(}resp., quantum{\rm)} edge.
\end{enu}
\end{nclaim}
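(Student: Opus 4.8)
The plan is to compute directly how the new element $\Xi(p^{\OS}_J)$ differs from $\Xi(p^{\OS}_K)$ when we adjoin the last index $j_r$, using the explicit case description of $\Xi(p^{\OS}_J)$ already displayed. First I would recall the end-point recursion $\ed(p^{\OS}_J) = \ed(p^{\OS}_K)\, s_{\beta^{\OS}_{j_r}}$ in $W_\aff$, and decompose both $\ed(p^{\OS}_K)$ and $\ed(p^{\OS}_J)$ according to $W_\aff = t(Q)\rtimes W$, so that $\ed(p^{\OS}_J) = t(\wt(\ed(p^{\OS}_J)))\,\dr(\ed(p^{\OS}_J))$. Since $\dr(\ed(p^{\OS}_K)) = w_s\lon$ by the definition of $\Xi$, and $\beta^{\OS}_{j_r} = \overline{\beta^{\OS}_{j_r}} + \dg(\beta^{\OS}_{j_r})\delta$ with $\dg(\beta^{\OS}_{j_r}) = c_{\overline{\beta^{\OS}_{j_r}}} a_{j_r}$, multiplying on the right by the affine reflection $s_{\beta^{\OS}_{j_r}} = t\bigl(\dg(\beta^{\OS}_{j_r})\,\overline{\beta^{\OS}_{j_r}}^{\lor}/\text{(normalization)}\bigr) s_{\overline{\beta^{\OS}_{j_r}}}$ produces a translation part that shifts $\wt$ by a multiple of $\dr(\ed(p^{\OS}_K))\,\overline{\beta^{\OS}_{j_r}}$; unwinding the translation-conjugation identity $w\, t(x) = t(wx)\, w$ gives precisely the extra term $a_{j_r} w_s\lon(-\overline{\beta^{\OS}_{j_r}})$. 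This proves (1); I would also check the degenerate subcase where $\lfloor w_s s_{\lon\overline{\beta^{\OS}_{j_r}}}\rfloor = \lfloor w_s\rfloor$ cannot occur (it is excluded exactly as in the injectivity argument of Lemma~\ref{injective}, using Lemma~\ref{8.5}).

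For (2), recall $\qwt(p^{\OS}_J) = \sum_{j\in J^0\sqcup J^-}\beta^{\OS}_j$, so that $\qwt(p^{\OS}_J) - \qwt(p^{\OS}_K)$ equals $\beta^{\OS}_{j_r}$ if $j_r\in J^0\sqcup J^-$ and $0$ otherwise; applying $\dg$ and using linearity, $\dg(\qwt(p^{\OS}_J)) - \dg(\qwt(p^{\OS}_K)) = \zeta'\dg(\beta^{\OS}_{j_r})$ where $\zeta' = 1$ precisely when $j_r\in J^0$ or $j_r\in J^-$, i.e.\ precisely when the edge $\dr(z^{\OS}_{r-1})\to\dr(z^{\OS}_r)$ is a quantum edge (for $j_r\notin J^0$) or $\ell_{j_r}=0$ (in which case it is automatically quantum by the remark following the definition of $\qwt$). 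On the $\QLS$ side, $\Dg(\eta) = \sum_{i=0}^{s-1}(1-\sigma_i)\wt_\lambda(w_i\Rightarrow w_{i+1})$, and by the case description the passage from $\Xi(p^{\OS}_K)$ to $\Xi(p^{\OS}_J)$ appends (or extends) exactly one edge $w_s s_{\lon\overline{\beta^{\OS}_{j_r}}}\leftarrow w_s$ at parameter $\sigma_{s-1}$ or $d_{j_r}$; the contribution to $\Dg$ is $(1-d_{j_r})\,\wt_\lambda$ of that single edge, which is $0$ on a Bruhat edge and $(1-d_{j_r})\langle\lambda, \lon\overline{\beta^{\OS}_{j_r}}^{\lor}\rangle = \langle\lambda_-,\overline{\beta^{\OS}_{j_r}}^{\lor}\rangle - a_{j_r}$ on a quantum edge. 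I would then reconcile this with $\zeta\dg(\beta^{\OS}_{j_r})$ via the identity $\dg(\beta^{\OS}_{j_r}) = c_{\overline{\beta^{\OS}_{j_r}}}a_{j_r}$ and the definition $d_{j_r} = (c_{\overline{\beta^{\OS}_{j_r}}}\langle\lambda_-,\overline{\beta^{\OS}_{j_r}}^{\lor}\rangle - \dg(\beta^{\OS}_{j_r}))/(c_{\overline{\beta^{\OS}_{j_r}}}\langle\lambda_-,\overline{\beta^{\OS}_{j_r}}^{\lor}\rangle)$ from Remark~\ref{2.15}, so that $(1-d_{j_r})c_{\overline{\beta^{\OS}_{j_r}}}\langle\lambda_-,\overline{\beta^{\OS}_{j_r}}^{\lor}\rangle = \dg(\beta^{\OS}_{j_r})$, matching $\zeta = 1$ on quantum edges exactly.

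The key bookkeeping point—and the main obstacle—is to verify the matching of the "quantum/Bruhat" dichotomies on the two sides: that the edge $w_s s_{\lon\overline{\beta^{\OS}_{j_r}}}\leftarrow w_s$ in $\QBG$ is quantum if and only if the $\QBG$ edge $\dr(z^{\OS}_{r-1})\xrightarrow{-(\overline{\beta^{\OS}_{j_r}})^{\lor}}\dr(z^{\OS}_r)$ (equivalently $j_r\in J^-\sqcup J^0$) is quantum. This follows from Lemma~\ref{involution}: right-multiplication by $\lon$ and negation of the root label send Bruhat edges to Bruhat edges and quantum edges to quantum edges, and $w_s = \dr(z^{\OS}_r)\lon$, so the edge $\dr(z^{\OS}_{r-1})\to\dr(z^{\OS}_r)$ transforms into $\dr(z^{\OS}_{r-1})\lon\xleftarrow{\lon\overline{\beta^{\OS}_{j_r}}^{\lor}}\dr(z^{\OS}_r)\lon$, i.e.\ into $w_s s_{\lon\overline{\beta^{\OS}_{j_r}}}\leftarrow w_s$ after the identification $\dr(z^{\OS}_{r-1})\lon = w_s s_{\lon\overline{\beta^{\OS}_{j_r}}}$. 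Hence $\zeta = \zeta'$, completing both claims. Once the two claims are established, (1) follows by telescoping: $\wt(\ed(p^{\OS}_J)) = \lambda_- + \sum$ of the increments $a_{j_i}w_{?}\lon(-\overline{\beta^{\OS}_{j_i}})$, which is precisely the value of $\wt$ of the terminal direction $\eta(1)$ of $\Xi(p^{\OS}_J)$; and (2) follows since $\dg(\qwt(\cdot))$ and $\Dg(\Xi(\cdot))$ have the same base value $0$ and the same increments.
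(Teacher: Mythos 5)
The paper's own ``proof'' of this Claim is only a citation to \cite[Proposition 3.2.6]{NNS}, so a direct argument is welcome; but as written your plan has a genuine gap in each part. For part (1): the assertion concerns $\wt(\Xi(p^{\OS}_J))=\Xi(p^{\OS}_J)(1)$, the endpoint of the piecewise-linear QLS path, whereas your first paragraph computes the increment of $\wt(\ed(p^{\OS}_J))$, the translation part of the affine Weyl group element --- that is Claim~2\,(1), not Claim~1\,(1). The equality of these two quantities is exactly what the induction in Proposition~\ref{qls_qb} is set up to establish, so your closing assertion that the telescoped sum ``is precisely the value of $\eta(1)$'' is circular. What part (1) actually requires is the direct computation from the displayed case description of $\Xi(p^{\OS}_J)$: in both cases the path is altered only on $[d_{j_r},1]$, where the direction $\lfloor w_s\rfloor$ is replaced by $\lfloor w_s s_{\lon\overline{\beta^{\OS}_{j_r}}}\rfloor$, and since $W_S$ fixes $\lambda$,
\begin{equation*}
\wt(\Xi(p^{\OS}_J))-\wt(\Xi(p^{\OS}_K))
=(1-d_{j_r})\bigl(w_s s_{\lon\overline{\beta^{\OS}_{j_r}}}\lambda-w_s\lambda\bigr)
=-(1-d_{j_r})\langle\lambda_-,\overline{\beta^{\OS}_{j_r}}^{\lor}\rangle\,w_s\lon\overline{\beta^{\OS}_{j_r}}
=a_{j_r}w_s\lon\bigl(-\overline{\beta^{\OS}_{j_r}}\bigr),
\end{equation*}
using $1-d_{j_r}=a_{j_r}/\langle\lambda_-,\overline{\beta^{\OS}_{j_r}}^{\lor}\rangle$ from Remark~\ref{2.15}. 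This is short, but it is the whole content of part (1) and is absent from your plan.

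For part (2), your evaluation of the quantum-edge contribution is internally inconsistent. By Remark~\ref{2.15} one has $(1-d_{j_r})\langle\lambda_-,\overline{\beta^{\OS}_{j_r}}^{\lor}\rangle=a_{j_r}$, not $\langle\lambda_-,\overline{\beta^{\OS}_{j_r}}^{\lor}\rangle-a_{j_r}$ (the latter equals $d_{j_r}\langle\lambda_-,\overline{\beta^{\OS}_{j_r}}^{\lor}\rangle$, cf.\ \eqref{sigma_with_a}); and your ``reconciliation'' $(1-d_{j_r})\,c_{\overline{\beta^{\OS}_{j_r}}}\langle\lambda_-,\overline{\beta^{\OS}_{j_r}}^{\lor}\rangle=\dg(\beta^{\OS}_{j_r})$ silently inserts the factor $c_{\overline{\beta^{\OS}_{j_r}}}$, which does not occur in the quantity $(1-d_{j_r})\wt_\lambda(\mbox{new edge})$ you wrote one line earlier. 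You have thus assigned two incompatible values to the same contribution, and the matching with $\zeta\dg(\beta^{\OS}_{j_r})$ is not actually established; accounting for this factor $c_{\overline{\beta^{\OS}_{j_r}}}$ is precisely the new feature of the present dual-untwisted setting relative to \cite{NNS} and must be confronted explicitly. Two smaller points: in the subcase $d_{j_r}=\sigma_{s-1}$ the new edge extends the last segment rather than creating a new one, so you also need the additivity of $\wt_\lambda$ along the concatenated label-increasing path (Lemma~\ref{8.5}\,(2) plus the path-independence of $\wt$), which you do not mention; and in your use of Lemma~\ref{involution} the identification should be $\dr(z^{\OS}_{r-1})\lon=w_s$ and $\dr(z^{\OS}_{r})\lon=w_s s_{\lon\overline{\beta^{\OS}_{j_r}}}$, not the reverse, though the conclusion that the Bruhat/quantum type is preserved under the $\lon$-involution is correct and is indeed the right way to identify $\zeta$ with membership in $J^{0}\sqcup J^{-}$.
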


\begin{nclaim}
\
\begin{enu}
\item
We have
	\begin{equation*}
		\wt ( \ed (p^{\OS}_{J}) ) 
		= \wt ( \ed (p^{\OS}_{K}) )+ 
		a_{j_{r}}w_{s}\lon \left( -\overline {\beta^{\OS}_{j_r } } \right).
	\end{equation*}

\item
We have
	\begin{equation*}
		\dg ( \qwt (p^{\OS}_{J}) )
		= \dg ( \qwt (p^{\OS}_{K}) ) + \zeta \dg(\beta^{\OS}_{j_r } ).
	\end{equation*}
\end{enu}
\end{nclaim}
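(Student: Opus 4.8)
The plan is to derive both parts of Claim~2 from the single identity $\ed(p^{\OS}_{J})=\ed(p^{\OS}_{K})\,s_{\beta^{\OS}_{j_r}}$ in $W_\aff$. This holds because for $J=\{j_1<\dots<j_r\}$ and $K=J\setminus\{j_r\}$ the partial products $z^{\OS}_0,\dots,z^{\OS}_{r-1}$ are the same (they only involve $j_1<\dots<j_{r-1}$), so $\ed(p^{\OS}_{K})=z^{\OS}_{r-1}$ and $\ed(p^{\OS}_{J})=z^{\OS}_{r}=z^{\OS}_{r-1}s_{\beta^{\OS}_{j_r}}$; in particular the $r$-th step of $p^{\OS}_{J}$ is the $\QBG$-edge $\dr(z^{\OS}_{r-1})\xrightarrow{-(\overline{\beta^{\OS}_{j_r}})^{\lor}}\dr(z^{\OS}_{r})$, and, from the construction of $\Xi$, $\dr(\ed(p^{\OS}_{K}))=\dr(z^{\OS}_{r-1})=w_s\lon$.

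For part~(1) I would first record the elementary fact that, in $W_\aff=t(Q)\rtimes W$, every affine reflection $s_\beta$ with $\beta=\overline{\beta}+\dg(\beta)\delta\in\Delta^+_\aff$ decomposes as $s_\beta=t\bigl(-\frac{\dg(\beta)}{c_{\overline{\beta}}}\overline{\beta}\bigr)s_{\overline{\beta}}$: writing $\beta=\overline{\beta}+c_{\overline{\beta}}a\delta$ with $a=\dg(\beta)/c_{\overline{\beta}}\in\mathbb{Z}$, one has $\beta=t(\mu)(\overline{\beta})$ for $\mu$ with $(\mu,\overline{\beta})=-\frac{c_{\overline{\beta}}a}{2}$, so that $s_\beta=t(\mu)s_{\overline{\beta}}t(\mu)^{-1}=t(\langle\mu,\overline{\beta}^{\lor}\rangle\overline{\beta})s_{\overline{\beta}}=t(-a\overline{\beta})s_{\overline{\beta}}$; one checks this is consistent with $s_0=t(\theta)s_\theta$ (take $\beta=\alpha_0=\delta-\theta$). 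Applying this with $\beta=\beta^{\OS}_{j_r}$ and $u=\ed(p^{\OS}_{K})$, and using $\dr(u)t(x)=t(\dr(u)x)\dr(u)$, gives $\wt(\ed(p^{\OS}_{J}))=\wt(u\,s_{\beta^{\OS}_{j_r}})=\wt(u)-a_{j_r}\dr(u)\overline{\beta^{\OS}_{j_r}}$; since $\dr(u)=w_s\lon$ and $a_{j_r}=\dg(\beta^{\OS}_{j_r})/c_{\overline{\beta^{\OS}_{j_r}}}$ (Remark~\ref{2.15}), this is exactly $\wt(\ed(p^{\OS}_{K}))+a_{j_r}w_s\lon(-\overline{\beta^{\OS}_{j_r}})$.

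For part~(2) I would use that $\qwt(p^{\OS}_{J})=\sum_{j\in J^0\sqcup J^-}\beta^{\OS}_{j}$ and that, since the first $r-1$ steps of $p^{\OS}_{J}$ are literally the steps of $p^{\OS}_{K}$, the set $J^0\sqcup J^-$ for $J$ is obtained from that for $K$ by adjoining $j_r$ exactly when the $r$-th edge of $p^{\OS}_{J}$ is a quantum edge (recall that $\ell_{j_r}=0$ forces that edge to be quantum, so $J^0\sqcup J^-$ is precisely the set of indices labelling quantum edges). Hence $\qwt(p^{\OS}_{J})-\qwt(p^{\OS}_{K})$ equals $\beta^{\OS}_{j_r}$ if that edge is quantum and $0$ if it is Bruhat; applying the additive map $\dg(\cdot)$ yields $\dg(\qwt(p^{\OS}_{J}))=\dg(\qwt(p^{\OS}_{K}))+\zeta\,\dg(\beta^{\OS}_{j_r})$, provided we know that $\zeta$, defined in Claim~1 through the edge between $w_s s_{\lon\overline{\beta^{\OS}_{j_r}}}$ and $w_s$, is $1$ precisely when the $r$-th edge of $p^{\OS}_{J}$ is quantum. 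This last identification is Lemma~\ref{involution}: the edge $\dr(z^{\OS}_{r-1})\xrightarrow{-(\overline{\beta^{\OS}_{j_r}})^{\lor}}\dr(z^{\OS}_{r})$ has the same type (Bruhat/quantum) as its right-$\lon$-translate $\dr(z^{\OS}_{r})\lon\xrightarrow{\lon(\overline{\beta^{\OS}_{j_r}})^{\lor}}\dr(z^{\OS}_{r-1})\lon$, and $\dr(z^{\OS}_{r-1})\lon=w_s$, $\dr(z^{\OS}_{r})\lon=\dr(z^{\OS}_{r-1})s_{\overline{\beta^{\OS}_{j_r}}}\lon=w_s s_{\lon\overline{\beta^{\OS}_{j_r}}}$, so this translate is exactly the edge used to define $\zeta$.

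Finally, Claims~1 and~2 together close the induction step of Proposition~\ref{qls_qb}: the inductive hypothesis gives $\wt(\ed(p^{\OS}_{K}))=\wt(\Xi(p^{\OS}_{K}))$ and $\dg(\qwt(p^{\OS}_{K}))=\Dg(\Xi(p^{\OS}_{K}))$, and the two claims exhibit identical increments $a_{j_r}w_s\lon(-\overline{\beta^{\OS}_{j_r}})$ and $\zeta\dg(\beta^{\OS}_{j_r})$. I expect the only real difficulty to be bookkeeping: fixing the signs and the twist constants $c_{\overline{\beta}}$ in $s_\beta=t(-a\overline{\beta})s_{\overline{\beta}}$ against the paper's normalizations of $t(\cdot)$, $\delta$ and the form $(\cdot,\cdot)$, and tracking the reversal of edge orientation through Lemma~\ref{involution} and right multiplication by $\lon$ so that the Bruhat/quantum dichotomy governing $\qwt$ matches the one defining $\zeta$ in Claim~1.
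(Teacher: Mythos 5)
Your argument is correct, and it supplies essentially the computation the paper leaves out: the paper proves this claim only by reference to [NNS, Proposition 3.2.6], and your route — $\ed(p^{\OS}_J)=\ed(p^{\OS}_K)s_{\beta^{\OS}_{j_r}}$ together with $s_\beta=t(-a\overline{\beta})s_{\overline{\beta}}$ for part (1), and the observation that $J^0\sqcup J^-$ grows by $\{j_r\}$ exactly when the last edge is quantum for part (2) — is the standard one and checks out against the paper's normalizations (in particular it is consistent with $s_0=t(\theta)s_\theta$, and $a_{j_r}=\dg(\beta^{\OS}_{j_r})/c_{\overline{\beta^{\OS}_{j_r}}}$ as in Remark~\ref{2.15}). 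The only point worth flagging is that the edge defining $\zeta$ in Claim~1 is written there as $w_s s_{\lon\overline{\beta^{\OS}_{j_r}}}\leftarrow w_s$, whereas Lemma~\ref{involution} applied to the last step of $p^{\OS}_J$ produces the edge from $w_s s_{\lon\overline{\beta^{\OS}_{j_r}}}$ to $w_s$; this is an orientation typo in the paper, and your reading (matching the two edges as the same $\lon$-translate, hence of the same Bruhat/quantum type) is the intended one.
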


The proofs of Claims 1 and 2 are similar to those of Claims 1 and 2 in \cite[Proposition 3.2.6]{NNS}.

\end{proof}

\begin{proof}[Proof of Theorem $\ref{theorem_graded_character}$]
We know from Corollary \ref{oss} that
	\begin{equation*}
		P_\lambda^\C (q,0)=
		\sum_{p^{\OS}_{J} \in\QB^\C({\id} ; \tra ) } 
		q^{{\dg}({\qwt}(p^{\OS}_{J}))}e^{\wt({\ed}(p^{\OS}_{J}))}.
	\end{equation*}
Therefore, it follows from Propositions \ref{bijective} and \ref{qls_qb} that
\begin{equation*}
		P_\lambda^\C (q,0) =
		\sum_{\eta \in {\QLS^\C(\lambda)} }
		q^{\Dg(\eta)}e^{\wt(\eta)}.
	\end{equation*}
Hence we conclude that $P_\lambda^\C (q,0) = \gch \QLS^\C (\lambda)$, as desired.
\end{proof}

\subsection{Nonsymmetric Macdonald polynomials $E_\mu^\C (q, 0)$ at $t=0$ with respect to arbitrary weights}

Let $\lambda \in Q$ be a dominant weight.
By Remark \ref{Lemma7.7} and Theorem \ref{theorem_graded_character},
we have $E_{\lon \lambda}^\C(q,0) = \sum_{\eta \in \QLS^\C(\lambda)} q^{\Dg(\eta)}e^{\wt(\eta)}$.
In general, we have the following theorem.
The proof is similar to that of \cite[Theorem 1.1]{LNSSS4}.

For $\eta = (w_1, \ldots , w_s; \sigma_0, \ldots , \sigma_s) \in \QLS^\C(\lambda)$, we set $i(\eta)\eqdef w_1$.
Then for $\mu \in W\lambda$, we define $\QLS^\C_{\leq v(\mu)}(\lambda) \eqdef
\{
\eta \in \QLS^\C(\lambda)
\ | \
i( \eta ) \leq v(\mu) \}
$,
where $\leq$ denotes the Bruhat order on $W$.

\begin{thm}
Let $\lambda \in Q$ be a dominant weight and
$\mu \in W \lambda$.
Then
\begin{equation*}
E^\C_\mu (q, 0) =\sum_{\eta \in \QLS^\C_{\leq v(\mu)} (\lambda)} q^{\Dg(\eta)}e^{\wt(\eta)}.
\end{equation*}
\end{thm}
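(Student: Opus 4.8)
The plan is to repeat the proof of Theorem~\ref{theorem_graded_character} with $\tra = m_{\lambda_-}$ replaced throughout by $m_\mu$, and to identify the image of the resulting bijection with $\QLS^\C_{\leq v(\mu)}(\lambda)$; this parallels the proof of \cite[Theorem 1.1]{LNSSS4}. By Proposition~\ref{os} we have
\begin{equation*}
E^\C_\mu(q,0) = \sum_{p^{\OS}_J \in \QB^\C(\id; m_\mu)} q^{\dg(\qwt(p^{\OS}_J))} e^{\wt(\ed(p^{\OS}_J))},
\end{equation*}
so it suffices to construct a bijection $\Xi_\mu : \QB^\C(\id; m_\mu) \to \QLS^\C_{\leq v(\mu)}(\lambda)$ with $\wt(\ed(p^{\OS}_J)) = \wt(\Xi_\mu(p^{\OS}_J))$ and $\dg(\qwt(p^{\OS}_J)) = \Dg(\Xi_\mu(p^{\OS}_J))$.

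First I would carry over the content of \S 4.3 to $m_\mu$. Fixing a reduced expression of $v(\mu)$ and extending it on the right to a reduced expression of $\lon$, one obtains, as in Proposition~\ref{goodreducedexpression} and \cite[Proposition 3.1.8]{NNS}, a distinguished reduced expression $m_\mu = s_{\ell'_1}\cdots s_{\ell'_{L'}}$ together with a weak reflection order on $\Delta^+_\aff \cap m_\mu^{-1}\Delta^-_\aff$ defined through the analogue of the map $\Phi$. The crucial structural input is the relation between $m_\mu$ and $\tra$: one verifies a length-additive factorization $m_\mu = x_\mu\cdot m_\lambda$ with $\dr(m_\mu)\lon \in v(\mu)W_S$ and $\lfloor\dr(m_\mu)\lon\rfloor = v(\mu)$ (the case $\mu = \lambda_-$ recovers $\tra$ with $\dr(\tra)\lon = \lon$ and $\lfloor\lon\rfloor = v(\lambda_-)$), and that $\Delta^+_\aff \cap m_\mu^{-1}\Delta^-_\aff$ is exactly the sub-collection of $\Delta^+_\aff \cap \tra^{-1}\Delta^-_\aff$ obtained by restricting the bookkeeping of Lemma~\ref{lengthadditive} and Corollary~\ref{akbk} to directed $\QBG$-paths issuing from $\dr(m_\mu)\lon$. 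Establishing this dictionary — equivalently, showing that the construction below both lands in $\QLS^\C_{\leq v(\mu)}(\lambda)$ and exhausts it — is the step I expect to be the main obstacle; it rests on Lemma~\ref{8.5} (uniqueness of label-increasing $\QBG$-paths and minimality in the tilted Bruhat order).

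Granting this, I would define $\Xi_\mu$ by the verbatim recipe of \S 4.4: from $p^{\OS}_J \in \QB^\C(\id; m_\mu)$ extract the directed $\QBG$-path $\dr(m_\mu) = x_0 \to \cdots \to x_r$ given by the Orr--Shimozono data, multiply it on the right by $\lon$, cut the result at the jumps of the rational numbers $d_{j_i}$, and pass to cosets in $W^S$ to get $\eta = (\lfloor w_1\rfloor,\ldots,\lfloor w_s\rfloor; \sigma_0,\ldots,\sigma_s)$ (notation of \S 4.4). The arguments of \S 4.4 (Lemmas~\ref{lem:6.1} and~\ref{8.5}, the integrality computation, and Remark~\ref{affinereflectionorder}) show $\eta \in \QLS^\C(\lambda)$; moreover $w_1 = x_{u_1}\lon$ lies on a label-increasing $\QBG$-path ending at $w_0 = \dr(m_\mu)\lon$ whose labels lie in $(\Delta^+\setminus\Delta^+_S)^\lor$, so Lemma~\ref{8.5}(1),(2) yields $i(\eta) = \lfloor w_1\rfloor \leq \lfloor w_0\rfloor = v(\mu)$ and hence $\eta \in \QLS^\C_{\leq v(\mu)}(\lambda)$. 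Injectivity of $\Xi_\mu$ follows as in Lemma~\ref{injective}; surjectivity follows as in Lemma~\ref{surjective}, reconstructing from a given $\eta$, stage by stage, label-increasing $\QBG$-paths $v_p\lon \to \cdots \to v_{p+1}\lon$ by Lemma~\ref{8.5}(2),(3) but now with $v_0 := \dr(m_\mu)\lon$, where the hypothesis $i(\eta) \leq v(\mu) = \lfloor v_0\rfloor$ is precisely what makes $v_1 = \min(i(\eta)W_S, <_{v_0})$ reachable from $v_0$ along such a path, so that the reconstructed alcove path genuinely belongs to $\QB^\C(\id; m_\mu)$.

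Finally, $\Xi_\mu$ preserves $\wt$ and $\Dg$ by the same induction on $\#J$ as in Proposition~\ref{qls_qb}: Claims 1 and 2 there record only the local effect on weight and degree of appending a single Bruhat or quantum edge, which is insensitive to the starting translation of the alcove path, so they apply unchanged. Substituting $\wt\circ\ed = \wt\circ\Xi_\mu$ and $\dg\circ\qwt = \Dg\circ\Xi_\mu$ into the Orr--Shimozono sum above gives
\begin{equation*}
E^\C_\mu(q,0) = \sum_{\eta \in \QLS^\C_{\leq v(\mu)}(\lambda)} q^{\Dg(\eta)} e^{\wt(\eta)},
\end{equation*}
as desired.
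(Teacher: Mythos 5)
Your proposal is correct and takes essentially the route the paper intends: the paper gives no details for this theorem beyond the remark that the proof is similar to that of [LNSSS4, Theorem 1.1], which in context means rerunning the bijection $\Xi$ of \S 4.4 with $t(\lon\lambda)$ replaced by $m_\mu$ and identifying the image with $\QLS^\C_{\leq v(\mu)}(\lambda)$ via the initial direction. Your elaboration --- the Orr--Shimozono formula for $E^\C_\mu(q,0)$, the adapted reduced expression and weak reflection order for $m_\mu$, the identity $\lfloor \dr(m_\mu)\lon\rfloor = v(\mu)$, and the use of Lemma \ref{8.5} to show the construction lands in and exhausts $\QLS^\C_{\leq v(\mu)}(\lambda)$ --- is a faithful reconstruction of that argument.
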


\vspace{5mm}

\appendix
%\noindent {\LARGE Appendix}

\section{Semi-infinite Bruhat graph of type $\C$}
We fix $(A, A_\aff) = (C_n, \C)$.
Throughout Appendix, we omit the dagger for the notation because we does not consider other cases than $(A, A_\aff)= (C_n, \C)$.
We remark that, in this section 
$\QBG$, $\QBG^S$, $\QBG_{b \lambda}^\C$, $(\QBG_{b \lambda}^\C)^S$, $\QLS(\lambda)$
mean
$\QBG\pdag$, $(\QBG\pdag)^S$, $(\QBG\pdag_{b \lambda})^\C$, $((\QBG\pdag_{b \lambda})^\C)^S$, $\QLS^\dag(\lambda)$, respectively,
defined in Definitions \ref{QBG_A}-\ref{QBG_B} and \ref{QLS_C}.

We set 
\begin{equation*}
c_\alpha \eqdef \left\{
\begin{array}{ll}
1 & \mbox{if }\alpha \mbox{ is a intermediate root of }\Delta,\\
2 & \mbox{if }\alpha \mbox{ is a long root of }\Delta,
\end{array}\right.
\end{equation*}
and recall that the set of all real root of $\mathfrak{g}(\C)$ is
\begin{align*}
\Delta_\aff =
\{ \alpha + c_\alpha a \delta \ | \ \alpha \in \Delta , a \in \mathbb{Z} \} 
\sqcup \{ \half(\alpha + (2a-1)\delta ) \ | \ \alpha \in \Delta^\ell, a \in \mathbb{Z} \},
\end{align*}
where $\Delta^\ell$ denotes the set of all long roots in $\Delta$.

For $x \in Q^\lor$,
let $t(x)$ denote the linear transformation on $P_\aff^0$: 
$ t(x) (y+ a \delta ) = y +(a -  \langle y, x \rangle_\aff )\delta$ for $y \in \oplus_{i \in I}\mathbb{Z}(\Lambda_i - \langle \Lambda_i,c \rangle_\aff \Lambda_0) $, $a \in \half \mathbb{Z}$.
The affine Weyl group of $\mathfrak{g}(\C)$
is defined by
$W_\aff \eqdef t(Q^\lor) \rtimes W $.
Also, we define $s_0 : P_\aff^0 \rightarrow P_\aff^0 $ by $x \mapsto s_\theta x - \langle x, \theta^\lor \rangle_\aff \delta$.
Then $W_\aff = \langle s_i \ | \ i \in I_\aff \rangle$.

The affine Weyl group also acts on $P_\aff^\lor$ by
\begin{align*}
s_i y = y - \langle \alpha_i, y \rangle_\aff \alpha_i^\lor
\end{align*}
for $i \in I_\aff$ and $\mu \in P_\aff^\lor$.

For $\beta \in \Delta^+_\aff$ let $w \in W_\aff$ and $i \in I_\aff$ be such that $\beta = w \alpha_i$.
We define the associated reflection $s_\beta \in W_\aff$ and the associated coroot $\beta^\lor \in P_\aff^\lor$ by 
\begin{align*}
s_{\beta} &= w s_i w\inv,
\\
\beta^\lor &= w \alpha_i^\lor.
\end{align*}
Note that for $\Lambda \in P^0_\aff$,
\begin{align}
\langle \Lambda, (\alpha + c_\alpha a \delta)^\lor \rangle_\aff&= \langle  \overline{\xi}\inv \circ \cl (\Lambda), \alpha^\lor \rangle, 
 \ \ \alpha \in \Delta, a \in \mathbb{Z}, \label{aff_paring_1} \\
\langle \Lambda, \left( \half(\alpha + (2 a-1) \delta) \right)^\lor \rangle_\aff&= 2\langle  \overline{\xi}\inv \circ \cl (\Lambda), \alpha^\lor \rangle,
 \ \ \alpha \in \Delta^\ell, a \in \mathbb{Z}. \label{aff_paring_2}
\end{align}
%where $\alpha \in \Delta$ and $a \mathbb{Z}$.

The proof of the following lemma is straightforward.
\begin{lem}\label{affref}
\begin{enu}
\item
Let
$\beta \in \Delta^+_\aff$ be a positive root of the form $\beta = \alpha + a c_\alpha \delta$ with $\alpha \in \Delta$ and $a \in \mathbb{Z}$,
and let $x \in P^0_\aff$. Then, $s_\beta x = s_\alpha x  - a c_\alpha \langle x, \alpha^\lor \rangle \delta= s_\alpha t(a c_\alpha \alpha^\lor) x$.

\item
Let
$\beta \in \Delta^+_\aff$ be a positive root of the form $\beta = \half(\alpha + (2a-1)  \delta)$ with $\alpha \in \Delta$ and $a \in \mathbb{Z}$,
and let $x \in P^0_\aff$. Then, $s_\beta x = s_\alpha x  -(2a-1) \langle x, \alpha^\lor \rangle \delta = s_\alpha t((2a-1)\alpha^\lor) x$.
\end{enu}
\end{lem}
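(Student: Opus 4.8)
The plan is to verify both identities by a single direct computation, the only nontrivial inputs being the pairings \eqref{aff_paring_1}, \eqref{aff_paring_2} and the definition of the translation operators $t(\cdot)$ recalled immediately before the lemma. The preliminary observation I would record is that for a real affine root $\beta = w\alpha_i\in\Delta^+_\aff$, with associated coroot $\beta^\lor = w\alpha_i^\lor$ as defined above, the reflection acts on $P^0_\aff$ (indeed on all of $\mathfrak{h}^*_\aff$) by $s_\beta x = w s_i w\inv x = x - \pair{x}{\beta^\lor}_\aff\,\beta$, using $W_\aff$-invariance of $\pair{\cdot}{\cdot}_\aff$; in particular $s_\beta$ preserves $P^0_\aff$.

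First I would treat (1). Write $\beta = \alpha + a c_\alpha\delta$ with $\alpha\in\Delta$, $a\in\mathbb{Z}$. Since $x\in P^0_\aff$ is of level zero, \eqref{aff_paring_1} gives $\pair{x}{\beta^\lor}_\aff = \pair{\overline{\xi}\inv\circ\cl(x)}{\alpha^\lor}$, and this equals the quantity $\pair{x}{\alpha^\lor}$ appearing in the statement (the identity $\pair{\overline{\xi}\inv\circ\cl(x)}{\alpha^\lor} = \pair{x}{\alpha^\lor}$ for $x\in P^0_\aff$ is immediate from the definition of $\xi$, both sides being $\mathbb{Z}$-linear in $x$ and vanishing on $\delta$). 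Substituting into the reflection formula and splitting off the $\delta$-part,
\[
s_\beta x = x - \pair{x}{\alpha^\lor}\bigl(\alpha + a c_\alpha\delta\bigr) = \bigl(x - \pair{x}{\alpha^\lor}\alpha\bigr) - a c_\alpha\pair{x}{\alpha^\lor}\delta = s_\alpha x - a c_\alpha\pair{x}{\alpha^\lor}\delta,
\]
where in the last step I use that $s_\alpha\in W\subset W_\aff$ fixes $\delta$ and satisfies $s_\alpha x = x - \pair{x}{\alpha^\lor}\alpha$. For the remaining equality, the definition $t(y)(z + b\delta) = z + (b - \pair{z}{y}_\aff)\delta$ yields $t(a c_\alpha\alpha^\lor)x = x - a c_\alpha\pair{x}{\alpha^\lor}\delta$, and applying $s_\alpha$ (which fixes $\delta$) returns the same right-hand side.

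Then I would run the identical argument for (2), where $\beta = \half(\alpha + (2a-1)\delta)$ with $\alpha\in\Delta^\ell$; here \eqref{aff_paring_2} supplies the factor $2$, namely $\pair{x}{\beta^\lor}_\aff = 2\pair{x}{\alpha^\lor}$, which cancels the $\half$ in $\beta$, so that $s_\beta x = x - \pair{x}{\alpha^\lor}\bigl(\alpha + (2a-1)\delta\bigr) = s_\alpha x - (2a-1)\pair{x}{\alpha^\lor}\delta = s_\alpha t((2a-1)\alpha^\lor)x$ exactly as before. I do not expect any real obstacle: the whole content is the one-line substitution, and the only point demanding attention is the bookkeeping of the constants $c_\alpha$ and $\half$ through \eqref{aff_paring_1}--\eqref{aff_paring_2}, which is precisely why the proof is routine.
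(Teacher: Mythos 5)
Your computation is correct and is exactly the ``straightforward'' verification that the paper omits (it offers no proof of this lemma): one expands $s_\beta x = x - \pair{x}{\beta^\lor}_\aff\,\beta$, evaluates the pairing via \eqref{aff_paring_1} or \eqref{aff_paring_2}, and separates the $\delta$-component, which is also what the second stated equality reduces to under the definition of $t(\cdot)$. The bookkeeping of $c_\alpha$ in case (1) and of the factor $2$ against the $\half$ in case (2) is handled correctly, so there is nothing to add.
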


%The affine Weyl group $W_\aff$ acts on
%$P \oplus \mathbb{Z}\delta$ induced by $\xi : P \rightarrow P_\aff^0$
%(as affine transformations):
%for $v\in W$, $t(x) \in t(Q^\lor)$,
%\begin{equation}
%v t(x)( \overline{\beta}+r\delta )=v\overline{\beta}+(r- 2 ( x, \overline{\beta} ) ) \delta,
%\ \ 
%\overline{\beta} \in  \mathfrak{h}^* , r \in \mathbb{C}.
%\end{equation}

\subsection{Peterson's coset representatives $\pcr$}
Let $S$ be a subset of $I$.
We define 
\begin{align*}
\apr &\eqdef
\{ \alpha + c_\alpha a \delta \ | \ \alpha \in \Delta_S, a \in \mathbb{Z} \}
\cup
\{ \half (\alpha + (2 a-1) \delta \ | \ \alpha \in \Delta_S \cap \Delta^\ell , a \in \mathbb{Z} \}, \\
\appr &\eqdef \apr \cap \Delta^+_\aff, \\
\apsg &\eqdef W \ltimes \{ t(\mu) \ | \ \mu \in Q^\lor_S \}, \\
\pcr &\eqdef \{x \in W_\aff \ | \ x \beta \in \Delta^+_\aff \mbox{ for all } \beta \in \appr  \}.
\end{align*}

\begin{lem}[{\cite{Pe}, {see also \cite[Lemma 10.6]{LS10}}}]\label{2.2.2}
For every $x \in W_\aff$, there exists a unique factorization $x = x_1 x_2$
with $x_1 \in \pcr$, and $x_2 \in \apsg$.
\end{lem}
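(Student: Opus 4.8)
The plan is to treat Lemma~\ref{2.2.2} as the ``semi-infinite'' affine analogue of the classical factorization $W = W^S\cdot W_S$, using the general theory of reflection subgroups of the Coxeter group $W_\aff$. The first step is to identify $\apsg$ with the subgroup of $W_\aff$ generated by the reflections $\{\,s_\beta \mid \beta\in\appr\,\}$, and to check that $\apr$ is precisely its root subsystem, with $\appr = \apr\cap\Delta^+_\aff$ as positive system. This is where Lemma~\ref{affref} enters: it expresses $s_\beta$ for $\beta = \alpha + c_\alpha a\delta$ (resp.\ $\beta = \half(\alpha+(2a-1)\delta)$) as $s_\alpha\, t(a c_\alpha\alpha^\lor)$ (resp.\ $s_\alpha\, t((2a-1)\alpha^\lor)$), so that the reflections indexed by $\appr$ generate exactly $W_S\ltimes t(Q^\lor_S)=\apsg$ and $\beta\mapsto s_\beta$ is compatible with the $W_\aff$-action on $\Delta_\aff$. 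By the structure theory of reflection subgroups (Deodhar, Dyer), $\apsg$ is then itself a Coxeter group with a distinguished set of simple reflections $\{s_\beta\}_{\beta\in\Pi_S}$, $\Pi_S\subset\appr$, such that every $\beta\in\appr$ is a nonnegative integer combination of the roots in $\Pi_S$, and with length function $\ell_S(w)=\#\{\beta\in\appr\mid w\beta\in\Delta^-_\aff\}$; the claim will be that $\pcr$ is exactly the set of elements of minimal $\ell_S$-length in their coset.

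For existence I would run a descent on the monovariant $\Phi(y)\eqdef\#\{\beta\in\appr\mid y\beta\in\Delta^-_\aff\}$. This is finite for every $y\in W_\aff$, since $\{\beta\in\appr\mid y\beta\in\Delta^-_\aff\}\subseteq\Delta^+_\aff\cap y^{-1}\Delta^-_\aff$ and the latter inversion set has cardinality $\ell(y)<\infty$; hence $\Phi$ attains a minimum on the coset $x\apsg$, say at $x_1$. If $x_1\notin\pcr$, then $x_1\beta\in\Delta^-_\aff$ for some $\beta\in\appr$; writing $\beta$ as a nonnegative combination of the roots in $\Pi_S$ and using that a nonnegative combination of positive affine roots which is again a root is positive, some simple root $\beta_0\in\Pi_S$ must already satisfy $x_1\beta_0\in\Delta^-_\aff$. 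Replacing $x_1$ by $x_1 s_{\beta_0}\in x\apsg$ and using that $s_{\beta_0}$ sends $\beta_0\mapsto-\beta_0$ and permutes $\appr\setminus\{\beta_0\}$, one computes $\Phi(x_1 s_{\beta_0})=\Phi(x_1)-1$, contradicting minimality. Hence $x_1\in\pcr$, and $x = x_1\cdot(x_1^{-1}x)$ with $x_1^{-1}x\in\apsg$ is the desired factorization.

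For uniqueness, suppose $x_1,x_1'\in\pcr$ lie in the same coset and set $u\eqdef x_1^{-1}x_1'\in\apsg$. If $u\neq e$, then, $\apsg$ being a reflection group on $\apr$, there is $\beta\in\appr$ with $u\beta\in\Delta^-_\aff$; since $u\beta\in\apr\cap\Delta^-_\aff=-\appr$, we get $-u\beta\in\appr$, so $x_1(-u\beta)\in\Delta^+_\aff$ because $x_1\in\pcr$, i.e.\ $x_1'\beta = x_1 u\beta\in\Delta^-_\aff$; but $\beta\in\appr$ and $x_1'\in\pcr$ force $x_1'\beta\in\Delta^+_\aff$, a contradiction. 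Thus $u=e$ and $x_1=x_1'$.

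The main obstacle, compared with the finite case $W=W^S W_S$, is exactly that $\apsg$ is infinite, so ``the minimal element of a coset'' is not automatically well defined: the argument leans on the finiteness of $\Phi(y)$ for each individual $y$ together with the Coxeter structure (existence of simple reflections, positive spanning) supplied by reflection-subgroup theory. A more self-contained alternative is to write $x = w\,t(\mu)$ via $W_\aff = t(Q^\lor)\rtimes W$, factor $w=w^S w_S$ through $W=W^S W_S$, and adjust $\mu$ modulo $Q^\lor_S$; this avoids Dyer's theorem but requires checking the positivity condition $x_1\beta\in\Delta^+_\aff$ by hand on all of $\appr$, i.e.\ on both families of affine roots appearing in Lemma~\ref{affref}, keeping careful track of the factors $c_\alpha$ and of the half-integral roots $\half(\alpha+(2a-1)\delta)$ with $\alpha\in\Delta^\ell$. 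In either approach one can simultaneously read off, for later use in the appendix, that $\pcr$ consists of the $x$ with $x\appr\subseteq\Delta^+_\aff$, equivalently the $\ell_S$-minimal representatives.
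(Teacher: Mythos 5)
The paper offers no proof of this lemma at all---it is quoted from Peterson's notes and \cite[Lemma~10.6]{LS10}---so your argument is not competing with an in-text proof but supplying one, and what you give is correct and is essentially the standard argument underlying the cited results: realize $\apsg$ as a reflection subgroup of the Coxeter group $W_\aff$, get existence by descending on the finite inversion count $\Phi(y)=\#\{\beta\in\appr\mid y\beta\in\Delta^-_\aff\}\le\ell(y)$ within a coset, and get uniqueness from the fact that a nontrivial element of $\apsg$ must send some $\beta\in\appr$ into $-\appr\subset\Delta^-_\aff$. Two points that you assert in passing do deserve explicit verification in this twisted setting, and both come down to Lemma~\ref{affref}. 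First, the reflections of $W_\aff$ lying in $\apsg=W_S\ltimes t(Q^\lor_S)$ are exactly $\{s_\beta\mid\beta\in\appr\}$, and these generate $\apsg$: for $\alpha\in\Delta_S$ long the roots $\alpha+2a\delta$ only yield the even translations $t(2a\alpha^\lor)$, and you need the half-roots $\half(\alpha+(2a-1)\delta)$ to supply the odd multiples $t((2a-1)\alpha^\lor)$, so that together one obtains all of $t(Q^\lor_S)$; without the second family the generated subgroup would be strictly smaller than $\apsg$ and the coset minimization would take place in the wrong group. Second, for Dyer--Deodhar theory to apply verbatim (positive roots in bijection with reflections, canonical simple system $\Pi_S\subset\appr$ with $\appr$ contained in its nonnegative span), one should note that the realization of $\Delta_\aff$ used in the appendix is reduced: $2\cdot\half(\alpha+(2a-1)\delta)=\alpha+(2a-1)\delta$ is not a real root since for $\alpha$ long the first family only contains $\alpha+2a'\delta$. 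With these checks recorded, your descent step ($\Phi(x_1s_{\beta_0})=\Phi(x_1)-1$ for a canonical simple $\beta_0$ with $x_1\beta_0\in\Delta^-_\aff$, using that $s_{\beta_0}$ permutes $\appr\setminus\{\beta_0\}$ and that a nonnegative combination of positive affine roots which is a root is positive) and your uniqueness step are both sound; the more hands-on alternative you sketch via $W_\aff=t(Q^\lor)\rtimes W$ and Lemma~\ref{2.2.6}-type adjustment of $\mu$ modulo $Q^\lor_S$ is closer in spirit to how \cite{LNSSS1} and \cite{INS} actually organize this material, but is not logically necessary.
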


We define a surjective map $\Pi^S : W_\aff \rightarrow \pcr$ by $\Pi^S (x) \eqdef x_1$
if $x = x_1 x_2$ with $x_1 \in \pcr$ and $x_2 \in \apsg$.

\begin{dfn}[{see \cite[Lemma 3.8]{LNSSS1}}]\label{2.2.5}
An element $\mu \in Q^\lor $ is said to be $S$-adjusted if $\langle \mu, \gamma \rangle \in \{ 0, -1 \}$
for all $\gamma \in \Delta^+_S$. 
Let $Q^\lor_{S \adj}$ denote the set of $S$-adjusted elements.
\end{dfn}

Since
$W_\aff$ is isomorphic to the affine Weyl group of $\mathfrak{g}(C_n^{(1)})$,
the proof of the following Lemma is the same as the proof in \cite[Lemma 2.3.5]{INS}.

\begin{lem}[{\cite[Lemma 2.3.5]{INS}, see \cite[(3.6), (3.7), and Lemma 3.7]{LNSSS1}}]\label{2.2.6}
Let $S$ be a subset of $I$. 
\begin{enu}
\item
For each $\mu \in Q^\lor$, there exists a unique $\phi_S (\mu) \in Q_S^\lor$ such that $\mu+ \phi_S(\mu) \in Q^\lor_{S \adj}$.

\item
For each $\mu \in Q^\lor $, there exists a unique $z_\mu \in W_S$ such that $\Pi^S (t (\mu) ) = z_\mu t(\mu+ \phi_S(\mu))$.

\item
For $w \in W$ and $\mu \in Q^\lor $, we have $\Pi^S (w t (\mu) ) = \lfloor w \rfloor  z_\mu t(\mu+ \phi_S(\mu))$.
In particular, 
\begin{align*}
\pcr = \{ w z_\mu t(\mu) \ | \ w \in W^S, \mu \in Q^\lor_{S \adj} \}.
\end{align*}
\end{enu}
\end{lem}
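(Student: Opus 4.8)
The plan is to transfer all three assertions to the untwisted affine type $C_n^{(1)}$, where they are established in \cite[Lemma 2.3.5]{INS} (see also \cite[(3.6), (3.7), Lemmas 3.7 and 3.8]{LNSSS1}), and then to check that the combinatorial data appearing in the statement do not change under the passage from $\mathfrak{g}(\C)$ to $\mathfrak{g}(C_n^{(1)})$. First I would record the identification: $W_\aff = t(Q^\lor)\rtimes W$, together with its actions on $Q^\lor$ and on $P^0_\aff$, is the affine Weyl group of $C_n^{(1)}$, since the latter is also $W\ltimes t(Q^\lor)$ for the Weyl group $W$ and the coroot lattice $Q^\lor$ of $C_n$. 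Under this identification $\appr$ corresponds to the analogous set of positive affine roots for $C_n^{(1)}$ up to rescaling of individual roots: by Lemma~\ref{affref}, for a long root $\alpha\in\Delta^\ell$ the two families $\alpha+2a\delta$ and $\tfrac12(\alpha+(2a-1)\delta)$, $a\in\mathbb{Z}$, of real roots of $\mathfrak{g}(\C)$ have associated reflections $s_\alpha t(2a\alpha^\lor)$ and $s_\alpha t((2a-1)\alpha^\lor)$, so together they contribute precisely the reflections $s_\alpha t(k\alpha^\lor)$, $k\in\mathbb{Z}$, that is, the same reflecting hyperplanes as in the untwisted case, while for intermediate roots there is nothing to rescale. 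Consequently $\pcr$, $\apsg$, and the notion of $S$-adjustedness (phrased through the finite pairing on $Q^\lor\times\Delta_S^+$, cf.\ \eqref{aff_paring_1} and \eqref{aff_paring_2}) are literally the same objects in the two settings.

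Granting this, part~(1) reduces to the statement that each coset $\mu+Q_S^\lor$ of $Q^\lor$ contains a unique $S$-adjusted element (\cite[Lemma 3.7]{LNSSS1}); the proof is the standard alcove argument, since the region $\{x\ :\ -1\le\langle x,\gamma\rangle\le 0\ \text{ for all }\gamma\in\Delta_S^+\}$ is a fundamental domain for the $Q_S^\lor$-translation action on the affine arrangement of $\Delta_S$ and meets each such coset in a single point because all pairings are integral. For parts~(2) and~(3) I would invoke Peterson's decomposition (Lemma~\ref{2.2.2}): writing $t(\mu)=x_1x_2$ with $x_1\in\pcr$ and $x_2\in\apsg$ and comparing translation parts inside $W_\aff=t(Q^\lor)\rtimes W$, the condition $x_1\in\pcr$ forces the translation part of $x_1$ to be $S$-adjusted, hence equal to $\mu+\phi_S(\mu)$ by part~(1); thus $x_1=z_\mu\,t(\mu+\phi_S(\mu))$ for a unique $z_\mu\in W_S$, the uniqueness coming from Lemma~\ref{2.2.2}. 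For general $w\in W$ one then analyzes the Peterson decomposition of $wt(\mu)$ using the fact that $\Pi^S$ restricted to $W$ sends $w$ to $\lfloor w\rfloor$, that $\lfloor wz\rfloor=\lfloor w\rfloor$ for $z\in W_S$, and the $W_S$-equivariance of the translation part, to obtain $\Pi^S(wt(\mu))=\lfloor w\rfloor z_\mu t(\mu+\phi_S(\mu))$. The final description of $\pcr$ then follows by letting $w$ range over $W$ and $\mu$ over $Q^\lor$: $\Pi^S(wt(\mu))$ exhausts $\pcr$ by surjectivity of $\Pi^S$, $\lfloor w\rfloor$ exhausts $W^S$, $\mu+\phi_S(\mu)$ exhausts $Q^\lor_{S\adj}$, and the presentation $v\,z_\nu\,t(\nu)$ with $v\in W^S$ and $\nu\in Q^\lor_{S\adj}$ is unique by Lemma~\ref{2.2.2}.

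The step I expect to be the main obstacle is the compatibility check in the first paragraph, namely verifying that the extra half-integral real roots $\tfrac12(\alpha+(2a-1)\delta)$ of $\mathfrak{g}(\C)$, and the accompanying factor $2$ in \eqref{aff_paring_2}, do not alter $\pcr$, $\apsg$, or the $S$-adjusted condition relative to $C_n^{(1)}$. This is precisely where Lemma~\ref{affref} and the pairing identities \eqref{aff_paring_1}, \eqref{aff_paring_2} are needed; once it is in place, the remaining arguments are verbatim those of \cite[Lemma 2.3.5]{INS} (and \cite[Lemma 3.8]{LNSSS1}).
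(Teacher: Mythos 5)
Your proposal is correct and follows essentially the same route as the paper, which simply observes that $W_\aff=t(Q^\lor)\rtimes W$ is isomorphic to the affine Weyl group of $\mathfrak{g}(C_n^{(1)})$ and imports the proof of \cite[Lemma 2.3.5]{INS} verbatim. You merely supply more detail than the paper does, in particular the (correct) check via Lemma~\ref{affref} that the half-integral real roots of $\mathfrak{g}(\C)$ produce the same reflections, hence the same $\apsg$, $\pcr$, and $S$-adjustedness condition, as in the untwisted case.
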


\begin{lem}[{\cite[Lemma 2.2.7]{INS}}]\label{2.2.8}
Let $x \in W_\aff$, and $\mu \in Q^\lor_{S\adj}$.
Then,  $x z_\mu t(\mu) \in  \pcr$ if and only if $x \in \pcr$.
\end{lem}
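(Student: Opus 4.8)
The plan is to reduce everything to the claim that $g \eqdef z_\mu t(\mu)$ both normalizes $\apsg = W_{S,\aff}$ and satisfies $g(\appr) = \appr$. First I would note that $g \in \pcr$: since $\mu \in Q^\lor_{S\adj}$, the uniqueness in Lemma~\ref{2.2.6}(1) gives $\phi_S(\mu) = 0$, so Lemma~\ref{2.2.6}(2) yields $g = z_\mu t(\mu) = \Pi^S(t(\mu)) \in \pcr$. Granting $g(\appr) = \appr$, the desired equivalence is then immediate from the definition of $\pcr$: since $W_\aff$ acts linearly on $\mathfrak{h}^*_\aff$ so that $(xg)\beta = x(g\beta)$, we have for every $x \in W_\aff$ that $xg \in \pcr$ iff $x(g\beta) \in \Delta^+_\aff$ for all $\beta \in \appr$, iff $x\gamma \in \Delta^+_\aff$ for all $\gamma \in g(\appr) = \appr$, iff $x \in \pcr$.

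The real content is $g(\appr) = \appr$, which I would establish in two steps. Step 1: $g$ normalizes $\apsg$. We have $z_\mu \in W_S \subseteq \apsg$, and $t(\mu)$ normalizes $\apsg$ because conjugation by a translation fixes the classical part of every affine real root (by the description of the $t$-action, cf.\ Lemma~\ref{affref}), so $t(\mu)$ stabilizes $\apr$ --- whose membership is detected purely by the classical part lying in $\Delta_S$, resp.\ in $\Delta_S \cap \Delta^\ell$ for the half-roots $\tfrac12(\alpha + (2a-1)\delta)$ --- and hence normalizes its reflection group $W_{S,\aff} = \apsg$. Therefore $g$ normalizes $\apsg$, and consequently $g(\apr) = \apr$. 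Step 2: from $g \in \pcr$ we get $g(\appr) \subseteq \Delta^+_\aff$, so $g(\appr) \subseteq \apr \cap \Delta^+_\aff = \appr$; conversely, given $\gamma \in \appr$, write $\gamma = g\beta$ with $\beta \in \apr$ (possible since $g(\apr) = \apr$), and observe that $\beta \in -\appr$ would force $-\gamma = g(-\beta) \in g(\appr) \subseteq \appr$, impossible; hence $\beta \in \appr$ and $g(\appr) = \appr$.

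The step I expect to require the most care is the invariance claim in Step 1 in the present twisted situation: one must check that $\apr$ --- containing both the roots $\alpha + c_\alpha a\delta$ and the half-roots $\tfrac12(\alpha + (2a-1)\delta)$ --- is genuinely stable under $t(\mu)$, and that the set of affine roots whose reflections lie in $W_{S,\aff}$ is exactly $\apr$, so that $g(\apr) = \apr$ really follows from $g$ normalizing $\apsg$. Everything else (the factorization $\Pi^S$, the partition argument, the final substitution) is formal and parallels \cite[Lemma~2.2.7]{INS}; since $W_\aff$ here is isomorphic to the affine Weyl group of $\mathfrak{g}(C_n^{(1)})$, those arguments transfer once Step 1 is verified.
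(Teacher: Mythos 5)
Your proposal is correct. The paper offers no proof of its own here (it simply cites \cite[Lemma 2.2.7]{INS}), and your argument --- that $g=z_\mu t(\mu)=\Pi^S(t(\mu))\in\pcr$ because $\phi_S(\mu)=0$, that $g(\apr)=\apr$ since translations fix the classical part of every affine real root and $z_\mu\in W_S$ preserves $\Delta_S$, hence $g(\appr)=\appr$ and the equivalence follows formally from the definition of $\pcr$ --- is exactly the standard argument underlying that citation, correctly adapted to account for the extra affine roots $\tfrac12(\alpha+(2a-1)\delta)$ present in type $\C$.
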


\subsection{Semi-infinite Bruhat graph of type $\C$}

\begin{dfn}[\cite{Pe}]\label{2.3.1}
Let $x = w t(\mu) \in W_\aff$ with 
$w \in W$ and $\mu \in Q^\lor$.
Then we define 
$\ell^\si(x)  \eqdef \ell (x) + 2\langle \rho, \mu  \rangle$,
where
$\rho = \half\sum_{\alpha \in \Delta^+}\alpha$.
\end{dfn}

\begin{dfn}[{see \cite[Definition 2.4.2]{INS} for untwisted types}]\label{2.3.2}
Let $S$ be a subset of $I$.
The semi-infinite Bruhat graph $\SB^S$ is the directed graph with vertex set $\pcr$
and directed edges labeled by elements in $\Delta^+_\aff$;
for $x\in \pcr$ and $\beta \in \Delta^+_\aff$,
$x \xrightarrow{\beta} s_\beta x$ is an edge of $\SB^S$
if the following  hold:

\begin{enu}
\item
$s_{\beta}x \in \pcr$,

\item
$\ell^\si (s_{\beta}x) = \ell^\si( x) +1 $.
\end{enu}
\end{dfn}

\begin{dfn}[{see \cite[Definition 3.1.1]{INS} for untwisted types}]\label{3.1.1}
Let $\Lambda \in P^0_\aff$ be a level-zero dominant weight, and set
$\lambda = \overline{\xi}\inv \circ \cl (\Lambda) \in P$ and $S = S_\lambda$.
Let $b \in \mathbb{Q}\cap [0,1]$.
We denote by $\SB^S_{b \Lambda}$ the subgraph of $\SB^S$ with the same vertex set but having only the edges:
\begin{equation*}
x \xrightarrow{\beta} s_\beta x \mbox{ with } b\langle x\Lambda, \beta^\lor \rangle_\aff \in \mathbb{Z}.
\end{equation*}
\end{dfn}

\begin{dfn}[{see \cite[Definition 3.1.2]{INS} for untwisted types}]
Let $\Lambda \in P_\aff^0$ be a level-zero dominant  weight,
and set $\lambda = \overline{\xi}\inv \circ \cl (\Lambda)$ and $S = S_\lambda = \{ i \in I \ | \ \langle \lambda , \alpha^\lor_i \rangle =0 \}$.
A pair $\pi = (x_1, x_2 ,\ldots ,x_s ; \tau_0, \tau_1 , \ldots , \tau_s ) $
of a sequence
$w_1,\ldots , w_s$ of  elements in $\pcr$  and a increasing sequence  
$0=\tau_0, \ldots , \tau_s=1$ of rational numbers
is called a semi-infinite Lakshmibai-Seshadri ($\SILS$) path of shape $\Lambda$
if 

(C)
for every $1\leq i \leq s-1$, there exists a directed  path from $w_{i+1}$ to $w_{i}$ in $\SB^S_{b \Lambda}$.

\noindent
Let $\SILS(\Lambda)$ denote the set of all $\SILS$ paths of shape $\Lambda$.
\end{dfn}

Let $\Lambda \in P_\aff^0$ be a level-zero dominant  weight, and set 
$\lambda =  \overline{\xi}\inv \circ \cl (\Lambda)$.
We take $\pi = (x_1, \ldots , x_s ; \sigma_0, \ldots, \sigma_s) \in \SILS(\Lambda)$.
It follows from Lemma \ref{2.2.6} that for each $1 \leq i \leq s$, there exists a unique decomposition $x_i = w_i z_{\mu_i} t(\mu_i)$
with $w_i \in W^S$ and $\mu_i \in Q^\lor_{S\adj}$.
Then we set $\cl (\pi ) = (w_1, \ldots , w_s ; \sigma_0, \ldots, \sigma_s)$.

We will show the following theorem in \S A.3.
\begin{thm}\label{qls_and_sils}
For each $\eta \in \SILS(\Lambda)$, $\cl (\eta ) \in \QLS(\lambda)$.
Hence we obtain the map $\cl : \SILS(\Lambda) \rightarrow \QLS(\lambda)${\rm;}
moreover, the map $\cl :   \SILS(\Lambda) \rightarrow \QLS(\lambda)$ is surjective.
\end{thm}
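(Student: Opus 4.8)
The plan is to mimic, in the twisted type $\C$ setting, the argument of Ishii–Naito–Sagaki for untwisted types (\cite[\S 4]{INS}), adapting it to account for the fact that the real roots of $\mathfrak{g}(\C)$ come in two families, $\alpha + c_\alpha a\delta$ and $\half(\alpha + (2a-1)\delta)$ with $\alpha \in \Delta^\ell$. The proof naturally splits into two parts: (i) showing that $\cl$ is well-defined, i.e.\ $\cl(\eta) \in \QLS(\lambda)$ for every $\eta \in \SILS(\Lambda)$; and (ii) showing that $\cl$ is surjective.

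For part (i), the key reduction is to a single-edge statement: if $x \xrightarrow{\beta} s_\beta x$ is an edge of $\SB^S_{b\Lambda}$ with $x = w z_\mu t(\mu)$, $s_\beta x = w' z_{\mu'}t(\mu')$ in the decomposition of Lemma~\ref{2.2.6}, then there is a directed path from $\lfloor w' \rfloor$ to $\lfloor w \rfloor$ in $(\QBG^\dag_{b\lambda})^S$. Write $\beta = \alpha + c_\alpha a\delta$ or $\beta = \half(\alpha + (2a-1)\delta)$; by Lemma~\ref{affref}, $s_\beta x = s_\alpha t(c\alpha^\lor)x$ for the appropriate integer $c$ ($c = ac_\alpha$ in the first case, $c = 2a-1$ in the second), so $s_\beta$ acts on $W$-parts by $s_\alpha$ and the edge label determines whether the resulting $\QBG^\dag$-edge is Bruhat or quantum via the sign comparison of $\ell^\si$. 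The integrality condition $b\langle x\Lambda, \beta^\lor\rangle_\aff \in \mathbb{Z}$, combined with \eqref{aff_paring_1} and \eqref{aff_paring_2}, translates exactly into the integrality conditions defining $(\QBG^\dag_{b\lambda})^\C$ from Definition~\ref{QBG_B}: when $\alpha$ is an intermediate (short) root of $\Delta$ the edge $\beta = \alpha + c_\alpha a\delta$ gives $b\langle\lambda,\alpha^\lor\rangle^\dag \in \mathbb{Z}$, while for $\alpha$ a long root the two families of affine roots yield the $\half\mathbb{Z}$-condition for quantum edges and the $\mathbb{Z}$-condition for Bruhat edges, respectively — this is precisely the dichotomy built into Definition~\ref{QBG_B}, so the factor-of-$2$ discrepancy between $\langle x\Lambda,\beta^\lor\rangle_\aff$ and $\langle\lambda,\alpha^\lor\rangle^\dag$ in \eqref{aff_paring_2} is exactly what the definition of $(\QBG^\dag_{b\lambda})^\C$ is designed to absorb. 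Applying $\Pi^S$ and using Lemma~\ref{2.2.6}(3) and Lemma~\ref{2.2.8} to pass from $w,w'$ to $\lfloor w\rfloor, \lfloor w'\rfloor$, one deduces the required $(\QBG^\dag_{b\lambda})^S$-edge (or a short directed path), after which concatenation along the sequence $x_1,\ldots,x_s$ of $\eta$ and the definition of $\SB^S_{b\Lambda}$-paths give condition (C) for $\cl(\eta)$.

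For part (ii), given $\eta = (w_1,\ldots,w_s;\tau_0,\ldots,\tau_s) \in \QLS(\lambda)$, one constructs a preimage by lifting each $w_i \in (W^\dag)^S$ to $x_i = w_i z_{\mu_i} t(\mu_i) \in \pcr$ for suitably chosen $\mu_i \in Q^\lor_{S\adj}$, choosing the $\mu_i$ inductively (starting from $\mu_s = 0$ or a canonical choice and working downward) so that each consecutive pair $x_{i+1} \to x_i$ admits a directed path in $\SB^S_{b\Lambda}$ lying over the given $(\QBG^\dag_{b\lambda})^S$-path from $w_{i+1}$ to $w_i$ — this is the ``lifting'' argument, where one lifts each $\QBG^\dag$-edge to an $\SB^S$-edge whose affine-root label is chosen to land in $\SB^S_{b\Lambda}$ by matching degrees against $\tau_i$; the weight condition defining $\SILS$ paths is then verified using \eqref{aff_paring_1}–\eqref{aff_paring_2} again. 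The main obstacle I expect is the bookkeeping in part (ii): one must check that the inductively chosen $\mu_i$ are genuinely $S$-adjusted and that the lifted edges are simultaneously $\SB^S$-edges (the $\ell^\si$-increase-by-one condition) and satisfy the integrality condition of $\SB^S_{b\Lambda}$, which requires the twisted-type analogue of the Peterson-coset and length-function machinery of \cite[\S 3--4]{INS}; the distinction between the two families of real roots must be tracked carefully so that long roots of $\Delta$ contribute the ``half-integer'' affine roots in exactly the cases where the $\QLS^\dag$ definition allows $\half\mathbb{Z}$-degrees. Since $W_\aff$ for $\mathfrak{g}(\C)$ is isomorphic to the affine Weyl group of $\mathfrak{g}(C_n^{(1)})$ (as already used for Lemma~\ref{2.2.6}), most of the underlying combinatorial lemmas carry over verbatim, and the genuinely new content is confined to the integrality translation via \eqref{aff_paring_1} and \eqref{aff_paring_2}.
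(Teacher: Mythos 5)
Your proposal follows essentially the same route as the paper: the well-definedness of $\cl$ is reduced to a single-edge statement (the paper's Proposition~\ref{4.3.7}\,(1), resting on Lemma~\ref{4.3.5_1} and the translation of integrality via \eqref{aff_paring_1}--\eqref{aff_paring_2}), and surjectivity is proved by exactly the descending induction you describe, setting $\mu_s=0$ and lifting each $(\QBG^\C_{b\lambda})^S$-edge to an $\SB^S_{b\Lambda}$-edge via Lemmas~\ref{2.2.6}, \ref{2.2.8} and Proposition~\ref{4.3.7}\,(2)--(4), with Bruhat edges lifting to $\beta=w\alpha$ and quantum edges to $w\alpha+\delta$ or $\half(w\alpha+\delta)$ according to the length of $\alpha$. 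Your identification of the factor-of-two discrepancy in \eqref{aff_paring_2} as precisely what Definition~\ref{QBG_B} absorbs is the same key observation the paper uses.
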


Let $\Lambda \in P_\aff^0$ be a level-zero dominant  weight.
Let $B (\Lambda)$ be the crystal basis of the extremal weight module $V(\Lambda)$ over $U_q(\mathfrak{g}(A_\aff))$
(see \cite[\S 3.1]{Kas}).
The proof of the following theorem is the same as \cite[Theorem 3.2.1]{INS}.

\begin{thm}[{see \cite[Theorem 3.2.1]{INS} for untwisted types}]
Let $\Lambda \in P$ be a level-zero dominant weight.
Then,
$\SILS(\Lambda)$ has a $U_q(\mathfrak{g}(\C))$-crystal structure.
Moreover,
$B (\Lambda)$ is isomorphic to the set $\SILS(\Lambda)$ as a $U_q(\mathfrak{g}(A_\aff))$-crystal, and the surjective map $\cl :   \SILS(\Lambda) \rightarrow \QLS(\overline{\xi}\inv \circ \cl (\Lambda))$ satisfies the following{\rm:}
for $\eta \in  \SILS(\Lambda)$ and $i \in I_\aff$,
\begin{align*}
\cl (e_i (\eta)) = e_i (\cl (\eta)), \\
\cl (f_i (\eta)) = f_i (\cl (\eta)),
\end{align*}
where $\cl (\zero) = \zero$.
\end{thm}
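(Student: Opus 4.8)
The plan is to follow the architecture of \cite[Theorem 3.2.1]{INS} step by step, substituting at each stage the type-$\C$ objects already set up above: the semi-infinite Bruhat graph $\SB^S$ (Definition~\ref{2.3.2}), its $b\Lambda$-refinement $\SB^S_{b\Lambda}$ (Definition~\ref{3.1.1}), Peterson's coset representatives $\pcr$ with the factorization of Lemma~\ref{2.2.6}, and the reflection formula of Lemma~\ref{affref}. Because the underlying affine root system $\C$ carries the extra family of ``half'' real roots $\half(\alpha+(2a-1)\delta)$, the task is to check that every combinatorial input used by \cite{INS} survives in this twisted setting; the overall logical skeleton is unchanged.

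First I would define the root operators on $\SILS(\Lambda)$. Identifying $\pi=(x_1,\dots,x_s;\sigma_0,\dots,\sigma_s)\in\SILS(\Lambda)$ with the piecewise-linear continuous map
\[
\pi(t)=\sum_{k=1}^{p-1}(\sigma_k-\sigma_{k-1})\,x_k\Lambda+(t-\sigma_{p-1})\,x_p\Lambda,\qquad \sigma_{p-1}\le t\le\sigma_p,
\]
into $\mathbb{R}\otimes_\mathbb{Z}P^0_\aff$, I would set $H^\pi_i(t)\eqdef\langle\pi(t),\alpha_i^\lor\rangle_\aff$ and define $e_i,f_i$ ($i\in I_\aff$) by the Littelmann recipe at the first/last minimum of $H^\pi_i$, exactly as for $\QLS(\lambda)$ in \S 3.2. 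The substantive point is stability of $\SILS(\Lambda)\sqcup\setzero$ under all $e_i,f_i$; this is proved as in \cite{INS}, the local modification of $\pi$ being governed by $\SB^S$-edges labelled by $\beta\in\Delta^+_\aff$ whose reflections act on $P^0_\aff$ through Lemma~\ref{affref}, while the integrality built into $\SB^S_{b\Lambda}$ via \eqref{aff_paring_1}--\eqref{aff_paring_2} guarantees that the minima of $H^\pi_i$ are integral. Together with $\wt(\pi)\eqdef\pi(1)\in P^0_\aff$ and $\varepsilon_i,\varphi_i$ read off from $H^\pi_i$, this endows $\SILS(\Lambda)$ with a $U_q(\mathfrak{g}(\C))$-crystal structure (including the degree, carried by the $\delta$-component of $\pi(1)$).

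Next I would establish the compatibility of $\cl$ with the crystal operators. Recall $\cl$ sends $\pi$ to $(w_1,\dots,w_s;\sigma_0,\dots,\sigma_s)$, where $x_k=w_kz_{\mu_k}t(\mu_k)$ is the factorization of Lemma~\ref{2.2.6}. By Lemma~\ref{affref}, a reflection $s_\beta$ with $\beta=\alpha+c_\alpha a\delta$ or $\beta=\half(\alpha+(2a-1)\delta)$ acts on the finite Weyl-group part as $s_\alpha$ and alters only the translation part; consequently each $\SB^S$-local move underlying $e_i,f_i$ projects onto the corresponding $\QBG^S$-local move underlying the $\QLS$ root operators (the relevant diamonds being the type-$\C$ forms of Lemma~\ref{diamond}). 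This yields $\cl(e_i\pi)=e_i(\cl\pi)$ and $\cl(f_i\pi)=f_i(\cl\pi)$ with $\cl(\zero)=\zero$; surjectivity of $\cl$ is Theorem~\ref{qls_and_sils}.

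Finally, for the isomorphism $B(\Lambda)\cong\SILS(\Lambda)$ I would invoke Kashiwara's description of the crystal basis of the level-zero extremal weight module $V(\Lambda)$ as the normal crystal generated by an extremal vector of weight $\Lambda$, determined by this property together with its weight data. The straight path $\pi_\Lambda=(e;0,1)$ is extremal in $\SILS(\Lambda)$, the first step shows $\SILS(\Lambda)$ is normal, and identifying its extremal elements with the translates $t(\mu)\pi_\Lambda$ ($\mu\in Q^\lor$) matches the two crystals component by component, exactly as in \cite[Theorem 3.2.1]{INS}. The hard part will be the twisted-type bookkeeping that underlies both the stability in the first step and the projection in the third: one must verify that $\SB^S$ in type $\C$ enjoys the same shortest-path and diamond behaviour as in the untwisted case despite the half-roots $\half(\alpha+(2a-1)\delta)$, since it is precisely this structure --- and its compatibility with $\Pi^S$ via Lemmas~\ref{2.2.6} and~\ref{affref} --- that makes the root operators well defined and makes $\cl$ a strict morphism of crystals.
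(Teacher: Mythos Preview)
Your proposal is correct and takes essentially the same approach as the paper: the paper does not give a separate proof but simply asserts that ``the proof of the following theorem is the same as \cite[Theorem 3.2.1]{INS},'' and your outline is precisely a faithful transcription of that argument into the $\C$ setting, using Lemma~\ref{affref}, Lemmas~\ref{2.2.6}--\ref{2.2.8}, Proposition~\ref{4.3.7}, and Theorem~\ref{qls_and_sils} as the twisted-type substitutes for the corresponding ingredients in \cite{INS}.
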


%We take $\pi = (x_1, \ldots , x_s ; \sigma_0, \ldots, \sigma_s) \in \SILS(\Lambda)$.
%It follows from Lemma \ref{2.2.6} that for each $1 \leq i \leq s$, there exists a unique decomposition $x_i = w_i z_{\mu_i} t(\mu_i)$
%with $w_i \in W^S$ and $\mu \in Q^\lor_{S\adj}$.
%Then we set $\cl (\pi ) = (w_1, \ldots , w_s ; \sigma_0, \ldots, \sigma_s)$.
%
%We will show the following Theorem in \S 6.3.
%\begin{thm}\label{qls_and_sils}
%For each $\eta \in \SILS(\Lambda)$, $\cl (\eta ) \in \QLS(\lambda)$.
%Hence we obtain the map $\cl : \SILS(\Lambda) \rightarrow \QLS(\lambda)$;
%moreover, the map $\cl$ is surjective.
%\end{thm}

\subsection{Proof of Theorem \ref{qls_and_sils}}

\begin{lem}[{\cite[Lemma 4.3]{BFP}}]\label{4.3.4}
We have 
$\ell(s_\alpha) \leq 2 \langle \rho, \alpha^\lor \rangle -1$ for all $\alpha \in \Delta^+$.
\end{lem}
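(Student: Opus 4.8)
The statement to prove is the elementary inequality $\ell(s_\alpha) \leq 2\langle \rho, \alpha^\lor \rangle - 1$ for every $\alpha \in \Delta^+$. My plan is to interpret both sides combinatorially in terms of the root system $\Delta$ of $\mathfrak{g}(\C)$, i.e.\ the finite (intermediate/long) root system of type $C_n$, and then compare them term by term. The left-hand side $\ell(s_\alpha)$ counts the positive roots sent to negative roots by the reflection $s_\alpha$; the right-hand side, via $\langle \rho, \alpha^\lor \rangle = \frac{1}{2}\sum_{\gamma \in \Delta^+}\langle \gamma, \alpha^\lor\rangle$, is a weighted count of positive roots not orthogonal to $\alpha^\lor$. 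The idea is that every positive root inverted by $s_\alpha$ contributes at least $1$ to $2\langle \rho, \alpha^\lor\rangle$, and the single root $\alpha$ itself contributes $2$, which accounts for the $-1$.

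\textbf{Key steps.} First I would recall the standard identity $\ell(s_\alpha) = \#\{\gamma \in \Delta^+ : s_\alpha \gamma \in \Delta^-\}$ and observe that $\gamma \in \Delta^+$ is inverted by $s_\alpha$ exactly when $\langle \gamma, \alpha^\lor\rangle > 0$ (here I use that $s_\alpha \gamma = \gamma - \langle \gamma, \alpha^\lor\rangle \alpha$, so $\gamma$ goes negative iff the coefficient is positive, together with the fact that $\alpha$ itself is the unique positive root whose image is $-\alpha$). Second, I would write
\begin{equation*}
2\langle \rho, \alpha^\lor \rangle = \sum_{\gamma \in \Delta^+} \langle \gamma, \alpha^\lor \rangle = \sum_{\gamma \in \Delta^+,\ \langle \gamma, \alpha^\lor\rangle > 0} \langle \gamma, \alpha^\lor \rangle + \sum_{\gamma \in \Delta^+,\ \langle \gamma, \alpha^\lor\rangle < 0} \langle \gamma, \alpha^\lor \rangle.
\end{equation*}
Third, I would pair up the roots: the map $\gamma \mapsto s_\alpha \gamma$ is a bijection between $\{\gamma \in \Delta^+ : \langle \gamma, \alpha^\lor\rangle > 0,\ \gamma \neq \alpha\}$ and $\{\gamma \in \Delta^+ : \langle \gamma, \alpha^\lor\rangle < 0\}$, and it negates the value of the pairing; hence the two sums above cancel except for the contribution of $\gamma = \alpha$, giving $2\langle \rho, \alpha^\lor\rangle = \langle \alpha, \alpha^\lor\rangle = 2$ plus... — wait, this would give equality only in rank one. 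More carefully: for $\gamma$ with $\langle\gamma,\alpha^\lor\rangle > 0$ and $\gamma\neq\alpha$, its partner $s_\alpha\gamma$ is positive with pairing $-\langle\gamma,\alpha^\lor\rangle$, so these cancel; the leftover is $\langle\alpha,\alpha^\lor\rangle = 2$. Thus actually $2\langle\rho,\alpha^\lor\rangle$ equals $2$ plus twice... no: I must be more careful about which positive roots have $\langle\gamma,\alpha^\lor\rangle$ of each sign and whether the pairing can exceed $1$ in absolute value. The cleanest route: $\ell(s_\alpha) = \#\{\gamma\in\Delta^+: \langle\gamma,\alpha^\lor\rangle>0\}$, and since each such $\gamma$ contributes $\langle\gamma,\alpha^\lor\rangle \geq 1$ to $\sum_{\gamma\in\Delta^+}\langle\gamma,\alpha^\lor\rangle = 2\langle\rho,\alpha^\lor\rangle$, with $\gamma=\alpha$ contributing $2$, I get $2\langle\rho,\alpha^\lor\rangle \geq (\ell(s_\alpha) - 1) + 2 = \ell(s_\alpha)+1$, which rearranges to $\ell(s_\alpha) \leq 2\langle\rho,\alpha^\lor\rangle - 1$.

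\textbf{Main obstacle.} The only subtle point is ensuring that $\sum_{\gamma \in \Delta^+} \langle \gamma, \alpha^\lor\rangle$ really equals $2\langle\rho,\alpha^\lor\rangle$ and that the ``subtract $\alpha$'s contribution, lower-bound the rest by $1$ each'' step is valid — one must know that $\langle\gamma,\alpha^\lor\rangle$ is a nonnegative integer for $\gamma$ in the positive cone generated by... no, rather that it is an integer (true since $\alpha^\lor$ is a coroot and $\gamma$ a root) and that the set of positive roots with positive pairing has size exactly $\ell(s_\alpha)$. Both are standard facts about (finite, possibly non-simply-laced) root systems and hold uniformly for type $C_n$; since the paper has already been treating $\mathfrak{g}(\C)$'s finite part as an ordinary type $C_n$ root system, there is no twisted-affine complication here. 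I expect the entire argument to be a few lines once these standard identities are invoked, so the ``hard part'' is essentially just writing the inequality chain cleanly.
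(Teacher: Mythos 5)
The paper does not prove this lemma; it simply quotes it from \cite[Lemma 4.3]{BFP}, so there is no in-paper argument to compare against. Judged on its own, your proposal has a genuine gap, and it sits exactly where you sensed trouble. The ``cleanest route'' you settle on rests on the identity $\ell(s_\alpha) = \#\{\gamma\in\Delta^+ : \langle\gamma,\alpha^\lor\rangle>0\}$, which is false for non-simple behaviour of the inversion set: only the implication ``$s_\alpha\gamma\in\Delta^-$ $\Rightarrow$ $\langle\gamma,\alpha^\lor\rangle>0$'' holds, not its converse. Already in type $A_2$ with $\alpha=\alpha_1$ one has $\ell(s_{\alpha_1})=1$ while $\{\gamma\in\Delta^+:\langle\gamma,\alpha_1^\lor\rangle>0\}=\{\alpha_1,\alpha_1+\alpha_2\}$ has two elements (indeed $s_{\alpha_1}(\alpha_1+\alpha_2)=\alpha_2\in\Delta^+$). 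Worse, your final inequality drops the negative terms of $\sum_{\gamma\in\Delta^+}\langle\gamma,\alpha^\lor\rangle$ in the wrong direction: that full sum is \emph{at most} the sum over the positive-pairing roots, not at least. In the same $A_2$ example your chain would assert $2\langle\rho,\alpha_1^\lor\rangle = 2 \geq (2-1)+2 = 3$. The intermediate bijection in your ``Third'' step is likewise between the wrong sets: $s_\alpha$ carries $\{\gamma\in\Delta^+:\langle\gamma,\alpha^\lor\rangle<0\}$ bijectively onto $\{\gamma\in\Delta^+:\langle\gamma,\alpha^\lor\rangle>0,\ s_\alpha\gamma\in\Delta^+\}$, not onto all of $\{\gamma\in\Delta^+:\langle\gamma,\alpha^\lor\rangle>0,\ \gamma\neq\alpha\}$; the discrepancy between these two sets is precisely the inversion set minus $\{\alpha\}$, i.e.\ exactly the quantity the lemma is about, which is why your cancellation seemed to prove too much.

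The fix is to run the cancellation so that it isolates the inversion set $A:=\{\gamma\in\Delta^+ : s_\alpha\gamma\in\Delta^-\}$ rather than the positive-pairing set. The complement $\Delta^+\setminus A$ is stable under $s_\alpha$, and $\langle s_\alpha\gamma,\alpha^\lor\rangle=-\langle\gamma,\alpha^\lor\rangle$, so the contribution of $\Delta^+\setminus A$ to $\sum_{\gamma\in\Delta^+}\langle\gamma,\alpha^\lor\rangle$ vanishes; equivalently, $\sum_{\gamma\in A}\gamma=\rho-s_\alpha\rho=\langle\rho,\alpha^\lor\rangle\alpha$, whence
\begin{equation*}
2\langle\rho,\alpha^\lor\rangle=\sum_{\gamma\in A}\langle\gamma,\alpha^\lor\rangle .
\end{equation*}
Now every $\gamma\in A$ satisfies $\langle\gamma,\alpha^\lor\rangle\geq 1$ (a positive integer, by the one valid implication above), $\#A=\ell(s_\alpha)$, and the term $\gamma=\alpha$ contributes $2$; hence $2\langle\rho,\alpha^\lor\rangle\geq\ell(s_\alpha)+1$, which is the assertion. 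This is a few lines, as you predicted, but the sum must be taken over the inversion set, not over the roots with positive pairing.
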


\begin{lem}[{see \cite[Corollary 4.2.2]{INS} for untwisted types}]\label{4.3.5_1}
Let $S$ be a subset of $I$.
Let $x = w z_\mu t(\mu) \in \pcr$ with $w \in W^S$ and $\mu \in Q^\lor_{S\adj}$, 
and let $\beta \in \Delta_\aff^+$ be such that $x \xrightarrow{\beta} s_\beta x$ in $\SB^S$.
If $\beta$ is a long or intermediate root, then we write $\beta$ as $\beta = \gamma + a c_\gamma \delta$ with $\gamma \in \Delta$ and $a \in \mathbb{Z}_{\geq 0}${\rm;}
if $\beta$ is a short root, then we write $\beta$ as $\beta = \half( \gamma + (2a+1) \delta)$ with $\gamma \in \Delta^\ell$ and $a \in \mathbb{Z}_{\geq 0}$.
Set $\alpha \eqdef w^{-1}\gamma \in \Delta$.
Then the following hold.
\begin{enu}
\item
$\alpha \in \Delta^+ \setminus  \Delta^+_S $.

\item
If $\beta$ is a long or intermediate root, then
$\ell(w s_\alpha z_\mu ) = \ell(w z_\mu ) + 1 - 2 c_\alpha a \langle \rho, z_\mu^{-1}\alpha^\lor \rangle$;
if $\beta$ is a short root, then 
$\ell(w s_\alpha z_\mu ) = \ell(w z_\mu ) + 1 - 2 (2a + 1) \langle \rho, z_\mu^{-1}\alpha^\lor \rangle$,

\item
$a \in \{ 0, 1\}$.
Moreover, $\beta$ is a long or short root, then $a = 0$.
\end{enu}
\end{lem}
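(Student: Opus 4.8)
The plan is to follow \cite[Corollary~4.2.2]{INS} step by step; the only feature genuinely new to type $\C$ is the family of short affine roots $\tfrac{1}{2}(\gamma+(2a+1)\delta)$, and Lemma~\ref{affref}(2) is designed precisely so that these are handled exactly like the roots $\gamma+ac_\gamma\delta$ via Lemma~\ref{affref}(1). All three assertions reduce to length bookkeeping in $W_\aff$. The first step is to put $s_\beta x$ into Peterson normal form. By Lemma~\ref{affref}, one has $s_\beta=s_\gamma\,t(c\gamma^\lor)$ in $W_\aff$, where $c=ac_\gamma$ when $\beta$ is long or intermediate and $c=2a+1$ when $\beta$ is short. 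Commuting the translation past $wz_\mu$ inside $W_\aff=t(Q^\lor)\rtimes W$, and using $s_\gamma w=ws_\alpha$ together with $w^{-1}\gamma^\lor=\alpha^\lor$, gives
\begin{equation*}
s_\beta x=(ws_\alpha)\,z_\mu\,t\bigl(\mu+c\,z_\mu^{-1}\alpha^\lor\bigr).
\end{equation*}
Since $s_\beta x\in\pcr$, Lemma~\ref{2.2.6} then forces $\nu:=\mu+c\,z_\mu^{-1}\alpha^\lor\in Q^\lor_{S\adj}$ and identifies $\lfloor ws_\alpha\rfloor$ as the $W^S$-component of $s_\beta x$.

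For (1), I would first rule out $\alpha\in\Delta_S$: if $\alpha\in\Delta_S$ then $z_\mu^{-1}\alpha\in\Delta_S$ (as $z_\mu\in W_S$), and a direct computation of $x^{-1}\beta$ shows $x^{-1}\beta\in\apr$, so $s_{x^{-1}\beta}\in\apsg$ and $s_\beta x=x\,s_{x^{-1}\beta}$ lies in the coset $x\apsg$; but $x$ and $s_\beta x$ are both the distinguished Peterson representatives of their cosets (Lemma~\ref{2.2.2}), whence $s_\beta x=x$, contradicting $\ell^\si(s_\beta x)=\ell^\si(x)+1$. To obtain $\alpha\in\Delta^+$, I would substitute the normal form above into the edge condition $\ell^\si(s_\beta x)=\ell^\si(x)+1$ of Definition~\ref{2.3.2}(2): using $\ell^\si(v\,t(\kappa))=\ell(v\,t(\kappa))+2\langle\rho,\kappa\rangle$ (Definition~\ref{2.3.1}) and the standard length formula for $v\,t(\kappa)$ in $W_\aff$, the quantity $\ell^\si(s_\beta x)-\ell^\si(x)$ becomes an explicit expression that cannot equal $1$ when $\alpha\in\Delta^-$. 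This is the same computation as in \cite{INS}, and it is here, rather than in showing $\alpha\notin\Delta_S$, that the full edge hypothesis is used.

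Part (2) is the core. Continuing the computation just begun, the translation parts of $x$ and $s_\beta x$ contribute $2\langle\rho,\nu-\mu\rangle=2c\langle\rho,z_\mu^{-1}\alpha^\lor\rangle$ to $\ell^\si(s_\beta x)-\ell^\si(x)$, while the finite parts contribute $\ell(ws_\alpha z_\mu)-\ell(wz_\mu)$; that no further terms arise is exactly the statement that $x,s_\beta x\in\pcr$ together with $\mu,\nu\in Q^\lor_{S\adj}$ keep the length formula in its reduced form. Setting this sum equal to $1$, and substituting $c=a c_\gamma=a c_\alpha$ (resp.\ $c=2a+1$) since $w$ preserves root lengths, yields the two displayed identities of (2). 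For (3), write $ws_\alpha z_\mu=wz_\mu\,s_{z_\mu^{-1}\alpha}$, so that by Lemma~\ref{4.3.4}
\begin{equation*}
\ell(ws_\alpha z_\mu)\ \geq\ \ell(wz_\mu)-\ell\bigl(s_{z_\mu^{-1}\alpha}\bigr)\ \geq\ \ell(wz_\mu)-2\langle\rho,z_\mu^{-1}\alpha^\lor\rangle+1 .
\end{equation*}
Comparing with (2) and using $\langle\rho,z_\mu^{-1}\alpha^\lor\rangle>0$ — valid because $z_\mu^{-1}\alpha\in\Delta^+\setminus\Delta^+_S$ by (1) and $z_\mu\in W_S$ — gives $c_\alpha a\leq 1$ in the long or intermediate case and $2a+1\leq 1$ in the short case; hence $a\in\{0,1\}$, with $a=0$ forced unless $\beta$ is intermediate.

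The main obstacle is the bookkeeping in Part (2): making precise the explicit length formula for elements of $W_\aff$ in Peterson normal form, and verifying that passing from $x=wz_\mu t(\mu)$ to $s_\beta x=(ws_\alpha)z_\mu t(\nu)$ changes the Coxeter length by exactly $\ell(ws_\alpha z_\mu)-\ell(wz_\mu)$, i.e.\ that the interaction of the factor $z_\mu$ with the change $\mu\mapsto\nu$ of translation part produces no extra contribution. A secondary, more mechanical point is to confirm that Lemma~\ref{affref}(2) is indeed the exact analogue of Lemma~\ref{affref}(1) needed, so that the short-affine-root case requires nothing beyond the single substitution $c=2a+1$ throughout; this is where the argument genuinely departs from the untwisted setting of \cite{INS}.
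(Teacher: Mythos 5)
Your proposal follows the paper's proof essentially step for step: the same Peterson normal form $s_\beta x = w s_\alpha z_\mu\, t(\mu + c\, z_\mu^{-1}\alpha^\lor)$ obtained from Lemma~\ref{affref}, the same uniqueness-of-factorization argument via Lemma~\ref{2.2.2} to exclude $\alpha \in \Delta_S$, the same direct $\ell^{\si}$ bookkeeping for part (2) (which is immediate from Definition~\ref{2.3.1}, so the ``main obstacle'' you worry about does not actually arise), and the same use of Lemma~\ref{4.3.4} to bound $\ell(s_{z_\mu^{-1}\alpha})$ and force $c\leq 1$ in part (3). The only place you are thinner than the paper is the step $\alpha\in\Delta^+$ in part (1), which you defer to the computation in \cite{INS}: the paper carries it out in two stages, first applying Lemma~\ref{4.3.4} to $-z_\mu^{-1}\alpha$ to force $c=0$, and then deriving $\ell^{\si}(s_\beta x)<\ell^{\si}(x)$ from $\ell(w s_\alpha)<\ell(w)$, rather than by a single explicit expression that cannot equal $1$.
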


\begin{proof}
(1)
%As in \cite[Lemma 4.3.5]{Is}.
We first show that $\alpha \notin \Delta_S$. Suppose that $\alpha \in \Delta_S$.
If $\beta$ is a long or intermediate root, then we see that
\begin{equation*}
x\inv \beta = t(-\mu)z_{\mu}\inv w\inv (w \alpha + a c_\gamma  \delta) = t(-\mu)(z_{\mu}\inv \alpha + a c_\alpha \delta) = z_{\mu}\inv \alpha + \zeta c_\alpha \delta
\ \mbox{ for some }
\zeta \in \mathbb{Z};
\end{equation*}
if $\beta$ is a short root, then we see that $x\inv \beta = \half ( z_{\mu}\inv \alpha + \zeta \delta)$ for some odd integer $\zeta$.
Because $\alpha \in \Delta_S$ and $z_\mu \in W_S$, it follows that $x\inv \beta \in  \apr$, and hence $s_{x\inv \beta}\in  \apsg$.
Therefore, we deduce from Lemma \ref{2.2.2} that $s_{\beta}x = x s_{x\inv \beta}$ is not contained in $\pcr$,
which contradicts the assumption that $x \rightarrow s_{\beta}x $ is an edge of $\SB^S$. Thus we obtain $\alpha \notin \Delta_S$.

We set 
\begin{align*}
a' \eqdef \left\{
\begin{array}{ll}
a c_\alpha & \mbox{if }\beta \mbox{ is a long or intermediate root,}\\
2a+1 & \mbox{if }\beta \mbox{ is a short root}.
\end{array}\right.
\end{align*}
It follows from Lemma \ref{affref} that
$s_\beta = s_\gamma t(a' \gamma^\lor)$.
Hence we have 
\begin{align*}
s_\beta x &= s_\gamma t(a' \gamma^\lor) w z_\mu t(\mu) \\
&= w s_{w^{-1}\gamma} z_\mu t(a' z_\mu^{-1} w^{-1} \gamma^\lor +\mu )\\
&= w s_{\alpha}  z_\mu t(a'   z_\mu^{-1} \alpha^\lor +\mu ).
\end{align*}

%We next show that $\alpha \in \Delta^+$. 
Suppose, for a contradiction,  that $-\alpha \in \Delta^+$.
Then, $-z_\mu\inv \alpha \in \Delta^+$ since $\alpha \notin  \Delta_S$.
We have
\begin{align*}
\ell^\si (s_\beta x) &= \ell^\si (w s_{\alpha}  z_\mu t(a'   z_\mu^{-1} \alpha^\lor +\mu) ) & &\\
&= \ell (w s_{\alpha}  z_\mu) + 2\langle \rho,  a'   z_\mu^{-1} \alpha^\lor +\mu  \rangle & &\\
&= \ell (w z_\mu  s_{z\inv_\mu \alpha}) + 2\langle \rho,  a'   z_\mu^{-1} \alpha^\lor +\mu  \rangle  & &\\
&\leq \ell (w z_\mu ) + \ell(  s_{z\inv_\mu \alpha}) + 2\langle \rho,  a'   z_\mu^{-1} \alpha^\lor +\mu  \rangle  & &\\
&\leq \ell (w z_\mu ) + (2\langle \rho , -z\inv_\mu \alpha^\lor \rangle-1) + 2 \langle \rho,  a'   z\inv_\mu \alpha^\lor +\mu  \rangle  &(&\mbox{by Lemma \ref{4.3.4}})\\
&=\ell^\si (x)-1 +2(a'-1)\langle \rho , z_\mu\inv \alpha^\lor \rangle.& &
\end{align*}
Since $\ell^\si (s_\beta x) = \ell^\si (x)+1$, we deduce that $(a'-1)\langle \rho , z_\mu\inv \alpha^\lor \rangle \geq 1$.
Because $\langle \rho , z_\mu\inv \alpha^\lor \rangle <0$, we obtain $a' = 0$, and hence $\beta = w \alpha$.
Recall that $w \alpha \in\Delta^+$ by assumption.
Because $-\alpha \in \Delta^+$ and $w \alpha \in \Delta^+$, we see that $\ell(w s_\alpha) < \ell(w)$.
However, we have
\begin{align*}
\ell^\si (s_\beta x) &= \ell^\si (w s_{\alpha}  z_\mu t(\mu) )  &(&\mbox{since }a'=0 )\\
&= \ell (w s_{\alpha}  z_\mu) + 2\langle \rho, \mu  \rangle & &\\
&\leq \ell (w s_{\alpha} ) + \ell(z_\mu)  + 2\langle \rho,  \mu  \rangle & & \\
&< \ell (w ) + \ell(  z_\mu) + 2\langle \rho,  \mu  \rangle   = \ell^\si (x)&(&\mbox{since } \ell(w s_\alpha) < \ell(w) ),
\end{align*}
which contradicts $\ell^\si (s_\beta x )= \ell^\si(x)+1$.
Thus we obtain $\alpha \in \Delta^+$. This proves (1).

(2)
%Set 
%\begin{align*}
%a' \eqdef \left\{
%\begin{array}{ll}
%a c_\alpha & \mbox{if }\beta \mbox{ is a long or intermediate root,}\\
%2a+1 & \mbox{if }\beta \mbox{ is a short root}.
%\end{array}\right.
%\end{align*}
%It follows from Lemma \ref{affref} that
%$s_\beta = s_\gamma t_{a' \alpha^\lor}$.
%Hence we have 
%\begin{align*}
%s_\beta x &= s_\gamma t_{a' \gamma^\lor} w z_\mu t_\mu \\
%&= w s_{w^{-1}\gamma} z_\mu t_{a' z_\mu^{-1} w^{-1} \gamma^\lor +\mu }\\
%&= w s_{\alpha}  z_\mu t_{a'   z_\mu^{-1} \alpha^\lor +\mu }.
%\end{align*}
%Therefore, it follows that
As in (1), we see that
\begin{align*}
1 &= \ell^\si (s_\beta x) - \ell^\si (x) & & \\
&= \ell(w s_{\alpha}  z_\mu) + 2 \langle \rho, a'   z_\mu^{-1} \alpha^\lor +\mu  \rangle
 - \ell(w z_\mu) - 2 \langle \rho, \mu  \rangle &(&\mbox{by Definition \ref{2.3.1}}) \\
&=\ell(w s_{\alpha}  z_\mu) -\ell(w  z_\mu) + 2 a' \langle \rho,   z_\mu^{-1} \alpha^\lor \rangle, & &
\end{align*}
which proves (2).

(3)
It follows that
\begin{align*}
1 &=\ell(w s_{\alpha}  z_\mu) -\ell(w  z_\mu) + 2 a' \langle \rho,   z_\mu^{-1} \alpha^\lor \rangle &(&\mbox{by (2)}) \\
&=\ell(w   z_\mu s_{z_\mu^{-1} \alpha}) - \ell(w  z_\mu) +2 a' \langle \rho,   z_\mu^{-1} \alpha^\lor \rangle & &\\
&\geq \ell(s_{z_\mu^{-1} \alpha})+2 a' \langle \rho,   z_\mu^{-1} \alpha^\lor \rangle & & \\
&\geq 1 - 2\langle \rho, z_\mu^{-1} \alpha^\lor \rangle +2 a' \langle \rho,   z_\mu^{-1} \alpha^\lor \rangle &(&\mbox{by Lemma \ref{4.3.4}})
\end{align*}
which implies that $a' \in \{ 0,1 \}$, since $z_\mu^{-1} \alpha \in \Delta^+ $.
If $\beta$ is a intermediate root, then $c_\alpha =1$ and hence $a \in \{ 0,1 \}$.
If $\beta$ is a long root, then $c_\alpha =2$ and hence $a =0$.
If $\beta$ is a short root, then $2a+1 \in \{ 0,1 \}$ and hence $a =0$.
\end{proof}

\begin{lem}[{\cite[Lemma 3.10]{LNSSS2}}]\label{4.3.6}
For $\mu \in Q^\lor_{S\adj}$,
we have $\ell(z_\mu) = -2\langle \rho_S, \mu \rangle$.
\end{lem}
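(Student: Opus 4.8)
The plan is to show that $\ell(z_\mu)$ and $-2\langle\rho_S,\mu\rangle$ both count the set $\{\gamma\in\Delta^+_S\mid\langle\gamma,\mu\rangle=-1\}$. First, since $\mu\in Q^\lor_{S\adj}$ the uniqueness in Lemma~\ref{2.2.6}(1) forces $\phi_S(\mu)=0$, so Lemma~\ref{2.2.6}(2) gives $\Pi^S(t(\mu))=z_\mu t(\mu)$ with $z_\mu\in W_S$; in particular $z_\mu t(\mu)\in\pcr$. Because $z_\mu\in W_S$ preserves $\Delta^+\setminus\Delta^+_S$, we have $\ell(z_\mu)=\#(\Delta^+\cap z_\mu^{-1}\Delta^-)=\#\{\gamma\in\Delta^+_S\mid z_\mu\gamma\in\Delta^-\}$. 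On the other hand, $S$-adjustedness gives $\langle\gamma,\mu\rangle\in\{0,-1\}$ for all $\gamma\in\Delta^+_S$, so unwinding $\rho_S=\tfrac12\sum_{\gamma\in\Delta^+_S}\gamma$ we get $-2\langle\rho_S,\mu\rangle=-\sum_{\gamma\in\Delta^+_S}\langle\gamma,\mu\rangle=\#\{\gamma\in\Delta^+_S\mid\langle\gamma,\mu\rangle=-1\}$. Hence it suffices to prove, for $\gamma\in\Delta^+_S$, the equivalence $z_\mu\gamma\in\Delta^-\iff\langle\gamma,\mu\rangle=-1$.

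Both directions follow from the membership $z_\mu t(\mu)\beta\in\Delta^+_\aff$ for $\beta\in\appr$, combined with the fact that a translation acts on level-zero affine roots by $t(\mu)(\alpha+m\delta)=\alpha+(m-\langle\alpha,\mu\rangle)\delta$ for $\alpha\in\Delta$, and that an affine root is positive precisely when its $\delta$-coefficient is positive, or is zero with the finite part positive. Taking $\beta=\gamma\in\appr$ (the case $a=0$) gives $z_\mu t(\mu)\gamma=z_\mu\gamma-\langle\gamma,\mu\rangle\delta$; when $\langle\gamma,\mu\rangle=0$ the $\delta$-coefficient vanishes, so positivity forces $z_\mu\gamma\in\Delta^+$, whence $z_\mu\gamma\in\Delta^-$ implies $\langle\gamma,\mu\rangle=-1$. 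For the converse, suppose $\langle\gamma,\mu\rangle=-1$; since every long root of $\Delta$ takes an even value against any element of $Q^\lor$, the root $\gamma$ must be an intermediate (short) root, so $c_\gamma=1$ and $\beta\eqdef-\gamma+c_\gamma\delta=-\gamma+\delta\in\appr$. For this $\beta$ one computes $z_\mu t(\mu)\beta=-z_\mu\gamma+(1+\langle\gamma,\mu\rangle)\delta=-z_\mu\gamma$, whose $\delta$-coefficient is zero, so positivity forces $z_\mu\gamma\in\Delta^-$. This proves the equivalence, and therefore $\ell(z_\mu)=-2\langle\rho_S,\mu\rangle$.

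I expect the main things to be careful about to be the precise action of translations on the twisted affine root system of $\mathfrak{g}(\C)$ (in particular on the fractional roots $\tfrac12(\alpha+(2a-1)\delta)$ with $\alpha$ long) and the parity statement used above; both are elementary and, for $\Delta$ of type $C_n$, can be checked directly in the standard coordinates. Alternatively --- and this is essentially the content of the cited \cite[Lemma 3.10]{LNSSS2} --- one may observe that $z_\mu$, $\phi_S$, $Q^\lor_{S\adj}$, $\rho_S$, and the relation $\Pi^S(t(\mu))=z_\mu t(\mu)$ are all formulated purely in terms of the data $(W,Q^\lor,S)$ and of $W_\aff=t(Q^\lor)\rtimes W$, which is isomorphic to the affine Weyl group of $\mathfrak{g}(C_n^{(1)})$; under this isomorphism the assertion coincides with \cite[Lemma 3.10]{LNSSS2}, whose proof applies without change.
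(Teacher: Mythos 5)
Your proof is correct. Note that the paper itself offers no argument for this lemma: it is stated with the citation \cite[Lemma 3.10]{LNSSS2} and left unproved, the implicit justification being the observation (made just before Lemma~\ref{2.2.6}) that $W_\aff \cong t(Q^\lor)\rtimes W$ is isomorphic to the affine Weyl group of type $C_n^{(1)}$, so that the untwisted statement transfers verbatim --- exactly the alternative route you sketch in your closing paragraph. Your main argument is a genuinely self-contained replacement: you reduce both sides to the count of $\{\gamma\in\Delta^+_S \mid \langle\gamma,\mu\rangle=-1\}$ and establish the key equivalence $z_\mu\gamma\in\Delta^- \iff \langle\gamma,\mu\rangle=-1$ directly from the defining property of $\pcr$, testing it against $\gamma\in\appr$ (the $a=0$ case) and $-\gamma+c_\gamma\delta\in\appr$ (the $a=1$ case). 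The one point that genuinely needs the type-$\C$ data --- that $\langle\gamma,\mu\rangle=-1$ forces $\gamma$ to be an intermediate root, since long roots of $C_n$ pair evenly with $Q^\lor$ --- is correct (the long roots are $\pm 2\epsilon_i$ and $Q^\lor=\bigoplus_i\mathbb{Z}e_i$), and it is also what guarantees consistency: long roots of $\Delta^+_S$ contribute zero to both $\ell(z_\mu)$ and $-2\langle\rho_S,\mu\rangle$. What your approach buys is independence from the untwisted reference and an explicit identification of the inversion set of $z_\mu$; what the citation buys is brevity. Both are sound.
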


\begin{rem}\label{4.2.2}
\
\begin{enu}
\item
If $w \xrightarrow{\alpha} \lfloor w s_\alpha \rfloor$ is a Bruhat edge of $\QBG^S$, then we have $w s_\alpha = \lfloor w s_\alpha \rfloor \in W^S$
(\cite[Remark 3.1.2]{LNSSS4}).

\item
Let $w \xrightarrow{\alpha} \lfloor w s_\alpha \rfloor$ be a quantum edge of $\QBG^S$.
Then we have $w s_\alpha t(\alpha^\lor) \in \pcr$, and $\ell(w s_\alpha) = \ell(w)+ 1 -\langle \rho, \alpha^\lor \rangle$
(\cite[\S 4.3]{LNSSS1}; see also \cite[\S 10]{LS10}).
\end{enu}
\end{rem}

\begin{prop}[{see \cite[Proposition A.1.2]{INS} for untwisted types}]\label{4.3.7}
Let $0 < b \leq 1$ be a rational number.
Let $x = w z_\mu t(\mu) \in \pcr$ with $w \in W^S$ and $\mu \in Q^\lor_{S\adj}$.

\begin{enu}
\item
Assume that $x \xrightarrow{\beta} s_\beta x \in \SB^S_{b \Lambda}$ for $\beta \in \Delta^+_\aff$.
We set $\alpha$ as in Lemma $\ref{4.3.5_1}$.
Then $w \xrightarrow{\alpha} \lfloor w s_\alpha \rfloor $ is an edge of $(\QBG^\C_{b \lambda})^S$.

\item
Assume that 
$w \xrightarrow{\alpha} \lfloor w s_\alpha \rfloor$ is a Bruhat edge of $(\QBG^\C_{b \lambda})^S$.
We set $\beta \eqdef w \alpha \in \Delta^+ \subset \Delta^+_\aff$.
Then  $s_\beta x \in \pcr$.
Moreover, for each $\mu \in Q^\lor_{S\adj}$,
$w z_\mu t(\mu) \xrightarrow{\beta} s_\beta w z_\mu t(\mu)$
is an edge of $\SB^S_{b \Lambda}$.

\item
Assume that
$w \xrightarrow{\alpha} \lfloor w s_\alpha \rfloor$ is a quantum edge of $(\QBG^\C_{b \lambda})^S$ with
$\alpha \in \Delta^+$ a short root.
We set
$\beta \eqdef w \alpha + \delta \in  \Delta^+_\aff$.
Then
$s_\beta x \in \pcr$.
Moreover, for each $\mu \in Q^\lor_{S\adj}$,
$w z_\mu t(\mu) \xrightarrow{\beta} s_\beta w z_\mu t(\mu)$
is an edge of $\SB^S_{b \Lambda}$.

\item
Assume that
$w \xrightarrow{\alpha} \lfloor w s_\alpha \rfloor$ is a quantum edge of $(\QBG^\C_{b \lambda})^S$ with
$\alpha \in \Delta^+$ a long root.
We set
$\beta \eqdef \half(w \alpha + \delta) \in  \Delta^+_\aff$.
Then
$s_\beta x \in \pcr$.
Moreover, for each $\mu \in Q^\lor_{S\adj}$,
$w z_\mu t(\mu) \xrightarrow{\beta} s_\beta w z_\mu t(\mu)$
is an edge of $\SB^S_{b \Lambda}$.
\end{enu}
\end{prop}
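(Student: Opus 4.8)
The proof transports edges between the two graphs through Peterson's factorization $W_\aff=\pcr\cdot\apsg$ (Lemmas~\ref{2.2.2}, \ref{2.2.6}, \ref{2.2.8}), using Lemma~\ref{affref} to evaluate $s_\beta$ on elements of $\pcr$ and the pairing identities~\eqref{aff_paring_1} and~\eqref{aff_paring_2} to pass between the condition $b\langle x\Lambda,\beta^\lor\rangle_\aff\in\mathbb{Z}$ defining $\SB^S_{b\Lambda}$ and the three integrality conditions of Definition~\ref{QBG_B}. Throughout one keeps track of the three length classes of real roots of $\mathfrak{g}(\C)$; the guiding dictionary is: a Bruhat edge $w\xrightarrow{\alpha}\lfloor ws_\alpha\rfloor$ lifts to the affine root $\beta=w\alpha$ with no $\delta$-shift; a quantum edge with $\alpha$ a short root of $C_n$ (so $c_\alpha=1$) lifts to the intermediate affine root $\beta=w\alpha+\delta$; a quantum edge with $\alpha$ a long root lifts to the short (``half'') affine root $\beta=\half(w\alpha+\delta)$, a genuine half-root since $w\alpha\in\Delta^\ell$. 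This is the type-$\C$ refinement of~\cite[Proposition~A.1.2]{INS}.

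\textbf{Part (1).} Here Lemma~\ref{4.3.5_1} supplies, for $\beta=\gamma+ac_\gamma\delta$ (resp.\ $\beta=\half(\gamma+(2a+1)\delta)$) and $\alpha=w^{-1}\gamma$, the facts $\alpha\in\Delta^+\setminus\Delta^+_S$, the length identity~\ref{4.3.5_1}(2), and $a\in\{0,1\}$ with $a=0$ unless $\beta$ is intermediate. First I would feed that length identity, together with $\ell(wz_\mu)=\ell(w)+\ell(z_\mu)$, Lemma~\ref{4.3.6}, and Remark~\ref{4.2.2}, into a length comparison to conclude that $w\xrightarrow{\alpha}\lfloor ws_\alpha\rfloor$ is an edge of $\QBG^S$ --- Bruhat when $a=0$ and $\beta$ is not a half-root, quantum when $\beta$ is intermediate with $a=1$ (so $\alpha$ short) or when $\beta$ is a half-root (so $\alpha$ long). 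Then I compute $x^{-1}\beta$ via the translation formula for $t(-\mu)$: one gets $x^{-1}\beta=z_\mu^{-1}\alpha+c_{z_\mu^{-1}\alpha}\mathbb{Z}\delta$ in the non-half case and $x^{-1}\beta=\half(z_\mu^{-1}\alpha+(2\mathbb{Z}-1)\delta)$ with $z_\mu^{-1}\alpha\in\Delta^\ell$ in the half case. Applying~\eqref{aff_paring_1} resp.~\eqref{aff_paring_2} and using $z_\mu\in W_S$, $z_\mu\lambda=\lambda$, gives $\langle x\Lambda,\beta^\lor\rangle_\aff=\langle\lambda,\alpha^\lor\rangle$ resp.\ $=2\langle\lambda,\alpha^\lor\rangle$, so $b\langle x\Lambda,\beta^\lor\rangle_\aff\in\mathbb{Z}$ is exactly the integrality demanded of $w\xrightarrow{\alpha}\lfloor ws_\alpha\rfloor$ by Definition~\ref{QBG_B} in each case, the factor $2$ accounting for the $\half\mathbb{Z}$ in the long-quantum case; this upgrades the edge to one of $(\QBG^\C_{b\lambda})^S$.

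\textbf{Parts (2)--(4).} Given $w\xrightarrow{\alpha}\lfloor ws_\alpha\rfloor$ of $(\QBG^\C_{b\lambda})^S$, put $\beta$ as prescribed, fix $\mu\in Q^\lor_{S\adj}$, and set $x=wz_\mu t(\mu)$. Three checks are needed. First, $s_\beta x\in\pcr$: by Lemma~\ref{affref}, $s_\beta x=ws_\alpha\cdot z_\mu t(\mu)$ in the Bruhat case and $s_\beta x=(ws_\alpha t(\alpha^\lor))\cdot z_\mu t(\mu)$ in the two quantum cases; by Remark~\ref{4.2.2}, $ws_\alpha\in W^S\subset\pcr$ resp.\ $ws_\alpha t(\alpha^\lor)\in\pcr$, so since $\mu$ is $S$-adjusted Lemma~\ref{2.2.8} gives $s_\beta x\in\pcr$. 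Second, $\ell^\si(s_\beta x)=\ell^\si(x)+1$: this is the computation in the proof of Lemma~\ref{4.3.5_1}(2)--(3) run in reverse --- after reading off $\dr(s_\beta x)=ws_\alpha z_\mu$ and the translation part from Lemma~\ref{affref}, expand $\ell^\si$ via Definition~\ref{2.3.1} and substitute $\ell(ws_\alpha)=\ell(w)+1$ (Bruhat) or $\ell(ws_\alpha)=\ell(w)+1-\langle\rho,\alpha^\lor\rangle$ (quantum) from Remark~\ref{4.2.2} and $\ell(z_\mu)=-2\langle\rho_S,\mu\rangle$ from Lemma~\ref{4.3.6}, so that the term $2\langle\rho,z_\mu^{-1}\alpha^\lor\rangle$ coming from the translation is exactly what makes the total $+1$, Lemma~\ref{4.3.4} controlling the cross terms. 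Third, the integrality $b\langle x\Lambda,\beta^\lor\rangle_\aff\in\mathbb{Z}$ is Part (1) backwards: $\langle x\Lambda,\beta^\lor\rangle_\aff$ equals $\langle\lambda,\alpha^\lor\rangle$ (non-half $\beta$) or $2\langle\lambda,\alpha^\lor\rangle$ (half $\beta$), so the hypothesis that $b\langle\lambda,\alpha^\lor\rangle$ lies in $\mathbb{Z}$, resp.\ in $\half\mathbb{Z}$ in the long-quantum case, gives what is wanted.

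\textbf{Main obstacle.} The hard part is the second check in Parts (3) and (4): showing that the prescribed lift of a quantum edge has semi-infinite length increment exactly $1$. This is where the twisted root data genuinely intervenes --- one must recognize $\beta$ correctly as a half-root in Part (4), track how the translation part $t(\alpha^\lor+\mu)$ produced by $s_\beta$ reduces via $\phi_S$ and the $z$-factors of Lemma~\ref{2.2.6} back into $W_S\cdot t(Q^\lor_{S\adj})$, and check that the interplay of Remark~\ref{4.2.2}, Lemma~\ref{4.3.4}, and Lemma~\ref{4.3.6} closes the length bookkeeping to $+1$ rather than to a mere inequality. Parts (1) and (2) are, by contrast, close transcriptions of the untwisted argument of~\cite[Proposition~A.1.2]{INS} with the pairing factors of~\eqref{aff_paring_1}--\eqref{aff_paring_2} inserted.
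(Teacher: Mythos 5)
Your proposal follows essentially the same route as the paper's proof: the same three-case dictionary (Bruhat edge $\leftrightarrow$ $\beta=w\alpha$, quantum with $\alpha$ short $\leftrightarrow$ $\beta=w\alpha+\delta$, quantum with $\alpha$ long $\leftrightarrow$ $\beta=\half(w\alpha+\delta)$), the same use of Lemma~\ref{4.3.5_1} and Lemmas~\ref{2.2.6},~\ref{2.2.8} with Remark~\ref{4.2.2} for membership in $\pcr$, the same $\ell^\si$ bookkeeping via Lemma~\ref{4.3.6}, and the same passage between the integrality conditions through \eqref{aff_paring_1}--\eqref{aff_paring_2}. The outline is correct and, once the length computations are written out as indicated, reproduces the paper's argument.
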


\begin{proof}
(1)
By Lemma \ref{4.3.5_1} (1), (3),
the affine root $\beta$ is of the form:

(i)
(long or intermediate root)
$\beta = w \alpha$ with $\alpha \in \Delta^+$, or

(ii)
(intermediate root)
$\beta = w \alpha + \delta$ with $\alpha \in \Delta^+$ a short root, or

(iii)
(short root)
$\beta = \half(w \alpha + \delta )$ with $\alpha \in \Delta^+$ a long root.

Consider case (i).
Since $\pcr \ni s_\beta x = s_\beta w z_\mu t(\mu) = w s_\alpha z_\mu t(\mu)$,
we have
$w s_\alpha \in W^S$, and hence
$\ell( w s_\alpha z_\mu) = \ell( w s_\alpha ) + \ell(z_\mu)$.
Also, since $\ell(w z_\mu )= \ell(w ) + \ell(z_\mu)$, 
$\ell(w s_\alpha) = \ell(w) + 1$ because $\ell^\si (s_\beta x) = \ell^\si (x) +1$.
Hence $w \xrightarrow{\alpha} \lfloor w s_\alpha \rfloor = w s_\alpha$ is a Bruhat edge of $\QBG$.
Moreover,
$w \xrightarrow{\alpha} \lfloor w s_\alpha \rfloor$ is an edge of $(\QBG_{b \lambda}^\C)^S$
since 
$\langle \lambda , \alpha^\lor \rangle =\langle z_\mu t(\mu) \Lambda, w\inv \beta^\lor \rangle_\aff =\langle x \Lambda, \beta^\lor \rangle_\aff \in \mathbb{Z}$ by (\ref{aff_paring_1}).

Consider cases (ii) and (iii).
Since 
$\pcr \ni s_\beta x = w s_\alpha z_\mu 
t(z_\mu^{-1}\alpha^\lor + \mu)$,
we obtain $z_\mu^{-1}\alpha^\lor + \mu \in Q^\lor_{S\adj}$
by Lemma \ref{2.2.6} (3).
We have
\begin{align*}
 w s_\alpha z_\mu 
t(z_\mu^{-1}\alpha^\lor + \mu)
&= \Pi^S (w s_\alpha z_\mu 
t(z_\mu^{-1}\alpha^\lor + \mu))
\\ &=
\lfloor w s_\alpha \rfloor z_{z_\mu^{-1}\alpha^\lor + \mu}
t(z_\mu^{-1}\alpha^\lor + \mu) &(& \mbox{by Lemma \ref{2.2.6} (3)}),
 \end{align*}
 which implies that $ w s_\alpha z_\mu  = \lfloor w s_\alpha \rfloor z_{z_\mu^{-1}\alpha^\lor + \mu}$,
and hence 
$\ell(w s_\alpha z_\mu) = \ell(\lfloor w s_\alpha \rfloor) +
\ell(z_{z_\mu^{-1}\alpha^\lor + \mu})$.
Therefore,
\begin{align*}
\ell(\lfloor w s_\alpha \rfloor)
&= \ell(w s_\alpha z_\mu) - \ell(z_{z_\mu^{-1}\alpha^\lor + \mu}) & &\\
&=\ell(w  z_\mu) + 1
-2\langle \rho, \alpha^\lor \rangle - \ell(z_{z_\mu^{-1}\alpha^\lor + \mu})&(&
\mbox{by Lemma \ref{4.3.5_1} (2))}
\\
&=\ell(w) + \ell(z_\mu) + 1
-2\langle \rho, \alpha^\lor \rangle - \ell(z_{z_\mu^{-1}\alpha^\lor + \mu})& &\\
&=\ell(w) - 2\langle \rho_S, \mu \rangle + 1
-2\langle \rho, \alpha^\lor \rangle +
2\langle \rho_S, z_\mu^{-1}\alpha^\lor + \mu \rangle
&(&\mbox{by Lemma \ref{4.3.6}})\\
&=\ell(w)  + 1
-2\langle \rho, \alpha^\lor \rangle +
2\langle \rho_S, z_\mu^{-1}\alpha^\lor \rangle
& &\\
&=
\ell(w)  + 1
-2\langle \rho, \alpha^\lor \rangle +
2\langle \rho_S, \alpha^\lor \rangle
&(&\mbox{since }z_\mu \in W_S)\\
&=
\ell(w)  + 1
-2\langle \rho- \rho_S, \alpha^\lor \rangle, & & 
\end{align*}
which implies that $w \xrightarrow{\alpha} \lfloor w s_\alpha \rfloor$ is a quantum edge of $\QBG$.
Moreover, if
$\beta = \gamma+ \delta$,
then
$\langle \lambda, \alpha^\lor \rangle=  \langle x \Lambda ,  \beta^\lor \rangle_\aff  \in \mathbb{Z}$ by (\ref{aff_paring_1});
if $\beta = \half (\gamma + \delta)$, then
$\langle \lambda, \alpha^\lor \rangle =  \half \langle x \Lambda ,  \beta^\lor \rangle_\aff \in \half \mathbb{Z}$ by (\ref{aff_paring_2}).
Therefore, in both cases, we have
$w \xrightarrow{\alpha} \lfloor w s_\alpha \rfloor$ is a quantum edge of $(\QBG_{b \lambda}^\C)^S$.

(2)
%Set $x = w z_\mu t_\xi$.
%We see that $\beta = w \alpha \in \Delta^+ (C_n) \subset 
%\Delta^+ (A^2_{2n})$.
Since $s_\beta x = w s_\alpha z_\mu t(\mu)$ and $w s_\alpha = \lfloor w s_\alpha \rfloor  \in W^S$ by Remark \ref{4.2.2} (1),
it follows 
from Lemma \ref{2.2.6} (3)
that
$s_\beta x \in \pcr$.
Also,
we have
\begin{align*}
\ell^\si (s_\beta x) &=
\ell (w s_\alpha z_\mu) + 2\langle \rho, \mu \rangle & &\\
&=
\ell (w s_\alpha ) + \ell(z_\mu) +  2\langle \rho, \mu \rangle &(&\mbox{since }w s_\alpha \in W^S
\mbox{ and }z_\mu \in W_S \\
&=
\ell (w) + 1 + \ell(z_\mu)  + 2\langle \rho, \mu \rangle 
&(& \mbox{since }w\xrightarrow{\alpha}w s_\alpha
\mbox{ is a Bruhat edge}) 
\\
&=
\ell (w z_\mu) + 2\langle \rho, \mu \rangle + 1 
&(&\mbox{since }w \in W^S
\mbox{ and }z_\mu \in W_S
\\
&=
\ell^\si (x) +1
\end{align*}
Moreover,
$ 
b \langle \lambda, \alpha^\lor \rangle =b \langle x\Lambda, \beta^\lor \rangle_\aff \in \mathbb{Z}$ by (\ref{aff_paring_1}).
Therefore,
$x \xrightarrow{\beta} s_\beta x$ is an edge of $\SB^S_{b \Lambda}$.

(3), (4)
%Set $x = w z_\xi t_\xi$.
%It is clear that $\beta \in \Delta^+(A^2_{2n})$.
Since $w s_\alpha t(\alpha^\lor) \in \pcr$
by Remark \ref{4.2.2} (2),
it follows from Lemma \ref{2.2.8} that 
$s_\beta x = w s_\alpha t(\alpha^\lor) z_\mu t(\mu) \in \pcr$.
Because
\begin{align*}
s_\beta x &= \Pi^S (s_\beta x)   &(& \mbox{since }s_\beta x \in \pcr )\\
&=
\Pi^S (w s_\alpha z_\mu t(\mu + z_\mu^{-1}\alpha^\lor ))
& &
\\
&=
\lfloor w s_\alpha \rfloor z_{\mu + z_\mu^{-1}\alpha^\lor } t(\mu + z_\mu^{-1}\alpha^\lor )
&(& \mbox{by Lemma \ref{2.2.6} (3)}),
\end{align*}
we deduce that
\begin{align*}
\ell^\si (s_\beta x) &= \ell^\si (\lfloor w s_\alpha \rfloor z_{\mu + z_\mu^{-1}\alpha^\lor } t(\mu + z_\mu^{-1}\alpha^\lor ) ) \\
&=
\ell (\lfloor w s_\alpha \rfloor z_{\mu + z_\mu^{-1}\alpha^\lor} )
+
2\langle \rho , \mu + z_\mu^{-1}\alpha^\lor \rangle  \\
&=
\ell (\lfloor w s_\alpha \rfloor) + \ell( z_{\mu + z_\mu^{-1}\alpha^\lor }) + 2\langle \rho , \mu + z_\mu^{-1}\alpha^\lor \rangle  \\
&=
\ell(w)+1 -2\langle \rho- \rho_S, \alpha^\lor \rangle
+ \ell( z_{\mu + z_\mu^{-1}\alpha^\lor }) + 2 \langle \rho , \mu + z_\mu^{-1}\alpha^\lor \rangle \\
& \hspace{50mm} (\mbox{since }w \xrightarrow{\alpha} \lfloor w s_\alpha \rfloor 
\mbox{ is a quantum edge}) \\
&\overset{{\rm Lem. \ \ref{4.3.6}}}{=}
\ell(w)+1 -2\langle \rho- \rho_S, \alpha^\lor \rangle
-
2\langle \rho_S, \mu + z_\mu^{-1}\alpha^\lor \rangle + 2\langle \rho , \mu + z_\mu^{-1}\alpha^\lor \rangle \\
&=
\ell(w)+1 -2\langle \rho- \rho_S, z_\mu^{-1} \alpha^\lor \rangle
+
2\langle \rho - \rho_S, \mu + z_\mu^{-1}\alpha^\lor \rangle 
\ \ (\mbox{since }z_\mu \in W_S) \\
&=
\ell(w)+1 +
2\langle \rho - \rho_S, \mu \rangle \\
&\overset{{\rm Lem. \ \ref{4.3.6}}}{=}
\ell(w)+ 1 +
2\langle \rho, \mu \rangle + \ell(z_\mu) \\
&=
\ell^\si (x)+1.
\end{align*}
Thus, 
$x \xrightarrow{\beta} s_\beta x $ is an edge of $\SB^S$.
Moreover, in case (3), we have
$b\langle x\Lambda, \beta^\lor \rangle_\aff = 
b\langle \lambda, \alpha^\lor \rangle 
\in \mathbb{Z}$ by (\ref{aff_paring_1});
in case (4), we have
$b\langle x\Lambda, \beta^\lor \rangle_\aff = 
2b\langle \lambda, \alpha^\lor \rangle 
\in 2 \cdot \half \mathbb{Z} = \mathbb{Z}$ by (\ref{aff_paring_2}).
Therefore, in both cases,
$x \xrightarrow{\beta} s_\beta x$ is an edge of $\SB^S_{b \Lambda}$.
\end{proof}

\begin{proof}[Proof of Theorem $\ref{qls_and_sils}$]
Let $\pi = (x_1, \ldots, x_s ; \sigma_0, \ldots ,\sigma_s) \in \SILS(\Lambda)$.
Let $w_i \in W^S$ and $\mu_i \in Q^\lor_{S\adj}$ be such that $x_i = w_i z_{\mu_i} t(\mu_i)$ for $1 \leq i \leq s$.
If there exists a directed path from
$x_{i+1}$ to $x_i$ in $\SB^S_{b \Lambda}$, then 
there exists a directed path from
$w_{i+1}$ to $w_i$ in $(\QBG^\C_{b \lambda})^S$ by Lemma \ref{4.3.7}(1).
This implies $\cl(\pi) = (w_1, \ldots, w_s ; \sigma_0, \ldots ,\sigma_s) \in \QLS(\lambda)$.

Let $\eta = (w_1, \ldots, w_s ; \sigma_0, \ldots ,\sigma_s) \in \QLS(\lambda)$, and set $\mu_s = 0$.
We proceed by descending induction on $1 \leq p \leq s$.
Assume that for every $i+1 \leq p \leq s$ there exist an element $\mu_p \in Q^\lor_{S\adj}$ and 
a directed path from $w_{p+1} z_{\mu_{p+1}} t(\mu_{p+1})$ to $w_p z_{\mu_p} t(\mu_p)$ in $\SB^S_{b \Lambda}$.
By the definition of $\QLS(\lambda)$,
There exists a directed path from $w_{i+1}$ to $w_i$ in $(\QBG^\C_{b \lambda})^S$.
It follows from Lemmas \ref{2.2.6} and \ref{4.3.7} that 
there exist
an element $\mu_i \in Q^\lor_{S\adj}$ and 
a directed path from $w_{i+1} z_{\mu_{i+1}} t(\mu_{i+1})$ to $w_i z_{\mu_i} t(\mu_i)$ in $\SB^S_{b \Lambda}$.
Thus we obtain $\pi \eqdef (w_1 z_{\mu_{1}} t(\mu_{1}), \ldots , w_s z_{\mu_{s}} t(\mu_{s}); \sigma_0, \ldots , \sigma_s) \in \SILS(\Lambda)$.
It is obvious that $\cl (\pi) = (w_1, \ldots, w_s ; \sigma_0, \ldots ,\sigma_s) $, which implies that the map $\cl : \SILS(\Lambda)\rightarrow \QLS(\lambda)$ is surjective.
\end{proof}

\appendix

\end{document}